\documentclass[a4paper, reqno]{amsart}
\usepackage[T1]{fontenc}
\usepackage{microtype}
\usepackage{lmodern}
\usepackage{amsmath,amsthm,amssymb,amscd}
\usepackage[shortlabels]{enumitem}
\usepackage[nobysame, alphabetic]{amsrefs}
\usepackage[margin=30mm]{geometry}
\usepackage{relsize}
\makeatletter
\g@addto@macro\bfseries{\boldmath}
\makeatother
\usepackage{caption,subcaption}
\captionsetup[subfigure]{labelfont=rm}

\usepackage{tikz}
\usetikzlibrary{cd, arrows}
\usepackage{tikz-cd}

\usepackage{xcolor}

\definecolor{darkblue}{rgb}{0,0,0.6}

\usepackage{xpatch}
\usepackage[breaklinks, pdftex, ocgcolorlinks,colorlinks=true, citecolor=darkblue, filecolor=darkblue, linkcolor=darkblue, urlcolor=darkblue]{hyperref}
\usepackage[capitalize,noabbrev]{cleveref}

\usepackage{footnotebackref}
\makeatletter
\xpretocmd{\@adminfootnotes}{\let\@makefntext\BHFN@OldMakefntext}{}{}
\renewcommand\@makefntext[1]{%
  \@ifundefined{@makefnmark}
    {}
    {%
     \renewcommand\@makefnmark{%
       \mbox{%
         \textsuperscript{%
           \normalfont
           \hyperref[\BackrefFootnoteTag]{\@thefnmark}%
         }%
       }\,%
     }%
     \BHFN@OldMakefntext{#1}%
  }%
}
\makeatother

\makeatletter
\newtheorem*{rep@theorem}{\rep@title}
\newcommand{\newreptheorem}[2]{%
\newenvironment{rep#1}[1]{%
\def\rep@title{#2 \ref{##1}}%
\begin{rep@theorem}}%
{\end{rep@theorem}}}
\makeatother

\numberwithin{equation}{section}

\newtheorem{proposition}[equation]{Proposition}
\newtheorem{theorem}[equation]{Theorem}
\newtheorem{corollary}[equation]{Corollary}
\newtheorem{lemma}[equation]{Lemma}

\newreptheorem{theorem}{Theorem}
\newreptheorem{lemma}{Lemma}
\newreptheorem{proposition}{Proposition}
\newreptheorem{corollary}{Corollary}

\theoremstyle{definition}
\newtheorem{definition}[equation]{Definition}
\newtheorem{question}[equation]{Question}
\newtheorem{example}[equation]{Example}

\theoremstyle{remark}
\newtheorem{remark}[equation]{Remark}
\newtheorem*{claim}{Claim}
\newtheorem*{remark*}{Remark}

\AddToHook{env/theorem/begin}{\crefalias{equation}{theorem}}
\AddToHook{env/proposition/begin}{\crefalias{equation}{proposition}}
\AddToHook{env/corollary/begin}{\crefalias{equation}{corollary}}
\AddToHook{env/definition/begin}{\crefalias{equation}{definition}}
\AddToHook{env/lemma/begin}{\crefalias{equation}{lemma}}
\AddToHook{env/question/begin}{\crefalias{equation}{question}}
\AddToHook{env/example/begin}{\crefalias{equation}{example}}
\AddToHook{env/conjecture/begin}{\crefalias{equation}{conjecture}}
\AddToHook{env/remark/begin}{\crefalias{equation}{remark}}
\AddToHook{env/const/begin}{\crefalias{equation}{const}}

\crefname{theorem}{Theorem}{Theorems}
\crefname{proposition}{Proposition}{Propositions}
\crefname{corollary}{Corollary}{Corollaries}
\crefname{definition}{Definition}{Definitions}
\crefname{lemma}{Lemma}{Lemmas}
\crefname{question}{Question}{Questions}
\crefname{example}{Example}{Examples}
\crefname{conjecture}{Conjecture}{Conjectures}
\crefname{remark}{Remark}{Remarks}
\crefname{const}{Construction}{Constructions}

\renewcommand{\epsilon}{\varepsilon}
\renewcommand{\phi}{\donotusephi}

\newcommand{\bbF}{\mathbb{F}}
\newcommand{\mB}{\mathcal{B}}

\newcommand{\N}{\mathbb{N}}

\newcommand{\Z}{\mathbb{Z}}

\newcommand{\B}{\mathcal{B}}

\newcommand{\cN}{\mathcal{N}}

\newcommand{\im}{\operatorname{Im}}
\newcommand{\Id}{\operatorname{Id}}
\newcommand{\id}{\operatorname{Id}}

\newcommand{\sS}{\mathcal{S}}
\DeclareMathOperator{\hAut}{hAut}
\DeclareMathOperator{\Tor}{Tor}
\DeclareMathOperator{\UNil}{UNil}
\DeclareMathOperator{\Wh}{Wh}
\DeclareMathOperator{\G}{G}
\DeclareMathOperator{\Top}{Top}
\DeclareMathOperator{\TopSpin}{\mathrm{TopSpin}}
\newcommand{\medoplus}{\mathop{\mathsmaller \bigoplus}}

\newcommand{\onto}{\twoheadrightarrow}

\newcommand{\ol}{\overline}
\newcommand{\wt}{\widetilde}
\newcommand{\wh}{\widehat}

\newcommand{\heq}{\simeq}
\newcommand{\ks}{\operatorname{ks}}

\newcommand{\proj}{\mathrm{proj}}
\newcommand{\CP}{\mathbb{CP}}
\newcommand{\RP}{\mathbb{RP}}
\newcommand{\st}{\mathop{}\!\star}

\DeclareMathOperator{\Out}{Out}
\DeclareMathOperator{\Aut}{Aut}
\DeclareMathOperator{\Hom}{Hom}
\DeclareMathOperator{\ev}{ev}
\DeclareMathOperator{\Her}{Her}
\DeclareMathOperator{\Sesq}{Sesq}

\DeclareMathOperator{\coker}{coker}

\DeclareMathOperator{\red}{red}
\DeclareMathOperator{\Ext}{Ext}

\DeclareMathOperator*{\bighash}{\scalebox{1.5}{\#}}

\newcommand{\propH}{Property~4HL}
\newcommand{\HtFA}{P2FA}
\newcommand{\HtFAs}{P2FA$^*$}

\DeclareMathOperator{\Res}{Res}
\DeclareMathOperator{\Ind}{Ind}
\DeclareMathOperator{\Coind}{Coind}

\begin{document}
\title[Homotopy classification of $4$-manifolds]{Homotopy classification of $4$-manifolds with\\$3$-manifold fundamental group}

\author{Jonathan Hillman}
\address{School of Mathematics and Statistics\\ University of Sydney\\ Australia}
\email{jonathanhillman47@gmail.com}

\author{Daniel Kasprowski}
\address{School of Mathematical Sciences\\ University of Southampton\\ United Kingdom}
\email{d.kasprowski@soton.ac.uk}

\author{Mark Powell}
\address{School of  Mathematics and Statistics\\ University of Glasgow\\ United Kingdom}
\email{mark.powell@glasgow.ac.uk}

\author{Arunima Ray}
\address{School of Mathematics and Statistics\\ The University of Melbourne\\ Australia}
\email{aru.ray@unimelb.edu.au}

\def\subjclassname{\textup{2020} Mathematics Subject Classification}
\expandafter\let\csname subjclassname@1991\endcsname=\subjclassname
\subjclass{
57K40. 
}
\keywords{$4$-manifolds, homotopy equivalence}

\begin{abstract}
We give a criterion on a group $\pi$ and a homomorphism $w \colon \pi \to C_2$ under which closed
$4$-manifolds with fundamental group $\pi$ and orientation character $w$ are classified up to homotopy equivalence by their quadratic $2$-types.
We verify the criterion for a large class of $3$-manifold groups and orientation characters, in particular for the fundamental group $\pi$ of any closed, orientable $3$-manifold whose finite subgroups are cyclic, provided $w$ vanishes on every element of $\pi$ of finite order.
We deduce a homeomorphism classification of closed, orientable $4$-manifolds with infinite dihedral fundamental group $\Z/2 * \Z/2$.
\end{abstract}
\maketitle

\section{Introduction}

The study of $4$-manifolds up to homotopy equivalence began with the work of Whitehead and Milnor~\cites{Whitehead-4-complexes,Milnor-simply-connected-4-manifolds}, who gave a full classification in the closed, simply connected case.
An appropriate generalisation of Whitehead and Milnor's classification to aim for was formulated by Hambleton--Kreck~\cite{Hambleton-Kreck:1988-1}, as follows.

\begin{question}\label{question:driving}
For which fundamental groups can we classify closed $4$-manifolds up to homotopy equivalence in terms of the  quadratic $2$-type?
\end{question}

Smooth structures will play no role here, so we work in the generality of topological $4$-manifolds.
The \emph{quadratic $2$-type} $Q(M)$ of a closed, connected, based $4$-manifold $M$, with a local orientation at the basepoint, consists of the data
\[Q(M):=(\pi_1(M),\pi_2(M),k_M,w_1(M),\lambda_M).\]
Here $\pi_2(M)$ is considered as a $\Z[\pi_1(M)]$-module, $k_M\in H^3(\pi_1(M);\pi_2(M))$ is the $k$-invariant classifying the Postnikov $2$-type of $M$, $w_1(M)\in H^1(M;\Z/2)$ is the first Stiefel--Whitney class, and $\lambda_M\colon \pi_2(M)\times \pi_2(M)\to \Z[\pi_1(M)]$ is the equivariant intersection form.
In this article, instead of $w_1(M)$ we will consider equivalently the orientation character $w_M\colon \pi_1(M)\to \{\pm 1\}$. An \emph{isomorphism} between the quadratic $2$-types $Q(M)$ and $Q(M')$ of $4$-manifolds $M$ and $M'$ consists of a pair of isomorphisms $g_i\colon \pi_i(M)\to \pi_i(M')$ for $i=1,2$, that respect the $k$-invariant, and such that $g_1$ intertwines the orientation characters, and $g_2$ induces an isomorphism of the intersection forms.

There has been considerable progress on an affirmative answer to \cref{question:driving} in special cases, for example certain finite fundamental groups by Hambleton--Kreck~\cite{Hambleton-Kreck:1988-1}, Kasprowski--Powell--Ruppik~\cite{KPR}, and Kasprowski--Nicholson--Ruppik~\cite{KNR}, all in the oriented setting.  For geometrically 2-dimensional fundamental groups that satisfy the Farrell--Jones conjecture,  Hambleton--Kreck--Teichner~\cite{HKT09} showed that the homotopy classification is determined by the quadratic $2$-type together with the $w_2$-type.

A key tool in the articles~\cites{Hambleton-Kreck:1988-1,KPR,KNR} is a criterion due to Hambleton--Kreck~\cite{Hambleton-Kreck:1988-1}*{Theorem~1.1~(i)} for finite fundamental groups, which when satisfied implies that an isomorphism of quadratic $2$-types $\smash{Q(M) \xrightarrow{\cong} Q(M')}$ is induced by a homotopy equivalence $M \simeq M'$. We generalise the Hambleton--Kreck criterion in \cref{thm:homotopyclass}. Our new criteria can be applied to all fundamental groups, not just finite groups. In other words, we show that for $4$-manifolds satisfying the conditions in \cref{thm:homotopyclass}, the homotopy type is determined by the quadratic $2$-type.

Then we investigate whether the conditions of \cref{thm:homotopyclass} are satisfied by \emph{$3$-manifold groups}, namely those groups that arise as the fundamental group of some (not necessarily orientable) closed $3$-manifold.
To state our main theorem we introduce the following terminology.

\begin{definition}
   Let $\pi$ be a $3$-manifold group, i.e.~$\pi=\pi_1(Y)$ for some closed $3$-manifold $Y$, and let $w \colon \pi \to C_2$ be a homomorphism (which need not be the orientation character of $Y$).
   Suppose that $Y$ has prime decomposition $Y_1 \# \cdots \# Y_m$.
   We say the pair $(\pi,w)$ is \emph{admissible} if $w$ vanishes on every element of $\pi$ of finite order, and if whenever $\pi_1(Y_i)$ contains finite order elements, $Y_i$ is either a lens space or $S^1 \times \RP^2$.
\end{definition}

The second condition in the definition means that summands $Y_i$ with fundamental group a non-cyclic finite $3$-manifold group are inadmissible, as are summands~$Y_i$, other than $S^1 \times \RP^2$, that contain a two-sided~$\RP^2$.

\begin{remark}
  In particular, if $\pi$ is the fundamental group of a closed, orientable $3$-manifold whose finite subgroups are cyclic, and $w$ vanishes on every element of finite order, then $(\pi,w)$ is admissible.
\end{remark}

\begin{theorem}\label{thm:main-homotopy}
Let $\pi$ be a $3$-manifold group and let $w \colon \pi \to C_2$ be a homomorphism, such that $(\pi,w)$ is admissible.
Let $M$ and $M'$ be closed $4$-manifolds, locally oriented at the basepoints. Suppose that $\pi_1(M)$ and $\pi_1(M')$ are both isomorphic to $\pi$, via isomorphisms that pull back $w$ to $w_M$ and $w_{M'}$ respectively.

Then every isomorphism $\smash{Q(M) \xrightarrow{\cong} Q(M')}$ between the quadratic $2$-types of $M$ and $M'$ is realised by a homotopy equivalence.
In particular, $M$ and $M'$ are homotopy equivalent if and only if they have isomorphic quadratic $2$-types.
Homotopy equivalences are assumed to be basepoint and local orientation preserving.
\end{theorem}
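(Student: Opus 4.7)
The plan is to deduce the theorem from the general criterion \cref{thm:homotopyclass} by verifying its hypotheses for every admissible pair $(\pi, w)$ with $\pi$ a 3-manifold group. The verification is organised around the Kneser--Milnor prime decomposition $Y = Y_1 \# \cdots \# Y_m$, which yields a free product decomposition $\pi \cong \Gamma_1 * \cdots * \Gamma_m$ with $\Gamma_i := \pi_1(Y_i)$. Admissibility restricts each factor to be of one of the following forms: trivial (for $Y_i = S^3$), infinite cyclic (for $Y_i = S^1 \times S^2$), torsion-free with $Y_i$ aspherical (by geometrisation), finite cyclic (for $Y_i$ a lens space), or $\Z \times \Z/2$ (for $Y_i = S^1 \times \RP^2$). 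Moreover, the restriction of $w$ to the torsion subgroup of each $\Gamma_i$ is trivial.

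The bulk of the proof is the verification of the hypotheses of \cref{thm:homotopyclass} for each of these building blocks. Torsion-free aspherical blocks are controlled by the fact that $Y_i$ itself is a finite model for $K(\Gamma_i, 1)$: the group cohomology of $\Gamma_i$ is three-dimensional and explicit, and the Farrell--Jones conjecture (known for 3-manifold groups) handles any $K$- or $L$-theoretic input. The cyclic case can be treated by existing computations in the tradition of Hambleton--Kreck for finite fundamental groups. The $\Z \times \Z/2$ case is small enough to allow direct computation, and the assumption that $w$ vanishes on the $\Z/2$ factor keeps the twisted coefficient systems manageable. To pass from the vertex groups to $\pi$ itself, I would use the Mayer--Vietoris sequences associated with free products (i.e.\ the Bass--Serre decomposition) applied to each of the obstruction groups appearing in the hypotheses of \cref{thm:homotopyclass}.

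The main obstacle I expect is the $S^1 \times \RP^2$ case. It is the only admissible building block that is simultaneously infinite and contains torsion, so it combines the cohomological subtleties of having a $\Z/2$ subgroup with the non-compactness coming from the $\Z$ factor. When an $S^1 \times \RP^2$ summand is amalgamated with other factors by a free product, tracking the behaviour of $\pi_2(M)$ as a $\Z[\pi]$-module—and, in particular, controlling the relevant $\Ext$ and $\Tor$ groups with the twisted coefficients dictated by $w$—requires the most care. Verifying the precise hypotheses of \cref{thm:homotopyclass} in this mixed setting, under the constraint that $w$ is trivial on the $\Z/2$ subgroup but potentially non-trivial on the $\Z$ factor, is where the argument is most likely to be delicate.
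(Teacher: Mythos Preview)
Your plan is correct and follows the paper's route: verify the four conditions of \cref{thm:homotopyclass} (packaged as \cref{cor:main-strategy}) building block by building block along the free-product decomposition, assembling via Mayer--Vietoris and stable-splitting arguments, with the $\Z\times\Z/2$ summands indeed requiring the most delicate work (\cref{sec:ZZ2,sec:proph-ZZ2}). One minor correction: the Farrell--Jones conjecture plays no role in the homotopy classification---the $PD_3$-group case is handled purely by Poincar\'e duality and the explicit description of $\pi_2(M)$ (\cref{lem:pi2-pd3,lem:PD3H}); Farrell--Jones enters only later, in \cref{sec:homeo-s-cob-classification}, for the $s$-cobordism classification.
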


\cref{thm:main-homotopy} includes the homotopy classification for oriented $4$-manifolds with the following fundamental groups.
\begin{enumerate}
    \item Fundamental groups of closed, oriented, aspherical $3$-manifolds, i.e.~\emph{COAT groups}.
    \item Free products of finitely many cyclic groups. In particular, the infinite dihedral group $D_\infty\cong \Z/2*\Z/2$.
    \item The group $\Z \times \Z/2$.
\end{enumerate}
Note that Hambleton--Kreck's result~\cite{Hambleton-Kreck:1988-1}*{Theorem~A} already covered finite groups with 4-periodic cohomology, so in particular finite fundamental groups of closed $3$-manifolds.
\cref{thm:main-homotopy} also includes many nonorientable cases.

The proof of \cref{thm:main-homotopy} relies on the following foundational results of Baues--Bleile and Hambleton--Kreck.
Let $f \colon M \to P_2(M)$ and $f' \colon M' \to P_2(M')$ be $3$-connected maps to the respective Postnikov $2$-types, and suppose there is a homotopy equivalence $g \colon P_2(M) \xrightarrow{\simeq} P_2(M')$. Let $\pi := \pi_1(P_2(M'))$, and identify all relevant fundamental groups with $\pi$ using $f$, $f'$, and $g$. Suppose that there is a homomorphism $w \colon \pi \to \{\pm 1\}$ that determines the orientation characters of $M$ and $M'$.  Then by \cite{Hambleton-Kreck:1988-1}*{Theorem~1.1} and
\cite{Baues-Bleile}*{Corollary~3.2} (stated below as \cref{prop:fundclass}) there is a homotopy equivalence $h \colon M \to M'$, with $h_* = (f'_*)^{-1} \circ g_* \circ f_* \colon \pi_i(M) \to \pi_i(M')$, for $i=1,2$, if and only if there are $($twisted$)$ fundamental classes $[M]$ and $[M']$ such that $g_* \circ f_*([M]) = f'_*([M']) \in H_4(P_2(M');\Z^w)$.

Our conditions from \cref{thm:homotopyclass}, if satisfied, imply that an isomorphism  $Q(M) \xrightarrow{\cong} Q(M')$ gives rise to a homotopy equivalence $g \colon P_2(M) \xrightarrow{\simeq} P_2(M')$ as above such that $g_* \circ f_*([M]) = f'_*([M'])$, and hence by \cref{prop:fundclass} to a homotopy equivalence between $M$ and $M'$.
The majority of the proof of \cref{thm:main-homotopy}, which starts in \cref{section:pi_2-(M)} and culminates in \cref{sec:mainhomotopy-proof}, consists of careful verification of the criteria from \cref{thm:homotopyclass}.

As another application of our criteria from \cref{thm:homotopyclass},
in \cref{cor:2-dim}  we give a new proof of \cite{HKT09}*{Theorem~5.13}. This was part of the proof of the classification of $4$-manifolds with geometrically 2-dimensional fundamental groups from that article.
See \cref{sec:geom2d} for details.

\begin{remark}
A homotopy classification via $Q(M)$ as in \cref{thm:main-homotopy} does not hold for all fundamental groups. In particular, while it holds for $\Z \times \Z/2$ in the orientable case, the analogous statement does not hold for fundamental group $\Z\times\Z/p$ whenever there are non-homotopy equivalent $3$-dimensional lens spaces with fundamental group $\Z/p$. To see this consider $S^1 \times L$ and $S^1 \times L'$, where $L$ and $L'$ are lens spaces with $\pi_1(L) \cong \Z/p \cong \pi_1(L')$. Then by considering $\Z$-covers we see that $S^1 \times L$ and $S^1 \times L'$ are homotopy equivalent if and only if $L$ and $L'$ are homotopy equivalent.  However $\pi_2(S^1 \times L)=0=\pi_2(S^1 \times L')$ and so $S^1 \times L$ and $S^1 \times L'$ have isomorphic quadratic $2$-types, even if $L$ and $L'$ are not homotopy equivalent.

We also check that the criteria  from \cref{thm:homotopyclass} do not hold in this case. Conditions \eqref{item:0}, \eqref{item:2}, and \eqref{item:3} hold trivially since $\pi_2(S^1 \times L)$ is trivial. More precisely, using the notation from \cref{thm:homotopyclass}, we can take $B = B(\Z \times \Z/p)$, so in particular $H_4(B;\Z[\Z \times \Z/p]) =0$, and $\im \varphi_B =\{0\}$. However the images of $[S^1 \times L]$ and $[S^1 \times L']$ in $H_4(\Z \times \Z/p;\Z)$ are distinct, so in particular their difference does not lie in $\im \varphi_B$.  Hence condition~\eqref{item:1} does not hold.
\end{remark}

\begin{remark}
   The restrictions on the orientation character in \cref{thm:main-homotopy} are necessary. Even for fundamental group $\Z/2$ and nontrivial orientation character, the homotopy type of $M$ is not determined by $Q(M)$ as can be seen from work of Kim, Kojima, and Raymond \cite{kim-kojima-raymond}.
\end{remark}

\begin{remark}
In this paper we use the term \emph{$PD_n$-complex} to refer to a finite $n$-dimensional Poincar\'{e} duality complex. A group $G$ is said to be a \emph{$PD_n$-group} if the classifying space $K(G,1)$ is a $PD_n$-complex.
The proof of \cref{thm:main-homotopy} extends to the case that $\pi$ is a free product $*_{i=1}^sG_i$, where the factors $G_i$ are either $PD_3$-groups or cyclic.
This may be a spurious generalisation, as it remains an open question whether every $PD_3$-group is the fundamental group of a closed $3$-manifold.

Note that the finiteness conditions on $PD_n$-complexes and $PD_n$-groups in the literature sometimes differ. For example the algebraic definition of a $PD_n$-group from \cite{bieri-eckmann} does not require that the group be finitely presentable, and for every $n \neq 4$ there are examples of such groups that are not finitely presentable by \cite{davis}*{Theorem~C}, and so do not have finite classifying spaces.
\end{remark}

\subsection{Stable homeomorphism for COAT and \texorpdfstring{$\Z \times \Z/2$}{ZxZ/2} fundamental groups}
Recall that $4$-manifolds $M$ and $M'$ are said to be \emph{stably homeomorphic} if there exists some $k\geq 0$ such that $M\#k(S^2\times S^2)$ is homeomorphic to $M'\#k(S^2\times S^2)$.
Since closed, oriented $4$-manifolds with COAT or $\Z \times \Z/2$ fundamental group are stably homeomorphic if they are homotopy equivalent and have equal Kirby--Siebenmann invariant~\citelist{\cite{KLPT}*{Corollary~1.6}\cite{KNV}*{Theorem~B}} we obtain the following statement.

\begin{corollary}\label{cor:stab-homeo-ZZ2}
Let $M$ and $M'$ be closed, oriented $4$-manifolds with $\pi_1(M) \cong \pi_1(M')$ either a COAT group or isomorphic to $\Z \times \Z/2$. Then $M$ and $M'$ are orientation preserving stably homeomorphic if and only if they have equal Kirby--Siebenmann invariant and their quadratic $2$-types are stably isomorphic.
\end{corollary}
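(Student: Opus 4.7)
The plan is to combine \cref{thm:main-homotopy} with the stable homeomorphism theorems \cite{KLPT}*{Corollary~1.6} and \cite{KNV}*{Theorem~B} cited immediately above the corollary: once we have an orientation-preserving homotopy equivalence between oriented $4$-manifolds with COAT or $\Z\times\Z/2$ fundamental group and equal Kirby--Siebenmann invariant, those results supply an orientation-preserving stable homeomorphism.

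The easy direction will be a direct check: connect sum with $S^2\times S^2$ preserves the Kirby--Siebenmann invariant and adds a hyperbolic summand $H(\Z[\pi])$ to both $\pi_2$ and the equivariant intersection form, without changing $\pi_1$, $w_1$, or the $k$-invariant. Hence an orientation-preserving stable homeomorphism of $M$ and $M'$ immediately yields equality of Kirby--Siebenmann invariants and stable isomorphism of quadratic $2$-types.

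For the substantive direction I would first unravel stable isomorphism of quadratic $2$-types to mean that for some $k, k'\geq 0$, the stabilized manifolds $M_1:=M\#k(S^2\times S^2)$ and $M_1':=M'\#k'(S^2\times S^2)$ have isomorphic quadratic $2$-types. Before invoking \cref{thm:main-homotopy} I would verify admissibility with trivial orientation character: a COAT group is torsion-free so admissibility is vacuous, while $\Z\times\Z/2\cong\pi_1(S^1\times\RP^2)$ has $S^1\times\RP^2$ itself as its unique prime summand with torsion, and the trivial orientation character vanishes on all torsion. \cref{thm:main-homotopy} then supplies a homotopy equivalence $M_1\simeq M_1'$ realizing the given isomorphism of quadratic $2$-types; this is automatically orientation-preserving because the isomorphism intertwines the orientation characters and the intersection forms. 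Since $\ks(M_1)=\ks(M)=\ks(M')=\ks(M_1')$, the cited KLPT and KNV theorems now apply to give an orientation-preserving stable homeomorphism of $M_1$ and $M_1'$, which together with the intermediate stabilizations yields the required stable homeomorphism between $M$ and $M'$.

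The only delicate point, rather than a real obstacle, is the bookkeeping around the precise formulation of stable isomorphism of quadratic $2$-types and the orientation-preservation throughout; no new geometric input is required, as \cref{thm:main-homotopy} and the cited stable-homeomorphism theorems together do all of the real work.
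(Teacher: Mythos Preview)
Your proposal is correct and follows the same approach as the paper: apply \cref{thm:main-homotopy} to the stabilised manifolds (after verifying admissibility of $(\pi,w)$ with $w$ trivial) to obtain an orientation-preserving homotopy equivalence, then invoke \cite{KLPT}*{Corollary~1.6} and \cite{KNV}*{Theorem~B} to upgrade this to a stable homeomorphism. The paper records only the one-sentence deduction before the corollary, and your write-up simply spells out the details (including the easy converse direction) that the paper leaves implicit.
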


Similar stable classification statements for $4$-manifolds with COAT fundamental groups were obtained in \cite{KLPT}*{Theorem~9.1} and \cite{Hambleton-Hildum}*{Theorem~B}.
The latter result of Hambleton--Hildum covers more 3-dimensional groups than COAT groups, but when restricted to COAT groups their result is covered by \cite{KLPT}*{Theorem~9.1}.  To compare \cite{KLPT}*{Theorem~9.1} with \cref{cor:stab-homeo-ZZ2} note that the former used the $w_2$-type, but did not consider the $k$-invariant, whereas in the latter we use the quadratic $2$-type, including the $k$-invariant, but the $w_2$-type does not appear.

A stable classification for manifolds with fundamental group $\Z\times \Z/2$  was obtained in \cite{KPT-long}*{Theorem~1.2} using modified surgery over the normal 1-type~\cite{kreck}, but using a thoroughly different set of invariants.

\subsection{Homeomorphism classification for \texorpdfstring{$4$-manifolds}{4-manifolds} with infinite dihedral fundamental group}

Given a homotopy classification as in  \cref{thm:main-homotopy}, a natural question is whether this can be upgraded to a homeomorphism classification using surgery theory. The first and most famous instance of this is due to Freedman~\cite{F}; once he established that surgery theory could be applied topologically in dimension four for trivial fundamental groups, he improved Whitehead and Milnor's homotopy classification~\cites{Whitehead-4-complexes,Milnor-simply-connected-4-manifolds} of closed, simply connected $4$-manifolds to a homeomorphism classification. (Initially, Freedman's classification was stated for $4$-manifolds that can be smoothed away from a point.  However shortly afterwards Quinn showed~\cite{Quinn-annulus} that this holds for all connected $4$-manifolds.)

For non-simply-connected $4$-manifolds, a prerequisite for applying surgery theoretic methods is that the fundamental group be \emph{good}, a class of groups that contains finite groups and solvable groups, and is closed under subgroups, quotients, extensions, and colimits~\cites{Freedman-Teichner:1995-1,DET-book-goodgroups}.

In particular the infinite dihedral group $D_\infty:= \Z/2 * \Z/2$ is good, since it fits into an extension $0\to \Z \to D_{\infty} \to \Z/2 \to 0$.  As $\Z/2 * \Z/2 \cong \pi_1(\RP^3 \# \RP^3)$ it is one of the groups covered by \cref{thm:main-homotopy}, with respect to the trivial orientation character.
In addition the Whitehead group $\Wh(D_\infty)=0$, since $\Wh(\Z/2)=0$ and $\Wh(\Z/2 * \Z/2) = \Wh(\Z/2) \oplus \Wh(\Z/2)$ by~\cite{Stallings-Whitehead-additive}, which simplifies the application of surgery theory.
We focus on $D_\infty$, obtaining a homeomorphism classification for closed, oriented $4$-manifolds with fundamental group $D_\infty$, in terms of the quadratic $2$-type, the Kirby--Siebenmann invariant, and an additional invariant $s(M,\alpha)$. This invariant is due to Kreck, Lück, and Teichner \cite{KLT}*{Definition~2.1}, although we give a slightly different description that applies to topological $4$-manifolds.

\begin{definition}\label{def:s-invt}
Let $M$ be a closed, oriented $4$-manifold and let $\alpha\colon \pi_1(M)\xrightarrow{\cong} D_\infty$ be an isomorphism.
If the universal cover $\wt{M}$ is not spin, then set $s(M,\alpha) =0$.
We assume for the rest of the definition that $\wt M$ is spin. The isomorphism $\alpha$ induces a map $f\colon M\to \RP^\infty\vee\RP^\infty\simeq \RP^\infty\cup[0,1]\cup\RP^\infty$, which is well-defined up to homotopy. Let $S$ be a regular preimage of $\tfrac{1}{2}\in [0,1]$ under $f$. Then the inclusion $S\subseteq M$ lifts to an inclusion $S\subseteq \wt M$ and the unique spin structure of $\wt M$ induces a spin structure on $S$. Let $N$ be a spin $4$-manifold with spin boundary $S$. The submanifold $S\subseteq M$ decomposes $M$ into two parts, i.e.\ $M=M_L\cup_SM_R$. We define
\[s(M,\alpha):=(\sigma(M_L\cup_S-N)/8+\ks(M_L\cup_S-N),\sigma(M_R\cup_SN)/8+\ks(M_R\cup_SN))\in \Z/2\times\Z/2.\]
For more details see \cite{KLT}*{Section~2}.
In particular, $s(M,\alpha)$ is a stable homeomorphism invariant of $(M,\alpha)$ by \cite{KLT}*{Lemma~2.2}.
\end{definition}

We can now state the promised homeomorphism classification.

\begin{theorem}\label{thm:homeo-class}
Let $M_1$ and $M_2$ be closed, oriented $4$-manifolds with isomorphisms $\alpha_i\colon \pi_1(M_i)\xrightarrow{\cong} D_\infty$, for $i=1,2$. Then $M_1$ and $M_2$ are orientation preserving homeomorphic over $D_\infty$ if and only if
\begin{enumerate}[leftmargin=1cm,font=\normalfont]
\item\label{item-D-infty-1} $M_1$ and $M_2$ have isomorphic quadratic $2$-types over $D_\infty$,
\item\label{item-D-infty-2} $\ks(M_1)=\ks(M_2)$, and
\item\label{item-D-infty-3}
$s(M_1,\alpha_1)=s(M_2,\alpha_2) \in \Z/2 \times \Z/2$.
\end{enumerate}
Moreover, if  conditions \eqref{item-D-infty-2} and \eqref{item-D-infty-3} hold, then every isomorphism of the quadratic $2$-types over $D_\infty$
is realised by a homeomorphism $M_1 \to M_2$.
\end{theorem}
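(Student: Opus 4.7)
The forward direction is an immediate invariance check: any orientation-preserving homeomorphism $M_1\xrightarrow{\homeo}M_2$ over $D_\infty$ induces an isomorphism of quadratic 2-types over $D_\infty$ and preserves $\ks$, while $s$ is preserved as a stable homeomorphism invariant by \cite{KLT}*{Lemma~2.2}.

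For the converse, my plan is first to convert the given isomorphism of quadratic 2-types into a homotopy equivalence, and then to upgrade that homotopy equivalence to a homeomorphism via surgery theory. Since $D_\infty\cong\pi_1(\RP^3\#\RP^3)$ with each prime summand a lens space, the pair $(D_\infty,1)$ is admissible, so \cref{thm:main-homotopy} furnishes an orientation-preserving homotopy equivalence $h\colon M_1\to M_2$ over $D_\infty$ realising the given quadratic 2-type isomorphism. Topological surgery theory is available in dimension four since $D_\infty$ is Freedman--Quinn good, being an extension $1\to\Z\to D_\infty\to\Z/2\to 1$ of good groups. Moreover, as noted in the introduction, $\Wh(D_\infty)=0$, so every $s$-cobordism with fundamental group $D_\infty$ is a product, and $\mathcal{S}^{\mathrm{TOP}}(M_2)$ agrees with its simple variant. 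The task therefore reduces to proving that $[h]=[\id_{M_2}]$ in $\mathcal{S}^{\mathrm{TOP}}(M_2)$.

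I would analyse this via the topological surgery exact sequence
\[L_5(\Z[D_\infty])\to \mathcal{S}^{\mathrm{TOP}}(M_2)\xrightarrow{\eta}\mathcal{N}^{\mathrm{TOP}}(M_2)\xrightarrow{\theta} L_4(\Z[D_\infty]).\]
The first subgoal is $\eta([h])=\eta([\id_{M_2}])$. Recall that $\mathcal{N}^{\mathrm{TOP}}(M_2)\cong [M_2,G/\mathrm{TOP}]$ fits in an extension of $H^2(M_2;\Z/2)$ by $H^4(M_2;\Z)$, where the $H^4$-part records $\ks$ data and the $H^2$-part records the $w_2$-type. The $w_2$-type can be recovered from the quadratic 2-type via Wu's formula on the equivariant intersection form, so conditions \eqref{item-D-infty-1} and \eqref{item-D-infty-2} together match the normal invariants of $h$ and $\id_{M_2}$. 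Consequently $[h]$ and $[\id_{M_2}]$ differ by the Wall-realised action of some class $x\in L_5(\Z[D_\infty])$, and the problem reduces to showing $x=0$.

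This last step, where the $s$-invariant enters, is where I expect the hard work to lie. I would use Cappell's Mayer--Vietoris splitting of $L$-groups along the amalgamation $D_\infty=\Z/2*\Z/2$, which decomposes $L_*(\Z[D_\infty])$ into contributions from $L_*(\Z[\Z/2])$, $L_*(\Z)$, and a $\UNil$ summand; correspondingly $x=x_{\mathrm{cl}}+x_{\UNil}$. The classical piece $x_{\mathrm{cl}}$ is controlled by ordinary surgery on the two halves $M_{2,L}$ and $M_{2,R}$ of the Cappell splitting $M_2=M_{2,L}\cup_S M_{2,R}$ used in \cref{def:s-invt}: both halves have good finite fundamental group $\Z/2$ and trivial Whitehead group, and the matching of normal invariants and $\ks$ established above rules out $x_{\mathrm{cl}}$. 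The $\UNil$ piece $x_{\UNil}$ is detected precisely by the $s$-invariant via Cappell's geometric realisation of $\UNil$-classes by codimension-one surgery along the splitting surface $S$, so condition \eqref{item-D-infty-3} rules out $x_{\UNil}$ as well. Combining, $x=0$ and $[h]=[\id_{M_2}]$, yielding the desired homeomorphism $M_1\to M_2$ homotopic to $h$ and realising the given quadratic 2-type isomorphism. The main technical obstacle will be the precise matching between $s(M_1,\alpha_1)-s(M_2,\alpha_2)$ and the $\UNil$-component of $x$, which requires carefully tracing through the Novikov-additivity description of $s$ from \cref{def:s-invt} and identifying Cappell's $\UNil$-realisation as the source of the difference.
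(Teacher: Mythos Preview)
Your surgery-theoretic strategy has two genuine errors that would cause the argument to fail.

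First, the claim that conditions \eqref{item-D-infty-1} and \eqref{item-D-infty-2} force $\eta([h])=\eta([\id_{M_2}])$ is a conflation. The $H^2(M_2;\Z/2)$ component of the normal invariant of $h$ is the Kervaire--Arf class of the degree-one normal map $h$; it is an invariant of the map $h$, not of the target $M_2$, and it is \emph{not} the $w_2$-type of $M_2$. Wu's formula tells you nothing about this class. In fact, the paper computes (\cref{prop:structure-set-dihedral}) that the structure set is exactly $\mathcal{S}(M_2)\cong H_2(M_2;\Z/2)$, so every class in $H_2(M_2;\Z/2)$ arises as the normal invariant of some homotopy equivalence with vanishing surgery obstruction. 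Your argument as written gives no control over which class $\eta([h])$ is.

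Second, and more seriously, your plan to detect the $\UNil$-part of $x\in L_5(\Z[D_\infty])$ via the $s$-invariant cannot work, because $L_5(\Z[D_\infty])=0$: by Connolly--Davis one has $\UNil_5(\Z;\Z,\Z)=0$ and $L_5(\Z[\Z/2])=0$ (see \cref{lem:connolly-davis}). There is no nontrivial Wall realisation in this dimension, so the $s$-invariant is not detecting anything in $L_5$. Its role is entirely different: it distinguishes \emph{stable} homeomorphism classes, i.e.\ elements of the bordism group over the normal $1$-type $(D_\infty,x^2+y^2)$.

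The paper's route is to first compute $\mathcal{S}(M_2)\cong H_2(M_2;\Z/2)$ using $L_5=0$, then analyse the action of $\hAut(M_2,D_\infty)$ on this set via Stong's realisation of classes by self-homotopy-equivalences (\cref{thm:stableclasss}). The quotient $\mathcal{S}(M_2)/\hAut(M_2,D_\infty)$ has at most $1$, $2$, or $4$ elements depending on the $w_2$-type, and these are then shown to be pairwise \emph{not} stably homeomorphic, distinguished exactly by $\ks$ and $s$. The theorem then follows from the slogan ``homotopy equivalent $+$ stably homeomorphic $\Rightarrow$ homeomorphic'' (\cref{thm:homeo-homotopy-stable}), with the stable classification supplied by modified surgery over the normal $1$-type (\cref{khldflkhfglufdalkgualkguvf,lem:stable-classes}).
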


Here,  we say that $M_1$ and $M_2$ are \emph{homeomorphic over $D_\infty$} if there exists a homeomorphism $f\colon M_1\to M_2$ such that $\alpha_1= \alpha_2\circ f$. Similarly an isomorphism of $Q(M_1)$ and $Q(M_2)$ over $D_\infty$ is an isomorphism of the quadratic $2$-types where the constituent isomorphism $g\colon \pi_1(M_1)\xrightarrow{\cong}\pi_1(M_2)$ intertwines $\alpha_1$ and $\alpha_2$, i.e.~$\alpha_1=\alpha_2\circ g$.

\begin{remark}
Let $M_1$ and $M_2$ be closed, oriented $4$-manifolds with isomorphisms $\alpha_i\colon \pi_1(M_i)\xrightarrow{\cong} D_\infty$, for $i=1,2$. Let $x,y\in H^1(D_\infty;\Z/2)$ be the elements obtained from pulling back  the generator of $H^1(\Z/2;\Z/2)$ using the map on group cohomology induced by the standard projections $D_{\infty}=\Z/2*\Z/2 \to \Z/2$, to the first or second factor respectively.
\leavevmode
\begin{enumerate}[label=(\roman*)]
    \item Note that condition~\eqref{item-D-infty-1} in \cref{thm:homeo-class} implies, by \cref{thm:main-homotopy}, that $M_1$ and $M_2$ are homotopy equivalent over $D_\infty$.  The Stiefel--Whitney classes are homotopy invariant, which follows from the Wu formulae.  Thus  $\wt{M}_1$ is spin if and only if $\wt{M}_2$ is spin, and also $w_2(M_1) =\alpha_1^*(x^2+y^2)$ if and only if $w_2(M_2) =\alpha_2^*(x^2+y^2)$.
    \item Of course if the $\wt{M}_i$ are both not spin, then \cref{thm:homeo-class}\,\eqref{item-D-infty-3} holds automatically by definition of $s(M_i,\alpha_i)$.
    \item Let $M$ be a closed, oriented $4$-manifold and let $\alpha\colon \pi_1(M)\xrightarrow{\cong} D_\infty$ be an isomorphism. We will show in \cref{khldflkhfglufdalkgualkguvf} that if $\wt{M}$ is spin and $w_2(M)\neq\alpha^*(x^2+y^2)$, then $s(M,\alpha)$ is determined by the signature $\sigma(M)$ and the Kirby--Siebenmann invariant $\ks(M)$. So again \cref{thm:homeo-class}\,\eqref{item-D-infty-3} holds automatically in such cases as a consequence of conditions \eqref{item-D-infty-1} and \eqref{item-D-infty-2}.
    \item\label{item:w2-type-x2y2} If each $\wt{M_i}$ is spin and $w_2(M_i) =\alpha_i^*(x^2+y^2)$, then \cref{thm:homeo-class}\,\eqref{item-D-infty-3} is an important extra condition. For example, let $E$ denotes the unique $S^2$-bundle over $\RP^2$ with orientable but not spin total space. There exists a unique $4$-manifold $\st E$, which is homotopy equivalent to $E$, but not (stably) homeomorphic to $E$. Then Teichner~\cite{teichner-star} showed that the closed, oriented $4$-manifolds $E\#E$ and $\st E \# \st E$ are homotopy equivalent, have vanishing Kirby--Siebenmann invariants, but are not (stably) homeomorphic. See~\cite{table}*{Example~5.11}  for additional discussion of these examples.
    \item Let $M$ be a closed, oriented $4$-manifold together with an isomorphism $\alpha\colon \pi_1(M)\xrightarrow{\cong} D_\infty$. Let $\beta \colon \Z/2 \times \Z/2 \to \Z/2$ be given by $(a,b) \mapsto a+b$.
    Then if $w_2(M) =\alpha^*(x^2+y^2)$, we have that \[\ks(M) = \beta(s(M,\alpha)) + \sigma(M)/8.\]
   This follows by Novikov additivity of the signature and additivity of the Kirby--Siebenmann invariant.  So the invariants are not independent.
    \item Item~\ref{item:w2-type-x2y2} also shows that the analogue of \cref{cor:stab-homeo-ZZ2} does not hold for fundamental group $D_{\infty}$.
    \item We would like to remove `over $D_{\infty}$' from the statement of \cref{thm:homeo-class}, but we have not been able to do so.
  This would follow if we knew that every closed $4$-manifold $M$ with fundamental group $D_\infty$ admits a homotopy self-equivalence that sends $a \mapsto b$ and $b \mapsto a$.
\end{enumerate}
\end{remark}

In some cases we can simplify the classification. In  \cref{subsection:D-infty-replacing-k-invariant}, specifically in~\cref{prop:D-infty-k-invariant-free-statement}, we show that in some cases we do not need to control the $k$-invariant, and instead it suffices to control the $w_2$-type. A special case is the following corollary, in which we try to minimise the algebraic topological computations needed to apply it.

\begin{corollary}\label{cor:intro-smooth-dihedral}
    Let $M$ and $M'$ be closed, oriented, smooth $4$-manifolds with fundamental group $\pi:=D_\infty$ and equivariant intersection forms both isomorphic to $H(I\pi)\oplus\lambda$, where $\lambda$ is a nonsingular Hermitian form on a stably free $\Z\pi$-module.
    Then $M\#\CP^2$ and $M'\#\CP^2$ are homeomorphic, as are $M\#\ol{\CP}^2$ and $M'\#\ol{\CP}^2$.
\end{corollary}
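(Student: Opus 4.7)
The plan is to verify the three conditions of \cref{thm:homeo-class} for $M\#\CP^2$ and $M'\#\CP^2$; the case of $\ol{\CP}^2$ is entirely symmetric. Condition \eqref{item-D-infty-2} is immediate: since $M$ and $M'$ are smooth, so are the connected sums, and hence both have vanishing Kirby--Siebenmann invariant.

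For condition \eqref{item-D-infty-3}, the key observation is that $\CP^2$ is simply connected and not spin. The universal cover of $M\#\CP^2$ therefore contains, for each element of $D_\infty$, a copy of $\CP^2\setminus D^4$ on which $w_2$ is non-zero, so $\wt{M\#\CP^2}$ is not spin. By \cref{def:s-invt} this forces $s(M\#\CP^2,\alpha)=0$, and the same holds for $M'\#\CP^2$. Hence condition \eqref{item-D-infty-3} is satisfied automatically.

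The remaining work is to establish the isomorphism of quadratic $2$-types over $D_\infty$. The hypothesis $\lambda_M\cong H(I\pi)\oplus\lambda\cong\lambda_{M'}$ implies that after taking connected sum with $\CP^2$ both equivariant intersection forms become $H(I\pi)\oplus\lambda\oplus(+1)$, and are in particular isomorphic; consequently the underlying $\Z\pi$-modules of $\pi_2(M\#\CP^2)$ and $\pi_2(M'\#\CP^2)$ are isomorphic. What remains is the $k$-invariant, and for this we will invoke \cref{prop:D-infty-k-invariant-free-statement}: the presence of the hyperbolic summand $H(I\pi)$, together with the fact that the $w_2$-types of $M\#\CP^2$ and $M'\#\CP^2$ agree (both universal covers are non-spin, so in both cases $w_2$ is not pulled back from $BD_\infty$), will produce an isomorphism of quadratic $2$-types $Q(M\#\CP^2)\cong Q(M'\#\CP^2)$ over $D_\infty$ without any further need to compare $k$-invariants directly.

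The main obstacle is this last step, namely invoking \cref{prop:D-infty-k-invariant-free-statement} correctly: we have to ensure that the hyperbolic summand in the equivariant intersection form, combined with agreement of $w_2$-types, really does suffice to upgrade an isometry of intersection forms to an isomorphism of full quadratic $2$-types. Once this is settled, the three conditions of \cref{thm:homeo-class} are in place, yielding a homeomorphism $M\#\CP^2 \approx M'\#\CP^2$; the analogous argument with $(+1)$ replaced by $(-1)$ handles the $\ol{\CP}^2$ case.
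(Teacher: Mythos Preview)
Your approach is essentially the paper's: verify $\ks=0$ (smoothness), $s=0$ (universal cover not spin after $\#\CP^2$), and invoke \cref{prop:D-infty-k-invariant-free-statement} for the quadratic $2$-types, then apply \cref{thm:homeo-class}. Two small points: part~\ref{item:prop:dinfty-k-inv-i} of \cref{prop:D-infty-k-invariant-free-statement} already gives $Q(M\#\CP^2)\cong Q(M'\#\CP^2)$ directly from the hypothesis on $M,M'$, with no $w_2$-type input needed (your $w_2$-type discussion conflates parts~\ref{item:prop:dinfty-k-inv-i} and~\ref{item:prop:dinfty-k-inv-ii}, and part~\ref{item:prop:dinfty-k-inv-ii} does not apply here since the connected sums are not almost spin); and for $\ol{\CP}^2$ the paper does not rerun the argument with $(-1)$ but instead observes $-H(I\pi)\cong H(I\pi)$, applies the $\CP^2$ case to $\ol{M},\ol{M'}$, and reverses orientation.
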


Here $H(I\pi)$ denotes the hyperbolic form on $I\pi \oplus I\pi^\dagger$, where $I\pi := \ker(\varepsilon \colon \Z\pi \to \Z)$ is the augmentation ideal of the group ring and $I\pi^\dagger \cong \Hom_{\Z\pi}(I\pi,\Z\pi)$; see the conventions below.

\begin{remark}
Other known homeomorphism classifications of closed, oriented $4$-manifolds are due to Freedman--Quinn~\cite{FQ} for fundamental group $\Z$, Hambleton--Kreck~\cites{Hambleton-Kreck:1988-1,Hambleton-Kreck-93} for finite cyclic fundamental groups, Hambleton--Kreck--Teichner \cite{HKT09} for some geometrically $2$-dimensional groups, especially solvable Baumslag--Solitar groups, and Hambleton--Hildum~\cite{Hambleton-Hildum} for some cases (spin${}^+$ and pre-stabilised) involving cohomological dimension 3 groups.

In the closed, nonorientable case we have classifications by Wang~\cite{wang95} for fundamental group $\Z$, Hambleton--Kreck--Teichner~\cite{HKT94} for fundamental group $\Z/2$,
$4$-manifolds homotopy equivalent to $\RP^4\#\RP^4$ by Brookman--Davis--Khan~\cite{BDK07} (see also Jahren--Kwasik~\cite{jahren-kwasik}), and Hambleton--Hillman~\cite{Hambleton-Hillman} for $4$-manifolds homotopy equivalent to quotients of $S^2 \times S^2$.
\end{remark}

\subsection{Homeomorphism classification for \texorpdfstring{$3$-manifold}{3-manifold} fundamental groups}
In \cref{section:realisation-pi-2,section:realisation-int-forms} we place limitations on the $\Z\pi$-modules, and the sesquilinear Hermitian forms on them, that can arise as the invariants of $4$-manifolds.
In \cref{sec:homeo-s-cob-classification} we consider what the surgery exact sequence tells us about the classification of homotopy equivalent $4$-manifolds whose fundamental group is a torsion-free $3$-manifold group.
\cref{thm:s-cob-classn-3-mfld-group} applies to all such groups, giving an upper bound on the number of $s$-cobordism classes of such $4$-manifolds, in both the smooth and topological categories.
Then specialising again to solvable groups we obtain the following result on the homeomorphism classification.

\begin{corollary}\label{cor:homeo-classn-3-mfld-groups}
Let $M$ be a closed $4$-manifold whose fundamental group $\pi$ is a torsion-free, solvable $3$-manifold group.
There are at most two homeomorphism classes $($including that of $M)$ of closed $4$-manifolds with quadratic $2$-type isomorphic to $Q(M)$
and with the same Kirby--Siebenmann invariant.
\end{corollary}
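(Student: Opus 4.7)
The plan is to combine Theorem \ref{thm:main-homotopy} with the $s$-cobordism bound from Theorem \ref{thm:s-cob-classn-3-mfld-group} and the topological $s$-cobordism theorem. First, I would apply Theorem \ref{thm:main-homotopy} to reduce counting homeomorphism classes with a given quadratic $2$-type to counting within a single homotopy type. Since $\pi$ is torsion-free, both admissibility conditions are automatic: the only element of finite order in $\pi$ is the identity, and if $Y$ is any closed $3$-manifold with $\pi_1(Y) \cong \pi$ then the Kurosh subgroup theorem applied to the free product $\pi_1(Y) = \ast_i \pi_1(Y_i)$ forces every factor of the prime decomposition to be torsion-free. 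Hence every closed $4$-manifold $M'$ with $Q(M') \cong Q(M)$ is homotopy equivalent to $M$.

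Next I would apply Theorem \ref{thm:s-cob-classn-3-mfld-group} to $\pi$ to obtain the upper bound of $2$ on the number of topological $s$-cobordism classes of closed $4$-manifolds homotopy equivalent to $M$ with the same Kirby--Siebenmann invariant. That theorem proceeds through the topological surgery exact sequence, which requires $\pi$ to be a good group in the sense of Freedman--Quinn; solvable groups are good by \cite{Freedman-Teichner:1995-1}, so this hypothesis is satisfied. To upgrade `$s$-cobordant' to `homeomorphic' I would invoke the topological $s$-cobordism theorem in dimension $5$, which requires in addition that $\Wh(\pi) = 0$. A torsion-free solvable $3$-manifold group is virtually poly-$\Z$, so the Whitehead group vanishes by Farrell--Hsiang (equivalently, by the Farrell--Jones conjecture, which is known for solvable groups).

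The step I expect to be the main obstacle is Theorem \ref{thm:s-cob-classn-3-mfld-group} itself, from which the factor of $2$ must be extracted. One must control the topological structure set of $M$ via the surgery exact sequence $L_5(\Z\pi, w) \to \mathcal{S}^{\Top}(M) \to \mathcal{N}^{\Top}(M) \to L_4(\Z\pi, w)$ together with the Kirby--Siebenmann component of the normal invariant set $[M, G/\Top]$. For solvable $\pi$ the relevant $L$-groups are accessible by iterated Shaneson splittings along a polycyclic filtration, and a careful comparison with $[M, G/\Top]$ after fixing $\ks$ is expected to show that the residual ambiguity, modulo the action of $L_5$, is at most a $\Z/2$.
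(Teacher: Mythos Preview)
Your overall strategy matches the paper's: apply \cref{thm:main-homotopy} (admissibility is automatic since $\pi$ is torsion-free) to reduce to a single homotopy type, then apply \cref{thm:s-cob-classn-3-mfld-group} together with the topological $s$-cobordism theorem (valid since solvable groups are good). This is exactly how the paper proceeds.

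There is one genuine gap. \cref{thm:s-cob-classn-3-mfld-group} gives the bound $2^{\beta_3}$, not $2$; you never explain why $\beta_3 \le 1$. The paper's argument is short: a nontrivial free product contains a nonabelian free subgroup, so a solvable group is freely indecomposable. Hence a torsion-free solvable $3$-manifold group is either $\Z$ or a $PD_3$-group, giving $\beta_3(\pi;\bbF_2) = \dim_{\bbF_2} H_3(\pi;\bbF_2) \le 1$ and therefore $2^{\beta_3} \le 2$. This is the entire content of ``extracting the factor of $2$''; your last paragraph about Shaneson splittings and analysing $L_4$, $L_5$ by hand is unnecessary, since \cref{thm:s-cob-classn-3-mfld-group} is already available as a black box and its proof (via the Farrell--Jones conjecture for $3$-manifold groups and the Atiyah--Hirzebruch spectral sequence) does not use Shaneson splitting.

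Two minor corrections: \cref{thm:s-cob-classn-3-mfld-group} itself does not require $\pi$ to be good (it concerns $s$-cobordism classes, not homeomorphism classes); goodness enters only when you invoke the $5$-dimensional $s$-cobordism theorem. And $\Wh(\pi)=0$ is already established inside the proof of \cref{thm:s-cob-classn-3-mfld-group} (from Farrell--Jones for $3$-manifold groups), so you need not argue it separately.
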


\subsection*{Conventions and notation} All manifolds are assumed to be closed, connected, and based. They are considered as topological manifolds by default. Our $n$-manifolds $X$ are also assumed to be endowed with a local orientation at the basepoint, which determines a (twisted) fundamental class $[X] \in H_n(X;\Z^w)$.
Homotopy equivalences, homeomorphisms, and stable homeomorphisms are assumed to respect the basepoint and the local orientation, and hence in the oriented case to be orientation preserving.

We use the symbol $C_2$ to denote the multiplicative group $\{\pm 1\}$. The symbol $\Z/2$ denotes the additive group $\{0,1\}$. The symbol $\Sigma_2$ denotes the symmetric group on two elements. The symbol $\bbF_2$ denotes the field of two elements. The symbol $\Z^-$ denotes the integers as a $\Z[\Z/2]$-module, where $1\in \Z/2$ acts by multiplication by $-1$.

We will always assume that $\pi$ is a finitely presented group and that $w \colon \pi \to C_2$ is a homomorphism. When we say that a $4$-manifold $M$ has fundamental group $\pi$ and orientation character $w$, we mean that $M$ comes with an identification $\pi_1(M) \xrightarrow{\cong} \pi$, such that $\pi_1(M) \to \pi \xrightarrow{w} C_2$ is the orientation character of $M$.

For a topological space $X$ homotopy equivalent to a CW complex, the Postnikov $2$-type is denoted by $P_2(X)$. We always choose a model CW complex for $P_2(X)$.

For a left $\Z\pi$-module $A$ we write $A^\dagger$ for the left $\Z\pi$-module given by $\Hom_{\Z\pi}(A,\Z\pi)$, turned into a left module using the involution of $\Z\pi$ sending $g \mapsto w(g)g^{-1}$. Similarly, for a map $f\colon A\to B$ of $\Z\pi$-modules, we have the induced left module homomorphism $f^\dagger\colon B^\dagger\to A^\dagger$.
When $G\leq \pi$ is a subgroup, for a $\Z G$-module $A$, we will denote the module $\Hom_{\Z G}(A,\Z G)$ by $A^\star$.

\subsection*{Outline}
\cref{sec:generalities} gives our promised criteria in \cref{thm:homotopyclass}. This can be combined with \cites{Hambleton-Kreck:1988-1,Baues-Bleile} to provide homotopy classifications. This combination is presented as \cref{cor:main-strategy}, which states that homotopy classifications can be proven for $4$-manifolds satisfying conditions \eqref{item:0} -- \eqref{item:3} of \cref{thm:homotopyclass}.

\cref{section:pi_2-(M),sec:H2FA,sec:inj-ev*,sec:injectiveB,section:proving-thm-22(3),section:Property-H} are concerned with showing that these conditions are satisfied by the $4$-manifolds considered in \cref{thm:main-homotopy}. More precisely, in \cref{section:pi_2-(M)} we prove general results about the second homotopy groups of $4$-manifolds and in \cref{sec:H2FA} we show that \cref{thm:homotopyclass}\,\eqref{item:0} holds in our setting. Next, in \cref{sec:inj-ev*} we consider the injectivity of the map $\ev^*$, which is condition~\eqref{item:3} of~\cref{thm:homotopyclass}.
In \cref{sec:injectiveB} we give general criteria under which the map $\B_A \colon \Z^w\otimes_{\Z\pi}\Gamma(A)\to\Her^w(A^\dagger)$ is injective, where $A$ is a $\Z\pi$-module. Condition~\eqref{item:2} of~\cref{thm:homotopyclass} calls for the kernel of $\mB_{H_2(B;\Z\pi)}$ to be contained in the kernel of the map $\varphi_B\colon \Z^w\otimes_{\Z\pi} H_4(B;\Z\pi)\to H_4(B;\Z^w)$. That this holds for the $4$-manifolds in \cref{thm:main-homotopy} is shown in \cref{section:proving-thm-22(3)}. \cref{section:Property-H} introduces a property of groups, that we call {\propH}, short for \emph{$4$th Homology Lifting}. This property is useful to establish \cref{thm:homotopyclass}\,\eqref{item:1}. We show that the groups considered in \cref{thm:main-homotopy} have {\propH}.
With these results in hand, we give the proof of \cref{thm:main-homotopy} in \cref{sec:mainhomotopy-proof}.

In~\cref{sec:oldresults} we present two more applications of \cref{cor:main-strategy}, using it  to recover \cite{HKT09}*{Theorem~5.13} and \cite{Hambleton-Kreck:1988-1}*{Theorem~1.1(i)}.
Then in \cref{sec:homeo} we prove \cref{thm:homeo-class}.

In \cref{section:additional-information} we provide additional information on various classifications of $4$-manifolds with torsion-free $3$-manifold fundamental groups.

\subsection*{Acknowledgements}
We thank the Max Planck Institute for Mathematics in Bonn, the University of Glasgow, and the University of Southampton, where some of this research was carried out. We thank Peter Teichner for helpful conversations.

Preliminary versions of some of the results of this paper appeared in \cite{hillman-COAT}, by JH.  Our collaboration was formed when we learnt that the two teams had independently obtained similar results, and we continued to develop the theory together.

MP was partially supported by EPSRC New Investigator grant EP/T028335/2 and EPSRC New Horizons grant EP/V04821X/2.

\section{General statements on homotopy classification}\label{sec:generalities}

In this section we state and prove \cref{thm:homotopyclass}, which will be our main technical tool going forward. Before doing so we recall the result of  \cites{Hambleton-Kreck:1988-1,Baues-Bleile} and the definition of Whitehead's universal quadratic functor. We end the section by giving a more detailed outline of the upcoming proof of \cref{thm:main-homotopy}.

\subsection{Classification via the fundamental triple}
In general, the homotopy classification of closed $4$-manifolds is given by the \emph{fundamental triple}:  the Postnikov $2$-type $B$, the orientation character $w$, and the image of the fundamental class in $H_4(B;\Z^w)$. This was shown by Baues and Bleile~\cite{Baues-Bleile}*{Corollary~3.2}, extending work of Hambleton and Kreck \cite{Hambleton-Kreck:1988-1}*{Theorem~1.1}, in the following result.

\begin{theorem}[\citelist{\cite{Baues-Bleile}\cite{Hambleton-Kreck:1988-1}}]\label{prop:fundclass}
Let $M$ and $M'$ be closed $4$-manifolds, both with fundamental group $\pi$. Let~$B$ be a connected, $3$-coconnected $CW$ complex, also with fundamental group identified with $\pi$. Fix a homomorphism $w \colon \pi \to C_2$. Assume there are $3$-connected maps $f\colon M\to B$ and $f'\colon M'\to B$ inducing the identity maps on fundamental groups, and such that $w \circ f_*,w \circ f'_*\colon \pi\to C_2$ give the orientation characters of $M$ and $M'$ respectively.  If $f_*([M])=f'_*([M'])\in H_4(B;\Z^w)$, then $M\heq M'$, via a $($basepoint and local orientation preserving$)$ homotopy equivalence that induces the identity on fundamental groups and induces $(f'_*)^{-1} \circ f_* \colon \pi_2(M) \xrightarrow{\cong} \pi_2(M')$.
\end{theorem}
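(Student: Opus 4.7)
My plan is to construct a homotopy equivalence $h \colon M \to M'$ lying over $B$, in the sense that $f' \circ h \simeq f$, by an obstruction-theoretic argument, and to use the equality $f_*[M]=f'_*[M']$ precisely to kill the one potential obstruction. Since $B$ is $3$-coconnected and both $f, f'$ are $3$-equivalences, $B$ is a common model for the Postnikov $2$-type of $M$ and $M'$. Up to homotopy I may replace $M$ and $M'$ by finite $4$-dimensional CW complexes (they are PD$_4$-complexes) and assume $f, f'$ are cellular. By standard cellular approximation, using that $\pi_k(B)=0$ for $k\geq 3$, I can further arrange the $3$-skeleta $M^{(3)}$ and $M'^{(3)}$ to map by homotopy equivalences into a common subcomplex $B^{(3)}\subseteq B$, so that $M$ and $M'$ are presented as $B^{(3)}$ with finitely many $4$-cells attached along maps $\alpha_i, \alpha'_j \colon S^3 \to B^{(3)}$.

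The obstructions to lifting $f \colon M \to B$ to a map $h \colon M \to M'$ with $f' \circ h \simeq f$ live in $H^{k+1}(M;\pi_k(F'))$, where $F'$ denotes the homotopy fibre of $f'$. Since $f'$ is a $3$-equivalence and $\pi_k(B)=0$ for $k \geq 3$, the space $F'$ is $2$-connected and $\pi_3(F') \cong \pi_3(M')$; all lower and higher obstructions vanish cellularly and for dimensional reasons. Hence there is a single obstruction class $o(f) \in H^4(M;\pi_3(M'))$, with twisted coefficients governed by $w$. The key claim, which is the technical heart of the argument in \cite{Baues-Bleile} building on \cite{Hambleton-Kreck:1988-1}, is that under the Poincar\'e duality pairing on $M$ the class $o(f)$ equals the difference $f_*[M] - f'_*[M']$, transported appropriately through the Serre spectral sequence of $F' \to M' \to B$. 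Thus $f_*[M]=f'_*[M']$ forces $o(f)=0$ and a lift $h$ exists.

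It then remains to promote the lift $h$ to a homotopy equivalence. By construction $h_1=\id$ on $\pi_1$ and $h_2=(f'_*)^{-1}\circ f_*$ on $\pi_2$, both isomorphisms by hypothesis; moreover $h_*[M] = [M']$ follows from $f'_*(h_*[M])=f_*[M]=f'_*[M']$ together with the fact that $f'_*$ is injective on the top cell of $M'$ (as the image of $[M']$ generates the $\Z^w$-valued top homology modulo the kernel of $f'_*$, and a cellular argument using the attaching maps $\alpha'_j$ identifies $[M']$ uniquely). A Whitehead-type argument for PD$_4$-complexes, lifting to the universal cover and applying Poincar\'e duality with $\Z\pi$-coefficients, then shows that any map inducing isomorphisms on $\pi_1, \pi_2$ and sending fundamental class to fundamental class is a homotopy equivalence.

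The main obstacle, and the step I expect to require the most care, is the identification $o(f) = f_*[M] - f'_*[M']$ under Poincar\'e duality. One must untangle the interplay of twisted coefficients with the Postnikov fibration and verify that the Serre spectral sequence differential matches the obstruction-theoretic formula; this is exactly where the non-simply-connected case in \cite{Hambleton-Kreck:1988-1} needed a delicate chain-level argument, and where \cite{Baues-Bleile} streamlined the proof via fundamental triples.
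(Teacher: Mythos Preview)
The paper does not prove this theorem; it is quoted from \cite{Baues-Bleile}*{Corollary~3.2} and \cite{Hambleton-Kreck:1988-1}*{Theorem~1.1} without argument. Your outline is in the spirit of those references, but there is a genuine gap in the step where you promote the lift $h$ to a homotopy equivalence.

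You deduce $h_*[M]=[M']$ from $f'_*(h_*[M])=f'_*[M']$ by asserting that $f'_*\colon H_4(M';\Z^w)\to H_4(B;\Z^w)$ is injective. This fails in general. Take $M=M'=S^1\times S^3$ with $\pi=\Z$, $w$ trivial; then $\pi_2=0$, the $2$-type is $B=K(\Z,1)$, and $H_4(B;\Z)=0$, so $f'_*$ is the zero map. The hypothesis $f_*[M]=f'_*[M']=0$ is vacuous, yet among the lifts of $f$ through $f'$ are the maps $(z,x)\mapsto(z,x^d)$ of arbitrary degree $d$, and only $d=\pm 1$ gives a homotopy equivalence. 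So merely producing \emph{a} lift via obstruction theory is not enough; you must single out a degree-one lift, and injectivity of $f'_*$ cannot do this for you. The proofs in \cite{Hambleton-Kreck:1988-1} and \cite{Baues-Bleile} avoid this by working directly with the attaching maps of the top cells: one realises $M$ and $M'$ as a common $3$-complex $K$ with $4$-cells attached along $\alpha,\alpha'\in\pi_3(K)$, and uses Whitehead's sequence $\Gamma(\pi_2)\to\pi_3(K)\to H_3(K;\Z\pi)$ together with $H_4(B;\Z\pi)\cong\Gamma(\pi_2(B))$ to show that $f_*[M]=f'_*[M']$ forces $\alpha$ and $\alpha'$ to lie in the same orbit under self-equivalences of $K$. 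Your obstruction class lives in $H^4(M;\pi_3(M'))\cong\Z^w\otimes_{\Z\pi}\pi_3(M')$, not in $H_4(B;\Z^w)$, and making the comparison precise is exactly this analysis; once done, it also tells you which lift to take.
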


The main difficulty in applying \cref{prop:fundclass} lies in computing the image of the fundamental class. In the case of finite fundamental groups, it is known in some cases that the image of the fundamental class in the homology of the Postnikov $2$-type is determined by the equivariant intersection form. More precisely, by work of Hambleton and Kreck \cite{Hambleton-Kreck:1988-1}*{Theorem~1.1~(i)} together with an improvement by Teichner \cite{teichner-phd} (see \cite{KT}*{Corollary~1.5} for the published version), for finite fundamental groups the quadratic $2$-type of $M$ determines its homotopy type if the abelian group $\Z^w\otimes_{\Z\pi}\Gamma(H_2(M;\Z\pi))$ is torsion-free.
Here $\Gamma$ denotes Whitehead's universal quadratic functor, which we recall presently. For $w=0$, this condition was shown to hold for finite groups that are cyclic \cite{Hambleton-Kreck:1988-1}, abelian with 2 generators \cite{KPR} and dihedral \cite{KNR}. In \cref{sub:criterion} we state and prove \cref{thm:homotopyclass}, which gives a generalisation of the Hambleton--Kreck criterion that can be applied to all fundamental groups.

\subsection{Whitehead's \texorpdfstring{$\Gamma$}{Gamma} groups}\label{sub:gamma-groups-recall}
In \cite{whitehead-certainsequence}, Whitehead defined the universal quadratic functor $\Gamma$.
Here, a function $f\colon A\to B$ between abelian groups is said to be \emph{quadratic} if $f(-a)=f(a)$ for all $a\in A$ and if the function $A\times A\to B$, given by $(a,b)\mapsto f(a+b)-f(a)-f(b)$ is bilinear. The functor $\Gamma$ is the universal quadratic functor in the sense that there exists a quadratic map $\gamma\colon A\to \Gamma(A)$ with the property that for every quadratic map $f\colon A\to B$ there exists a unique linear map $\Gamma(f)\colon \Gamma(A)\to B$ with $f=\Gamma(f)\circ \gamma$.

For a free abelian group $A$, the group $\Gamma(A)$ is isomorphic to the group of fixed points of $A\otimes_{\Z} A$ under the $\Sigma_2$-action permuting the two copies of $A$.
If $A$ is a $\Z\pi$-module then the diagonal action of $\pi$ gives $A\otimes_{\Z}A$
the structure of  a $\Z\pi$-module, and $\Gamma(A)\subseteq A\otimes_{\Z}A$
	is a $\Z\pi$-submodule.
A direct consequence of~\cite{whitehead-certainsequence}*{Sections~10 and 13} is that for a connected, $3$-coconnected CW complex $B$ with fundamental group~$\pi$, we have an isomorphism $\Gamma(H_2(B;\Z\pi)) \cong H_4(B;\Z\pi)$.

\subsection{A criterion for homotopy classification via the quadratic \texorpdfstring{$2$-type}{2-type}}\label{sub:criterion}

Let $M$ be a closed $4$-manifold with fundamental group $\pi$ and orientation character $w\colon \pi\to C_2$. We consider the equivariant intersection form $\lambda_M$ as a Hermitian form on $H^2(M;\Z\pi)$. For a left $\Z\pi$-module $A$ we denote the left $\Z\pi$-module given by $\Hom_{\Z\pi}(A,\Z\pi)$, turned into a left module using the involution of $\Z\pi$ sending $g \mapsto w(g)g^{-1}$, by $A^\dagger$. We denote the group of Hermitian forms on a $\Z\pi$-module $C$  by $\Her^w(C)$ and we denote sesquilinear forms by $\Sesq^w(C)$. When $w$ is the constant map, we may suppress the superscript. When $A$ is free as an abelian group, we have the homomorphism
\begin{align}
\label{eq:B-def}
\mB_A\colon \Z^w\otimes_{\Z\pi}\Gamma(A) &\to \Her^w(A^\dagger) \\
a\otimes b &\mapsto ((f,g)\mapsto \overline{f(a)}g(b)), \nonumber
\end{align}
as in \cite{hillman}*{Section~7}.

\begin{remark}
This is a special case of the following, which defines $\mB_A$ also when $A$ is not necessarily free as an abelian group.
Consider the quadratic function $f \colon A \to \Her^w(A^\dagger)$ given by $a \mapsto ((f,g)\mapsto \overline{f(a)}g(a))$. By the universal property of $\Gamma$, there is an induced map $\Gamma(A) \to \Her^w(A^\dagger)$, and it is straightforward to check that this factors through $\Z^w\otimes_{\Z\pi}\Gamma(A)$, yielding a homomorphism $\Z^w\otimes_{\Z\pi}\Gamma(A) \to \Her^w(A^\dagger)$. We omit the details, since henceforth we will only consider cases where $A$ is free as an abelian group.
\end{remark}

Recall from the previous section that by~\cite{whitehead-certainsequence}*{Sections~10 and 13}, for a connected, $3$-coconnected CW complex $B$ with fundamental group~$\pi$, we have an isomorphism
\[\Upsilon \colon \Z^w \otimes_{\Z\pi} H_4(B;\Z\pi)  \xrightarrow{\cong}  \Z^w \otimes_{\Z\pi} \Gamma(H_2(B;\Z\pi)) .\]
For any connected CW complex $B$ with fundamental group $\pi$, let
\[\ev^*\colon \Her^w(H_2(B;\Z\pi)^\dagger)\to \Her^w(H^2(B;\Z\pi))\]
denote the homomorphism induced by the evaluation map $\ev\colon H^2(B;\Z\pi)\to H_2(B;\Z\pi)^\dagger$ taking $\alpha \mapsto (x\mapsto \alpha \cap x)$. Let
\[\varphi_B \colon \Z^w\otimes_{\Z\pi}H_4(B;\Z\pi)\to H_4(B;\Z^w)\]
denote the homomorphism given by reduction of coefficients.
Now we state our main technical theorem.

\begin{theorem}\label{thm:homotopyclass}
Let $M$ and $M'$ be closed $4$-manifolds with fundamental group $\pi$ and orientation character $w \colon \pi \to C_2$. Let $B$ be a connected, $3$-coconnected $CW$ complex with fundamental group also identified with $\pi$, and  let $f\colon M\to B$ and $f'\colon M' \to B$ be maps that induce the identity on fundamental groups. Assume further that, for some $k\in \Z$, the following holds.
\begin{enumerate}[leftmargin=1cm,font=\normalfont]
\item\label{item:0} The module $H_2(B;\Z\pi)$ is free as an abelian group.
\item\label{item:1} The difference $k(f_*[M]-(f')_*[M'])\in H_4(B;\Z^w)$ lies in the image of the map $\varphi_B \colon \Z^w\otimes_{\Z\pi} H_4(B;\Z\pi) \to H_4(B;\Z^w)$.
\item\label{item:2} The kernel of $\mB_{H_2(B;\Z\pi)} \circ \Upsilon$ is contained in the kernel of $\varphi_B \colon \Z^w\otimes_{\Z\pi}H_4(B;\Z\pi)\to H_4(B;\Z^w)$.
\item\label{item:3} The map $\ev^*\colon\Her^w(H_2(B;\Z\pi)^\dagger)\to \Her^w(H^2(B;\Z\pi))$ is injective.
\end{enumerate}
Then $kf_*[M]=kf'_*[M']\in H_4(B;\Z^w)$ if and only if $f_*\lambda_M=f'_*\lambda_{M'}\in \Her^w(H^2(B;\Z\pi))$.
\end{theorem}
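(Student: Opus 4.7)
The plan centres on the composite
\[
\Psi := \ev^* \circ \mB_{H_2(B;\Z\pi)} \circ \Upsilon \colon \Z^w \otimes_{\Z\pi} H_4(B;\Z\pi) \longrightarrow \Her^w(H^2(B;\Z\pi)),
\]
which sits alongside $\varphi_B$. By the injectivity of $\ev^*$ from~\eqref{item:3} one has $\ker\Psi = \ker(\mB_{H_2(B;\Z\pi)} \circ \Upsilon)$, and~\eqref{item:2} then yields the key algebraic containment $\ker\Psi \subseteq \ker\varphi_B$.

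The geometric heart of the argument is a chain-level identity: for any $c \in \Z^w \otimes_{\Z\pi} H_4(B;\Z\pi)$ with $\varphi_B(c) = k(f_*[M] - f'_*[M'])$, which exists by~\eqref{item:1}, one has
\[
\Psi(c) = f_*\lambda_M - f'_*\lambda_{M'} \in \Her^w(H^2(B;\Z\pi)).
\]
I would prove this by choosing cellular Poincar\'e cycles $\mu \in C_4(M;\Z^w)$ and $\mu' \in C_4(M';\Z^w)$ representing $[M]$ and $[M']$, pushing them forward to $C_4(B;\Z^w)$ along $f$ and $f'$, and using~\eqref{item:1} to lift $k(f_*\mu - f'_*\mu')$ to a $\Z\pi$-cycle whose homology class is $c$. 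Condition~\eqref{item:0} makes Whitehead's isomorphism $\Upsilon$ explicit as an identification of $\Gamma(H_2(B;\Z\pi))$ with the $\Sigma_2$-fixed part of $H_2(B;\Z\pi) \otimes_\Z H_2(B;\Z\pi)$, so that $\mB_{H_2(B;\Z\pi)}$ evaluates as the quadratic self-pairing $a \mapsto ((\alpha,\beta) \mapsto \overline{\alpha(a)}\beta(a))$; combined with $\ev^*$ and the naturality of the cap product, this unwinds to the standard formula $\lambda_N(\alpha,\beta) = (\alpha \cup \beta) \cap [N]$ for the equivariant intersection pairing on $M$ and $M'$, pushed forward to $B$. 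The construction should be arranged so that the $\Psi$-value does not depend on the choice of lift, and so that the trivial lift $c = 0$ is valid when $k(f_*[M] - f'_*[M']) = 0$.

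Granted the identity, both directions of the equivalence follow. If $f_*\lambda_M = f'_*\lambda_{M'}$, then $\Psi(c) = 0$, so $c \in \ker\Psi \subseteq \ker\varphi_B$, whence $kf_*[M] = kf'_*[M']$. Conversely, if $kf_*[M] = kf'_*[M']$, the trivial lift $c = 0$ satisfies $\Psi(0) = 0$, and the identity applied to this lift forces $f_*\lambda_M - f'_*\lambda_{M'} = 0$. The principal obstacle is the chain-level identity itself --- tracking signs and the $w$-twist through the Poincar\'e-duality cap products, Whitehead's $\Gamma$-construction, and the evaluation map so that the formal identity holds for every lift. Freeness of $H_2(B;\Z\pi)$ as an abelian group, i.e.\ condition~\eqref{item:0}, is indispensable, because only in this setting do the pair $(\mB_{H_2(B;\Z\pi)}, \Upsilon)$ admit the explicit symmetric-tensor description needed to compute $\Psi$ concretely.
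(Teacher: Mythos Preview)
Your approach is essentially the paper's, but there is a slip in the stated identity and a resulting gap in the only-if direction.

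The paper packages your chain-level computation into a commutative square
\[
\begin{tikzcd}
\Z^w\otimes_{\Z\pi}H_4(B;\Z\pi) \ar[r,"\varphi_B"]\ar[d,"\mB_{H_2(B;\Z\pi)}\circ\Upsilon"'] & H_4(B;\Z^w)\ar[d,"\Theta_B"]\\
\Her^w(H_2(B;\Z\pi)^\dagger)\ar[r,"\ev^*"] & \Her^w(H^2(B;\Z\pi))
\end{tikzcd}
\]
with $\Theta_B(x)(\alpha,\beta) := \langle\beta,\alpha\cap x\rangle$. Commutativity says precisely $\Psi = \Theta_B \circ \varphi_B$, so the $\Psi$-value of a lift is automatically independent of the lift --- there is nothing to ``arrange''. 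Since $\Theta_B(f_*[M]) = f_*\lambda_M$ and $\Theta_B(f'_*[M']) = f'_*\lambda_{M'}$ by naturality of the cap product, the identity you would actually obtain from the chain-level calculation is
\[
\Psi(c) = k\bigl(f_*\lambda_M - f'_*\lambda_{M'}\bigr),
\]
with the factor of $k$ you dropped. Your if direction is unaffected (the right-hand side is zero anyway), but your only-if argument now yields only $k(f_*\lambda_M - f'_*\lambda_{M'}) = 0$. The paper closes the gap by observing that $\Her^w(H^2(B;\Z\pi))$ is torsion-free as an abelian group, since its values lie in $\Z\pi$, so one may cancel $k$. Without this torsion-freeness step your only-if direction is incomplete; and of course if your stated identity (without $k$) were provable, it would be false for any lift $c$ differing from another by an element of $\ker\varphi_B \setminus \ker\Psi$, which \eqref{item:2} does not exclude.
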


\begin{remark}
    We will only apply \cref{thm:homotopyclass} (in \cref{sec:oldresults,sec:mainhomotopy-proof}) in the case $k=1$. We prove the result for general $k$ in case this version is useful in the future.
\end{remark}

\begin{remark}
We will see in the upcoming proof of \cref{thm:homotopyclass} that rather than conditions \eqref{item:2} and \eqref{item:3} above, we only need the kernel of the composition  $\ev^* \circ \mB_{H_2(B;\Z\pi)}\circ \Upsilon$ to be contained in the kernel of $\varphi_B$. We prefer the current formulation since in our applications of this result 
we will verify the conditions individually. As we will show in \cref{sec:inj-ev*}, there are several general methods to conclude the injectivity of $\ev^*$, and in the setting of \cref{thm:main-homotopy} the map $\ev^*$ is in fact an isomorphism.
\end{remark}	

\begin{proof}[Proof of \cref{thm:homotopyclass}]		
Consider the diagram
\begin{equation}\label{eq:main-square}
\begin{tikzcd}
\Z^w\otimes_{\Z\pi}H_4(B;\Z\pi)	\arrow[r,"\varphi_B"]\arrow[d, "\mB_{H_2(B;\Z\pi)} \circ \Upsilon"]	& H_4(B;\Z^w)\arrow[d,"\Theta_B"]\\
\Her^w(H_2(B;\Z\pi)^\dagger)\arrow[r,"\ev^*"]	&\Her^w(H^2(B;\Z\pi)).
\end{tikzcd}
\end{equation}		
Here we used \eqref{item:0} to define the map $\mB_{H_2(B;\Z\pi)}$.
The map $\Theta_B$ is defined using the cap product as
\[
x \mapsto \big( (\alpha,\beta) \mapsto \langle \beta,\alpha\cap x\rangle \big).
\]
Commutativity of the diagram follows as in the proof of \cite{hillman}*{Lemma~10}. Since $\lambda_M$ and $\lambda_{M'}$ are induced by the cap product with the corresponding fundamental class, it follows from naturality of the cap product that $k(f_*[M]-f'_*[M'])\in H_4(B;\Z^w)$ maps to $k(f_*\lambda_M-f'_*\lambda_{M'})\in \Her^w(H^2(B;\Z\pi))$ under $\Theta_B$.

The only if direction can now be proven without using \eqref{item:1}, \eqref{item:2}, or \eqref{item:3}. Suppose that \[kf_*[M]=kf'_*[M']\in H_4(B;\Z^w).\] Then $k(f_*[M] -f'_*[M'])=0$ and hence $k(f_*\lambda_M-f'_*\lambda_{M'}) =0 \in \Her^w(H^2(B;\Z\pi))$.
However, $\Her^w(H^2(B;\Z\pi))$ is torsion-free since $\Z\pi$ is torsion-free, and so \[f_*\lambda_M=f'_*\lambda_{M'}\in \Her^w(H^2(B;\Z\pi)).\]

To prove the if direction, we suppose that $f_*\lambda_M=f'_*\lambda_{M'}\in \Her^w(H^2(B;\Z\pi))$.
By~\eqref{item:1}, there is some $x\in\Z^w\otimes_{\Z\pi}H_4(B;\Z\pi)$ such that \[\varphi_B(x)=k(f_*[M]-(f')_*[M'])\in H_4(B;\Z^w).\]
Then
\[\Theta_B \circ \varphi_B(x) = \Theta_B(k(f_*[M]-(f')_*[M'])) = k(f_*\lambda_M-f'_*\lambda_{M'})=0,\] and so
by commutativity \[\ev^*\circ \mB_{H_2(B;\Z\pi)} \circ \Upsilon(x)=0.\] By~\eqref{item:3} $\ev^*$ is injective, so $x \in \ker (\mB_{H_2(B;\Z\pi)} \circ \Upsilon)$. By \eqref{item:2}, $\ker (\mB_{H_2(B;\Z\pi)} \circ \Upsilon) \subseteq \ker (\varphi_B)$, and thus $k(f_*[M]-(f')_*[M']) = \varphi_B(x)=0$. It follows that $kf_*[M]=kf'_*[M']\in H_4(B;\Z^w)$ as desired.
\end{proof}

We will frequently apply the combination of \cref{prop:fundclass,thm:homotopyclass} to obtain our homotopy classifications. To make this easier, we give the statement that we will use in the following corollary.

\begin{corollary}\label{cor:main-strategy}
Let $M$ and $M'$ be closed $4$-manifolds with fundamental group $\pi$ and orientation character $w \colon \pi \to C_2$. Let $B$ be a connected, $3$-coconnected $CW$ complex with fundamental group also identified with $\pi$.  Let $f\colon M\to B$ and $f'\colon M'\to B$ be $3$-connected maps inducing the identity on fundamental groups. Assume that, for $k=1$, the conditions \eqref{item:0} -- \eqref{item:3} of \cref{thm:homotopyclass} hold. Let $g_i := (f')_*^{-1} \circ f_* \colon \pi_i(M) \to \pi_i(M')$, and suppose that $(g_1,g_2)$ induces an isomorphism between the quadratic $2$-type of $M$ and that of $M'$. Then there exists a $($basepoint and local orientation preserving$)$ homotopy equivalence $h \colon M \xrightarrow{\simeq} M'$ such that $h_*=g_i \colon \pi_i(M) \to \pi_i(M')$ for $i=1,2$.
\end{corollary}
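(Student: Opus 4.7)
The strategy is to combine \cref{thm:homotopyclass} with \cref{prop:fundclass}. By \cref{prop:fundclass} it suffices to establish the equality $f_*[M]=f'_*[M']\in H_4(B;\Z^w)$, since the resulting homotopy equivalence will automatically induce the identity on $\pi_1$ (which is $g_1$) and $(f'_*)^{-1}\circ f_* = g_2$ on $\pi_2$. By \cref{thm:homotopyclass} applied with $k=1$ (whose hypotheses \eqref{item:0}--\eqref{item:3} are assumed to hold), this in turn reduces to showing the equality $f_*\lambda_M=f'_*\lambda_{M'}$ in $\Her^w(H^2(B;\Z\pi))$. This is the only place where genuine work is required.

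Since $f$ and $f'$ are 3-connected and $B$ is 3-coconnected, the pushforwards $f_*,f'_*$ are isomorphisms on $\pi_2$ and on $H_2(\cdot;\Z\pi)$ via Hurewicz, and the pullbacks $f^*,(f')^*$ are isomorphisms on $H^2(\cdot;\Z\pi)$. By naturality of the cap product,
\[
f_*\lambda_M(\alpha,\beta)=\langle\beta,\alpha\cap f_*[M]\rangle=\langle f^*\beta,f^*\alpha\cap[M]\rangle=\lambda_M(f^*\alpha,f^*\beta)
\]
for all $\alpha,\beta\in H^2(B;\Z\pi)$, and analogously for $f'$. Under the Poincaré duality and Hurewicz identifications of the intersection form on $H^2(M;\Z\pi)$ with the equivariant intersection form on $\pi_2(M)$ (and similarly for $M'$), the composition $((f')^*)^{-1}\circ f^*$ is intertwined with $g_2=(f'_*)^{-1}\circ f_*$. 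Since $(g_1,g_2)$ is by hypothesis an isomorphism of quadratic 2-types, $g_2$ is an isometry of intersection forms, and therefore $f_*\lambda_M=f'_*\lambda_{M'}$ in $\Her^w(H^2(B;\Z\pi))$, as required.

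Putting this together, \cref{thm:homotopyclass} delivers $f_*[M]=f'_*[M']\in H_4(B;\Z^w)$, and then \cref{prop:fundclass} produces a basepoint and local orientation preserving homotopy equivalence $h\colon M\simeq M'$ with $h_*=g_1$ on $\pi_1$ and $h_*=g_2$ on $\pi_2$. The main piece of real work is the form-equality argument in the second paragraph; this is a naturality check between Hurewicz, Poincaré duality, the cap product, and the pushforwards $f_*,f'_*$, and is the only step where I foresee needing to keep careful track of conventions.
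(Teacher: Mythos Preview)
Your proposal is correct and follows exactly the same route as the paper: reduce to $f_*[M]=f'_*[M']$ via \cref{prop:fundclass}, reduce that to $f_*\lambda_M=f'_*\lambda_{M'}$ via \cref{thm:homotopyclass}, and obtain the form equality from the assumed isometry $g_2$. The paper states the last step in a single sentence, while you spell out the naturality check; this is a helpful elaboration but not a different argument.
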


\begin{proof}
By assumption, conditions \eqref{item:0} -- \eqref{item:3} of \cref{thm:homotopyclass} hold, so we can apply that theorem, for $k=1$. The assumption that $(g_1,g_2)$ induces an isomorphism between the quadratic $2$-type of $M$ and that of $M'$ implies that $f_*\lambda_M=f'_*\lambda_{M'}\in \Her^w(H^2(B;\Z\pi))$. Hence  \cref{thm:homotopyclass} implies that $f_*[M]=f'_*[M']\in H_4(B;\Z^w)$. Now \cref{prop:fundclass} implies that the desired homotopy equivalence $h \colon M \to M'$ exists.
\end{proof}

\subsection{Outline of the proof of \texorpdfstring{\cref{thm:main-homotopy}}{Theorem~1.1}}
We will use \cref{cor:main-strategy} to prove \cref{thm:main-homotopy}, with the culmination of the proof presented in \cref{sec:mainhomotopy-proof}. That is, we will show that conditions \eqref{item:0} -- \eqref{item:3} of \cref{thm:homotopyclass} hold for closed $4$-manifolds with $3$-manifold fundamental group $\pi$ and orientation character $w$, such that $(\pi,w)$ is admissible.
We sketch the proof for each condition individually next.

\begin{enumerate}
    \item In \cref{sec:H2FA} we introduce a property of groups which we call \emph{\HtFA} (\cref{def:H2FA}). By definition if a $4$-manifold $M$ has \HtFA\ fundamental group,  then $\pi_2(M)$ is free as an abelian group. We will show in \cref{prop:H2FA-3groups} that $3$-manifold groups are \HtFA, which shows that \cref{thm:homotopyclass}\,\eqref{item:0} holds in our setting.
    \item In \cref{section:Property-H} we introduce the \emph{$4$th homology lifting property} of a pair $(\pi,w)$, which we abbreviate to {\propH}, where $\pi$ is a group and $w\colon \pi\to C_2$ is a homomorphism. Roughly, this property gives a criterion to decide whether an element in the codomain of $\varphi_X \colon \Z^w\otimes_{\Z\pi} H_4(X;\Z^w)\to H_4(X;\Z^w)$ lies in the image of $\varphi_X$, for an arbitrary CW complex $X$ with $\pi_1(X)=\pi$ and homomorphism $w\colon \pi\to C_2$. This is helpful for proving \cref{thm:homotopyclass}\,\eqref{item:1} since we can apply the criterion to the element $f_*[M]-(f')_*[M']$, using the notation of \cref{thm:homotopyclass}. We will show in \cref{cor:propH-3mfd} that $(\pi,w)$ has {\propH} when $\pi$ is as in \cref{thm:main-homotopy}, enabling the application of the criterion in our setting. For the proof we first address the cases of finite groups, $PD_3$-groups, the infinite cyclic group, and the group $\Z\times \Z/2$ individually. In these cases the primary tool is the Leray--Serre spectral sequence for the fibration $\widetilde{B}\to B\to BH$, where $B$ is a connected, 2-coconnected CW complex and $H=\pi_1(B)$. With these individual cases established, the final result is proven by considering how {\propH} behaves under free products.
    \item In \cref{sec:injectiveB} we give general criteria under which the map $\B_A \colon \Z^w\otimes_{\Z\pi}\Gamma(A)\to\Her^w(A^\dagger)$ is injective, where $A$ is a $\Z\pi$-module.
    When $\mB_{H_2(B;\Z\pi)}$ is injective, for $B$ and $\pi$ as in \cref{thm:homotopyclass}, since $\Upsilon$ is an isomorphism it follows that condition~\eqref{item:2} is satisfied. However, in our setting, the map $\mB_{H_2(B;\Z\pi)}$ is not necessarily injective. Therefore in \cref{section:proving-thm-22(3)} we specialise to the case of admissible $3$-manifold groups and orientation characters. Here we use our previous results on the second homotopy groups from \cref{section:pi_2-(M)} and once again use the Leray--Serre spectral sequence. Special care is needed for the case of $\Z\times \Z/2$ fundamental group. Finally we show in \cref{cor:kernelB-equals-kernelphi} that \cref{thm:homotopyclass}\,\eqref{item:2} holds in our setting.
    \item We will show in \cref{prop:ev-inj} that for $4$-manifolds with $3$-manifold fundamental group the map $\ev^*$ is in fact an isomorphism, showing that \cref{thm:homotopyclass}\,\eqref{item:3} holds. The proof of \cref{prop:ev-inj} is rather general, depending on the homology and cohomology groups of~$\pi$ with $\Z\pi$ coefficients, and consisting of an analysis of certain exact sequences arising from the universal coefficient spectral sequence.
\end{enumerate}

We end this section by remarking that while \cref{sec:H2FA,section:proving-thm-22(3)} are rather specific to our setting, \cref{section:pi_2-(M),sec:inj-ev*,sec:injectiveB,section:Property-H} contain a number of general results that are likely to be useful to those interested in proving homotopy classification results for other fundamental groups.

\section{The second homotopy group
for \texorpdfstring{$4$-manifolds with $3$-manifold}{4-manifolds with 3-manifold} fundamental group}\label{section:pi_2-(M)}

In this section we study $\pi_2(M)\cong H_2(M;\Z\pi)$ for $4$-manifolds $M$ whose fundamental group $\pi$ is a $3$-manifold group.
In \cref{sub:general-result} we recall a general result on the stable isomorphism class of $\pi_2(M)$, without restriction on the fundamental group.
In \cref{subsection:irreducible-3-mfld-gps} we consider cases for which  $\pi_1(M)$ is the fundamental group of an irreducible $3$-manifold.
In \cref{subsection-pi_2-where-pi1-is-free-product}, we investigate~$\pi_2(M)$ when $\pi_1(M)$ is a $3$-manifold group that is an admissible nontrivial free product.
Finally in~\cref{sub:second-hom-gp-determines-image-fund-class} we prove that the stable isomorphism class of $\pi_2(M)$ determines the image of the fundamental class of $M$ in $H_4(\pi;\Z^w)$, where $\pi=\pi_1(M)$.

We will express the stable isomorphism classes of the second homotopy groups in terms of twisted augmentation ideals, which we define next.

\begin{definition}\label{defn:twisted-augmentation-ideals}
Let $v\colon \pi\to C_2$ be a homomorphism. Let $I\pi^v \unlhd  \Z\pi$ denote the \emph{twisted augmentation ideal}, i.e.~the kernel of the twisted augmentation map  \[\varepsilon_v\colon \Z\pi\to \Z^v\] sending $g\in \pi$ to $v(g)$. When $v$ is the constant map to $1\in C_2$, we use the symbol $I\pi$ for the (untwisted) augmentation ideal.
\end{definition}

We will use the following elementary lemma several times, so we record it here.

\begin{lemma}\label{lemma:tensoring-with-Zpi-w-gives-iso}
  Let $\pi$ be a group and let $w \colon \pi \to C_2$ be a homomorphism.
  The map
\begin{align}\label{eqn:map-for-changing-w}
 \omega \colon \Z\pi &\xrightarrow{} \Z\pi;\;\;\;
 \sum_{g\in \pi} n_g g  \mapsto  \sum_{g\in \pi}  w(g) n_g g.
\end{align}
 induces a left $\Z\pi$-module isomorphism $\omega\colon\Z\pi^w \xrightarrow{\cong} \Z\pi$.
\end{lemma}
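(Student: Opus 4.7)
The plan is to verify first that the self-map $\omega$ of the underlying abelian group $\Z\pi$ is a bijection, and then that after equipping the source with the twisted $\Z\pi$-module structure $\Z\pi^w$ (whose left action is $h\cdot x := w(h)\,hx$, with juxtaposition denoting the usual product in $\Z\pi$) and the target with the standard left regular module structure, the map $\omega$ becomes $\Z\pi$-linear.

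For the bijection, I would just observe that $\omega$ is an involution on the abelian group $\Z\pi$: applying it twice to a basis element $g$ gives $w(g)^2 g = g$, since $w$ takes values in $C_2=\{\pm 1\}$. Hence $\omega\circ\omega=\id$ and $\omega$ is a bijection with itself as inverse.

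For $\Z\pi$-linearity, additivity is immediate from the formula, so it suffices to check compatibility with the action of a single group element $h\in\pi$ on a basis element $g\in\pi$. On the source side, $h\cdot g = w(h)\,hg$ in $\Z\pi^w$, so
\[\omega(h\cdot g) = \omega\bigl(w(h)\,hg\bigr) = w(h)\,w(hg)\,hg = w(h)^2\,w(g)\,hg = w(g)\,hg.\]
On the target side, $h\cdot \omega(g) = h\cdot w(g)g = w(g)\,hg$. These agree because $w$ is a homomorphism and $w(h)^2=1$. The only real input is that $w$ is a homomorphism into $C_2$, and there is no substantive obstacle; this is a direct unwinding of the definitions, which is presumably why the lemma is recorded once and then used freely in later arguments.
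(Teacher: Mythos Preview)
Your proof is correct. The paper records this as an elementary lemma without proof, and your direct verification---checking that $\omega$ is an involution on the underlying abelian group and then that it intertwines the twisted action $h\cdot_w x = w(h)hx$ on the source with the standard action on the target---is exactly the natural argument.
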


\subsection{A general result on \texorpdfstring{$\pi_2(M)$}{pi2(M)}}\label{sub:general-result}

We start this section by recording the following fact regarding the stable isomorphism types of second homotopy groups of $4$-manifolds. Recall that two $4$-manifolds are said to be \emph{$\CP^2$-stably homeomorphic} if they become homeomorphic after connected sum with copies of $\CP^2$ and $\ol{\CP^2}$.

\begin{lemma}\label{lem:pi2-stable}
Let $M$ and $M'$ be closed $4$-manifolds with fundamental group $\pi$ and orientation character $w \colon \pi \to C_2$, along with classifying maps $c_M\colon M\to B\pi$ and $c_{M'}\colon M'\to B\pi$ $($not necessarily inducing the identity on $\pi_1)$. If $(c_M)_*([M])=(c_{M'})_*([M'])\in H_4(\pi;\Z^w)/\pm \Aut(\pi)$, then $\pi_2(M)$ and $\pi_2(M')$ are stably isomorphic as $\Z\pi$-modules.
\end{lemma}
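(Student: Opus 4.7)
The plan is to deduce the lemma from a $\CP^2$-stable homeomorphism classification theorem. Namely, by results of Kreck together with Kasprowski--Powell--Teichner, for fixed fundamental group $\pi$ and orientation character $w$, the $\CP^2$-stable homeomorphism class (allowing arbitrary self-homeomorphisms of $B\pi$, rather than fixing the identification of fundamental groups) of a closed $4$-manifold $M$ is a complete invariant for the image of the fundamental class in $H_4(\pi;\Z^w)/\pm\Aut(\pi)$. Under our hypothesis, this produces integers $r,s,r',s' \geq 0$ and a homeomorphism
\[
M \# r\CP^2 \# s\,\ol{\CP}^2 \;\homeo\; M' \# r'\CP^2 \# s'\,\ol{\CP}^2 .
\]

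Next I would use the standard calculation that connected sum with $\CP^2$ (or $\ol{\CP}^2$) stabilises $\pi_2$ by a free $\Z\pi$-summand: for any closed $4$-manifold $N$ with fundamental group $\pi$, there is a $\Z\pi$-module isomorphism
\[
\pi_2(N \# \CP^2) \;\cong\; \pi_2(N) \oplus \Z\pi ,
\]
and likewise for $\ol{\CP}^2$. One can see this either by writing $N\#\CP^2$ as $N$ with a $2$-cell attached along a null-homotopic loop followed by a $4$-cell, and reading off the cellular chain complex of the universal cover, or by applying the Mayer--Vietoris sequence to the connected sum decomposition of the universal cover. Iterating gives
\[
\pi_2(M) \oplus \Z\pi^{\,r+s} \;\cong\; \pi_2(M') \oplus \Z\pi^{\,r'+s'}
\]
as $\Z\pi$-modules, which is precisely the claimed stable isomorphism.

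The only mildly subtle point is bookkeeping of the $\pm\Aut(\pi)$ ambiguity: the classifying map $c_M$ is only defined up to composition with a self-homotopy equivalence of $B\pi$, corresponding to the action of $\Aut(\pi)$ on $H_4(\pi;\Z^w)$, and the sign accounts for the choice of local orientation. These ambiguities are exactly what the $\CP^2$-stable classification theorem quotients out by, so they match the hypothesis of the lemma. No real obstacle arises; the proof amounts to citing the $\CP^2$-stable classification theorem and the $\pi_2$-stabilisation formula for connected sum with $\CP^2$.
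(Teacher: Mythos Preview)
Your approach is essentially the same as the paper's, but there is a genuine gap. The $\CP^2$-stable classification you invoke is not quite as you state it: the image of $[M]$ in $H_4(\pi;\Z^w)/\pm\Aut(\pi)$ is \emph{not} a complete invariant of the $\CP^2$-stable homeomorphism class. One also needs the Kirby--Siebenmann invariant. So from the hypothesis $(c_M)_*[M]=(c_{M'})_*[M']$ alone you cannot conclude that $M$ and $M'$ are $\CP^2$-stably homeomorphic; if $\ks(M)\neq\ks(M')$ they are not.

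The paper fixes this by first taking a connected sum of one of the manifolds with the fake projective plane $\star\CP^2$, which flips the Kirby--Siebenmann invariant while leaving the image of the fundamental class in $H_4(\pi;\Z^w)$ unchanged. After this, the two sides do become $\CP^2$-stably homeomorphic, and since $\pi_2(N\#\star\CP^2)\cong\pi_2(N)\oplus\Z\pi$ as well, your stabilisation argument goes through verbatim. So the missing ingredient is exactly this $\star\CP^2$ step; once you insert it, your proof matches the paper's.
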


\begin{proof}
By~\cite{kreck} (see also \cite{KPT}*{Theorem~1.2, Section~1.5}), in the case that $(c_M)_*[M]=(c_{M'})_*[M']\in H_4(\pi;\Z^w)/\pm \Aut(\pi)$, the manifolds $M$ and $M'$ are $\CP^2$-stably homeomorphic if and only if they have equal Kirby--Siebenmann invariants. In other words, possibly after connected sum with a copy of $\star\CP^2$, they become $\CP^2$-stably homeomorphic. 
Connected sum with $\CP^2$, $\ol{\CP^2}$, or $\star\CP^2$ changes the second homotopy group by direct sum with $\Z\pi$. This completes the proof.
\end{proof}

\subsection{Irreducible \texorpdfstring{$3$-manifold}{3-manifold} groups}\label{subsection:irreducible-3-mfld-gps}

In this section we study $\pi_2(M)$ for $4$-manifolds $M$ such that~$\pi_1(M)$ is
either $\Z\times \Z/2$, infinite cyclic, a finite $3$-manifold group, or a $PD_3$-group. We will consider these cases individually.

In the following special case of \citelist{\cite{Hambleton-Kreck:1988-1}*{Proposition~2.4}\cite{Ha09}*{Theorem 4.2}\cite{KPT}*{Proposition 1.10}},
we use the fact that $H_4(\pi;\mathbb{Z})=0$ for the group $\pi$ in the statement.  When $\pi$ is a torsion-free $3$-manifold group this follows from the fact that the cohomological dimension $\operatorname{cd}\pi\leq3$.

\begin{lemma}[\cites{Hambleton-Kreck:1988-1,Ha09,KPT}]\label{lem:pi-2-stably-iso-ker-coker}
Let $M$ be a closed $4$-manifold such that $\pi=\pi_1(M)$ is cyclic or a torsion-free $3$-manifold group and let $w\colon \pi\to C_2$ be the orientation character of $M$. Assume that $w$ is trivial if $\pi$ is finite cyclic. Let $F\xrightarrow{d_2} G\onto I\pi$ be a presentation for the augmentation ideal $I\pi$ and write $d_2^\dagger \colon G^\dagger \xrightarrow{} F^\dagger$ for the dual map, where $F$ and $G$ are free $\Z\pi$-modules. Recall that here we use the involution $g\mapsto w(g)g^{-1}$ to turn $G^\dagger$ and $F^\dagger$ into left modules.
Then $\pi_2(M)$ is stably isomorphic to $\ker(d_2)\oplus\coker(d_2^\dagger)$.
\end{lemma}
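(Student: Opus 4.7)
The plan is to put the cellular chain complex $C_* := C_*(\widetilde M;\Z)$ of the universal cover of $M$ into a specific symmetric normal form using Poincar\'e duality and the given presentation of $I\pi$, and then read off $\pi_2(M) = H_2(C_*)$.

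First, choose a handle decomposition of $M$ with a single $0$-handle and a single $4$-handle, so that $C_*$ is a complex
\[
0 \to \Z\pi \xrightarrow{\partial_4} C_3 \xrightarrow{\partial_3} C_2 \xrightarrow{\partial_2} C_1 \xrightarrow{\partial_1} \Z\pi \to 0
\]
of free $\Z\pi$-modules. Since $H_0(\widetilde M) = \Z$ and $H_1(\widetilde M) = 0$, the tail $C_2 \to C_1 \to \Z\pi$ is a partial free resolution of $\Z$, which in particular yields a free presentation $C_2 \twoheadrightarrow \ker(\partial_1)$ of $I\pi$. Any two free presentations of a module agree up to stabilization (Schanuel's lemma), and such stabilization can be realized geometrically by introducing cancelling $(1,2)$- and $(2,3)$-handle pairs, which does not alter $\pi_2(M)$ up to stable isomorphism. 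Hence we may assume $C_1 = G$ with $\partial_1\colon G \twoheadrightarrow I\pi \hookrightarrow \Z\pi$, and $C_2 = F \oplus F'$ with $\partial_2 = (d_2,\,0)$ for some free $\Z\pi$-module $F'$.

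Second, invoke equivariant Poincar\'e--Lefschetz duality, which for the closed $4$-manifold $(M,w)$ provides a chain homotopy equivalence $\cap[M]\colon C^{4-*} \xrightarrow{\simeq} C_*$ of $\Z\pi$-chain complexes, where $C^k := \Hom_{\Z\pi}(C_k,\Z\pi)^\dagger$. Under this equivalence, $\partial_4$ corresponds (stably) to $\partial_1^\dagger$ and $\partial_3$ to $\partial_2^\dagger$. Using the cohomological hypotheses on $\pi$---namely $\mathrm{cd}\,\pi \le 3$ for torsion-free $3$-manifold groups, and $4$-periodic cohomology for finite cyclic $\pi$ with trivial $w$---a further stable modification lets us take $C_3 = G^\dagger$ with $\partial_4 = \partial_1^\dagger\colon \Z\pi \to G^\dagger$, and identify $F'$ with $F^\dagger$ so that $\partial_3 = (0,\,d_2^\dagger)\colon G^\dagger \to F\oplus F^\dagger$. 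With $C_*$ in this normal form the computation of $H_2$ is immediate: $\ker(\partial_2) = \ker(d_2)\oplus F^\dagger$ and $\mathrm{im}(\partial_3) = 0\oplus \mathrm{im}(d_2^\dagger)$, so
\[
\pi_2(M) = H_2(C_*) \;\cong\; \ker(d_2)\,\oplus\, F^\dagger/\mathrm{im}(d_2^\dagger) \;=\; \ker(d_2) \oplus \coker(d_2^\dagger)
\]
stably, as claimed.

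The main obstacle is the second step above. Poincar\'e duality on its own only gives a chain homotopy equivalence, and hence stable isomorphism between certain sums of odd- or even-degree modules, rather than an explicit chain-level identification of $C_3$ with $G^\dagger$, of $F'$ with $F^\dagger$, and of $\partial_3$ with $(0,d_2^\dagger)$. Promoting the abstract duality to this concrete compatibility with the chosen presentation of $I\pi$ is precisely where the cohomological hypotheses on $\pi$ enter: they ensure the relevant Ext/obstruction classes vanish, so that the required stable matchings exist. Once these identifications are in place, the algebraic computation of $H_2$ in the previous paragraph finishes the proof.
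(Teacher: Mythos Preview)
Your outline has a real gap at exactly the point you flag as the ``main obstacle.''

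First, a minor issue: topological $4$-manifolds need not admit handle decompositions (the $E_8$ manifold is the standard counterexample), so you should work with a finite CW model for $M$ instead.

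The substantive gap is your second step, which is asserted rather than proved. Poincar\'e duality supplies only a chain homotopy equivalence $C^{4-*}\simeq C_*$; promoting this to a stable chain isomorphism that identifies $C_3$ with $G^\dagger$, $F'$ with $F^\dagger$, and $\partial_3$ with $(0,d_2^\dagger)$ is an actual theorem, and saying the cohomological hypotheses ``ensure the relevant Ext/obstruction classes vanish'' without naming those groups or explaining the vanishing is not a proof. This step \emph{is} the content of the lemma. The paper does not argue it directly either: it cites the references and records only that the hypothesis they need, $H_4(\pi;\Z^w)=0$, holds here (via $\operatorname{cd}\pi\le 3$ in the torsion-free case, and $H_4(\Z/n;\Z)=0$ for finite cyclic $\pi$ with trivial $w$). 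In those references the vanishing of $H_4(\pi;\Z^w)$ is what forces the stable splitting; one route is that the stable isomorphism class of $\pi_2(M)$ depends only on $c_*[M]\in H_4(\pi;\Z^w)$, so one may compute it on a single model---the double of a thickened $2$-complex for $\pi$---whose chain complex has your symmetric form by construction. To complete your direct approach you would need to supply precisely this bridge.
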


Recall that an $R$-module $A$ is said to be \emph{stably free} if there exist $m,n\geq 0$ such that $A\oplus R^n\cong R^m$.

\begin{lemma}
\label{lem:pi2-z}
Let $M$ be a closed $4$-manifold with infinite cyclic fundamental group. Then $\pi_2(M)$ is free.
\end{lemma}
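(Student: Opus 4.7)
The plan is to apply \cref{lem:pi-2-stably-iso-ker-coker} to an especially simple presentation of $I\pi$ and then invoke a standard fact about modules over $\Z[t^{\pm 1}]$. Let $t$ be a generator of $\pi \cong \Z$, so that $\Z\pi = \Z[t^{\pm 1}]$ is an integral domain and $I\pi$ is the principal ideal $(t-1)$. Since $t - 1$ is not a zero divisor in $\Z\pi$, the left $\Z\pi$-linear map $\Z\pi \to I\pi$ sending $1 \mapsto t - 1$ is an isomorphism of left $\Z\pi$-modules. I would therefore take $F := 0$ and $G := \Z\pi$ in the hypothesis of \cref{lem:pi-2-stably-iso-ker-coker}, producing a presentation $F \xrightarrow{d_2} G \twoheadrightarrow I\pi$ with $d_2 = 0$. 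In this case $\ker(d_2) = 0$ and $\coker(d_2^\dagger) = F^\dagger = 0$, so the lemma yields that $\pi_2(M)$ is stably isomorphic to the zero module; that is, $\pi_2(M)$ is stably free as a $\Z[t^{\pm 1}]$-module.

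To finish, I would invoke the well-known fact that every finitely generated stably free $\Z[t^{\pm 1}]$-module is free. This follows from results of Quillen and Suslin on projective modules over Laurent polynomial rings, together with the vanishing of the relevant $K$-theoretic obstructions; equivalently, every finitely generated projective $\Z[t^{\pm 1}]$-module is free. Since $\pi_2(M)$ is finitely generated (because $M$ is a finite CW complex), this completes the argument.

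The main obstacle is really just invoking this final algebraic input about stably free modules over $\Z[t^{\pm 1}]$; the reduction to stable freeness via \cref{lem:pi-2-stably-iso-ker-coker} is automatic once one observes that $I\pi$ is free of rank one as a $\Z\pi$-module, so that no higher syzygies appear. No further geometric input beyond the preceding lemma is required.
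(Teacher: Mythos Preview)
Your proof is correct and follows essentially the same approach as the paper's: observe that $I\pi$ is free of rank one over $\Z\pi = \Z[t^{\pm 1}]$, apply \cref{lem:pi-2-stably-iso-ker-coker} with the trivial presentation to conclude that $\pi_2(M)$ is stably free, and then invoke the fact that finitely generated projective $\Z[t^{\pm 1}]$-modules are free (the paper cites Swan's Laurent polynomial result rather than Quillen--Suslin, but these are the same theorem in this context). One small comment: your parenthetical justification for finite generation is slightly indirect---once you know $\pi_2(M)$ is stably free it is automatically a summand of a finitely generated free module, hence finitely generated and projective, without needing to appeal to the CW structure of $M$.
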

\begin{proof}
The augmentation ideal $I\Z$ is free as a $\Z[\Z]$-module. Then $0\to I\Z\xrightarrow{\id} I\Z$ is a presentation for $I\Z$, and hence $\pi_2(M)$ is stably free by \cref{lem:pi-2-stably-iso-ker-coker}. For every $k$, any projective $\Z[\Z^k]$-module is free \cite{swan-laurent-pols}*{Theorem~1.1}, see also \cite{lam}*{Corollary~V.4.12}. Hence $\pi_2(M)$ is free as claimed.
\end{proof}

For $4$-manifolds with fundamental group a finite $3$-manifold group, we have the following special case of \cite{Hambleton-Kreck:1988-1}*{Remark~2.5}. The application requires the fact that an arbitrary finite $3$-manifold group has 4-periodic cohomology and that for finite $3$-manifold groups the projective modules $P_i$ in \cite{Hambleton-Kreck:1988-1}*{Remark~2.5} can be chosen to be free.

\begin{lemma}[\cite{Hambleton-Kreck:1988-1}]
\label{lem:pi2-fin-3-manifold-group}
Let $\pi$ be a finite $3$-manifold group. Let $M$ be a closed, orientable $4$-manifold with
fundamental group $\pi$.
Then $\pi_2(M)$ is stably isomorphic to $I\pi\oplus \Hom_{\Z}(I\pi,\Z)$.
\end{lemma}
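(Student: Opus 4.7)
The plan is to reduce to \cite{Hambleton-Kreck:1988-1}*{Remark~2.5}, which gives a stable isomorphism classification of $\pi_2(M)$ when $\pi$ is a finite group admitting a length-$4$ projective resolution
\[0\to \Z\to P_3\to P_2\to P_1\to P_0\to \Z\to 0\]
of $\Z$ by $\Z\pi$-modules. Specialised to the case where the $P_i$ can be chosen to be free, this stable class reduces to $I\pi\oplus \Hom_{\Z}(I\pi,\Z)$, which is the desired conclusion. So it suffices to verify two hypotheses: that $\pi$ has cohomology of period dividing $4$, and that the modules in such a resolution can be taken to be free.

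Both hypotheses will follow from a single geometric input. By the elliptization theorem, a consequence of Perelman's resolution of the geometrization conjecture, every finite $3$-manifold group~$\pi$ acts freely and orthogonally on $S^3$. Let $Y := S^3/\pi$ be the resulting closed, orientable spherical space form, and fix a $\pi$-equivariant CW structure on the universal cover $\wt{Y}\simeq S^3$. The cellular chain complex
\[0\to C_3\to C_2\to C_1\to C_0\to \Z\to 0\]
is then a free resolution of $\Z$ over $\Z\pi$ of length~$4$, with $C_i$ free on the set of $i$-cells in $Y$. In particular this shows that $\pi$ has periodic cohomology of period dividing~$4$, and splicing copies of this resolution together produces a $4$-periodic free resolution of $\Z$ of the required form to feed into \cite{Hambleton-Kreck:1988-1}*{Remark~2.5}, yielding the stable isomorphism.

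The main potential obstacle is this freeness step: for a general finite group, the modules appearing in a $4$-periodic projective resolution of $\Z$ need not be stably free, because the reduced projective class group $\wt{K}_0(\Z\pi)$ may be nonzero. What circumvents this difficulty for finite $3$-manifold groups is precisely the geometric origin of the resolution, namely the existence of the spherical space form $Y = S^3/\pi$ whose cellular chain complex is already free; this is the essential extra input beyond the Hambleton--Kreck framework.
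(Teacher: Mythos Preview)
Your approach is exactly the one the paper sketches: reduce to \cite{Hambleton-Kreck:1988-1}*{Remark~2.5} and verify (i) $4$-periodicity and (ii) that the projectives can be taken free. However, your displayed sequence is wrong: the cellular chain complex of $\wt Y\simeq S^3$ is \emph{not} a finite free resolution of $\Z$, since $\ker(C_3\to C_2)=H_3(\wt Y;\Z)\cong\Z$. The correct statement is that
\[0\to \Z\to C_3\to C_2\to C_1\to C_0\to \Z\to 0\]
is exact, and this is already precisely the input required by \cite{Hambleton-Kreck:1988-1}*{Remark~2.5}; no splicing is needed (splicing would give the infinite periodic resolution, which is a different object). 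You should also check that the left-hand $\Z$ carries the trivial $\pi$-action, i.e.\ that $\pi$ acts orientation-preservingly on $\wt Y$; this follows from a Lefschetz fixed-point argument.

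One further remark: invoking elliptization is more than you need. For any closed $3$-manifold $Y$ with $\pi_1(Y)\cong\pi$ finite, the universal cover $\wt Y$ is a simply connected closed $3$-manifold, hence an integral homology $3$-sphere by Poincar\'e duality, and its $\pi$-equivariant cellular chain complex (from a triangulation, which exists by Moise) already yields the exact sequence above. You do not need $\wt Y\cong S^3$.
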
	

In the special case of an orientable $4$-manifold with finite cyclic fundamental group $\pi$, we deduce  that $\pi_2(M)$ is isomorphic to $I\pi\oplus I\pi$, as we see next.

\begin{lemma}\label{lem:pi2-fin-cyclic}
    For the group $\pi:=\Z/n$, the dual $\Hom_{\Z}(I\pi,\Z) = I\pi^*$ of the augmentation ideal $I\pi$ is isomorphic to $I\pi$. In particular, for a closed, orientable $4$-manifold $M$ with fundamental group $\pi$, the second homotopy group $\pi_2(M)$ is stably isomorphic to $I\pi\oplus I\pi$.
\end{lemma}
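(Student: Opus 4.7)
The plan is to first establish the $\Z\pi$-module isomorphism $\Hom_\Z(I\pi, \Z) \cong I\pi$, from which the stable isomorphism $\pi_2(M) \cong I\pi \oplus I\pi$ will follow at once by substituting into \cref{lem:pi2-fin-3-manifold-group}.

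My approach to the isomorphism will be to apply the functor $\Hom_\Z(-,\Z)$ to the augmentation sequence
\[
0 \to I\pi \to \Z\pi \xrightarrow{\varepsilon} \Z \to 0,
\]
which remains short exact since $I\pi$ is free as an abelian group of rank $n-1$. Under the standard left $\Z\pi$-module identifications $\Hom_\Z(\Z,\Z) \cong \Z$ (with trivial $\pi$-action) and $\Hom_\Z(\Z\pi,\Z) \cong \Z\pi$ via $g \mapsto \delta_g$, I would argue that the dual $\varepsilon^{*}$ corresponds to the map $\Z \to \Z\pi$ sending $1$ to the norm element $N := \sum_{g\in\pi} g$, so that
\[
\Hom_\Z(I\pi, \Z) \;\cong\; \Z\pi / N\Z
\]
as left $\Z\pi$-modules.

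The final step will be to identify $\Z\pi/N\Z$ with $I\pi$ using the cyclic structure of $\pi$. Fixing a generator $t$, multiplication by $t-1$ gives a $\Z\pi$-linear surjection $\Z\pi \onto I\pi$, and its kernel is the annihilator of $t-1$ in $\Z[t]/(t^n-1)$, which is easily seen to equal $N\Z$. Combining the two isomorphisms gives $\Hom_\Z(I\pi, \Z) \cong I\pi$, and then \cref{lem:pi2-fin-3-manifold-group} yields the stable isomorphism $\pi_2(M) \cong I\pi \oplus \Hom_\Z(I\pi,\Z) \cong I\pi \oplus I\pi$. I do not anticipate a serious obstacle: the only bookkeeping is to check that the identification $\Hom_\Z(\Z\pi,\Z) \cong \Z\pi$ is equivariant for the left action $(g\cdot \alpha)(x) = \alpha(g^{-1}x)$, and that under it $\varepsilon^{*}$ lands on $N$ and not on another central element; both verifications are immediate on the basis $\{g\}_{g\in\pi}$.
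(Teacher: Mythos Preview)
Your proposal is correct and follows essentially the same route as the paper: both dualise the augmentation sequence to identify $\Hom_\Z(I\pi,\Z)$ with $\Z\pi/N\Z$, and then use that multiplication by $1-T$ (your $t-1$) gives an isomorphism $\Z\pi/N\Z \xrightarrow{\cong} I\pi$, before invoking \cref{lem:pi2-fin-3-manifold-group}. The only cosmetic difference is that you argue directly that the dual sequence stays short exact because $I\pi$ is $\Z$-free, whereas the paper phrases this as $\Ext^1_\Z(\Z,\Z)=0$.
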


\begin{proof}
Let $\pi:=\Z/n=\langle T\mid T^n\rangle$. Dualise the sequence $0 \to I\pi \to \Z\pi \xrightarrow{\varepsilon} \Z \to 0$, where $\varepsilon \colon \Z\pi\to \Z$ is the augmentation map, to obtain \[(\Z \cong \Hom_{\Z}(\Z,\Z)) \xrightarrow{N} (\Hom_{\Z}(\Z\pi,\Z)\cong \Z\pi) \to \Hom_\Z(I\pi,\Z) \to \Ext^1_{\Z}(\Z,\Z) =0.\]
It follows that the cokernel of the norm map $\Z\xrightarrow{N}\Z\pi$ sending $1 \mapsto 1+T+\cdots + T^{n-1}$ is isomorphic to $\Hom_\Z(I\pi,\Z)$.
From the short exact sequence
\[0\to \Z\xrightarrow{N}\Z\pi\xrightarrow{1-T}\Z\pi\xrightarrow{\varepsilon}\Z,\]
we see that $\Hom_\Z(I\pi,\Z) \cong \Z\pi/\im N \cong  \Z\pi/\ker(1-T) \cong \im (1-T) \cong \ker \epsilon = I\pi$, as needed. The second statement now follows from \cref{lem:pi2-fin-3-manifold-group} since $\Z/n$ is a finite $3$-manifold group.
\end{proof}

The analogues of the previous lemmas do not hold for nonorientable $4$-manifolds as can be seen by considering $\RP^4$. 
This is a further reason that we restrict ourselves in \cref{thm:main-homotopy} to cases where the orientation character is trivial on the finite cyclic subgroups.

\begin{lemma}
\label{lem:pi2-ZZ2}
Let $\pi=\Z\times \Z/2=\langle t,T\mid [T,t],T^2\rangle$ and let $v'\colon \pi\to C_2$ be given by $v'(t)=1$ and $v'(T)=-1$. Let $M$ be a closed $4$-manifold with fundamental group $\pi$ and orientation character~$w$ such that $w(T)=1$. Let $v:=wv'$. Let $c\colon M\to B\pi$ induce an isomorphism on fundamental groups. Then $\pi_2(M)$ is stably free if $c_*([M]) \neq 0$ in $H_4(\pi;\Z^w)\cong \Z/2$ and otherwise $\pi_2(M)$ is stably isomorphic to $I\pi\oplus I\pi^{v}$ if $c_*([M])=0$.
\end{lemma}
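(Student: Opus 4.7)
The plan is to invoke \cref{lem:pi2-stable}, which reduces determining the stable isomorphism class of $\pi_2(M)$ to knowing $c_*([M])\in H_4(\pi;\Z^w)/\pm\Aut(\pi)$. Using the model $B\pi\simeq S^1\times\RP^\infty$ and the (twisted) Künneth formula, I will first compute $H_4(\pi;\Z^w)\cong \Z/2$; the hypothesis $w(T)=1$ is used here to ensure that $\Z^w$ pulls back from the $S^1$-factor so that the Künneth splitting applies. A direct calculation will then show that $\pm\Aut(\pi)$ acts trivially on this $\Z/2$, so exactly two stable classes of $\pi_2(M)$ can occur.

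For the class $c_*([M])\neq 0$, I will take $M_1:=S^1\times\RP^3$ in the orientable case, and an appropriate $\RP^3$-bundle over $S^1$ realising the required orientation character in the nonorientable case ($w(t)=-1$). Since the inclusion $\RP^3\hookrightarrow\RP^\infty$ induces the nonzero map $\Z=H_3(\RP^3;\Z)\to H_3(\RP^\infty;\Z)=\Z/2$, the class $c_*([M_1])$ is nonzero. Both factors of $M_1$ are aspherical, so $\pi_2(M_1)=0$, which is trivially stably free.

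For the class $c_*([M])=0$, I will exhibit an explicit representative $M_0$ and compute $\pi_2(M_0)$ directly. My proposed choice is the double $M_0:=D(N)$ of a regular neighbourhood $N$ of a 2-complex $K$ modelling the standard presentation $\langle t,T\mid T^2,[t,T]\rangle$ of $\pi$. Since $M_0=\partial(N\times I)$ bounds the 5-manifold $N\times I$, which itself maps to $B\pi$ via the deformation retraction $N\simeq K\hookrightarrow B\pi$, we obtain $c_*([M_0])=0$. The chain complex $C_*(\widetilde{M_0})$ can then be read off from the natural handle decomposition of $D(N)$ as two copies of the handle decomposition of $N$ glued along $\partial N$: its 2-skeletal piece is given by Fox calculus with respect to the standard presentation, while Poincaré duality identifies its upper-dimensional piece with the $w$-twisted dual of the 2-skeletal piece.

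The main obstacle will be the identification, up to stable isomorphism, of $H_2(\widetilde{M_0};\Z)$ with $I\pi\oplus I\pi^v$. This reduces to a matrix computation for the Fox calculus matrix of the presentation and its $w$-twisted dual; the twist $v=wv'$ emerges because the $w$-twisted dual of the Fox matrix picks up the orientation character $v'$ of the 3-manifold $S^1\times\RP^2$ that is the spine of the handlebody $N$. Some care will be required to show that additional stable free summands arising from nonminimal handle decompositions can be absorbed, and to verify compatibility of the splitting with the $\Z\pi$-action.
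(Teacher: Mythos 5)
Your overall strategy agrees with the paper's: reduce via \cref{lem:pi2-stable} to the image of the fundamental class, then handle the two classes of $H_4(\pi;\Z^w)\cong\Z/2$ separately by exhibiting models. For the nontrivial class the paper likewise uses $S^1\times\RP^3$ (when $w$ is trivial) and a mapping torus of an orientation-reversing self-homeomorphism of $\RP^3$ (when $w(t)=-1$), so that part of your argument coincides. One slip: you write that ``both factors of $M_1$ are aspherical,'' but $\RP^3$ is not aspherical ($\pi_3(\RP^3)\cong\Z$). What is true, and what the paper uses, is that $\pi_2(\RP^3)=0$, hence $\pi_2(M_1)=0$ from the long exact sequence of the fibration; the asphericity claim should be dropped.

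The genuine gap is in the $c_*[M]=0$ case, which is where all the content lives. You reduce to computing $H_2(\widetilde{M_0};\Z)$ for the double $M_0=D(N)$ of a $4$-dimensional thickening, and acknowledge this as ``the main obstacle,'' but you do not carry out the computation. This is precisely the step the paper addresses: it invokes a result from \cite{KPT}*{Section~5} identifying $\pi_2(M)$ stably with $\ker d_2\oplus\coker d^2_w$ for the standard $2$-periodic free resolution $(C_*,d_*)$ of $\Z$, uses the periodicity to show $\ker d_2\cong I\pi$, and then tensors the resolution with $\Z\pi^v$ (via the isomorphism $\omega$ of \cref{lemma:tensoring-with-Zpi-w-gives-iso}) to show $\coker d^2_w\cong I\pi^v$. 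Your double $M_0$ would eventually yield the same decomposition, but the computation still has to be done. Moreover, the heuristic you offer for why the twist $v=wv'$ appears is not right: the spine of a $4$-dimensional regular neighbourhood of a $2$-complex is the $2$-complex itself, not a $3$-manifold, so there is no ``orientation character of $S^1\times\RP^2$'' attached to $N$. The correct source of the twist is algebraic: dualising the Fox matrix with respect to the involution $g\mapsto w(g)g^{-1}$, combined with the fact that the $\Z/2$-part of the $2$-periodic resolution changes the sign of $T$ upon dualisation, produces exactly the character $v=wv'$. Without carrying out this computation and with the misattributed justification, the key identification $\pi_2(M)\cong_{\text{st}} I\pi\oplus I\pi^v$ is asserted, not proved.

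A final small point: the result cited by the paper, $\pi_2(M)\cong_{\text{st}}\ker d_2\oplus\coker d^2_w$, already holds for an \emph{arbitrary} closed $4$-manifold $M$ with $c_*[M]=0$, so the paper has no need to construct a specific model $M_0$ for the trivial class. Your approach of building $D(N)$ explicitly and computing its chain complex is a legitimate alternative and is essentially how such formulas are derived in the first place, but it is more geometric work to arrive at the same algebraic computation.
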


\begin{proof}
By \cref{lem:pi2-stable}, for a fixed $w$, the stable isomorphism class of $\pi_2(M)$ only depends on $c_*[M]$.

By \cite{KPT-long}*{Lemma~7.5}, the module $\pi_2(M)$ is stably free if $c_*[M]\neq 0$ and $w$ is trivial. For $w(t)=-1$, the same holds, because we can construct a model with trivial $\pi_2$, as follows.
Let $\tau\colon \RP^3\to \RP^3$ be an orientation reversing self-homeomorphism, e.g.\ induced by reflection across the equator of $S^3$.
Then the mapping torus $T_\tau$ of $\tau$ has orientation character $w$ and $c_*[T_\tau] \neq 0$. Also it has trivial $\pi_2$, by the long exact sequence of the fibration $\RP^3 \to T_\tau \to S^1$. It follows that $\pi_2(M)$ is stably free for all $4$-manifolds with  orientation character $w$ and $c_*[M] \neq 0$, as needed.

In \cite{KPT}*{Section~5}, it was shown that if $c_*[M]=0$, then $\pi_2(M)$ is stably isomorphic to $\ker d_2 \oplus \coker d^2_{w}$, where $(C_*,d_*)$ is the standard 2-periodic free $\Z\pi$-module resolution of $\Z$:
	\[\begin{tikzcd}[row sep=small, column sep=large]
\cdots \ar[r] & \Z\pi \ar[r,"{1-T}"]\ar[r,bend right, "d_3", pos=0.35,phantom]  & \Z\pi \ar[r,"{1+T}"]\ar[r,bend right, "d_2", pos=0.35,phantom] & \Z \pi \ar[r,"1-T"]\ar[r,bend right, "d_1", pos=0.35,phantom] & \Z\pi \ar[r,"\varepsilon"] & \Z \\
		&\oplus & \oplus & \oplus &  & \\
		\cdots \ar[r] &  \Z\pi \ar[r,"1+T"] \ar[uur,"1-t"',sloped] & \Z\pi \ar[r,"1-T"] \ar[uur,"t-1"', sloped] & \Z\pi \ar[uur,"1-t"',sloped] & &
\end{tikzcd}
\]
and $d^2_w$ denotes the cochain map from the cochain complex $\Hom_{\Z\pi}(C_*,\Z\pi^w)$.

 The summand $\ker d_2$ is independent of $w$, and as in \cite{KPT-long}*{Lemma~7.11} we have
\[\ker d_2 \cong \im d_3 \cong C_3/\ker d_3 \cong C_3/\im d_4 \cong C_1/\im d_2 \cong C_1/\ker d_1 \cong \im d_1 \cong I\pi.\]
The map $d^2_w$ is given by
	\[\begin{tikzcd}[row sep=small, column sep=large]
	\Z\pi \ar[r,"{1-T}"] & \Z\pi \\
	\oplus & \oplus & \\
  \Z\pi \ar[r,"1+T"] \ar[uur,"1-w(t)t"', sloped, pos=0.65] & \Z\pi .
\end{tikzcd}\]
We show that the cokernel of this map is $I\pi^v$.
Recall the $\Z\pi$-module isomorphism
$\Z\pi^v \xrightarrow{\cong} \Z\pi$ from \cref{lemma:tensoring-with-Zpi-w-gives-iso}.
Tensoring the above free resolution of $\Z$ with $\Z\pi^v$ over $\Z\pi$ and applying this isomorphism, we obtain the free resolution
	\[\begin{tikzcd}[row sep=small, column sep=large]
	\cdots \ar[r] & \Z\pi \ar[r,"{1+T}"]\ar[r,bend right, "d_3^v", pos=0.35,phantom] & \Z\pi \ar[r,"{1-T}"]\ar[r,bend right,pos=0.35, "d_2^v", phantom] & \Z \pi \ar[r,"1+T"]\ar[r,bend right,pos=0.35, "d_1^v", phantom] & \Z\pi \ar[r,"\varepsilon_v"] & \Z^v. \\
	&\oplus & \oplus & \oplus &  & \\
	\cdots \ar[r] &  \Z\pi \ar[r,"1-T"] \ar[uur,"1-w(t)t"', sloped, pos=0.65] & \Z\pi \ar[r,"1+T"] \ar[uur,"w(t)t-1"', sloped, pos=0.65] & \Z\pi \ar[uur,"1-w(t)t"',sloped, pos=0.65] & &
\end{tikzcd}\]
Thus we have
\[\coker d^2_w\cong \coker d_2^v\cong C_1/\im d_2^v\cong C_1/\ker d_1^v\cong \im d_1^v\cong \ker \varepsilon_v=: I\pi^v.\]
So $\pi_2(M)$ is stably isomorphic to $I\pi \oplus I\pi^v$, as required.
\end{proof}

Finally we consider the case of $PD_3$-groups. Below note that since any two aspherical $PD_3$-complexes with the same fundamental group are homotopy equivalent, the orientation character for a $PD_3$ group is well-defined. Also note that in this case the orientation character is determined by $\pi$, via the natural right action of $\pi$ on $H^3(\pi;\Z\pi) \cong \Z$. Hence $v'$ in the next lemma is defined in terms of $\pi$.

\begin{lemma}
\label{lem:pi2-pd3}
Let $\pi$ be a $PD_3$-group. Let $M$ be a closed $4$-manifold with fundamental group $\pi$ and orientation character $w\colon \pi \to C_2$. Let $v'$ be the orientation character of the aspherical $PD_3$-complex associated to $\pi$.   
Let $v:=wv'$.
Then $\pi_2(M)$ is stably isomorphic to $I\pi^{v}$.
\end{lemma}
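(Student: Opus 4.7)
The plan is to follow the strategy of \cref{lem:pi-2-stably-iso-ker-coker}, whose cited proofs in \cites{Hambleton-Kreck:1988-1,Ha09,KPT} work equally well for any group $\pi$ with $\operatorname{cd}\pi\leq 3$, and so in particular for $PD_3$-groups. Let $Y=K(\pi,1)$ be a finite $PD_3$-complex model for the classifying space, with orientation character $v'$, and let
\[
0\to C_3\xrightarrow{d_3}C_2\xrightarrow{d_2}C_1\xrightarrow{d_1}C_0\to\Z\to 0
\]
be the cellular chain complex of $\widetilde Y$. Since $\im(d_1)=I\pi$ and $\ker(d_1)=\im(d_2)$, the truncation $C_2\xrightarrow{d_2}C_1\twoheadrightarrow I\pi$ is a free $\Z\pi$-presentation of $I\pi$. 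The generalisation of \cref{lem:pi-2-stably-iso-ker-coker} then yields that $\pi_2(M)$ is stably isomorphic to $\ker(d_2)\oplus\coker(d_2^\dagger)$, where the dagger uses the $w$-twisted involution as in the paper's conventions.

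Injectivity of $d_3$ gives $\ker(d_2)=\im(d_3)\cong C_3$, a free $\Z\pi$-module, which contributes trivially to the stable class. The substantial step is to identify $\coker(d_2^\dagger)$ with $I\pi^v$ up to stable isomorphism. For this I would invoke the Poincar\'e duality chain equivalence for $Y$: writing $C_k^{\dagger_{v'}}$ for $\Hom_{\Z\pi}(C_k,\Z\pi)$ with the $v'$-twisted involution $g\mapsto v'(g)g^{-1}$, cap product with the fundamental class $[Y]\in H_3(Y;\Z^{v'})$ furnishes a chain equivalence $C_*^{\dagger_{v'}}\simeq C_{3-*}$ of $\Z\pi$-chain complexes. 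To pass from the $v'$-twist to the $w$-twist appearing in $d_2^\dagger$, observe that the two left $\Z\pi$-module structures on $\Hom_{\Z\pi}(C_k,\Z\pi)$ arising from the involutions $g\mapsto v'(g)g^{-1}$ and $g\mapsto w(g)g^{-1}$ differ precisely by the character $v=wv'$: explicitly, $g\cdot_w f=v(g)\,(g\cdot_{v'}f)$. Hence $C_k^{\dagger}\cong (C_k^{\dagger_{v'}})^v$ as left $\Z\pi$-modules for each $k$, and Poincar\'e duality upgrades to a chain equivalence $C_*^{\dagger}\simeq (C_{3-*})^v$.

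Since chain equivalences between bounded chain complexes of finitely generated free $\Z\pi$-modules preserve the stable isomorphism class of the cokernel of each differential, $\coker(d_2^\dagger)$ is stably isomorphic to $\bigl(\coker(d_2\colon C_2\to C_1)\bigr)^v=I\pi^v$, using that $\coker(d_2)=C_1/\ker(d_1)\cong\im(d_1)=I\pi$ and that the $v$-twist of $I\pi$ agrees with the paper's $I\pi^v$ from \cref{defn:twisted-augmentation-ideals} (compare \cref{lemma:tensoring-with-Zpi-w-gives-iso}). Combining the two summands, $\pi_2(M)$ is stably isomorphic to $C_3\oplus I\pi^v$, hence to $I\pi^v$, as required. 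The main obstacle will be the careful bookkeeping of the twisted involutions on $\Z\pi$: specifically, verifying that the transition between the $v'$- and $w$-twisted duals corresponds precisely to the $v$-twist of modules, and that chain equivalence between complexes of free modules suffices to identify the relevant cokernels up to stable isomorphism.
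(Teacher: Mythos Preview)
Your proof is correct and follows essentially the same approach as the paper's: both take the cellular chain complex of an aspherical $PD_3$-complex as a free presentation of $I\pi$, apply \cref{lem:pi-2-stably-iso-ker-coker} to get $\pi_2(M)$ stably isomorphic to $\ker(d_2)\oplus\coker(d_2^\dagger)$, note that $\ker(d_2)\cong C_3$ is free, and then use Poincar\'e duality to identify $\coker(d_2^\dagger)$ with $I\pi^v$. The paper dispatches the last step in a single sentence (``$d_2^\dagger$ is a presentation homomorphism for $I\pi^v$ by Poincar\'e duality''), whereas you have carefully unpacked the passage from the $v'$-twisted to the $w$-twisted dual via the character $v=wv'$; this extra bookkeeping is correct and arguably clarifies what the paper leaves implicit.
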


\begin{proof}
Let $X$ be an aspherical $PD_3$-complex with fundamental group $\pi$. We can assume that $X$ has a single 0- and $3$-cell. Consider the cellular $\Z\pi$-chain complex
\[C_3(\wt{X})\xrightarrow{d_3}C_2(\wt{X})\xrightarrow{d_2}C_1(\wt{X})\xrightarrow{d_1}C_0(\wt{X})\]
of $X$. Since $X$ is aspherical, $C_1/\im d_2 \cong C_1/\ker d_1 \cong \im d_1 = \ker \varepsilon = I\pi$, and so $d_2$ is a presentation homomorphism of $I\pi$. Moreover $\ker d_2=C_3(\wt{X})$ is free. By \cref{lem:pi-2-stably-iso-ker-coker}, $\pi_2(M)$ is stably isomorphic to $\coker d_2^\dagger$. Since $X$ is a $PD_3$-complex with orientation character $v'$, $d_2^\dagger$ is a presentation homomorphism for $I\pi^{v}$ by Poincar\'e duality. It follows that $\pi_2(M)$ is stably isomorphic to $I\pi^{v}$.
\end{proof}

\begin{corollary}
\label{cor:pd3-free-dual-pi2}
    Let $\pi$ be a $PD_3$-group. Let $M$ be a closed $4$-manifold with fundamental group $\pi$.
    Then $\pi_2(M)^\dagger$ is finitely generated and stably free.
\end{corollary}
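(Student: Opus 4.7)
The plan is to combine \cref{lem:pi2-pd3}, which shows $\pi_2(M)$ is stably isomorphic to the twisted augmentation ideal $I\pi^v$ (with $v = wv'$), with a direct calculation of $(I\pi^v)^\dagger$ via the defining short exact sequence $0 \to I\pi^v \to \Z\pi \xrightarrow{\varepsilon_v} \Z^v \to 0$.

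I would first note that since $\Hom_{\Z\pi}(-, \Z\pi)$ is additive and $(\Z\pi)^\dagger \cong \Z\pi$ as left $\Z\pi$-modules, taking $\dagger$ preserves stable isomorphism classes of finitely generated modules. Thus it suffices to show that $(I\pi^v)^\dagger$ is finitely generated and stably free. Applying $\Hom_{\Z\pi}(-, \Z\pi)$ to the above short exact sequence yields the four-term exact sequence
\[0 \to (\Z^v)^\dagger \to \Z\pi \to (I\pi^v)^\dagger \to \Ext^1_{\Z\pi}(\Z^v, \Z\pi) \to 0,\]
and the plan is to show that both outer terms vanish. The left one, $(\Z^v)^\dagger$, should be zero because $\pi$ is infinite (every $PD_3$-group is infinite) and $\Z\pi$ has no nonzero elements on which $\pi$ acts via the sign character $v$; this follows from a short direct check exploiting the finite support of elements of $\Z\pi$.

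For the right term, I would twist a free $\Z\pi$-resolution of $\Z$ by $v$ to obtain a free resolution of $\Z^v$, yielding the standard identification $\Ext^k_{\Z\pi}(\Z^v, \Z\pi) \cong H^k(\pi; \Z\pi^v)$. Then \cref{lemma:tensoring-with-Zpi-w-gives-iso} identifies $\Z\pi^v$ with $\Z\pi$ as left $\Z\pi$-modules, so the Ext group equals $H^k(\pi; \Z\pi)$, which vanishes for $k \neq 3$ by the defining property of a $PD_3$-group. Plugging in gives $(I\pi^v)^\dagger \cong \Z\pi$, so $\pi_2(M)^\dagger$ is stably isomorphic to $\Z\pi$ and the corollary follows. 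I do not expect a real obstacle here; the only delicate point is the bookkeeping of how the $w$-involution defining $\dagger$ interacts with the $v$-twist on $I\pi^v$, which is routine.
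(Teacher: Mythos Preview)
Your proposal is correct and follows essentially the same route as the paper: reduce to $(I\pi^v)^\dagger$ via \cref{lem:pi2-pd3}, dualise the defining short exact sequence, and show that both $(\Z^v)^\dagger$ and $\Ext^1_{\Z\pi}(\Z^v,\Z\pi)$ vanish. The only cosmetic difference is in the last step: the paper identifies $\Ext^1_{\Z\pi}(\Z^v,\Z\pi)\cong H^1(\pi;\Z\pi^v)$ and then applies Poincar\'e duality to reach $H_2(\pi;\Z\pi^{vv'})=H_2(\pi;\Z\pi^w)=0$, whereas you go directly via $\Z\pi^v\cong\Z\pi$ to $H^1(\pi;\Z\pi)=0$ --- both arguments ultimately rest on \cref{lemma:tensoring-with-Zpi-w-gives-iso} and the $PD_3$ property.
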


\begin{proof} First note that $\pi_2(M)$ is finitely generated as a $\Z\pi$-module.  Let $w\colon \pi \to C_2$ be the  orientation character  of $M$ and let $v'$ be the orientation character of the aspherical $PD_3$-complex associated to $\pi$. By \cref{lem:pi2-pd3}, $\pi_2(M)$ is stably isomorphic to the twisted augmentation ideal $I\pi^{v}$, where $v:=wv'$. Hence it suffices to show that $(I\pi^v)^\dagger$ is finitely generated and free. Dualising the short exact sequence $I\pi^v\to \Z\pi\to \Z^v$, we obtain the exact sequence
    \[(\Z^v)^\dagger\to (\Z\pi)^\dagger\to (I\pi^v)^\dagger \to \Ext^1_{\Z\pi}(\Z^v,\Z\pi).\]
    Since $\pi$ is infinite, $(\Z^v)^\dagger=0$. Furthermore we have
    \[\Ext^1_{\Z\pi}(\Z^v,\Z\pi)\cong H^1(\pi;\Z\pi^v)\cong H_2(\pi;\Z\pi^{vv'})=0.\]
    Here we used that $\Z\pi^{vv'} = \Z\pi^w$ is a free $\Z\pi$-module, because it is isomorphic to $\Z\pi$ via the map $\omega$ from \eqref{eqn:map-for-changing-w}.
Hence $(I\pi^v)^\dagger\cong (\Z\pi)^\dagger$ is finitely generated and free, as required.
\end{proof}

\subsection{\texorpdfstring{$3$-manifold}{3-manifold} groups that are free products}\label{subsection-pi_2-where-pi1-is-free-product}

Now we start considering $\pi_2(M)$ in the case that $\pi_1(M)$ is a $3$-manifold group that is a nontrivial free product. The first few results hold generally, but we quickly specialise to the case that the pair $(\pi_1(M),w_M)$ is admissible, as in \cref{thm:main-homotopy}.

\begin{lemma}
\label{lem:adding-I}
Let $G$ and $H$ be finitely presented groups. Let $w\colon G\to C_2$ and $w'\colon H\to C_2$ be homomorphisms.
Then the twisted augmentation ideal $I(G*H)^{w*w'}$ of $G*H$ is isomorphic to the direct sum $\Ind_G^{G*H} IG^w \oplus \Ind_H^{G*H} IH^{w'}$.
\end{lemma}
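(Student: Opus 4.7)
The plan is to use the Bass-Serre tree for the free product $\pi := G*H$, equivalently the universal cover of the classifying space $B\pi = BG \vee BH$, to obtain the desired decomposition of the twisted augmentation ideal. I would first establish the untwisted version and then derive the twisted statement by tensoring with $\Z^{w*w'}$.

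For the untwisted version, I claim that $I\pi = \Ind_G^\pi IG \oplus \Ind_H^\pi IH$ as an internal direct sum of $\Z\pi$-submodules of $\Z\pi$. That the sum $\Ind_G^\pi IG + \Ind_H^\pi IH$ exhausts $I\pi$ follows from the telescoping identity
\[g - 1 = \sum_{i=1}^n (x_1 \cdots x_{i-1})(x_i - 1)\]
for any reduced word $g = x_1 \cdots x_n$ in $\pi$, which expresses $g - 1$ as a $\Z\pi$-linear combination of elements of $IG$ and $IH$. That the intersection $\Ind_G^\pi IG \cap \Ind_H^\pi IH$ vanishes follows from the Bass-Serre exact sequence
\[0 \to \Z\pi \to \Ind_G^\pi \Z \oplus \Ind_H^\pi \Z \to \Z \to 0\]
arising from the contractibility of the Bass-Serre tree: the intersection in question is precisely the kernel of the left-most map $g \mapsto (g \otimes 1, -g \otimes 1)$, which is injective.

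For the twisted statement, set $v := w*w'$. I would tensor the untwisted isomorphism with $\Z^v$ over $\Z$ using the diagonal $\pi$-action. Three identifications are needed: $I\pi \otimes_\Z \Z^v \cong I\pi^v$ and $IG \otimes_\Z \Z^w \cong IG^w$, both obtained by restricting the isomorphism $\omega$ of \cref{lemma:tensoring-with-Zpi-w-gives-iso}; together with the natural $\Z\pi$-linear isomorphism $(\Ind_G^\pi M) \otimes_\Z \Z^v \cong \Ind_G^\pi (M \otimes_\Z \Z^w)$ for any $\Z G$-module $M$, defined by $(a \otimes x) \otimes 1 \mapsto v(a)(a \otimes (x \otimes 1))$, and analogously for $H$. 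Applying these to the tensored version of the untwisted decomposition yields the desired isomorphism. The main input is the Bass-Serre exact sequence, which provides the crucial injectivity; the telescoping identity and the identifications for the twisting step are routine bookkeeping.
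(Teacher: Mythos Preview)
Your proof is correct and takes a somewhat different route from the paper's. The paper argues via chain complexes of classifying spaces: it takes $B\pi = BG \vee BH$, tensors the $\Z\pi$-chain complex with $\Z\pi^{w*w'}$, uses the identification $(\Ind_G^\pi A)^{w*w'} \cong \Ind_G^\pi(A^w)$, and then recognises $I\pi^{w*w'}$ as the cokernel of the degree-two differential, which splits as the direct sum of the induced cokernels $\Ind_G^\pi IG^w \oplus \Ind_H^\pi IH^{w'}$. Your approach instead first establishes the untwisted internal direct sum $I\pi = \Z\pi \cdot IG \oplus \Z\pi \cdot IH$ by combining the telescoping identity with injectivity of $\Z\pi \to \Z[\pi/G] \oplus \Z[\pi/H]$ (the Bass--Serre sequence), and then twists by tensoring with $\Z^v$. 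The two arguments are close in spirit---the Bass--Serre sequence is essentially the Mayer--Vietoris sequence for the wedge $BG \vee BH$, and both proofs rely on the same $\omega$-type identification for passing between twisted and untwisted modules---but your presentation is more elementary, avoiding CW models and free resolutions entirely. One small point worth making explicit in your write-up: the identification of $\Ind_G^\pi IG$ with a submodule of $\Z\pi$ is via $\Z\pi \otimes_{\Z G} IG \hookrightarrow \Z\pi \otimes_{\Z G} \Z G \cong \Z\pi$ (injective because $\Z\pi$ is free over $\Z G$), and its image is exactly $\ker(\Z\pi \to \Z[\pi/G])$; this is what makes the intersection equal to the kernel of the Bass--Serre map.
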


\begin{proof}	
	Choose a model for $BG$ and tensor its $\Z G$-chain complex with $\Z G^w$ over $\Z G$ to obtain a free resolution
	\[\cdots\to C_2^{G,w}\xrightarrow{d_2^{G,w}}C_1^{G,w}\xrightarrow{d_1^{G,w}}\Z G^w \xrightarrow{\varepsilon^{G,w}}\Z^w.\]
	Similarly, a model for $BH$ gives a free resolution
	\[\cdots\to C_2^{H,w'}\xrightarrow{d_2^{H,w'}}C_1^{H,w'}\xrightarrow{d_1^{H,w'}} \Z H\xrightarrow{\varepsilon^{H,w'}}\Z^{w'}.\]
Here the module homomorphism $\varepsilon^{G,w} \colon \Z G^w \to \Z^w$ is the unique left $\Z\pi$-module homomorphism sending $1 \mapsto 1$, which sends $\sum n_g g \mapsto \sum n_g$.
It factors as
\[\varepsilon^{G,w}  \colon \Z\pi^w \xrightarrow{\omega,\cong} \Z\pi \xrightarrow{\varepsilon_w} \Z^w,\]
using the left $\Z\pi$-module isomorphism $\omega$ from \cref{lemma:tensoring-with-Zpi-w-gives-iso} and the map $\varepsilon_w$ from \cref{defn:twisted-augmentation-ideals}.
It follows that $\omega$ induces a left $\Z\pi$-module isomorphism $\ker \varepsilon^{G,w} \cong \ker \varepsilon_w = IG^w$.
So by exactness  $IG^w \cong \ker \varepsilon^{G,w} \cong \coker(d_2^{G,w})$ and $IH^{w'} \cong \ker \varepsilon^{H,w'} \cong \coker(d_2^{H,w'})$.
	
Write $\pi := G*H$.
	The space $BG\vee BH$ is a model for $B(G*H)$. So we can take its $\Z\pi$-chain complex and tensor with $\Z\pi^{w*w'}$ over $\Z\pi$ (which does not affect exactness) to obtain a free resolution of $\Z^{w*w'}$. To compare it with the previous resolutions for $G$ and $H$, we need the following.
	
	For every $\Z G$-module $A$, there is an isomorphism $\Ind_G^\pi(A)^{w*w'}\cong \Ind_G^\pi(A^w)$ given by $\gamma\otimes a\mapsto (w*w')(\gamma)\gamma\otimes a$ for all $\gamma\in\pi$ and $a\in A$. Similarly, $\Ind_H^\pi(A')^{w*w'}\cong \Ind_H^\pi((A')^{w'})$ for every $\Z H$-module $A'$.
To use this consider \[C_*(B\pi;\Z\pi) \cong C_*(BG \vee BH;\Z\pi) \cong \Ind_G^\pi C_*^{G} \oplus \Ind_H^\pi C_*^{H}.\]
Tensoring with $\Z\pi^{w*w'}$ over $\Z\pi$ yields
\[(\Ind_G^\pi C_i^G)^{w*w'} \oplus (\Ind_H^\pi C_i^{H})^{w*w'} \cong \Ind_G^\pi C_i^{G,w} \oplus \Ind_H^\pi C_i^{H,w'}.\]
	Hence the $\Z\pi$-chain complex of $BG\vee BH$ tensored with $\Z\pi^{w*w'}$ over $\Z\pi$ gives the free resolution
	\begin{align*}
		\cdots\to \Ind_G^{\pi} C_2^{G,w} \oplus\Ind_H^{\pi} C_2^{H,w'} \xrightarrow{\Ind_G^{\pi} d_2^{G,w} \oplus \Ind_H^{\pi} d_2^{H,w'} }\Ind_G^{\pi} C_1^{G,w} \oplus\Ind_H^{\pi} C_1^{H,w'}  \\ \xrightarrow{\Ind_G^{\pi} d_1^{G,w} +\Ind_H^{\pi} d_1^{H,w'} }\Z\pi^{w*w'} \xrightarrow{\varepsilon^{\pi,w*w'}} \Z^{w*w'}.
	\end{align*}
	Hence by exactness
	\begin{align*}
		I(G*H)^{w*w'} \cong I\pi^{w*w'}& \cong \coker(\Ind_G^{\pi} d_2^{G,w} \oplus \Ind_H^{\pi} d_2^{H,w'})\cong \Ind_G^{\pi}\coker (d_2^{G,w}) \oplus \Ind_H^{\pi} \coker(d_2^{H,w'}) \\
		&\cong \Ind_G^{\pi} IG^w \oplus \Ind_H^{\pi} IH^{w'} \cong \Ind_G^{G*H} IG^w \oplus \Ind_H^{G*H} IH^{w'}
	\end{align*}
as desired.
\end{proof}

We will use the following useful fact several times, which is due to the first-named author~\cite{hillman-stable-connsum}; an alternative proof was given later in~\cite{KLT-stable-connsum}. When \cref{lemma:stable-splitting} applies we say that $M$ \emph{stably splits as a connected sum}.

\begin{lemma}[\cite{hillman-stable-connsum}]\label{lemma:stable-splitting}
Let $M$ be a closed $4$-manifold and suppose that $\pi_1(M) \cong *_{i=1}^n G_i$. Then up to connected sum with copies of $S^2\times S^2$, $M$ is homeomorphic to a connected sum of $4$-manifolds $\#_{i=1}^n M_i$ with fundamental groups $\pi_1(M_i) \cong G_i$.
\end{lemma}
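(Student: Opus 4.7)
The plan is to induct on $n$, reducing the statement to the base case $n=2$, where $\pi_1(M) = G_1 * G_2$ and $B(G_1 * G_2) \simeq BG_1 \vee BG_2$.

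First, I would choose a classifying map $c\colon M \to BG_1 \vee BG_2$, let $p$ be a regular value in the interior of a small arc through the wedge point connecting the two summands, and arrange that $c$ is transverse to $p$. Then $N := c^{-1}(p)$ is a closed, separating $3$-submanifold of $M$ decomposing $M$ as $M_L \cup_N M_R$, where $c$ restricts to maps $M_L \to BG_1$ and $M_R \to BG_2$. Van Kampen's theorem gives $\pi_1(M) = \pi_1(M_L) *_{\pi_1(N)} \pi_1(M_R)$, and since the images of $\pi_1(M_L)$ and $\pi_1(M_R)$ in $\pi_1(M) = G_1 * G_2$ lie in $G_1$ and $G_2$ respectively, the image of $\pi_1(N)$ lies in $G_1 \cap G_2 = \{1\}$; in particular every loop in $N$ is null-homotopic in $M$.

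Next, I would simplify $N$ by ambient surgery in $M$. Ambient $0$-surgeries on $N$ (which change $N$ but not $M$) can be used to arrange that $N$, $M_L$, and $M_R$ are all connected. Each generator of $\pi_1(N)$, being null-homotopic in $M$, bounds a possibly immersed disk in $M$; self- and mutual intersections of such disks are the obstruction to performing ambient $1$-surgery. These intersections can be eliminated by the standard trick of tubing into essential $2$-spheres introduced by stabilizing $M$ with copies of $S^2 \times S^2$. After stabilizing $M$ by $k$ copies of $S^2 \times S^2$ for some $k \geq 0$ and performing the corresponding ambient $1$-surgeries, $N$ becomes simply connected, hence homeomorphic to $S^3$ by the topological Poincar\'e conjecture in dimension $3$ (Moise's theorem together with Perelman's theorem). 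The decomposition $M \# k(S^2 \times S^2) = M_L \cup_{S^3} M_R$ is then a connected sum $M_1 \# M_2$, where each $M_i$ is obtained by capping off with a $4$-ball; surjectivity of $\pi_1(M_L) \to G_1$ and $\pi_1(M_R) \to G_2$ follows from van Kampen, and injectivity from the uniqueness of free-product decompositions, so $\pi_1(M_i) \cong G_i$ as required.

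The main obstacle is the third step: controlling the geometry of the immersed disks well enough to perform ambient $1$-surgery after stabilization, without disturbing the van Kampen decomposition. The Norman-style trick allows intersections to be removed using spheres from the $S^2 \times S^2$-summands, but a careful argument is needed to ensure that the process terminates and that every generator of $\pi_1(N)$ is eventually killed. This is carried out in detail in~\cite{hillman-stable-connsum}, with an alternative proof, avoiding the explicit ambient-surgery argument in favour of a bordism-theoretic computation, given later in~\cite{KLT-stable-connsum}.
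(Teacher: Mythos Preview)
The paper does not give its own proof of this lemma; it is stated as a citation from \cite{hillman-stable-connsum}, with the remark that an alternative proof appears in \cite{KLT-stable-connsum}. Your sketch accurately reconstructs the ambient-surgery argument of \cite{hillman-stable-connsum} and correctly identifies \cite{KLT-stable-connsum} as the bordism-theoretic alternative, so there is nothing further to compare against in this paper.

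One small point worth tightening in your sketch: you say that each generator of $\pi_1(N)$ bounds an immersed disk in $M$, but for ambient $1$-surgery on $N$ you need the disk to lie on one side of $N$, i.e.\ in $M_L$ or $M_R$, and a priori you only know that loops in $N$ die in $\pi_1(M)$, not in $\pi_1(M_L)$ or $\pi_1(M_R)$ individually. In the actual argument one arranges this by first stabilising and then exploiting the fact that the stabilising $S^2\times S^2$ summands can be placed on either side of $N$, so that the Norman trick produces embedded disks in the required half; alternatively one can push the disk off $N$ in stages. Your closing paragraph already flags this step as the delicate one, so this is more a clarification than a correction.
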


The previous lemma is helpful since for a connected sum $M_1\#M_2$ of manifolds $M$ and $N$, we have that
\begin{equation}\label{eq:pi2-connsum-decomp}
\pi_2(M_1\#M_2)\cong \Ind_{\pi_1(M_1)}^{\pi_1(M_1)*\pi_1(M_2)} \pi_2(M_1) \oplus \Ind_{\pi_1(M_2)}^{\pi_1(M_1)*\pi_1(M_2)} \pi_2(M_2).
\end{equation}
Therefore, for a $4$-manifold $M$ with $\pi_1(M)=\pi$, connected sum with $S^2\times S^2$ adds copies of $\Z\pi\oplus \Z\pi$ to the second homotopy group, so the stable isomorphism type does not change.

For the remainder of this section, let $(\pi,w)$ be admissible, with $\pi$ a $3$-manifold group and $w$ an orientation character. Then, by the prime decomposition theorem for $3$-manifolds, there is a decomposition of the form
\begin{equation}
\label{eq:pi-decomp}
\pi\cong F*\big(\ast_{i=1}^rZ_i\big) *\big(\ast_{j=1}^sG_j\big)*\big(\ast_{k=1}^tH_k\big),
\end{equation}
for some $r,s,t\geq 0$, with $F$ a free group, $Z_i$ a finite cyclic group for each $i$, $G_j$ a $PD_3$-group for each $j$ and $H_k\cong \Z\times \Z/2$ for each $k$.

In the following proposition, and elsewhere, we will use that for either admissible $w \colon \Z \times \Z/2 \cong H \to C_2$, we have $H_4(H;\Z^w)\cong \Z/2$. When $w$ is trivial, this is straightforward to see from the K\"{u}nneth theorem. When $w$ is nontrivial, this can be obtained by writing out the standard $\Z H$-module resolution of $\Z$, tensoring with $\Z^w$, and computing the homology.

\begin{proposition}
\label{prop:pi2-decomp}
Fix a decomposition for $\pi$ as in \eqref{eq:pi-decomp}.
Let $M$ be a closed $4$-manifold with fundamental group $\pi$ and orientation character $w\colon \pi\to C_2$ such that $(\pi,w)$ is admissible.
By reordering the factors of $\pi$ if needed, we assume that there exists $0 \leq t' \leq t$ such that the image of the fundamental class $[M]$ in
\[H_4(\pi;\Z^w)\cong \bigoplus_{k=1}^tH_4(H_k;\Z^w)\cong (\Z/2)^t\]
is trivial in the first $t'$ summands and nontrivial for $k>t'$.
Then $\pi_2(M)$ is stably isomorphic to \[\Ind_\Gamma^\pi I\Gamma^v \oplus \Ind_{\Gamma'}^\pi I\Gamma',\] where
\begin{equation}\label{eqn:Gamma-splitting-big}
\Gamma = \big(\ast_{i=1}^rZ_i\big)* \big(\ast_{j=1}^sG_j\big)* \big(\ast_{k=1}^{t'}H_k\big)
\end{equation}
and
\begin{equation}\label{eqn:Gamma-prime-splitting-big}
\Gamma' = \big(\ast_{i=1}^rZ_i\big) *  \big(\ast_{k=1}^{t'}H_k\big)
\end{equation}
 are subgroups of $\pi$ in the canonical way.  Here $v=wv'\colon \Gamma \to C_2$, where $v'$ is trivial on each $Z_i$, is the projection onto the second factor on $\Z\times \Z/2$ $($followed by the canonical isomorphism $\Z/2\to C_2)$, and on each $G_j$ factor it is the orientation character $u_j$ of the aspherical $PD_3$-complex with fundamental group $G_j$.
\end{proposition}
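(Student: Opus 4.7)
The plan is to reduce the computation to the previously handled irreducible cases via stable splitting. By \cref{lemma:stable-splitting}, after stabilising $M$ by copies of $S^2\times S^2$, which does not alter the stable isomorphism class of $\pi_2$, I may assume $M$ is homeomorphic to $\#_\ell M_\ell$, where each $M_\ell$ is a closed $4$-manifold whose fundamental group $\pi_\ell := \pi_1(M_\ell)$ is one of the prime factors appearing in \eqref{eq:pi-decomp}, with orientation character given by the restriction of $w$ to $\pi_\ell$. Admissibility of $(\pi,w)$ ensures that this restriction is trivial on every finite cyclic factor $Z_i$ and on the $\Z/2$ subgroup of each $H_k$, so the hypotheses of the lemmas in \cref{subsection:irreducible-3-mfld-gps} are met wherever they are invoked. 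Iterating \eqref{eq:pi2-connsum-decomp} then yields a stable isomorphism
\[
\pi_2(M) \;\cong\; \bigoplus_\ell \Ind_{\pi_\ell}^\pi \pi_2(M_\ell).
\]

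Next I would compute each summand using the appropriate lemma: the $\Z$-factors coming from $F$ contribute stably trivially by \cref{lem:pi2-z}; each $Z_i$ contributes stably $IZ_i\oplus IZ_i$ by \cref{lem:pi2-fin-cyclic}, which I would rewrite as $IZ_i^v\oplus IZ_i$ using that $v|_{Z_i}$ is trivial; each $G_j$ contributes stably $IG_j^v$ by \cref{lem:pi2-pd3}, using that $v|_{G_j}=(w|_{G_j})\cdot u_j$ by definition; and each $H_k$ contributes stably $IH_k\oplus IH_k^v$ by \cref{lem:pi2-ZZ2} when the image of $[M_\ell]$ in $H_4(H_k;\Z^w)\cong\Z/2$ vanishes, and is stably free (hence stably trivial) otherwise.

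To determine which $H_k$-summands actually contribute, I would use the standard identification $B\pi\simeq\bigvee_\ell B\pi_\ell$, which yields $H_4(\pi;\Z^w)\cong\bigoplus_\ell H_4(\pi_\ell;\Z^{w|_{\pi_\ell}})$. The summands corresponding to $\Z$, $Z_i$, and $G_j$ vanish, by cohomological dimension one, by the $2$-periodicity of the cohomology of finite cyclic groups, and by $PD_3$-duality respectively, leaving $H_4(\pi;\Z^w)\cong\bigoplus_k H_4(H_k;\Z^w)\cong(\Z/2)^t$. Naturality of the classifying map with respect to connected sum shows that the image of $[M]$ is the sum of the images of the $[M_\ell]$ in their respective summands, justifying the reordering so that exactly the first $t'$ of the $H_k$-summands carry trivial image. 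Collecting the surviving contributions and applying \cref{lem:adding-I} iteratively, together with transitivity of induction, recombines them into $\Ind_\Gamma^\pi I\Gamma^v\oplus\Ind_{\Gamma'}^\pi I\Gamma'$, as required.

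The main obstacle is the careful bookkeeping of the three twists $w$, $v'$, and $v=wv'$ across all factors: I would need to verify that the restriction of $v$ to each factor coincides with the orientation character appearing in the corresponding irreducible-case lemma, and that the additivity of the fundamental class image along the decomposition of $H_4(\pi;\Z^w)$ is compatible with the stable splitting, before \cref{lem:adding-I} can be legitimately invoked to assemble the pieces.
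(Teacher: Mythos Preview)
Your proposal is correct and follows essentially the same approach as the paper: stable splitting via \cref{lemma:stable-splitting}, computation of each summand using the irreducible-case lemmas, identification of which $H_k$ contribute via the image of $[M]$ in $H_4(\pi;\Z^w)$, and reassembly via iterated applications of \cref{lem:adding-I} together with transitivity of induction. The paper's proof differs only in presentation, not in substance.
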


\begin{proof}
By \cref{lemma:stable-splitting}, $M$ is stably homeomorphic to
\[
M^F\#\Big(\bighash_{i=1}^r M^{Z_i}\Big)\#\Big(\bighash_{j=1}^s M^{G_j}\Big)\#\Big(\bighash_{k=1}^t M^{H_k}\Big),
\]
where each of $M^F$, $M^{Z_i}$, $M^{G_j}$, and $M^{H_k}$ has fundamental group $F$, $Z_i$, $G_j$, and $H_k$, respectively.
Therefore, we know that $\pi_2(M)$ is stably isomorphic to
\begin{equation}\label{eq:pi2-connsum-decomp-big}
\Ind_F^\pi \pi_2\big(M^F\big) \oplus \bigoplus_{i=1}^r \Ind_{Z_i}^\pi \pi_2\big(M^{Z_i}\big) \oplus\bigoplus_{j=1}^s \Ind_{G_j}^\pi \pi_2\big(M^{G_j}\big) \oplus\bigoplus_{k=1}^t \Ind_{H_k}^\pi \pi_2\big(M^{H_k}\big).
\end{equation}
Next we will consider each summand individually.

By \cref{lem:pi2-z,lemma:stable-splitting}, and using \eqref{eq:pi2-connsum-decomp}, we see that $\pi_2(M^F)$ is stably free. The induction is also stably free, so up to stable isomorphism, the first summand can be ignored.

By \cref{lem:pi2-fin-cyclic} we know that each $\pi_2(M^{Z_i})$ is stably isomorphic to $IZ_i\oplus IZ_i$. Similarly, by \cref{lem:pi2-pd3}, each $\pi_2(M^{G_j})$ is stably isomorphic to $IG_j^{v_j}$, where $v_j=wu_j\colon G_j\to C_2$ and $u_j$ is the orientation character of the aspherical $PD_3$-complex with fundamental group $G_j$. The $M^{H_k}$ factors are slightly more complicated. We saw in \cref{lem:pi2-ZZ2} that $\pi_2(M^{H_k})$ is isomorphic to $IH_k\oplus IH_k^u$, with $u=wu'$ and $u'$ the projection onto the second factor of $H_k= \Z\times \Z/2$, followed by the isomorphism $\Z/2\xrightarrow{\cong}C_2$, in the case of $k\leq t'$, or $\pi_2(M^{H_k})$ is stably free for $t \geq k >t'$. Applying these facts to \eqref{eq:pi2-connsum-decomp-big}, we see that $\pi_2(M)$ is stably isomorphic to
\begin{align}\label{eq:pi2-decomp-lessbig}
&\bigoplus_{i=1}^r \Ind_{Z_i}^\pi (IZ_i\oplus IZ_i)\oplus\bigoplus_{j=1}^s \Ind_{G_j}^\pi IG_j^{v_j} \oplus\bigoplus_{k=1}^{t'} \Ind_{H_k}^\pi(IH_k\oplus IH_k^u) \\
\cong  & \bigoplus_{i=1}^r \Ind_{Z_i}^\pi IZ_i   \oplus  \bigoplus_{i=1}^r \Ind_{Z_i}^\pi IZ_i \oplus \bigoplus_{j=1}^s \Ind_{G_j}^\pi IG_j^{v_j} \oplus \bigoplus_{k=1}^{t'} \Ind_{H_k}^\pi IH_k \oplus \bigoplus_{k=1}^{t'} \Ind_{H_k}^\pi IH_k^u.  \nonumber
\end{align}
Here we use that the induction of a direct sum is the direct sum of the inductions.
We claim that this decomposition equals $\Ind_\Gamma^\pi I\Gamma^v \oplus \Ind_{\Gamma'}^\pi I\Gamma'$.  To see this,
start with each of $\Ind_\Gamma^\pi I\Gamma^v$ and $\Ind_{\Gamma'}^\pi I\Gamma'$, and
apply \cref{lem:adding-I} iteratively, splitting off one factor at a time in the free product \eqref{eqn:Gamma-splitting-big} or \eqref{eqn:Gamma-prime-splitting-big} respectively. Use that $\Ind_K^\pi \Ind_H^K  = \Ind_H^\pi$ for subgroups $H \leq K \leq \pi$.
This shows that
\[
\Ind_\Gamma^\pi I\Gamma^v \cong \bigoplus_{i=1}^r
\Ind_{Z_i}^\pi IZ_i \oplus \bigoplus_{j=1}^s
\Ind_{G_j}^\pi IG_j^{v_j} \oplus \bigoplus_{k=1}^{t'}
\Ind_{H_k}^\pi IH_k^u,
\]
by definition of $v$. Similarly, we see that
\[
\Ind_{\Gamma'}^\pi I\Gamma'\cong \bigoplus_{i=1}^r
\Ind_{Z_i}^\pi IZ_i \oplus\bigoplus_{k=1}^{t'}
\Ind_{H_k}^\pi IH_k.
\]
To complete the proof note that each summand of \eqref{eq:pi2-decomp-lessbig} appears precisely once as a summand of precisely one of $\Ind_\Gamma^\pi I\Gamma^v$ or $\Ind_{\Gamma'}^\pi I\Gamma'$.
\end{proof}

\begin{corollary}
    \label{cor:pi2-decomp}
    Let $\pi$ be a torsion-free $3$-manifold group. Let $M$ be a closed $4$-manifold with fundamental group $\pi$ and orientation character $w\colon \pi\to C_2$ such that $(\pi,w)$ is admissible.
Then $\pi_2(M)$ is stably isomorphic to $I\pi^v$ where $v$ is defined as follows. Since $\pi$ is torsion-free, there is a decomposition $\pi\cong F\ast(\ast_{j=1}^sG_j)$, where $F$ is a free group and each $G_j$ is a $PD_3$-group. Then we have $v=wv'\colon \pi \to C_2$, where $v'$ is trivial on $F$, and on each $G_j$ factor it is the orientation character $u_j$ of the aspherical $PD_3$-complex with fundamental group $G_j$.
\end{corollary}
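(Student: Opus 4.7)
The plan is to derive Corollary~\ref{cor:pi2-decomp} as a direct consequence of Proposition~\ref{prop:pi2-decomp} together with Lemma~\ref{lem:adding-I}, exploiting the fact that torsion-freeness eliminates all finite cyclic and $\Z\times\Z/2$ summands in the prime decomposition.

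First I would observe that if $\pi$ is torsion-free then, in the decomposition~\eqref{eq:pi-decomp}, the integers $r$ and $t$ must both be zero, since the groups $Z_i$ and $H_k\cong\Z\times\Z/2$ all contain torsion. Consequently $\pi\cong F\ast(\ast_{j=1}^sG_j)$, matching the decomposition stated in the corollary. With $t=0$, the integer $t'$ of Proposition~\ref{prop:pi2-decomp} is automatically $0$ as well, so the proposition yields that $\pi_2(M)$ is stably isomorphic to $\Ind_\Gamma^\pi I\Gamma^v$, where $\Gamma=\ast_{j=1}^sG_j$ and $\Gamma'$ is trivial (contributing nothing). Moreover, on $\Gamma$ the character $v=wv'$ agrees precisely with the restriction to $\Gamma$ of the character $v$ defined in the corollary (both equal $w|_{G_j}\cdot u_j$ on each $G_j$).

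Next I would compute $I\pi^v$ directly. Applying Lemma~\ref{lem:adding-I} to the splitting $\pi=F\ast\Gamma$, we obtain
\[
I\pi^v \;\cong\; \Ind_F^\pi IF^{v|_F} \;\oplus\; \Ind_\Gamma^\pi I\Gamma^{v|_\Gamma}.
\]
By the previous paragraph, the second summand already equals the stable isomorphism class of $\pi_2(M)$. So it remains to show that the first summand $\Ind_F^\pi IF^{v|_F}$ is stably free, hence negligible.

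For this, I would use Lemma~\ref{lemma:tensoring-with-Zpi-w-gives-iso} applied to $F$: the map $\omega\colon \Z F^{v|_F}\xrightarrow{\cong}\Z F$ identifies the twisted augmentation kernel $IF^{v|_F}=\ker\varepsilon_{v|_F}$ with the ordinary augmentation ideal $IF$. Since $F$ is free, $IF$ is a free $\Z F$-module (of rank equal to the rank of $F$), so $IF^{v|_F}$ is free as a $\Z F$-module, and therefore $\Ind_F^\pi IF^{v|_F}$ is a free $\Z\pi$-module. Combining these observations,
\[
I\pi^v \;\cong\; (\text{free $\Z\pi$-module}) \;\oplus\; \Ind_\Gamma^\pi I\Gamma^v,
\]
which is stably isomorphic to $\Ind_\Gamma^\pi I\Gamma^v$, and hence stably isomorphic to $\pi_2(M)$, as required. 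The argument is almost entirely bookkeeping given the earlier results, so there is no substantive obstacle; the only care needed is to check that the character $v$ as defined in the corollary restricts correctly to the character appearing in the proposition, which is straightforward from the definitions.
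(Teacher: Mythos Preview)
Your proposal is correct and follows essentially the same route as the paper: specialise Proposition~\ref{prop:pi2-decomp} to the torsion-free case (so $r=t=t'=0$ and $\Gamma'$ is trivial), then use Lemma~\ref{lem:adding-I} together with the freeness of the augmentation ideal of the free factor to absorb $\Ind_F^\pi IF^{v|_F}$ into the stable class. One small caveat: the map $\omega$ from Lemma~\ref{lemma:tensoring-with-Zpi-w-gives-iso} does not literally give a $\Z F$-module isomorphism $IF^{v|_F}\cong IF$ with the \emph{standard} action on both sides (it intertwines the twisted action on the source with the standard one on the target), but your conclusion that $IF^{v|_F}$ is free is correct---for instance because $F$ has cohomological dimension~$1$, or because $\{x_i - v(x_i)\}$ is a free basis when $F$ is free on $\{x_i\}$.
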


\begin{proof}
    By \cref{prop:pi2-decomp}, $\pi_2(M)$ is stably isomorphic to $\Ind_\Gamma^\pi I\Gamma^v$, where $\Gamma=\ast_{j=1}^sG_j$. Since $IF\cong \Z F$, $\Ind_\Gamma^\pi I\Gamma^v$ is stably isomorphic to $I\pi^v$ by \cref{lem:adding-I}.
\end{proof}

\subsection{The second homotopy group \texorpdfstring{$\pi_2(M)$}{} determines the image of the fundamental class in \texorpdfstring{$H_4(\pi;\Z^w)$}{group homology}}
\label{sub:second-hom-gp-determines-image-fund-class}

We conclude the section with a result showing that the image $c_*([M]) \in H_4(\pi;\Z^w)$ of the fundamental class in the group homology is determined by the stable isomorphism class of~$\pi_2(M)$, for a map $c\colon M\to B\pi$ inducing an isomorphism on fundamental groups. We obtain a corollary (\cref{cor:images-fund-classes-in-group-homology}) that we will  use  in the proof of \cref{thm:main-homotopy} in \cref{sec:mainhomotopy-proof} in order to be able to apply \propH; see \cref{section:Property-H} for details on the latter.

\begin{proposition}
\label{prop:pi2-determines-H4-image}
Fix a decomposition for $\pi$ as in \eqref{eq:pi-decomp}.
Let $M$ be a closed $4$-manifold with fundamental group $\pi$ and orientation character $w\colon \pi\to C_2$ such that $(\pi,w)$ is admissible.
Let $c\colon M\to B\pi$ be a continuous map inducing the identity on fundamental groups.
By reordering the factors of $\pi$ if needed, we assume that there exists $t' \leq t$ such that $c_*([M])$ in
$H_4(\pi;\Z^w)\cong \bigoplus_{k=1}^tH_4(H_k;\Z^w)\cong (\Z/2)^t$ is trivial in the first $t'$ summands and nontrivial for $k>t'$.

Then $\Hom_{\Z\pi}(\pi_2(M),\Z H_k)$ is stably free as a $\Z H_k$-module if and only if $k>t'$.
Here the $\pi$-action on $\Z H_k$ is given by the projection to $H_k$ and left multiplication.
\end{proposition}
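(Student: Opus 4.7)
The plan is to apply \cref{prop:pi2-decomp} to stably decompose $\pi_2(M)$ and then use Frobenius reciprocity to reduce the computation of $\Hom_{\Z\pi}(\pi_2(M), \Z H_k)$ to a direct sum indexed by the subgroups appearing in the decomposition. The two cases $k > t'$ and $k \leq t'$ will then be handled separately.

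First, I would invoke \cref{prop:pi2-decomp}, together with the explicit expression \eqref{eq:pi2-decomp-lessbig} from its proof: up to stably free $\Z\pi$-summands, $\pi_2(M)$ is a direct sum of modules $\Ind_L^\pi IL^{v_L}$ where $L$ ranges over the subgroups $Z_i$, $G_j$, and $H_{k'}$ (for $k' \leq t'$), and $v_L$ is the appropriate character. Applying $\Hom_{\Z\pi}(-, \Z H_k)$ and using Frobenius reciprocity, the question reduces to the stable freeness over $\Z H_k$ of a sum of terms $\Hom_{\Z L}(IL^{v_L}, \Res_L^\pi \Z H_k)$, where $\Z H_k$ is a $\Z L$-module via the restriction of the natural projection $p_k \colon \pi \to H_k$ corresponding to the free-product decomposition of $\pi$.

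For $k > t'$, no such subgroup $L$ contains $H_k$ as a free-product factor, so $p_k|_L$ is trivial and $\Z H_k$ carries the trivial $L$-action. Using the adjunction $\Hom_{\Z L}(-, N^{\mathrm{triv}}) \cong \Hom_\Z((-)_L, N)$ together with the flatness of $\Z H_k$ over $\Z$, I would identify
\[\Hom_{\Z L}(IL^{v_L}, \Z H_k) \cong \Hom_\Z((IL^{v_L})_L, \Z) \otimes_\Z \Z H_k\]
as right $\Z H_k$-modules. Since $(IL^{v_L})_L$ is a finitely generated abelian group, $\Hom_\Z((IL^{v_L})_L, \Z)$ is free abelian of finite rank, and hence each summand is a finitely generated free $\Z H_k$-module; the total sum is therefore free over $\Z H_k$.

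For $k \leq t'$, the subgroup $H_k$ itself appears in the decomposition of $\Gamma'$, making $\Ind_{H_k}^\pi IH_k$ a summand of $\pi_2(M)$ (up to stably free). Now $p_k|_{H_k} = \mathrm{id}$, so this summand contributes $\Hom_{\Z H_k}(IH_k, \Z H_k) = IH_k^\dagger$. Dualising $0 \to IH_k \to \Z H_k \to \Z \to 0$, and using $H^0(H_k; \Z H_k) = 0$ (as $H_k$ is infinite) and $H^1(H_k; \Z H_k) \cong \Z$ (read off from $BH_k \simeq S^1 \times \RP^\infty$), one obtains an extension $0 \to \Z H_k \to IH_k^\dagger \to \Z \to 0$. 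If $IH_k^\dagger$ were stably free over $\Z H_k$, then $\Z$ would have a projective resolution of length one over $\Z H_k$, contradicting its infinite projective dimension arising from the $\Z/2$ torsion subgroup of $H_k$. Since the other summands are stably free by the previous case's analysis, the total Hom is not stably free. The main obstacle will be the case $k > t'$: correctly identifying the tensor-product form of each $\Hom_{\Z L}(IL^{v_L}, \Z H_k)$ as a right $\Z H_k$-module and confirming that each is free, which rests on the combination of trivial $L$-action, the adjunction for coinvariants, and flatness of $\Z H_k$ over $\Z$.
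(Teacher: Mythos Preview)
Your approach is essentially the same as the paper's: decompose $\pi_2(M)$ via \cref{prop:pi2-decomp}, apply Frobenius reciprocity, and split into the cases $L\neq H_k$ (trivial action, free contribution) and $L=H_k$ (non-projective contribution). Your argument for non-projectivity of $IH_k^\dagger$ via the extension $0\to\Z H_k\to IH_k^\dagger\to H^1(H_k;\Z H_k)\cong\Z\to 0$ and the infinite projective dimension of $\Z$ is a clean alternative to the paper's route, which instead identifies $IH_k^\dagger\cong IH_k^v$ via \cite{KPT-long}*{Lemma~7.14} and argues through $H_*(H_k;\Z^v)$.

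There is one inaccuracy in your write-up for $k\le t'$: the subgroup $H_k$ appears in \emph{both} $\Gamma$ and $\Gamma'$ (see \eqref{eqn:Gamma-splitting-big} and \eqref{eqn:Gamma-prime-splitting-big}), so there are two summands with $L=H_k$, contributing $\Hom_{\Z H_k}(IH_k^v,\Z H_k)$ and $\Hom_{\Z H_k}(IH_k,\Z H_k)$ respectively. Your sentence ``the other summands are stably free by the previous case's analysis'' is therefore not justified for the first of these. This does not break the argument, however: it suffices that $IH_k^\dagger$ is a direct summand of the total $\Hom$ and is not projective, since a direct summand of a projective module is projective. You should rephrase the final step accordingly.
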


\begin{proof}
By \cref{prop:pi2-decomp} we know that $\pi_2(M)$ is stably isomorphic to \[\Ind_\Gamma^\pi I\Gamma^v \oplus \Ind_{\Gamma'}^\pi I\Gamma',\]
where \[\Gamma= \big(\ast_{i=1}^rZ_i\big)* \big(\ast_{j=1}^sG_j\big)* \big(\ast_{k=1}^{t'}H_k\big)\] and \[\Gamma'= \big(\ast_{i=1}^rZ_i\big) *  \big(\ast_{k=1}^{t'}H_k\big)\]
are subgroups of $\pi$ in the canonical way.  Here $v=wv'\colon \Gamma \to C_2$, where $v'$ is trivial on each $Z_i$, is the projection onto the second factor on $\Z\times \Z/2$ followed by an isomorphism $\Z/2\xrightarrow{\cong}C_2$, and on the $G_j$ it is the orientation character of the aspherical $PD_3$-complex with fundamental group $G_j$. Then, for each $k$, the module $\Hom_{\Z\pi}(\pi_2(M),\Z H_k)$ is stably isomorphic to
\[\Hom_{\Z\pi}(\Ind_\Gamma^\pi I\Gamma^v \oplus \Ind_{\Gamma'}^\pi I\Gamma',\Z H_k)\cong \Hom_{\Z\pi}(\Ind_\Gamma^\pi I\Gamma^v, \Z H_k)\oplus \Hom_{\Z\pi}(\Ind_{\Gamma'}^\pi I\Gamma', \Z H_k).\]
Let us consider the two summands separately. We know by iteratively applying \cref{lem:adding-I} that
\[
\Ind_\Gamma^\pi I\Gamma^v \cong \bigoplus_{\substack{L \text{ a factor}\\ \text{ of } \Gamma \text{ in } \eqref{eqn:Gamma-splitting-big}}} \Ind_L^\pi IL^v.
\]
Then, by adjunction, for each $k$ the module $\Hom_{\Z\pi}(\Ind_\Gamma^\pi I\Gamma^v, \Z H_k)$ is isomorphic to
\[
\bigoplus_{\substack{L \text{ a factor}\\ \text{ of } \Gamma}} \Hom_{\Z L}(IL^v,\Res_L^\pi \Z H_k).
\]
Similarly, using \cref{lem:adding-I} and adjunction, $\Hom_{\Z\pi}(\Ind_\Gamma^\pi I\Gamma', \Z H_k)$ is isomorphic to
\[
\bigoplus_{\substack{L' \text{ a factor}\\ \text{ of } \Gamma'}} \Hom_{\Z L'}(IL',\Res_{L'}^\pi \Z H_k).
\]
By hypothesis the $\pi$ action on $\Z H_k$ is given by projection to $H_k$. Therefore,
\[
\Res_L^\pi \Z H_k= \begin{cases}
   \Z\otimes_\Z \Z H_k,   &\text{if }L\neq H_k\\
    \Z H_k,  &\text{if }L=H_k.
\end{cases}
\]
Here we write $\Z\otimes_\Z \Z H_k$ to emphasise that the $L$ action on $\Z H_k$ is trivial when $L\neq H_k$.
Therefore in the case that $k>t'$, the module $\Hom_{\Z\pi}(\pi_2(M),\Z H_k)$ is stably isomorphic to a direct sum of copies of $\Z H_k\otimes_{\Z} \Hom_{\Z L}(IL^v, \Z)\cong \Hom_{\Z L}(IL^v,\Z\otimes_{\Z}\Z H_k)$ and $\Z H_k\otimes_{\Z} \Hom_{\Z L'}(IL', \Z)\cong \Hom_{\Z L'}(IL',\Z\otimes_{\Z}\Z H_k)$, which are both free, as needed. On the other hand, if $k\leq t'$, then $\Hom_{\Z\pi}(\pi_2(M),\Z H_k)$ is stably isomorphic to $\Hom_{\Z H_k}(IH_k^v,\Z H_k)\oplus \Hom_{\Z H_k}(IH_k,\Z H_k)$.
The proof will be completed by showing that the latter is not a free module.

From \cite{KPT-long}*{Lemma~7.14} we know that $\Hom_{\Z H_k}(IH_k, \Z H_k)\cong IH_k^v$. We will show that $IH_k^v$ is not projective, which implies that
$\Hom_{\Z H_k}(IH_k^v,\Z H_k)\oplus \Hom_{\Z H_k}(IH_k,\Z H_k)$
is not free. Suppose that $IH_k^v$ were projective. Then $IH_k^v\to \Z H_k\to \Z^v$ is a projective resolution for $\Z^v$, implying that $H_i(H_k;\Z^v)=0$ for all $i>1$. This is a contradiction since $H_k=\Z\times \Z/2$, and its group homology with $\Z^v$ coefficients can be computed by hand, writing out a 2-periodic free resolution and tensoring with $\Z^v$. This completes the proof.
\end{proof}

\begin{corollary}\label{cor:images-fund-classes-in-group-homology}
Let $\pi$ be a $3$-manifold group and let $w \colon \pi \to C_2$ be a homomorphism such that $(\pi,w)$ is admissible.
Let $M$ and $M'$ be closed $4$-manifolds with fundamental group $\pi$, and common orientation character $w\colon \pi\to C_2$.
Let $c \colon M \to B\pi$ and $c' \colon M' \to B\pi$ be maps inducing the given identifications $\pi_1(M) \cong \pi$ and $\pi_1(M') \cong \pi$.  Suppose that $M$ and $M'$ have isomorphic quadratic $2$-types. Then $c_*([M]) - (c')_*([M']) =0 \in H_4(\pi;\Z^w)$.
\end{corollary}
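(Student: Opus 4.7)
The plan is to use Proposition~\ref{prop:pi2-determines-H4-image} to pin down the image of the fundamental class one $\Z\times\Z/2$-factor at a time, and to observe that the quadratic $2$-type determines the relevant $\Z\pi$-module $\Hom$-groups out of $\pi_2$.

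First, I would decompose $H_4(\pi;\Z^w)$ according to the prime decomposition \eqref{eq:pi-decomp}. Modelling $B\pi$ as the wedge $BF \vee \bigvee_i BZ_i \vee \bigvee_j BG_j \vee \bigvee_k BH_k$ gives
\[
H_4(\pi;\Z^w) \cong H_4(F;\Z^w) \oplus \bigoplus_i H_4(Z_i;\Z) \oplus \bigoplus_j H_4(G_j;\Z^w) \oplus \bigoplus_k H_4(H_k;\Z^w).
\]
Admissibility forces $w$ to be trivial on torsion, so each $Z_i$-summand is $H_4(Z_i;\Z)$, which vanishes by the $2$-periodicity of the integral cohomology of finite cyclic groups; the free group $F$ has cohomological dimension $1$ and each $PD_3$-group $G_j$ has cohomological dimension $3$, so both summands vanish; and each $H_4(H_k;\Z^w) \cong \Z/2$ by the remark preceding Proposition~\ref{prop:pi2-decomp}. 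Hence $H_4(\pi;\Z^w) \cong (\Z/2)^t$, with one summand for each $\Z\times\Z/2$ prime factor of $\pi$.

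Next, the hypothesis that the quadratic $2$-types are isomorphic provides an isomorphism $g_2\colon \pi_2(M)\xrightarrow{\cong} \pi_2(M')$ of $\Z\pi$-modules (after composing the identifications $\pi_1(M)\cong \pi\cong \pi_1(M')$ with the $\pi_1$-constituent of the quadratic $2$-type isomorphism if necessary, so that the latter becomes the identity). For every $k$ this yields a $\Z H_k$-module isomorphism
\[
\Hom_{\Z\pi}(\pi_2(M),\Z H_k) \cong \Hom_{\Z\pi}(\pi_2(M'),\Z H_k).
\]
By Proposition~\ref{prop:pi2-determines-H4-image} applied to both $M$ and $M'$, the $k$-th component of $c_*([M])\in H_4(\pi;\Z^w)\cong (\Z/2)^t$ is nonzero precisely when $\Hom_{\Z\pi}(\pi_2(M),\Z H_k)$ is stably free, and likewise for $c'_*([M'])$. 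The two $\Hom$-modules being isomorphic then forces the $k$-th components of $c_*([M])$ and $c'_*([M'])$ to agree for every $k$, so summing over $k$ gives $c_*([M]) - c'_*([M']) = 0$ in $H_4(\pi;\Z^w)$.

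I expect the main technical point to be the bookkeeping in the middle paragraph, namely that the $g_2$ furnished by the quadratic $2$-type can be taken as a genuine $\Z\pi$-module isomorphism rather than a map equivariant only for some nontrivial automorphism of $\pi$. Once this is in place, the rest is a direct translation of Proposition~\ref{prop:pi2-determines-H4-image} applied twice.
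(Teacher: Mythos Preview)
Your proposal is correct and follows essentially the same approach as the paper: decompose $H_4(\pi;\Z^w)\cong(\Z/2)^t$ via the prime decomposition, then apply Proposition~\ref{prop:pi2-determines-H4-image} twice, using that the $\Z\pi$-module isomorphism $\pi_2(M)\cong\pi_2(M')$ forces $\Hom_{\Z\pi}(\pi_2(M),\Z H_k)$ and $\Hom_{\Z\pi}(\pi_2(M'),\Z H_k)$ to be simultaneously stably free or not. The paper's proof is a two-sentence version of yours.

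One caution on your final paragraph: your proposed fix of composing with $g_1$ to make it the identity is not quite innocuous, since changing the identification $\pi_1(M')\cong\pi$ also changes $c'$ and hence $c'_*([M'])$. The paper does not address this point either; the intended reading (consistent with the application in \cref{sec:mainhomotopy-proof}) is that the identifications $\pi_1(M)\cong\pi\cong\pi_1(M')$ are chosen so that the $\pi_1$-component of the quadratic $2$-type isomorphism is the identity from the outset, so that $g_2$ is a genuine $\Z\pi$-module isomorphism.
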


As in \cref{prop:pi2-decomp,prop:pi2-determines-H4-image}, $H_4(\pi;\Z^w) \cong \bigoplus_t H_4(H_k;\Z^w) \cong (\Z/2)^t$. By \cref{prop:pi2-determines-H4-image}, the image $c_*[M]$ is nontrivial in the $k$th summand if and only if $\Hom_{\Z\pi}(\pi_2(M),\Z H_k)$ is stably free.

\begin{proof}
Since $M$ and $M'$ have isomorphic quadratic $2$-types, in particular $\pi_2(M) \cong \pi_2(M')$, and hence for each $H_k$ factor of $\pi$, we know $\Hom_{\Z\pi}(\pi_2(M),\Z H_k)$ is stably free if and only if $\Hom_{\Z\pi}(\pi_2(M'),\Z H_k)$ is stably free.
Hence by \cref{prop:pi2-determines-H4-image} the images  $c_*([M])$ and $(c')_*([M'])$ agree in $H_4(\pi;\Z^w)$, and so their difference vanishes.
\end{proof}

\section{P2FA and \texorpdfstring{\HtFAs}{P2FA*} groups}\label{sec:H2FA}

The main goal of this section is to prove \cref{prop:H2FA-3groups}, showing that if a $4$-manifold $M$ has a $3$-manifold group as fundamental group, then $\pi_2(M)$ is free an abelian group.
We do not need to restrict to admissible $(\pi,w)$ here.
We begin by introducing a relevant property of groups.

\begin{definition}\label{def:H2FA}
Let $\pi$ be a finitely presented group.
\begin{enumerate}[label=(\roman*)]
    \item  A group $\pi$ is \emph{\HtFA} if, for all closed $4$-manifolds $M$ with fundamental group $\pi$ and for all orientation characters, $\pi_2(M)$ is free as an abelian group.
    \item Further, a \HtFA\ group is said to be \emph{\HtFAs} if $\pi_2(M)^\dagger$ is free as an abelian group for all closed $4$-manifolds $M$ with fundamental group $\pi$ and for all orientation characters.
\end{enumerate}
\end{definition}

The following exact sequence will be useful in the proof of the next lemma.
\begin{remark}\label{remark:ucss-subgroup-pi2}
By the universal coefficient spectral sequence \cite{levine:modules}*{Theorem~2.3},
the sequence
\begin{equation}\label{eq:ucss-one zero}
0\to H^2(\pi;\Z\pi)\xrightarrow{c^*} H^2(X;\Z\pi)\xrightarrow{\ev}H_2(X;\Z\pi)^\dagger\to H^3(\pi;\Z\pi)\xrightarrow{c^*} H^3(X;\Z\pi)
\end{equation}
is exact for any space $X$ and any $2$-connected map $c\colon X\to B\pi$; see e.g.~\cite{FMGK}*{Lemma 3.3} or \cite{KPT-long}*{Proposition~3.3} for the derivation. 

If $X$ is a $3$-manifold with fundamental group $\pi$ and orientation character $w$, then $H^2(X;\Z\pi)\cong H_1(X;\Z\pi^w)=0$ by Poincar\'e duality. In this case, it follows from the exactness of \eqref{eq:ucss-one zero} that $H^2(\pi;\Z\pi)=0$.
If $X=M$ is a $4$-manifold then $H^3(M;\Z\pi)=0$, by Poincar\'e duality.
\end{remark}

\begin{lemma}\label{lem:P2TA-H2free}
		A group $\pi$ is \HtFA\ if and only if $H^2(\pi;\Z\pi)$ is free as an abelian group.
\end{lemma}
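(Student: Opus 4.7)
My plan is to apply the UCSS exact sequence of \cref{remark:ucss-subgroup-pi2} to a closed $4$-manifold $M$ with $\pi_1(M)\cong\pi$ and extract a short exact sequence of abelian groups with $\pi_2(M)$ in the middle and $H^2(\pi;\Z\pi)$ on one side. First I would simplify \eqref{eq:ucss-one zero} using Poincar\'e duality. For a closed $4$-manifold $M$ we have $H^3(M;\Z\pi)\cong H_1(M;\Z\pi^w)$, whose underlying abelian group is $H_1(\wt M;\Z)=0$; and $H^2(M;\Z\pi)\cong H_2(M;\Z\pi^w)$, whose underlying abelian group is $H_2(\wt M;\Z)=\pi_2(M)$. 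So the sequence becomes an exact sequence of abelian groups
\[
0\to H^2(\pi;\Z\pi)\to \pi_2(M)\to \pi_2(M)^\dagger\to H^3(\pi;\Z\pi)\to 0.
\]

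For the only-if direction, since $\pi$ is finitely presented there is some closed $4$-manifold $M$ with $\pi_1(M)\cong\pi$ (e.g.\ the boundary of a regular neighbourhood of an embedded presentation $2$-complex in $\R^5$). The \HtFA\ hypothesis then says $\pi_2(M)$ is free abelian, so $H^2(\pi;\Z\pi)$ is free abelian as a subgroup of a free abelian group.

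For the if direction, let $M$ be any closed $4$-manifold with $\pi_1(M)\cong\pi$, and set $K:=\ker\!\left(\pi_2(M)^\dagger\to H^3(\pi;\Z\pi)\right)$. The displayed sequence gives a short exact sequence of abelian groups
\[
0\to H^2(\pi;\Z\pi)\to \pi_2(M)\to K\to 0.
\]
By hypothesis, the left term is free abelian. For the right term, $\pi_2(M)$ is finitely generated as a $\Z\pi$-module (since $\wt M$ has a CW structure with finitely many $\pi$-orbits of cells), so picking a surjection $F\twoheadrightarrow\pi_2(M)$ from a finitely generated free $\Z\pi$-module and dualising embeds $\pi_2(M)^\dagger$ into $F^\dagger$, a finitely generated free $\Z\pi$-module. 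This is free as an abelian group, so $K$ is free abelian too. Since $K$ is free abelian the extension splits, and $\pi_2(M)$ is free abelian.

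The main subtlety I expect is identifying underlying abelian groups correctly via Poincar\'e duality with twisted coefficients (the $w$-twist changes the module structure but not the underlying abelian group), together with the observation that $\pi_2(M)^\dagger$ is automatically free abelian; this is really where finite generation of $\pi_2(M)$ enters the argument.
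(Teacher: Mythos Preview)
Your only-if direction is fine and matches the paper. The if direction has a genuine gap: the claim that $\pi_2(M)$ is finitely generated as a $\Z\pi$-module does not follow from $\wt M$ having finitely many $\pi$-orbits of cells. That only gives finitely generated chain modules $C_i(\wt M)$; since $\Z\pi$ is not Noetherian for general finitely presented $\pi$, the submodule $\ker d_2\subseteq C_2$ need not be finitely generated, and neither need $\pi_2(M)=\ker d_2/\im d_3$. In fact this conclusion is essentially equivalent to $\pi$ being $FP_3$ (compare the paper's \cref{FP3+}, where $FP_3$ is an explicit extra hypothesis used precisely to get $\pi_2(M)$ finitely generated), and there exist finitely presented groups that are not $FP_3$. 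Without finite generation you cannot embed $\pi_2(M)^\dagger$ into a finitely generated free $\Z\pi$-module, so your argument that $K$ is free abelian collapses. Note also that if your argument worked it would prove that $\pi_2(M)^\dagger$ is always free abelian, i.e.\ that every \HtFA\ group is \HtFAs; the paper requires extra hypotheses for this in \cref{FP3+}.

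The paper sidesteps the issue entirely. Rather than using the UCSS sequence, it uses that $\pi_2(M)$ is stably an extension of $\ker d_2$ by $\coker d^2$ (for $(C_*,d_*)$ a free $\Z\pi$-resolution of $\Z$), and then exhibits the short exact sequence $0\to H^2(\pi;\Z\pi)\to \coker d^2\to \im d^3\to 0$. The point is that $\im d^3\subseteq C^3$ is a subgroup of a free abelian group and hence free abelian for elementary reasons, with no finite-generation input required. So the right-hand term in the paper's extension is automatically free abelian, whereas the right-hand term $K$ in yours is not.
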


It is not known whether $H^2(\pi;\Z{\pi})$ is free as an abelian group for every finitely presentable group $\pi$.
There are finitely generated groups for which $H^2(\pi;\Z{\pi})$ is not free~\cite{Geoghegan}*{Chapter~13}.

\begin{proof}
By \eqref{eq:ucss-one zero}, $H^2(\pi;\Z\pi)$ is a subgroup of  $H^2(M;\Z\pi)\cong H_2(M;\Z\pi^w)$, which is isomorphic to $\pi_2(M)$ as an abelian group. So if $\pi$ is \HtFA, then $H^2(\pi;\Z\pi)$ is free as an abelian group.
	
For the converse, let $(C_*,d_*)$ be a free resolution of $\Z$ as a $\Z\pi$-module. Let $M$ be a closed $4$-manifold with fundamental group $\pi$.
By \citelist{\cite{Hambleton-Kreck:1988-1}*{Proposition~2.4}\cite{Ha09}*{Theorem 4.2}\cite{KPT}*{Proposition 1.10}}, $\pi_2(M)$ is stably an extension of $\ker d_2$ and $\coker d^2$. The module $\ker d_2$ is always free as an abelian group since it is a submodule of the free module $C_2$.
The extension
\[0\to \ker d^3/\im d^2\to C^2/\im d^2 \to C^2/\ker d^3\to 0,\]
together with the fact that $C^2/\ker d^3\cong \im d^3$, gives rise to the extension
\begin{equation}\label{eqn:an-extension} 0\to H^2(C^*)\to \coker d^2\to \im d^3\to 0.\end{equation}
Since $\im d^3$ is also a submodule of the free module $C^3$, it is free as an abelian group, and so $\coker d^2$, and hence $\pi_2(M)$, is free as an abelian group if and only if $H^2(C^*)\cong H^2(\pi;\Z\pi)$ is free as an abelian group.
\end{proof}

Recall that a group $\pi$ is said to be $FP_n$ if there is a resolution of $\Z$ by $\Z\pi$-modules such that the first $n$ terms are finitely generated projective modules, and that $\pi$ is $FP$ if there is a resolution with nonzero groups in only finitely many degrees.

\begin{lemma}
	\label{FP3+}
Let $\pi$ be a \HtFA\ group. Suppose that $H^3(\pi;\Z\pi)$ is free as an abelian group or that $\pi$ is $FP_3$.  Then $\pi$ is \HtFAs.
\end{lemma}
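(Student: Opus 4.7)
The plan is to show in each case that $\pi_2(M)^\dagger$ is a subgroup of a free abelian group, which suffices because subgroups of free abelian groups are themselves free abelian. Before splitting into cases, I would record three points used throughout. First, $\pi_2(M)^\dagger = \Hom_{\Z\pi}(\pi_2(M),\Z\pi)$ is torsion-free as an abelian group, since $\Z\pi$ is. Second, by Poincar\'e duality combined with the isomorphism $\omega$ from \cref{lemma:tensoring-with-Zpi-w-gives-iso}, there are abelian group isomorphisms $H^2(M;\Z\pi)\cong H_2(M;\Z\pi^w)\cong \pi_2(M)$. Third, since $\pi$ is \HtFA, both $\pi_2(M)$ (by definition) and $H^2(\pi;\Z\pi)$ (by \cref{lem:P2TA-H2free}) are free abelian; hence so is $H^2(M;\Z\pi)$ by the previous point.

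If $\pi$ is $FP_3$, I would take a free resolution $(C_*,d_*)$ of $\Z$ over $\Z\pi$ with $C_0,\ldots,C_3$ finitely generated. The stable extension $0\to \coker d^2\to \pi_2(M)\oplus F\to \ker d_2 \to 0$ from the proof of \cref{lem:P2TA-H2free} then has outer terms $\ker d_2 = \im d_3$ and $\coker d^2$ that are quotients of the finitely generated modules $C_3$ and $C^2$ respectively, hence both are finitely generated. Consequently $\pi_2(M)$ is finitely generated as a $\Z\pi$-module, and dualising any surjection $(\Z\pi)^N \twoheadrightarrow \pi_2(M)$ yields an injection $\pi_2(M)^\dagger \hookrightarrow (\Z\pi)^N$ into a free abelian group.

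If instead $H^3(\pi;\Z\pi)$ is free abelian, then I would invoke the universal coefficient exact sequence \eqref{eq:ucss-one zero} applied to $M$, which together with $H^3(M;\Z\pi)=0$ from Poincar\'e duality gives
\[
0 \to H^2(\pi;\Z\pi) \to H^2(M;\Z\pi) \to \pi_2(M)^\dagger \to H^3(\pi;\Z\pi) \to 0.
\]
Writing $B := \coker(H^2(\pi;\Z\pi) \hookrightarrow H^2(M;\Z\pi))$, the derived short exact sequence $0 \to B \to \pi_2(M)^\dagger \to H^3(\pi;\Z\pi) \to 0$ splits as abelian groups because $H^3(\pi;\Z\pi)$ is free abelian, reducing the task to showing $B$ is free abelian. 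For this I would combine the stable extension $0 \to \coker d^2 \to \pi_2(M)\oplus F \to \ker d_2 \to 0$ with the subsidiary extension $0 \to H^2(\pi;\Z\pi) \to \coker d^2 \to \im d^3 \to 0$ from the proof of \cref{lem:P2TA-H2free}, and use the Poincar\'e duality abelian group isomorphism $H^2(M;\Z\pi)\cong\pi_2(M)$ to identify $B$ with an abelian group extension of $\ker d_2$ by $\im d^3$. Both of these are submodules of the free $\Z\pi$-modules $C_2$ and $C^3$ and hence are free abelian; the extension then splits, making $B$ free abelian. The main obstacle here is the careful identification of the subgroup $H^2(\pi;\Z\pi)\subseteq H^2(M;\Z\pi)$ from the universal coefficient sequence with the analogous subgroup $H^2(\pi;\Z\pi)\subseteq\coker d^2\subseteq\pi_2(M)\oplus F$ coming from the stable extension structure, via Poincar\'e duality.
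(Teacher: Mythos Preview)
Your $FP_3$ argument is essentially the paper's: both show $\pi_2(M)$ is finitely generated over $\Z\pi$ and then embed its dual into a finite free module. For the second case, however, there is a genuine gap at exactly the point you flag as ``the main obstacle.'' You need the Poincar\'e duality isomorphism $H^2(M;\Z\pi)\cong\pi_2(M)$ of abelian groups to carry $c^*H^2(\pi;\Z\pi)$ to the copy of $H^2(\pi;\Z\pi)$ lying inside $\coker d^2\subseteq\pi_2(M)\oplus F$. But cap product with $[M]$ has no a priori relation to the filtration coming from the stable extension; and in any case it is $\pi_2(M)\oplus F$, not $\pi_2(M)$, that sits in that extension, so the ambient groups do not even match before you try to quotient. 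Without this identification the argument does not go through, and I do not see how to supply it.

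The paper avoids Poincar\'e duality for this case. One chooses a finite CW model $X\simeq M$ and a model for $B\pi$ sharing the \emph{same} $2$-skeleton $K=X^{(2)}$, so the cochain complexes agree in degrees $\leq 2$ and $d^2_X=d^2_\pi$. Then $\ker d^3_\pi\subseteq\ker d^3_X$ as literal subgroups of the common $C^2$, and $\coker c^*=\ker d^3_X/\ker d^3_\pi$ on the nose. A short diagram chase (using that $\ker d^3_X\subseteq C^2$ and $C^2/\ker d^3_\pi\cong\im d^3_\pi\subseteq C^3_\pi$ are free abelian) shows $\coker c^*\oplus C^2$ is free abelian, hence so is $\coker c^*$ as a direct summand. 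One then finishes via the splitting of $0\to\coker c^*\to\pi_2(M)^\dagger\to H^3(\pi;\Z\pi)\to 0$, exactly as you propose.
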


\begin{proof}
    Let $M$ be a closed $4$-manifold with fundamental group $\pi$.
    Then $M$ is homotopy equivalent to a finite 4-dimensional CW complex $X$ (see \cite{guide}*{Theorem~3.17} for references).
    We first assume that $\pi$ is~$FP_3$. Then there is a resolution $(P_*,d_*^\pi)$ with $P_i$ a finitely generated projective $\Z\pi$-module for $i \leq 3$. In particular $\ker d_2^\pi = \im d_3^\pi$ is finitely generated.
	Let $C_*=(C_*(X;\Z{\pi}),d_*^X)$ be the cellular chain complex of the universal cover of $X$, considered as a complex of finitely generated free left $\Z{\pi}$-modules.
	Since $0 \to  \ker d_2^\pi \to P_2 \to P_1 \to P_0 \to \Z \to 0$ and $0 \to \ker d_2^X \to C_2 \to C_1 \to C_0 \to \Z \to 0$ are both exact, with $\ker d_2^\pi$, $P_i$, and $C_i$ finitely generated, it follows from Schanuel's lemma that $\ker d_2^X$ is finitely generated.
	Hence $\pi_2(M)\cong H_2(M;\Z{\pi})\cong {H_2(C_*)}$ is finitely generated as a  $\Z\pi$-module.
	Let $k$ be such that $\pi_2(M)$ is a quotient of $(\Z{\pi})^k$. Then by left exactness $\pi_2(M)^\dagger$ is a subgroup of
	$((\Z{\pi})^{k})^{\dagger}\cong(\Z{\pi})^k$, so is free as an abelian group.
	Thus $\pi$ is \HtFAs.
	
	Now we assume that $H^3(\pi;\Z\pi)$ is free as an abelian group.
    As above, let $C_*=(C_*(X;\Z{\pi}),d_*^X)$ be the cellular chain complex of the universal cover of $X$, where $X$ is a finite 4-dimensional CW complex homotopy equivalent to $M$.
    Let $K := X^{(2)}$ be the $2$-skeleton of~$X$, and consider a CW model for $B\pi$ with $2$-skeleton $K$.
	Let $(C_*^\pi,d_*^\pi)$ be the free resolution of $\Z$ as a $\Z\pi$-module obtained from the cellular chain complex of $E\pi$. Let $c\colon X\to B\pi$ be a map that is the inclusion $K\to B\pi$ on the $2$-skeleton. Then $c$ induces a map of the chain complexes that is the identity in degrees up to two.
    There is a short exact sequence
    \[0\to \ker d^3_X\xrightarrow{(p,i)} (\ker d^3_X/\im d^2_X)\oplus C^2\xrightarrow{j-q} \coker d^2_X\to 0,\]
    where $i,j,p,$ and $q$ are the canonical inclusion and projection maps. Using that $d^2_\pi=d^2_X$, this fits into the following commutative diagram, where $j',p', q',$ and all unlabelled arrows are again the canonical inclusion and projection maps.
    \[\begin{tikzcd}
		&&&0\ar[d]&\\
		&&\ker d^3_\pi/\im d^2_X\ar[r,"="]\ar[d,"{(c^*,0)}"]&\ker d^3_\pi/\im d^2_X\ar[d]&\\
		0\ar[r]&\ker d^3_X\ar[r,"{(p,i)}"]\ar[d,"="]& (\ker d^3_X/\im d^2_X)\oplus C^2\ar[d]\ar[r,"j-q"]&\coker d^2_X\ar[r]\ar[d]&0\\
		0\ar[r]&\ker d^3_X\ar[r,"{(p',i)}"]&(\ker d^3_X/\ker d^3_\pi)\oplus C^2\ar[d]\ar[r,"j'-q'"]&C^2/\ker d^3_\pi\ar[d]\ar[r]&0\\
		&&0&0&
	\end{tikzcd}\]
    Since $\ker d^3_X\subseteq C^2$ and $C^2/\ker d^3_\pi\cong \im d^3_\pi \subseteq C^3_\pi$ are free as abelian groups, it follows from the bottom exact sequence that $(\ker d^3_X/\ker d^3_\pi)\oplus C^2$ is free as an abelian group. The middle vertical sequence identifies this term with $\coker c^*\oplus C^2$. Hence $\coker c^*$ is free as an abelian group.

By \eqref{eq:ucss-one zero} we have the short exact sequence
\[0\to \coker c^*\to H_2(M;\Z\pi)^\dagger\to H^3(\pi;\Z\pi)\to 0.\]
Hence $H_2(M;\Z\pi)^\dagger$ is free as an abelian group if $H^3(\pi;\Z\pi)$ is free as an abelian group.
\end{proof}

\begin{corollary}\label{ex:H2FA}
   Suppose that $\pi$ is a $PD_n$-group for some $n$. Then $\pi$ is \HtFAs. In particular, the trivial group, $\Z$, and all $PD_3$-groups are \HtFAs.
\end{corollary}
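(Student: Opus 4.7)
The plan is to combine Lemmas \ref{lem:P2TA-H2free} and \ref{FP3+} with the standard cohomological property of $PD_n$-groups. Recall that for a $PD_n$-group $\pi$, Poincar\'e duality applied to the (finite) classifying space $B\pi$, together with the acyclicity of its universal cover $E\pi$, gives the computation $H^i(\pi;\Z\pi) = 0$ for $i \neq n$ and $H^n(\pi;\Z\pi) \cong \Z$. In particular, both $H^2(\pi;\Z\pi)$ and $H^3(\pi;\Z\pi)$ are free as abelian groups in all cases, being either trivial or isomorphic to $\Z$.

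First, since $H^2(\pi;\Z\pi)$ is free abelian, Lemma \ref{lem:P2TA-H2free} immediately yields that $\pi$ is \HtFA. To upgrade this to \HtFAs, I would invoke Lemma \ref{FP3+}: since $H^3(\pi;\Z\pi)$ is also free abelian, the first hypothesis of that lemma is satisfied, and the conclusion follows. As an alternative route, the classifying space of a $PD_n$-group is a finite CW complex under our conventions, so $\pi$ is of type $FP$ and in particular $FP_3$, which is the other hypothesis available in Lemma \ref{FP3+}.

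For the ``in particular'' assertion, I would observe that the trivial group is a $PD_0$-group (classifying space a point), that $\Z$ is a $PD_1$-group (classifying space $S^1$), and that $PD_3$-groups are $PD_n$-groups for $n=3$ by definition, so the three listed cases are all covered.

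There is essentially no obstacle here: the substantive work has been packaged into Lemmas \ref{lem:P2TA-H2free} and \ref{FP3+}, and the only additional ingredient is the well-known computation of $H^*(\pi;\Z\pi)$ for a $PD_n$-group.
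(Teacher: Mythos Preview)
Your proposal is correct and matches the paper's proof essentially verbatim: compute $H^k(\pi;\Z\pi)\in\{0,\Z\}$ for a $PD_n$-group via Poincar\'e duality, then apply Lemma~\ref{lem:P2TA-H2free} followed by Lemma~\ref{FP3+}. The paper's version is terser and does not spell out the ``in particular'' clause or the alternative $FP_3$ route, but the argument is the same.
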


\begin{proof}
 If $\pi$ is a $PD_n$-group, then $H^k(\pi;\Z\pi)\cong H_{n-k}(\pi;\Z\pi)\in \{0,\Z\}$ is free as an abelian group for all $k$.
Thus $\pi$ is \HtFA\ by \cref{lem:P2TA-H2free} and
is \HtFAs\ by \cref{FP3+}.
\end{proof}

Let $\pi$ be a finitely presented group. Recall that for $G\leq \pi$ and a $\Z G$-module $A$, we denote the module $\Hom_{\Z G}(A,\Z G)$ by $A^\star$. As usual, for a $\Z\pi$-module $A$, we denote the module $\Hom_{\Z\pi}(A,\Z\pi)$ by $A^\dagger$.

\begin{lemma}\label{lem:ind-star-dagger-isom}
 Let $\pi$ be a finitely presented group, and let $G\leq \pi$ be a subgroup, let $A$ be a $\Z G$-module, and let $w\colon \pi\to C_2$ be a  homomorphism. 
    Then the map
    \begin{align*}
        \alpha\colon \Ind_G^\pi (A^\star)   &\to (\Ind_G^\pi A)^\dagger\\
        \gamma\otimes f   &\mapsto \big(\gamma\otimes a \mapsto \gamma f(a) \ol{\gamma}\big)
    \end{align*}
    is an isomorphism.
    As usual, we view $(\Ind_G^\pi A)^\dagger$ as a left module using the involution $\gamma\mapsto \overline{\gamma}=w(\gamma)\gamma^{-1}$.
\end{lemma}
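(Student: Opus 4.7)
The plan is first to check that $\alpha$ is well-defined and left $\Z\pi$-linear by direct computation, and then to exhibit an explicit inverse via the tensor--hom adjunction together with a coset decomposition of $\Z\pi$ as a $\Z G$-module.

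Well-definedness on the tensor product reduces to verifying $\alpha(\gamma g \otimes f) = \alpha(\gamma \otimes g f)$ for $g \in G$. The left $\Z G$-action on $A^\star = \Hom_{\Z G}(A, \Z G)$ is obtained from the natural right action via the involution, giving $(g f)(a) = f(a)\ol{g}$. Evaluating both sides of the identity at $\gamma' \otimes a$ yields $\gamma' f(a)\ol{\gamma g}$ and $\gamma' f(a)\ol{g}\,\ol{\gamma}$ respectively, which agree since $\ol{\gamma g} = \ol{g}\,\ol{\gamma}$. Left $\Z\pi$-linearity of $\alpha$ with respect to the involution-twisted left action $(r \cdot \phi)(m) = \phi(m)\ol{r}$ on $(\Ind_G^\pi A)^\dagger$ is an analogous routine check.

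For the isomorphism, I would pass through the tensor--hom adjunction
\[
(\Ind_G^\pi A)^\dagger = \Hom_{\Z\pi}(\Z\pi \otimes_{\Z G} A, \Z\pi) \cong \Hom_{\Z G}(A, \Res_G^\pi \Z\pi),
\]
under which a map $\phi$ corresponds to $\phi_0(a) := \phi(1 \otimes a)$. Fixing a set of right coset representatives $\{\eta_i\}_{i \in I}$ for $G$ in $\pi$ gives a decomposition $\Z\pi = \bigoplus_i \Z G \cdot \eta_i$ as a left $\Z G$-module, so each $\phi_0$ decomposes uniquely as $\phi_0(a) = \sum_i f_i(a)\eta_i$ with $f_i \in A^\star$. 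A parallel decomposition of $\Ind_G^\pi A^\star = \Z\pi \otimes_{\Z G} A^\star$ via left coset representatives, combined with the bijection between left and right cosets given by the involution $\gamma \mapsto \ol{\gamma}$, allows one to reassemble the $f_i$ into an element of $\Ind_G^\pi A^\star$, and thus to define $\alpha^{-1}(\phi)$. Both compositions $\alpha\circ \alpha^{-1}$ and $\alpha^{-1}\circ\alpha$ are then verified to be the identity by unwinding the formula for $\alpha$ and invoking the uniqueness of the coset expansions.

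The main obstacle is the careful bookkeeping of the interaction of the involution with the left/right $\Z G$-module structures on $A^\star$, on $\Z\pi$, and on the various $\Hom$-spaces, together with matching the left and right coset decompositions of $\Z\pi$ on the two sides. Once the conventions are consistently fixed, the verification of the inverse is a routine computation.
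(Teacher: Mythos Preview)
Your approach is correct and essentially the same as the paper's: both hinge on the tensor--hom adjunction and the freeness of $\Z\pi$ as a $\Z G$-module. The paper presents this more concisely by factoring $\alpha$ as a composite of three canonical isomorphisms $\Ind_G^\pi A^\star \cong A^\star \otimes_{\Z G} \Z\pi \cong \Hom_{\Z G}(A, \Z G \otimes_{\Z G} \Z\pi) \cong (\Ind_G^\pi A)^\dagger$, where the middle map is precisely your coset-decomposition argument packaged as the natural isomorphism $\Hom(-,M)\otimes N \to \Hom(-,M\otimes N)$ for $N$ free, rather than constructing an explicit inverse by hand.
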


\begin{proof}
    The map $\alpha$ is the composition
    \[
    \begin{tikzcd}[row sep=2mm, column sep=scriptsize]
    \Ind_G^\pi   (A^\star) \ar[r,"\cong"]  &\Hom_{\Z G}(A, \Z G)\otimes_{\Z G} \Z\pi \ar[r,"\cong"]    &\Hom_{\Z G} (A, \Z G\otimes_{\Z G} \Z\pi)\ar[r,"\cong"] &(\Ind_G^\pi A)^\dagger\\
    \gamma\otimes f \ar[r, mapsto]  &(a\mapsto f(a))\otimes \ol{\gamma}\ar[r, mapsto]  & (a\mapsto f(a)\otimes \ol{\gamma})\ar[r, mapsto]  &(\gamma \otimes a\mapsto \gamma f(a)\ol{\gamma}).
    \end{tikzcd}
    \]
    Here the first isomorphism follows from the definition, switching the left module structure to a right module structure using the involution on $\Z\pi$. The second isomorphism uses the fact that $\Z\pi$ is a free $\Z G$-module. The third isomorphism uses adjunction, and can be factored into the following three isomorphisms:
    \[(\Ind_G^\pi A)^\dagger \cong \Hom_{\Z\pi}(\Ind_G^\pi A,\Z\pi) \cong \Hom_{\Z G}(A,\Res_G^\pi \Z\pi) \cong \Hom_{\Z G}(A,\Z G \otimes_{\Z G} \Z\pi). \qedhere\]
\end{proof}

\begin{lemma}\label{lemma:Mayer-vietoris-forH3}
Let $Y$ be an orientable $3$-manifold and let $\pi := \pi_1(Y)$.
By the prime decomposition theorem for orientable $3$-manifolds, $\pi \cong \big(*_{i=1}^k L_i\big) * \big(*_{j=1}^sG_j\big)$, for some $k,s$, where each $L_i$ is infinite cyclic or finite, and each $G_j$ is a $PD_3$-group.
Then $H^3(\pi;\Z\pi)\cong\oplus_{j=1}^s E_j$, where $E_j:=\mathbb{Z}\pi\otimes_{\Z G_j}\Z = \Ind_{G_j}^{\pi} (\Z)$.
\end{lemma}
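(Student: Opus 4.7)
The plan is to reduce the computation to each free-product factor via a Mayer--Vietoris sequence, and then compute each summand using cohomological dimension, Shapiro's lemma, and Poincaré duality. First I would use the Bass--Serre / Mayer--Vietoris long exact sequence associated to an expression of $\pi$ as an amalgamated free product $A * B$ over the trivial subgroup,
\[\cdots \to H^{n-1}(1;M) \to H^n(A*B;M) \to H^n(A;M) \oplus H^n(B;M) \to H^n(1;M) \to \cdots,\]
applied with $M = \Z\pi$ viewed as a $\Z(A*B)$-module by restriction along $A*B \hookrightarrow \pi$. Since $H^k(1;M) = 0$ for $k \geq 1$, for all $n \geq 2$ this yields a natural isomorphism $H^n(A*B;\Z\pi) \cong H^n(A;\Z\pi) \oplus H^n(B;\Z\pi)$ of $\Z\pi$-modules. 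Iterating this over the factors in the free product decomposition of $\pi$ produces $H^3(\pi;\Z\pi) \cong \bigoplus_i H^3(L_i;\Z\pi) \oplus \bigoplus_j H^3(G_j;\Z\pi)$.

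Next I would compute each summand. If $L_i$ is infinite cyclic then $L_i$ has cohomological dimension~$1$ and so $H^3(L_i;\Z\pi) = 0$. If $L_i$ is finite then, restricted to $\Z L_i$, the module $\Z\pi$ splits as a direct sum of copies of $\Z L_i$ indexed by coset representatives of $L_i \backslash \pi$. Since finite groups are of type $FP_\infty$, the functor $H^n(L_i;-)$ commutes with this direct sum, and Shapiro's lemma identifies each summand $H^n(L_i;\Z L_i) \cong H^n(1;\Z)$, which vanishes for $n \geq 1$. So once more $H^3(L_i;\Z\pi) = 0$.

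For $G_j$ a $PD_3$-group I would first observe that each $G_j$ is orientable: the $G_j$ arise as the fundamental groups of the aspherical prime summands of $Y$ (the remaining summands have finite or infinite cyclic fundamental groups and contribute the $L_i$), and these aspherical summands inherit orientability from $Y$. Poincaré duality then gives $H^3(G_j;N) \cong H_0(G_j;N)$ for every $\Z G_j$-module $N$, so $H^3(G_j;\Z\pi) \cong (\Z\pi)_{G_j}$. The residual right $\Z\pi$-action on $\Z\pi$ commutes with the left $G_j$-action and therefore descends to the coinvariants; converting this to a left action via the involution $g \mapsto g^{-1}$ identifies the resulting left $\Z\pi$-module with $\Ind_{G_j}^\pi \Z = E_j$, completing the proof.

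The main technical hazard I anticipate is bookkeeping: keeping track of the $\Z\pi$-module structures (in particular the left/right conversions via the involution) throughout both the Mayer--Vietoris iteration and the Poincaré duality identification, so that the final isomorphism is genuinely as left $\Z\pi$-modules with the correct $\pi$-action on each $E_j$. The individual computations are otherwise routine applications of standard tools.
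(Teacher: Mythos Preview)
Your proposal is correct and follows essentially the same approach as the paper: Mayer--Vietoris for the free product, vanishing for the $L_i$ summands, and Poincar\'e duality for the $G_j$ summands. The only cosmetic difference is that the paper first decomposes $\Res_{G_j}^\pi \Z\pi \cong \bigoplus_{\pi/G_j} \Z G_j$ and then applies $H^3(G_j;\Z G_j)\cong \Z$, whereas you apply Poincar\'e duality directly to the module $\Z\pi$ to obtain the coinvariants $(\Z\pi)_{G_j}\cong E_j$; these are equivalent computations.
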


\begin{proof}
The classifying space $B\pi$ splits as a wedge of the classifying spaces $BL_i$ and $BG_j$. A Mayer--Vietoris argument therefore shows that
\begin{align*}
H^3(\pi;\Z\pi) &\cong\medoplus_{i=1}^k H^3(L_i;\Res_{L_i}^\pi\Z\pi) \oplus \medoplus_{j=1}^s H^3(G_j;\Res_{G_j}^\pi\Z\pi) \\
&\cong \medoplus_{i=1}^k H^3\big(L_i;\medoplus_{\pi/L_i} \Z L_i\big)\oplus \medoplus_{j=1}^s H^3\big(G_j;\medoplus_{\pi/G_j} \Z G_j\big)\\
&\cong \medoplus_{i=1}^k \medoplus_{\pi/L_i} H^3(L_i; \Z L_i)\oplus \medoplus_{j=1}^s \medoplus_{\pi/G_j} H^3(G_j; \Z G_j).
\end{align*}
For each $i,j$, we have $H^3(L_i;\Z L_i)=0$ and $H^3(G_j;\Z G_j)\cong \Z$, and so
\[H^3(\pi;\Z\pi) \cong \medoplus_{j=1}^s \medoplus_{\pi/G_j} \Z \cong \medoplus_{j=1}^s (\mathbb{Z}\pi\otimes_{\mathbb{Z}G_j}\mathbb{Z}) = \medoplus_{j=1}^s E_j\]
as required.
\end{proof}

Finally we prove the main result of this section showing that $3$-manifold groups are \HtFAs.

\begin{proposition}
	\label{prop:H2FA-3groups}
	Every $3$-manifold group is \HtFAs.
\end{proposition}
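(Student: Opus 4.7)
The plan is to verify the two freeness hypotheses required by \cref{lem:P2TA-H2free,FP3+}: namely that $H^2(\pi;\Z\pi)$ and $H^3(\pi;\Z\pi)$ are both free as abelian groups. The first, by \cref{lem:P2TA-H2free}, gives that $\pi$ is \HtFA; combined with the second, \cref{FP3+} then upgrades this to \HtFAs.

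By the Kneser--Milnor prime decomposition for closed $3$-manifolds, $\pi$ admits a finite free product decomposition $\pi\cong *_{i=1}^n L_i$ in which each factor $L_i$ is trivial, infinite cyclic (from $S^1\times S^2$ or $S^1\tilde\times S^2$ summands), a finite group (from spherical space forms), a $PD_3$-group (from aspherical prime summands, orientable or not), or $\Z\times\Z/2$ (from $S^1\times\RP^2$ summands in the non-orientable case). The orientable situation for $H^3$ is essentially the content of \cref{lemma:Mayer-vietoris-forH3}; the new ingredients needed here are a $\Z\times\Z/2$ computation and the $H^2$ calculation.

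My first step will be to compute $H^n(L;\Z L)$ in degrees $n=2,3$ for each possible factor $L$. These vanish when $L$ is trivial, when $L\cong\Z$ (immediate from the short resolution of $\Z$), and when $L$ is finite (by Shapiro's lemma, since $\Z L$ is coinduced from the trivial subgroup). For a $PD_3$-group $L$, Poincar\'e duality combined with the freeness of $\Z L$ over itself gives $H^n(L;\Z L)=0$ for $n\neq 3$ and $H^3(L;\Z L)\cong\Z$. Finally, for $L=\Z\times\Z/2$, I will use the Lyndon--Hochschild--Serre spectral sequence for $1\to\Z/2\to L\to\Z\to 1$: the coefficient module $\Z L$ is free as a $\Z[\Z/2]$-module (on a choice of coset representatives for $\Z/2$), so only the row $q=0$ survives, and the computation reduces to $H^p(\Z;\Z[\Z])$, which vanishes outside degree $1$. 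In every case $H^2$ vanishes and $H^3$ is either $0$ or $\Z$, both free as abelian groups.

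The global assembly will use the wedge model $B\pi\simeq\bigvee_i BL_i$ and its Mayer--Vietoris sequence to obtain, for $n\geq 2$,
\[
H^n(\pi;\Z\pi) \;\cong\; \bigoplus_{i} H^n(L_i;\Res_{L_i}^\pi\Z\pi).
\]
Since $\Res_{L_i}^\pi\Z\pi$ decomposes as a direct sum of copies of $\Z L_i$ indexed by $L_i\backslash\pi$, and since each factor type above is of type $FP_\infty$ so that $H^n(L_i;-)$ commutes with this direct sum, the per-factor calculation then yields $H^2(\pi;\Z\pi)=0$ and $H^3(\pi;\Z\pi)\cong\bigoplus_j\Ind_{G_j}^\pi\Z$, summed over the $PD_3$-group factors $G_j$; both are manifestly free as abelian groups. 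The main points that will require care are the $\Z\times\Z/2$ calculation, verifying the $FP_\infty$ hypothesis needed for cohomology to commute with the direct sum decomposition of $\Res_{L_i}^\pi\Z\pi$, and checking that the wedge/Mayer--Vietoris assembly is valid for twisted $\Z\pi$ coefficients. Once these are in hand, \cref{lem:P2TA-H2free,FP3+} close out the argument.
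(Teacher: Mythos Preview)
Your approach is correct and reaches the same conclusion, but the paper takes a somewhat different and shorter route in two places.

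For $H^2(\pi;\Z\pi)$, the paper avoids decomposing $\pi$ entirely: it applies the exact sequence of \cref{remark:ucss-subgroup-pi2} directly to the $3$-manifold $Y$ realising $\pi$, using Poincar\'e duality to see $H^2(Y;\Z\pi)\cong H_1(Y;\Z\pi^w)=0$, and hence $H^2(\pi;\Z\pi)=0$. This one-line argument is independent of the prime decomposition.

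For $H^3(\pi;\Z\pi)$, rather than analysing non-orientable prime factors directly, the paper passes to the orientation double cover $\wh Y$ with $\pi':=\pi_1(\wh Y)$, observes via Shapiro's lemma that $H^3(\pi;\Z\pi)\cong H^3(\pi';\Z\pi')$ as abelian groups, and then invokes \cref{lemma:Mayer-vietoris-forH3} for the \emph{orientable} group $\pi'$. This sidesteps the need to know that $S^1\times\RP^2$ is the only closed irreducible non-orientable prime summand with nontrivial $\pi_2$; your decomposition implicitly uses that fact (true by Epstein/Tollefson together with geometrization), whereas the double-cover trick makes the argument independent of any such structural input. Your explicit computation of $H^n(\Z\times\Z/2;\Z[\Z\times\Z/2])$ via the LHS spectral sequence is correct and gives a more concrete picture of $H^3(\pi;\Z\pi)$, at the cost of a slightly longer case analysis and the extra structural assumption on non-orientable prime pieces.
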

\begin{proof}
	Let $\pi$ be the fundamental group of a $3$-manifold $Y$. Then $H^2(\pi;\Z\pi)=0$ by \cref{remark:ucss-subgroup-pi2}, so $\pi$ is \HtFA\ by \cref{lem:P2TA-H2free}.
    Let $\pi':= \pi_1(\wh Y)$, where $\wh Y$ is the orientation double cover of $Y$. By Shapiro's lemma, $H^3(\pi;\Z\pi) \cong H^3(\pi';\Z\pi')$ as $\Z\pi'$-modules, and thus in particular as abelian groups. By \cref{lemma:Mayer-vietoris-forH3}, $H^3(\pi';\Z\pi')$ is free as an abelian group, and hence so is $H^3(\pi;\Z\pi)$.
	It now follows that $\pi$ is \HtFAs\ by \cref{FP3+}.
\end{proof}

By \cref{prop:H2FA-3groups}, \cref{thm:main-homotopy}\,\eqref{item:0} holds for the $4$-manifolds considered in \cref{thm:main-homotopy}, and the map $\mB_{H_2(M;\Z\pi)}$ is defined. For these inferences we only need that $3$-manifold groups are \HtFA. We will use the fact that they are moreover \HtFAs\ later in \cref{lem:res-dual-is-free}.

\section{Injectivity of \texorpdfstring{$\ev^*$}{ev}}
\label{sec:inj-ev*}

For this section, fix a finitely presented group $\pi$, a map $w\colon \pi\to C_2$, and a connected CW complex~$B$ with fundamental group $\pi$. We consider the map
\[\ev^*\colon \Her^w(H_2(B;\Z\pi)^\dagger)\to \Her^w(H^2(B;\Z\pi)),\]
induced by the evaluation map $\ev\colon H^2(B;\Z\pi)\to H_2(B;\Z\pi)^\dagger$. The following lemma gives a general method for showing that $\ev^*$ is injective. Recall that $\ev^*$ being injective is condition~\eqref{item:3} of~\cref{thm:homotopyclass}.

\begin{lemma}\label{lem:ev-inj}
If $H^3(\pi;\Z\pi)=0$, then $\ev^*\colon \Her^w(H_2(B;\Z\pi)^\dagger)\to \Her^w(H^2(B;\Z\pi))$ is injective.
\end{lemma}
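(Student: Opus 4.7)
The plan is to deduce injectivity of $\ev^*$ from surjectivity of $\ev$ itself, which in turn follows from the exact sequence \eqref{eq:ucss-one zero} recorded in \cref{remark:ucss-subgroup-pi2}.

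First, I would observe that the classifying map $c\colon B\to B\pi$ is $2$-connected. Indeed, $c$ induces an isomorphism on $\pi_1$ by hypothesis, and it is automatically surjective on $\pi_2$ because $\pi_2(B\pi)=0$. Hence the universal coefficient spectral sequence furnishes the exact sequence \eqref{eq:ucss-one zero} with $X=B$, and its right-hand portion reads
\[H^2(B;\Z\pi)\xrightarrow{\ev} H_2(B;\Z\pi)^\dagger \longrightarrow H^3(\pi;\Z\pi).\]
Under the assumption $H^3(\pi;\Z\pi)=0$, this shows that $\ev$ is surjective.

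With surjectivity of $\ev$ in hand, the injectivity of $\ev^*$ is immediate. Given $\lambda\in \Her^w(H_2(B;\Z\pi)^\dagger)$ with $\ev^*(\lambda)=0$, I would pick arbitrary elements $x,y\in H_2(B;\Z\pi)^\dagger$, write them as $x=\ev(\alpha)$ and $y=\ev(\beta)$ for some $\alpha,\beta\in H^2(B;\Z\pi)$ using surjectivity of $\ev$, and compute
\[\lambda(x,y)=\lambda\bigl(\ev(\alpha),\ev(\beta)\bigr)=\ev^*(\lambda)(\alpha,\beta)=0.\]
Hence $\lambda=0$, which is what we needed. There is no real obstacle here: once the exact sequence from \cref{remark:ucss-subgroup-pi2} is invoked, the argument is essentially one line, reflecting the standard fact that pullback of forms along a surjection is injective.
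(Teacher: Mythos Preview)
Your proof is correct and essentially identical to the paper's: both use the exact sequence \eqref{eq:ucss-one zero} to conclude that $\ev$ is surjective when $H^3(\pi;\Z\pi)=0$, and then observe that pullback of Hermitian forms along a surjection is injective. Your extra sentence verifying that $c\colon B\to B\pi$ is $2$-connected is a minor elaboration not spelled out in the paper, but otherwise the arguments coincide.
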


\begin{proof}
As in \eqref{eq:ucss-one zero}, the universal coefficient spectral sequence 
gives the exact sequence
\[H^2(\pi;\Z\pi)\to H^2(B;\Z\pi)\xrightarrow{\ev} H_2(B;\Z\pi)^\dagger\to H^3(\pi;\Z\pi).\]
Since we assume $H^3(\pi;\Z\pi)=0$, the map $\ev$ is surjective. We show this implies the induced map \[\ev^*\colon \Her^w(H_2(B;\Z\pi)^\dagger)\to \Her^w(H^2(B;\Z\pi))\] is injective.
Let $\theta \in \Her^w(H_2(B;\Z\pi)^\dagger)$ be such that $\ev^*(\theta) =0$. In other words, $\theta(\ev(x),\ev(y))=0$ for all $x,y \in H^2(B;\Z\pi)$. For any $a,b \in H_2(B;\Z\pi)^\dagger$, we have $x,y\in H^2(B;\Z\pi)$ with $a=\ev(x)$ and $b=\ev(y)$, since $\ev$ is surjective. Hence $\theta(a,b) = \theta(\ev(x),\ev(y))=0$ as claimed. Thus $\theta=0$ and $\ev^*$ is injective.
\end{proof}

\begin{corollary}\label{cor:ev*-pi-finite}
If $\pi$ is virtually free, then $\ev^*\colon \Her^w(H_2(B;\Z\pi)^\dagger)\to \Her^w(H^2(B;\Z\pi))$ is injective.
\end{corollary}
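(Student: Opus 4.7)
The plan is to apply Lemma~\ref{lem:ev-inj}: it suffices to show that $H^3(\pi;\Z\pi)=0$, and I will in fact establish the stronger vanishing $H^n(\pi;\Z\pi)=0$ for every $n\geq 2$.

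By the standing conventions $\pi$ is finitely presented. By the Karrass--Pietrowski--Solitar structure theorem, any finitely presented virtually free group is isomorphic to the fundamental group of a finite graph of finite groups, and hence acts cocompactly on its Bass--Serre tree $T$ with finite vertex stabilizers $K_v$ and finite edge stabilizers $K_e$. Contractibility of $T$ yields a short exact sequence of $\Z\pi$-modules
\[
0\to \bigoplus_{e\in E(T/\pi)}\Ind_{K_e}^\pi \Z \to \bigoplus_{v\in V(T/\pi)}\Ind_{K_v}^\pi \Z \to \Z \to 0,
\]
with only finitely many summands on each side.

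The key computation is that for any finite subgroup $K\leq \pi$ and any $n\geq 1$, Shapiro's lemma provides an isomorphism
\[
\Ext^n_{\Z\pi}(\Ind_K^\pi\Z,\Z\pi)\cong H^n(K;\Res_K^\pi\Z\pi),
\]
and this group vanishes: $\Res_K^\pi\Z\pi$ is a direct sum of copies of $\Z K$; since $K$ is finite, $\Z$ admits a finitely generated projective resolution over $\Z K$, so $H^n(K;-)$ commutes with arbitrary direct sums; and $H^n(K;\Z K)=0$ for $n\geq 1$ by Shapiro applied to $\Z K\cong\Coind_{\{e\}}^K\Z$.

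Feeding this into the long exact sequence of $\Ext^*_{\Z\pi}(-,\Z\pi)$ associated to the resolution above yields $H^n(\pi;\Z\pi)=\Ext^n_{\Z\pi}(\Z,\Z\pi)=0$ for all $n\geq 2$, and so in particular for $n=3$. Lemma~\ref{lem:ev-inj} then completes the proof. The only substantive input is the Karrass--Pietrowski--Solitar structure theorem identifying $\pi$ with a graph of finite groups; the rest is a standard Shapiro/Ext computation.
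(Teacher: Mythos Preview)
Your proof is correct, and it reaches the same intermediate goal as the paper's proof, namely $H^n(\pi;\Z\pi)=0$ for all $n\geq 2$, before invoking Lemma~\ref{lem:ev-inj}. However, the route you take is considerably more elaborate than necessary.

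The paper argues as follows: since $\pi$ is virtually free it has a free subgroup of finite index, and hence (by passing to the normal core) a free \emph{normal} subgroup $\rho$ of finite index. Then $\Z\pi\cong\Coind_\rho^\pi\Z\rho$ as $\Z\pi$-modules, so Shapiro's lemma gives $H^q(\pi;\Z\pi)\cong H^q(\rho;\Z\rho)$ as abelian groups, and the latter vanishes for $q>1$ because $\rho$ is free. This is a two-line argument using nothing beyond the existence of a finite-index free subgroup.

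Your approach instead invokes the Karrass--Pietrowski--Solitar theorem to realise $\pi$ as a finite graph of finite groups, builds the Bass--Serre resolution, and applies Shapiro termwise. This is valid, and it has the conceptual merit of making the graph-of-groups structure explicit; but it brings in substantially heavier machinery (Bass--Serre theory, the KPS structure theorem, commutation of cohomology with direct sums) to establish a vanishing that follows from a single application of Shapiro to a finite-index subgroup. For the purposes of this corollary the paper's argument is preferable for its economy.
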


\begin{proof}
If $\pi$ is virtually free then it has a free normal subgroup $\rho$ of finite index,
and $H^q(\pi;\Z\pi)\cong{H^q(\rho;\Z\rho)}$ as abelian groups,
by Shapiro's Lemma \cite{brown:cohomology-group}*{p.73}. 
Since $\rho$ is free these groups are 0 for all $q>1$.
Hence \cref{lem:ev-inj} applies.
\end{proof}

However, for general $3$-manifold groups $\pi$, it can happen that $H^3(\pi;\Z\pi)\neq 0$. Nonetheless, the conclusion still holds when $B$ is the Postnikov $2$-type of a $4$-manifold with $3$-manifold fundamental group, as we show below. In fact we will show in \cref{prop:ev-inj} that in this case $\ev^*$ is an isomorphism. For the proof we need the following preliminary results.

\begin{lemma}
\label{lem:cohomology-free-product}
Suppose $\pi\cong G*H$ and let $n>1$. Assume that $H^n(G;\Z G)=H^n(H;\Z H)=0$. Then $H^n(\pi;\Z\pi)=0$.
\end{lemma}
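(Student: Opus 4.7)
My plan is to derive a Mayer--Vietoris style decomposition from the wedge description $B\pi \simeq BG \vee BH$, equivalently from the Bass--Serre tree of the free product $\pi = G * H$. The tree is a contractible $\pi$-CW complex with two vertex orbits (stabilisers $G$ and $H$) and one edge orbit with trivial stabiliser, so its equivariant cellular chain complex yields a short exact sequence of $\Z\pi$-modules
\[0 \to \Z\pi \to \Ind_G^\pi \Z \oplus \Ind_H^\pi \Z \to \Z \to 0.\]

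Applying $\Ext^*_{\Z\pi}(-,\Z\pi)$ and using that $\Z\pi$ is projective, so that $\Ext^k_{\Z\pi}(\Z\pi,\Z\pi)=0$ for every $k\geq 1$, the associated long exact sequence degenerates for $n\geq 2$ to give
\[H^n(\pi;\Z\pi) \cong \Ext^n_{\Z\pi}(\Ind_G^\pi \Z,\Z\pi) \oplus \Ext^n_{\Z\pi}(\Ind_H^\pi \Z,\Z\pi).\]
The induction--restriction adjunction extends to Ext because $\Z\pi$ is flat (indeed free) as a $\Z G$- and a $\Z H$-module, so $\Ind_G^\pi$ carries projective resolutions to projective resolutions. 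Consequently each summand can be rewritten as $H^n(G;\Res_G^\pi \Z\pi)$ and $H^n(H;\Res_H^\pi \Z\pi)$ respectively.

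The remaining step is to deduce that $H^n(G;\Res_G^\pi \Z\pi)=0$ from the hypothesis $H^n(G;\Z G)=0$. Decomposing $\pi$ into orbits of left multiplication by $G$, the $\Z G$-module $\Res_G^\pi \Z\pi$ is free, isomorphic to $\bigoplus_{G\backslash \pi} \Z G$, and I expect this to be the main obstacle, since group cohomology need not commute with arbitrary infinite direct sums. However, the commutation $H^n(G; \bigoplus_I \Z G) \cong \bigoplus_I H^n(G;\Z G)$ does hold in degree $n$ whenever $G$ is of type $FP_n$---a mild finiteness condition satisfied by every group to which the lemma is applied in this paper (for example finite groups, $\Z$, $\Z \times \Z/2$, and $PD_3$-groups). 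Granting this, both summands vanish, and hence $H^n(\pi;\Z\pi) = 0$.
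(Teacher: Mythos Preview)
Your proposal is correct and essentially identical to the paper's proof: the paper also uses Mayer--Vietoris from $B\pi \simeq BG \vee BH$ to get $H^n(\pi;\Z\pi) \cong H^n(G;\Res_G^\pi \Z\pi) \oplus H^n(H;\Res_H^\pi \Z\pi)$, decomposes $\Res_G^\pi \Z\pi \cong \bigoplus_{\pi/G} \Z G$, and then pulls the direct sum outside cohomology. You are right to flag the $FP_n$ condition needed for that last commutation; the paper performs this step without comment, so your version is if anything more careful than the original.
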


\begin{proof}
Since $B\pi\simeq BG\vee BH$, we can see from the Mayer--Vietoris sequence that
\begin{align*}
H^n(\pi;\Z \pi)&\cong H^n(G;\Res_G^\pi \Z\pi)\oplus H^n(H;\Res_H^\pi\Z\pi)\\
&\cong H^n\Big(G;\bigoplus_{\pi/G} \Z G\Big)\oplus H^n\Big(H;\bigoplus_{\pi/H} \Z H\Big)\\
&\cong \bigoplus_{\pi/G} H^n(G;\Z G)\oplus \bigoplus_{\pi/H} H^n(H;\Z H)=0. \qedhere
\end{align*}
\end{proof}

Let $\pi$ be a group. Recall that, for a $\Z\pi$-module $A$, we denote the module $\Hom_{\Z\pi}(A,\Z\pi)$ by $A^\dagger$. For $G\leq \pi$ and a $\Z G$-module $A$, we denote the module $\Hom_{\Z G}(A,\Z G)$ by $A^\star$. In the proof below we will have two subgroups $G,H\leq \pi$, and we will consider both $\Z G$-duals and $\Z H$-duals. To avoid a proliferation of symbols, we will use the same notation in both cases.

\begin{lemma}
\label{lem:res-dual-is-free}
    Let $G$ be a $PD_3$-group and let $\pi= G*H$ for some $3$-manifold group $H$. Let $M$ be a closed $4$-manifold with fundamental group $\pi$. Then $\Res_G^\pi\big(\pi_2(M)^\dagger\big)$ is projective. 
\end{lemma}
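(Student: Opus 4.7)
The plan is to reduce, via the stable splitting of $M$ as a connected sum, to separate computations for the second homotopy groups of the two connected summands, then dualise using \cref{lem:ind-star-dagger-isom}, and finally apply Mackey's double coset formula to the restriction from $\pi$ down to $G$, exploiting the fact that subgroup intersections under conjugation are essentially trivial in a free product.

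First, \cref{lemma:stable-splitting} provides closed $4$-manifolds $M_1$ and $M_2$ with $\pi_1(M_1) \cong G$ and $\pi_1(M_2) \cong H$ such that $M$ and $M_1 \# M_2$ are stably homeomorphic. Combining this with \eqref{eq:pi2-connsum-decomp} and absorbing the $S^2 \times S^2$ contributions into free summands yields a stable $\Z\pi$-module isomorphism
\[\pi_2(M) \oplus F \cong \Ind_G^\pi \pi_2(M_1) \oplus \Ind_H^\pi \pi_2(M_2) \oplus F',\]
for some free $\Z\pi$-modules $F, F'$. Dualising (using $(\Z\pi)^\dagger \cong \Z\pi$ and the additivity of $(-)^\dagger$), applying \cref{lem:ind-star-dagger-isom} to identify $(\Ind_K^\pi A)^\dagger \cong \Ind_K^\pi A^\star$ for $K \in \{G,H\}$, and then restricting to $G$ (noting that $\Res_G^\pi \Z\pi$ is free as a $\Z G$-module) gives a stable $\Z G$-module isomorphism
\[\Res_G^\pi \pi_2(M)^\dagger \oplus F_1 \cong \Res_G^\pi \Ind_G^\pi \pi_2(M_1)^\star \oplus \Res_G^\pi \Ind_H^\pi \pi_2(M_2)^\star \oplus F_2,\]
with $F_1, F_2$ free $\Z G$-modules.

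Next I would apply Mackey's double coset formula to each restriction--induction composite. Using the normal form theorem for $\pi = G * H$, representatives of the nontrivial double cosets in $G \backslash \pi / G$ can be chosen to start and end with an $H$-letter, and for any such $g$ one computes $G \cap g G g^{-1} = \{1\}$; similarly $G \cap g H g^{-1} = \{1\}$ for every $g \in \pi$. Mackey therefore gives
\[\Res_G^\pi \Ind_G^\pi \pi_2(M_1)^\star \cong \pi_2(M_1)^\star \oplus \medoplus_{g \in G\backslash\pi/G,\, g \neq 1} \bigl(\Z G \otimes_\Z {}^g\pi_2(M_1)^\star\bigr),\]
and $\Res_G^\pi \Ind_H^\pi \pi_2(M_2)^\star$ is a direct sum of modules $\Z G \otimes_\Z {}^g\pi_2(M_2)^\star$ with $g$ running over $G \backslash \pi / H$.

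To finish, by \cref{cor:pd3-free-dual-pi2} the $\Z G$-module $\pi_2(M_1)^\star$ is stably free (hence in particular free as an abelian group), and since $H$ is a $3$-manifold group it is \HtFAs\ by \cref{prop:H2FA-3groups}, so $\pi_2(M_2)^\star$ is also free as an abelian group. Every off-diagonal Mackey summand thus has the form $\Z G \otimes_\Z A$ for a free abelian group $A$, and is therefore a free $\Z G$-module; direct sums of free modules are free, so $\Res_G^\pi \pi_2(M)^\dagger$ is stably isomorphic to $\pi_2(M_1)^\star$ plus a free $\Z G$-module, which is stably free and in particular projective. The main obstacle will be the Mackey bookkeeping, specifically verifying that every off-diagonal contribution is free over $\Z G$; this is exactly where the \HtFAs\ property of $3$-manifold groups is essential, since without freeness over $\Z$ the tensored summands need not even be projective.
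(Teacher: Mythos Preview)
Your proof is correct and follows essentially the same route as the paper: both use the stable splitting \cref{lemma:stable-splitting}, dualise via \cref{lem:ind-star-dagger-isom}, handle the $G$-summand using \cref{cor:pd3-free-dual-pi2}, and handle the $H$-summand by Mackey's formula (the paper cites \cite{brown:cohomology-group}*{Proposition~III.5.6(b)}, which is exactly the free-product case of Mackey you invoke) together with the \HtFAs\ property from \cref{prop:H2FA-3groups}. The only cosmetic difference is that for the $G$-summand the paper observes directly that $\Res_G^\pi \Ind_G^\pi$ of a stably free module is projective, whereas you explicitly decompose it via Mackey; both are valid and amount to the same thing.
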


\begin{proof}
    By \cref{lemma:stable-splitting}, $M$ is stably homeomorphic to $M_1\#M_2$ with $\pi_1(M_1)\cong G$ and $\pi_1(M_2)\cong H$. Hence $\pi_2(M)$ is stably isomorphic to
    \begin{equation*}
        \Ind_G^\pi \pi_2(M_1) \oplus \Ind_H^\pi \pi_2(M_2).
    \end{equation*} 
Therefore, $\Res_G^\pi\big(\pi_2(M)^\dagger\big)$ is stably isomorphic to
\begin{equation}
\label{eqn:stable-sum-decompn}
\Res^\pi_G\Big(\big(\Ind^\pi_G\pi_2(M_1)\oplus \Ind^\pi_H\pi_2(M_2)\big)^\dagger\Big)
\cong \Res^\pi_G\big((\Ind^\pi_G\pi_2(M_1))^\dagger\big) \oplus\Res^\pi_G \big((\Ind^\pi_H\pi_2(M_2))^\dagger\big)
\end{equation}
We consider the first summand. By \cref{lem:ind-star-dagger-isom} we have that $(\Ind_G^\pi\pi_2(M_1))^\dagger\cong \Ind_G^\pi(\pi_2(M_1)^\star)$, and hence
    \[ \Res_G^\pi (\Ind_G^\pi\pi_2(M_1))^\dagger \cong \Res_G^\pi \Ind_G^\pi(\pi_2(M_1)^\star),\]
    where $\pi_2(M_1)^\star:=\Hom_{\Z G}(\pi_2(M_1),\Z G)$.
   By \cref{cor:pd3-free-dual-pi2}, $\pi_2(M_1)^\star$ is a stably free $\Z G$-module, hence $\Res_G^\pi\Ind_G^\pi(\pi_2(M_1)^\star)$ is projective (but not necessarily finitely generated).
    This deals with the first summand of \eqref{eqn:stable-sum-decompn}, and so it remains to consider $\Res_G^\pi\big((\Ind_H^\pi\pi_2(M_2))^\dagger\big)$.

    Again using \cref{lem:ind-star-dagger-isom}, $\Res_G^\pi\big((\Ind_H^\pi \pi_2(M_2))^\dagger\big) \cong \Res_G^\pi \Ind_H^\pi(\pi_2(M_2)^\star)$,     where $\pi_2(M_2)^\star:=\Hom_{\Z H}(\pi_2(M_2),\Z H)$.  By \cref{prop:H2FA-3groups} we know that $\pi_2(M_2)^\star$ is free as an abelian group. We will finish the proof by showing that $\Res_G^\pi\Ind_H^\pi A$ is free for every $\Z H$-module $A$ that is free as an abelian group.
 By \cite{brown:cohomology-group}*{Proposition~III.5.6(b)},
    $\Res_G^\pi \Ind_H^\pi A $ is isomorphic to a direct sum of modules of the form $\Ind_{\{e\}}^G \Res_{\{e\}}^H A$, which are again free, since $\Res_{\{e\}}^H A$ is just $A$ considered as an abelian group.
    Therefore we have shown that each summand of \eqref{eqn:stable-sum-decompn} is projective, which completes the proof.
\end{proof}

The following lemma concerns general $3$-manifold groups, without an admissibility assumption.

\begin{lemma}\label{lem:hom-inj}
Suppose that $\pi$ is a $3$-manifold group. 
Let $B=P_2(M)$, where $M$ is a closed $4$-manifold with fundamental group $\pi$.
Let $A=\Z\pi$ or $A=\pi_2(M)^\dagger$.  Then
\begin{equation*}
\Hom_{\Z\pi}(\ev,A)\colon \Hom_{\Z\pi}(H_2(B;\Z\pi)^\dagger,A)\to \Hom_{\Z\pi}(H^2(B;\Z\pi),A)
\end{equation*}
is injective.
\end{lemma}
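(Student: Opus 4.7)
The plan is to reduce the injectivity of $\Hom_{\Z\pi}(\ev, A)$ to the vanishing of $\Hom_{\Z\pi}(H^3(\pi;\Z\pi), A)$ via the short exact sequence
\begin{equation*}
0 \to H^2(B;\Z\pi) \xrightarrow{\ev} H_2(B;\Z\pi)^\dagger \to H^3(\pi;\Z\pi) \to 0.
\end{equation*}
To obtain this sequence I would invoke \cref{remark:ucss-subgroup-pi2}, which gives $H^2(\pi;\Z\pi) = 0$ for any $3$-manifold group, so that $\ev$ is injective by the UCSS \eqref{eq:ucss-one zero} applied to $B$. For surjectivity of the boundary onto $H^3(\pi;\Z\pi)$, I would use the naturality of the UCSS under the $3$-equivalence $f\colon M \to B = P_2(M)$: since $f$ induces isomorphisms on $H^2$ and $H_2$ with $\Z\pi$-coefficients, and Poincar\'e duality for the $4$-manifold $M$ gives $H^3(M;\Z\pi) \cong H_1(\wt M;\Z) = 0$, the analogous connecting map for $M$ is surjective, and this surjectivity transfers to $B$.

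Applying the left exact functor $\Hom_{\Z\pi}(-, A)$ then yields
\begin{equation*}
0 \to \Hom_{\Z\pi}(H^3(\pi;\Z\pi), A) \to \Hom_{\Z\pi}(H_2(B;\Z\pi)^\dagger, A) \xrightarrow{\Hom_{\Z\pi}(\ev, A)} \Hom_{\Z\pi}(H^2(B;\Z\pi), A),
\end{equation*}
so injectivity of $\Hom_{\Z\pi}(\ev, A)$ is equivalent to $\Hom_{\Z\pi}(H^3(\pi;\Z\pi), A) = 0$. Next I would decompose $H^3(\pi;\Z\pi)$ using the prime decomposition $\pi \cong *_i \pi_i$ and the Mayer--Vietoris argument in the proof of \cref{lemma:Mayer-vietoris-forH3}, extended to the nonorientable case, obtaining $H^3(\pi;\Z\pi) \cong \bigoplus_i \bigoplus_{G_i \backslash \pi} H^3(\pi_i;\Z\pi_i)$. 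A non-$PD_3$ prime factor contributes nothing: finite factors are handled directly since $\Z\pi_i$ is free over itself; infinite cyclic factors vanish since $\Z$ has cohomological dimension $1$; and $\Z \times \Z/2$ is virtually free (containing $\Z$ of index $2$), so Shapiro's lemma gives $H^n(\pi_i;\Z\pi_i) \cong H^n(\Z;\Z[\Z]) = 0$ for $n \geq 2$. Thus only the $PD_3$-group summands $G_j$ survive and
\begin{equation*}
H^3(\pi;\Z\pi) \cong \bigoplus_j \Ind_{G_j}^\pi \Z^{u_j},
\end{equation*}
where $u_j$ is the orientation character of the aspherical $PD_3$-complex $BG_j$.

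By adjunction, $\Hom_{\Z\pi}(\Ind_{G_j}^\pi \Z^{u_j}, A) \cong \Hom_{\Z G_j}(\Z^{u_j}, \Res_{G_j}^\pi A)$. For $A = \Z\pi$, the restriction $\Res_{G_j}^\pi \Z\pi$ is free over $\Z G_j$ (being a direct sum of copies of $\Z G_j$ indexed by $G_j \backslash \pi$), so the computation reduces to $\Hom_{\Z G_j}(\Z^{u_j}, \Z G_j)$. This vanishes because a $u_j$-twisted $G_j$-invariant element of $\Z G_j$ would need to have constant-magnitude coefficients on the infinite group $G_j$, contradicting finite support. For $A = \pi_2(M)^\dagger$, I would apply \cref{lem:res-dual-is-free} to the decomposition $\pi = G_j * H$, with $H$ the free product of the remaining prime factors (itself a $3$-manifold group), to conclude that $\Res_{G_j}^\pi \pi_2(M)^\dagger$ is projective over $\Z G_j$ and hence a direct summand of a free $\Z G_j$-module; the vanishing of the previous case then propagates. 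The main obstacle will be confirming the decomposition of $H^3(\pi;\Z\pi)$ in the nonorientable case, since \cref{lemma:Mayer-vietoris-forH3} is stated only for orientable $Y$, though the Mayer--Vietoris argument adapts with the only change being the appearance of $u_j$.
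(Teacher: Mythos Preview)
Your proof is correct and follows the paper's overall strategy: derive the short exact sequence $0 \to H^2(B;\Z\pi) \xrightarrow{\ev} H_2(B;\Z\pi)^\dagger \to H^3(\pi;\Z\pi) \to 0$ from the UCSS and the $3$-equivalence $M \to B$, apply $\Hom_{\Z\pi}(-,A)$, and reduce to the vanishing of $\Hom_{\Z\pi}(H^3(\pi;\Z\pi),A)$ via a Mayer--Vietoris decomposition, adjunction, and the projectivity of $\Res_{G_j}^\pi A$ supplied by \cref{lem:res-dual-is-free}.

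The one substantive difference is how the possibly nonorientable case is handled. The paper first passes to the orientation double cover: by Shapiro, $H^3(\pi;\Z\pi) \cong H^3(\pi';\Z\pi')$ as $\Z\pi'$-modules, so $\Hom_{\Z\pi}(H^3(\pi;\Z\pi),A)$ embeds in $\Hom_{\Z\pi'}(H^3(\pi';\Z\pi'),\Res_{\pi'}^\pi A)$. Since $\pi'$ is the fundamental group of an \emph{orientable} $3$-manifold, \cref{lemma:Mayer-vietoris-forH3} applies as stated, the prime factor types are only finite, $\Z$, or orientable $PD_3$, and no twist $u_j$ ever appears; the cost is an extra identification $\Res_{\pi'}^\pi(\pi_2(M)^\dagger) \cong \Hom_{\Z\pi'}(\pi_2(\wh M),\Z\pi')$ before invoking \cref{lem:res-dual-is-free} for $\wh M$ and $\pi'$. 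Your route works directly with $\pi$, which lets you apply \cref{lem:res-dual-is-free} immediately to $M$ and $\pi = G_j * H$, but requires knowing the nonorientable prime decomposition and handling the extra factor type $\Z\times\Z/2$ (which you dispatch correctly via Shapiro). Both approaches are valid; the paper's double-cover trick simply trades the nonorientable prime classification for a module identification.
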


\begin{proof}
Since $B$ can be constructed from $M$ by only adding cells of dimension four and higher, the map $M \to P_2(M)=B$ induces isomorphisms $\pi_1(M) \xrightarrow{\cong} \pi_1(B)$, $H_2(B;\Z\pi)^\dagger \xrightarrow{\cong} H_2(M;\Z\pi)^\dagger$, and $H^2(B;\Z\pi)\xrightarrow{\cong} H^2(M;\Z\pi)$.  By naturality of the evaluation map, we obtain the exact sequence
\begin{equation}\label{eq:ucss-zeros-Bs}
0\to H^2(B;\Z\pi)\xrightarrow{\ev}H_2(B;\Z\pi)^\dagger\to H^3(\pi;\Z\pi)\to 0
\end{equation}
from the exact sequence \eqref{eq:ucss-one zero}, where we used $H^2(\pi;\Z\pi)=0$ and $H^3(M;\Z\pi)=0$ as mentioned.
Apply the $\Hom_{\Z\pi}(-,A)$ functor to obtain the exact sequence
\begin{equation*}\label{eq:ucss-zeros-Bs-hommed}
	\begin{tikzcd}[row sep=huge]
		0\to \Hom_{\Z\pi}(H^3(\pi;\Z\pi),A)\arrow[r]&\Hom_{\Z\pi}(H_2(B;\Z\pi)^\dagger,A)\arrow[rr,"{\Hom_{\Z\pi}(\ev,A)}"]&&\Hom_{\Z\pi}(H^2(B;\Z\pi),A).
	\end{tikzcd}
\end{equation*}
We need to show that $\Hom_{\Z\pi}(\ev,A)$ is an injection. For this, we will show that the left term $\Hom_{\Z\pi}(H^3(\pi;\Z\pi),A)$ vanishes. 

Let $Y$ be a $3$-manifold such that $\pi\cong\pi_1(Y)$ and let $\pi'= \pi_1(\wh Y)$, where $\wh Y$ is the orientation double cover of $Y$. By Shapiro's lemma, $H^3(\pi;\Z\pi) \cong H^3(\pi';\Z\pi')$ as $\Z\pi'$-modules.  Hence $\Hom_{\Z\pi}(H^3(\pi;\Z\pi),A)$ is a submodule of $\Hom_{\Z\pi'}(H^3(\pi';\Z\pi'),A')$, where $A':=\Res_{\pi'}^\pi (A)$.
By the prime decomposition theorem for orientable $3$-manifolds, $\pi'\cong \big(*_{i=1}^k L_i\big) * \big(*_{j=1}^sG_j\big)$, for some $k,s$, where each $L_i$ is infinite cyclic or finite, and each $G_j$ is a $PD_3$-group.

By \cref{lemma:Mayer-vietoris-forH3},  $H^3(\pi';\Z\pi')\cong\oplus_{j=1}^s E_j$, where $E_j=\mathbb{Z}\pi'\otimes_{\mathbb{Z}G_j}\mathbb{Z} = \Ind_{G_j}^{\pi'}(\Z)$. Hence
\begin{align*}
    \Hom_{\Z\pi'}(H^3(\pi';\Z\pi'),A') &\cong \Hom_{\Z\pi'}(\oplus_{j=1}^s E_j,A')\cong \prod_{j=1}^s\Hom_{\Z\pi'}(E_j,A') \\ & \cong \prod_{j=1}^s\Hom_{\Z\pi'}(\Ind_{G_j}^{\pi'} \Z,A')\cong \prod_{j=1}^s\Hom_{\Z G_j}(\Z,\Res_{G_j}^{\pi} A).\end{align*}
Let $\wh M$ be the double cover of $M$ corresponding to $\pi'\leq \pi$.
As $\Z \pi'$-modules we have
    \[\pi_2(M)^\dagger \cong \Hom_{\Z\pi}(\pi_2(M),\Ind^\pi_{\pi'}\Z \pi'))\cong \Hom_{\Z\pi}(\pi_2(M),\Coind_{\pi'}^\pi \Z \pi')\cong \Hom_{\Z\pi'}(\pi_2(\wh M),\Z \pi'),\]
    where $\Coind_{\pi'}^\pi \Z \pi'\cong \Hom_{\Z \pi'}(\Z\pi,\Z \pi')$ is the coinduction of $\Z \pi'$, and for the middle isomorphism we used that  $\Ind^\pi_{\pi'}\Z \pi'\cong \Coind_{\pi'}^\pi \Z \pi'$~\cite{brown:cohomology-group}*{Proposition~III.5.9}. The last isomorphism used the $\Res-\Coind$ adjunction and that $\Res^\pi_{\pi'}  \pi_2(M) \cong \pi_2(\wh M)$.
For $A=\pi_2(M)^\dagger$ it follows that
\[\Res_{G_j}^\pi A\cong\Res_{G_j}^{\pi'}A'\cong\Res_{G_j}^{\pi'} (\Hom_{\Z \pi}(\pi_2(\wh M),\Z\pi')).\]
 Hence $\Res_{G_j}^\pi A$ is projective by \cref{lem:res-dual-is-free}. For $A=\Z\pi$, the module $\Res_{G_j}^\pi A$ is projective as well.

Since $G_j$ is infinite, the only $G_j$-fixed point in $\Z G_j$ is the trivial element. 
Since $\Res^{\pi}_{G_j} A$ is projective, $\Hom_{\Z G_j}(\Z,\Res_{G_j}^{\pi}A)\subseteq \Hom_{\Z G_j}(\Z,\bigoplus \Z G_j)=0$. Thus $\Hom_{\Z\pi'}(H^3(\pi';\Z\pi'),A')=0$ and hence also $\Hom_{\Z\pi}(H^3(\pi;\Z\pi),A)=0$ as claimed.
\end{proof}

\begin{lemma}
    \label{lem:hom-surj}
    Let $\pi\cong H*\big(*_{j=1}^sG_j\big)$ such that $H^2(H;\Z H)=H^3(H;\Z H)=0$ and each $G_j$ is a $PD_3$-group. Let $B=P_2(M)$, where $M$ is a closed $4$-manifold with fundamental group $\pi$.  Let $A=\Z\pi$ or $A=\pi_2(M)^\dagger$,
then
\begin{equation*}\label{eq:hom-inj}
\Hom_{\Z\pi}(\ev,A)\colon \Hom_{\Z\pi}(H_2(B;\Z\pi)^\dagger,A)\to \Hom_{\Z\pi}(H^2(B;\Z\pi),A)
\end{equation*}
is surjective.
\end{lemma}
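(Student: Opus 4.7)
My plan is to extend the exact sequence used in the previous lemma one step further into an $\Ext$ group, and then show that this $\Ext$ group vanishes under the hypotheses.

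First, I would recall from the proof of \cref{lem:hom-inj} that we have the short exact sequence
\[0 \to H^2(B;\Z\pi) \xrightarrow{\ev} H_2(B;\Z\pi)^\dagger \to H^3(\pi;\Z\pi) \to 0\]
coming from the universal coefficient spectral sequence together with $H^2(\pi;\Z\pi)=0$ and $H^3(M;\Z\pi)=0$. Applying $\Hom_{\Z\pi}(-,A)$ yields a four-term exact sequence whose last two terms are
\[\Hom_{\Z\pi}(H^2(B;\Z\pi),A) \to \Ext^1_{\Z\pi}(H^3(\pi;\Z\pi),A).\]
Thus surjectivity of $\Hom_{\Z\pi}(\ev,A)$ is equivalent to the vanishing of $\Ext^1_{\Z\pi}(H^3(\pi;\Z\pi),A)$, and this is what I will prove.

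Next I would compute $H^3(\pi;\Z\pi)$ using the Mayer--Vietoris argument from \cref{lemma:Mayer-vietoris-forH3} applied to the wedge decomposition $B\pi \simeq BH\vee(\vee_j BG_j)$. The hypothesis $H^3(H;\Z H)=0$ kills the $H$-contribution, while $H^3(G_j;\Z G_j)\cong\Z$ since each $G_j$ is a $PD_3$-group. This gives
\[H^3(\pi;\Z\pi) \cong \bigoplus_{j=1}^s \Ind_{G_j}^{\pi}\Z.\]
Consequently, $\Ext^1_{\Z\pi}(H^3(\pi;\Z\pi),A) \cong \prod_j \Ext^1_{\Z\pi}(\Ind_{G_j}^{\pi}\Z, A)$, and by Shapiro's lemma each factor is isomorphic to $H^1(G_j;\Res_{G_j}^{\pi}A)$ (using that $\Z$ has a resolution by finitely generated projective $\Z G_j$-modules, since $G_j$ is $PD_3$ and hence $FP$).

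Finally I would verify the vanishing of $H^1(G_j;\Res_{G_j}^{\pi}A)$ for each choice of $A$. For $A=\Z\pi$, the restriction $\Res_{G_j}^{\pi}\Z\pi$ is a direct sum of copies of $\Z G_j$ indexed by $\pi/G_j$. Since $\Z$ is $FP$ over $\Z G_j$, cohomology commutes with this direct sum, reducing the problem to $H^1(G_j;\Z G_j)=0$, which holds because $G_j$ is a $PD_3$-group. For $A=\pi_2(M)^\dagger$, by \cref{lem:res-dual-is-free} the module $\Res_{G_j}^{\pi}(\pi_2(M)^\dagger)$ is projective over $\Z G_j$, hence a direct summand of a free $\Z G_j$-module, and the previous argument shows $H^1(G_j;-)$ vanishes on any free $\Z G_j$-module; so the group $H^1(G_j;\Res_{G_j}^{\pi}\pi_2(M)^\dagger)$ is a direct summand of zero. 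The main subtle point is checking that $H^1(G_j;-)$ genuinely commutes with the potentially infinite direct sums arising from $\Res_{G_j}^{\pi}$, which is what the $FP$-property of $G_j$ ensures. This completes the vanishing of the $\Ext$ group and hence the surjectivity.
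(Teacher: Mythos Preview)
Your proof is correct and follows essentially the same route as the paper: reduce to showing $\Ext^1_{\Z\pi}(H^3(\pi;\Z\pi),A)=0$, compute $H^3(\pi;\Z\pi)$ via Mayer--Vietoris as a sum of induced modules, apply the $\Ind$-$\Res$ adjunction, and use that $\Res_{G_j}^\pi A$ is projective. One small inaccuracy: \cref{lemma:Mayer-vietoris-forH3} is stated for orientable $3$-manifolds, but here the $G_j$ are arbitrary $PD_3$-groups, so $H^3(G_j;\Z G_j)\cong\Z^{w(G_j)}$ with the orientation twist rather than untwisted $\Z$, giving $H^3(\pi;\Z\pi)\cong\bigoplus_j\Ind_{G_j}^\pi\Z^{w(G_j)}$ (this is what the paper writes). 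This does not affect your conclusion, since twisting a projective $\Z G_j$-module by a character yields another projective module (via \cref{lemma:tensoring-with-Zpi-w-gives-iso}), so $H^1(G_j;(\Res_{G_j}^\pi A)^{w(G_j)})$ still vanishes. Your endgame is in fact slightly more direct than the paper's: the paper converts $\Ext^1_{\Z G_j}(\Z^{w(G_j)},\Res_{G_j}^\pi A)\cong H^1(G_j;(\Res_{G_j}^\pi A)^{w(G_j)})$ to $H_2(G_j;\Res_{G_j}^\pi A)$ via Poincar\'e duality before invoking projectivity, whereas you observe directly that $H^1(G_j;-)$ vanishes on projectives.
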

\begin{proof}
    As in the proof of \cref{lem:hom-inj}, there is a short exact sequence
    \begin{equation*}
0\to H^2(B;\Z\pi)\xrightarrow{\ev}H_2(B;\Z\pi)^\dagger\to H^3(\pi;\Z\pi)\to 0.
\end{equation*}
Apply the $\Hom_{\Z\pi}(-,A)$ functor to obtain the exact sequence
\begin{equation*}
\begin{tikzcd}[row sep=huge]
\Hom_{\Z\pi}(H_2(B;\Z\pi)^\dagger,A)\arrow[rr,"{\Hom_{\Z\pi}(\ev,A)}"]&&\Hom_{\Z\pi}(H^2(B;\Z\pi),A)\ar[r]&\Ext^1_{\Z\pi}(H^3(\pi;\Z\pi),A).
\end{tikzcd}
\end{equation*}
We need to show that $\Hom_{\Z\pi}(\ev,A)$ is a surjection. For this, we will show that the term $\Ext^1_{\Z\pi}(H^3(\pi;\Z\pi),A)$ vanishes. We have $H^3(H;\Z H)=0$ and $H^3(G_j;\Z G_j)\cong \Z^{w(G_j)}$, where $w(G_j)\colon G_j\to C_2$ is the orientation character of the aspherical $PD_3$-complex with fundamental group $G_j$. A Mayer--Vietoris argument as in the proof of \cref{lemma:Mayer-vietoris-forH3} shows that $H^3(\pi;\Z\pi)\cong\oplus_{j=1}^s E_j$, where $E_j=\mathbb{Z}\pi\otimes_{\mathbb{Z}G_j}\mathbb{Z}^{w(G_j)} = \Ind_{G_j}^{\pi} \big(\Z^{w(G_j)}\big)$. Thus $\Ext^1_{\Z\pi}(H^3(\pi;\Z\pi),A)$ is isomorphic to
\begin{align*}
    \Ext^1_{\Z\pi}(H^3(\pi;\Z\pi),A) &\cong\Ext^1_{\Z\pi}(\oplus_{j=1}^sE_j,A)\cong
\prod_{j=1}^s\Ext^1_{\Z\pi}(E_j,A) \\
&\cong \prod_{j=1}^s\Ext^1_{\Z\pi}\big(\Ind_{G_j}^\pi \Z^{w(G_j)},A\big)\cong \prod_{j=1}^s\Ext^1_{\Z G_j}\big(\Z^{w(G_j)},\Res_{G_j}^\pi A\big),
\end{align*}
where we again used the Ind-Res adjunction.
We consider each factor separately. By Poincar\'e  duality, we have
\[\Ext^1_{\Z G_j}\big(\Z^{w(G_j)},\Res_{G_j}^\pi A\big)\cong H^1\big(G_j;(\Res_{G_j}^\pi A)^{w(G_j)}\big)\cong H_2(G_j; \Res_{G_j}^\pi A).\]
Since $\Res_{G_j}^\pi A$ is projective as in the proof of \cref{lem:hom-inj} this is trivial. Hence $\Hom_{\Z\pi}(\ev,A)$ is surjective as claimed.
\end{proof}

\begin{proposition}\label{prop:ev-inj}
Let $\pi$ be a $3$-manifold group and let $w \colon \pi \to C_2$ be a homomorphism.  Let $B=P_2(M)$, where $M$ is a closed $4$-manifold with fundamental group $\pi$.
Then $\ev^* \colon \Her^w(H^2(B;\Z\pi)^\dagger)\to \Her^w(H_2(B;\Z\pi))$ is injective. If moreover $(\pi,w)$ is admissible, then $\ev^*$ is an isomorphism.
\end{proposition}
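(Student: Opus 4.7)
My plan is to deduce both claims from the short exact sequence
\[
0 \to H^2(B;\Z\pi) \xrightarrow{\ev} C \to H^3(\pi;\Z\pi) \to 0,
\]
where I abbreviate $C := H_2(B;\Z\pi)^\dagger$. This sequence drops out of the universal coefficient spectral sequence \eqref{eq:ucss-one zero} using $H^2(\pi;\Z\pi)=0$ for $3$-manifold groups (\cref{remark:ucss-subgroup-pi2}) and $H^3(M;\Z\pi)=0$ from Poincar\'e duality. I would feed this into \cref{lem:hom-inj} to get the first claim, and combine with \cref{lem:hom-surj} to upgrade to an isomorphism in the admissible case.

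For injectivity of $\ev^*$, \cref{lem:hom-inj} with $A = \Z\pi$ tells me the map $\ev^\dagger \colon C^\dagger \to H^2(B;\Z\pi)^\dagger$ (that is, $\phi \mapsto \phi\circ\ev$) is injective. Given $\theta \in \Her^w(C)$ with $\ev^*\theta = 0$, I would write $\hat\theta \colon C \to C^\dagger$ for its adjoint, so that the hypothesis reads $\ev^\dagger \circ \hat\theta \circ \ev = 0$. Applying injectivity of $\ev^\dagger$ to each value $\hat\theta(\ev v) \in C^\dagger$ first gives $\hat\theta \circ \ev = 0$; the Hermitian identity $\hat\theta(\ev v)(y) = \overline{\hat\theta(y)(\ev v)}$ then forces $\hat\theta(y) \circ \ev = 0$ for every $y \in C$, and a second application of the injectivity of $\ev^\dagger$ yields $\hat\theta = 0$.

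For the admissible case, combining \cref{lem:hom-inj,lem:hom-surj} with $A = \Z\pi$ promotes $\ev^\dagger$ to a $\Z\pi$-module isomorphism. Given $\eta \in \Her^w(H^2(B;\Z\pi))$ with adjoint $\hat\eta$, my plan is to set $\tilde\eta := (\ev^\dagger)^{-1} \circ \hat\eta$, form its Hermitian transpose $\tilde\eta^\vee \colon C \to H^2(B;\Z\pi)^\dagger$ by $\tilde\eta^\vee(c)(x) := \overline{\tilde\eta(x)(c)}$, and finally define $\hat\theta := (\ev^\dagger)^{-1} \circ \tilde\eta^\vee$. A short check using the Hermiticity of $\eta$ gives $\tilde\eta^\vee \circ \ev = \hat\eta$, from which $\hat\theta \circ \ev = \tilde\eta$ and therefore $\ev^*\theta = \eta$ follow. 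The main obstacle I anticipate is verifying that $\hat\theta$ itself is Hermitian; my plan there is to observe that the candidate conjugate $\hat\theta^\vee(c)(c') := \overline{\hat\theta(c')(c)}$ satisfies $\ev^\dagger \circ \hat\theta^\vee = \tilde\eta^\vee = \ev^\dagger \circ \hat\theta$, so the injectivity of $\ev^\dagger$ forces $\hat\theta^\vee = \hat\theta$, and the Hermiticity falls out of the uniqueness of the lift along the isomorphism $\ev^\dagger$.
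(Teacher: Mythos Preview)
Your proof is correct and takes a genuinely different route from the paper's. Both start from the same short exact sequence and invoke \cref{lem:hom-inj,lem:hom-surj}, but the paper applies these lemmas with \emph{both} $A=\Z\pi$ and $A=\pi_2(M)^\dagger$: it first shows that the composite
\[
\Hom_{\Z\pi}(\ev,H^2(B;\Z\pi)^\dagger)\circ(\ev^\dagger)_*\colon \Sesq^w(C)\to \Sesq^w(H^2(B;\Z\pi))
\]
is injective (an isomorphism in the admissible case), and then passes to $\Sigma_2$-fixed points to deduce the statement for Hermitian forms. Your argument uses only the case $A=\Z\pi$ and replaces the second application of the lemma by exploiting Hermitian symmetry directly, via the transpose construction $\hat\theta\mapsto\hat\theta^\vee$. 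This is more economical: in particular you avoid the $A=\pi_2(M)^\dagger$ case, whose proof in \cref{lem:hom-inj,lem:hom-surj} requires the nontrivial input from \cref{lem:res-dual-is-free} that $\Res_{G_j}^\pi(\pi_2(M)^\dagger)$ is projective. The paper's approach, on the other hand, gives the slightly stronger statement about sesquilinear forms for free and keeps the bookkeeping with adjoints to a minimum.

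One small point: to invoke \cref{lem:hom-surj} you still need to verify its hypothesis, namely that an admissible $\pi$ decomposes as $H*\big(*_jG_j\big)$ with each $G_j$ a $PD_3$-group and $H^2(H;\Z H)=H^3(H;\Z H)=0$. This is the step in the paper's proof that uses the prime decomposition and \cref{lem:cohomology-free-product}; you should mention it.
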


\begin{proof}
We know
\[\ev^\dagger:=\Hom_{\Z\pi}(\ev,\Z\pi) \colon H_2(B;\Z\pi)^{\dagger\dagger} \to H^2(B;\Z\pi)^{\dagger} \]
is injective by setting $A=\Z\pi$ in \cref{lem:hom-inj}. Apply the  functor $\Hom_{\Z\pi}(H_2(B;\Z\pi)^\dagger, -)$, which is covariant and left-exact, to see that
\[(\ev^\dagger)_*\colon \Hom_{\Z\pi}(H_2(B;\Z\pi)^\dagger,H_2(B;\Z\pi)^{\dagger\dagger})\to \Hom_{\Z\pi}(H_2(B;\Z\pi)^\dagger,H^2(B;\Z\pi)^{\dagger})\]
is injective. On the other hand, we can apply \cref{lem:hom-inj} using $A=H^2(B;\Z\pi)^\dagger\cong \pi_2(M)^\dagger$ to see that
\[\Hom_{\Z\pi}(\ev,H^2(B;\Z\pi)^\dagger)\colon \Hom_{\Z\pi}(H_2(B;\Z\pi)^\dagger,H^2(B;\Z\pi)^{\dagger})\to \Hom_{\Z\pi}(H^2(B;\Z\pi),H^2(B;\Z\pi)^{\dagger})\] is injective.
As a consequence, the composition $\Hom_{\Z\pi}(\ev,H^2(B;\Z\pi)^\dagger)\circ (\ev^\dagger)_*$ in the top row of the commuting diagram
\begin{equation}\label{eq:sesq}
\begin{tikzcd}
\Hom_{\Z\pi}(H_2(B;\Z\pi)^\dagger,H_2(B;\Z\pi)^{\dagger\dagger}) \ar[r]  \ar[d,phantom,sloped,"\cong"]&\Hom_{\Z\pi}(H^2(B;\Z\pi),H^2(B;\Z\pi)^{\dagger})\ar[d,phantom,sloped,"\cong"]\\
\Sesq^w(H_2(B;\Z\pi)^\dagger)\arrow[r]   & \Sesq^w(H^2(B;\Z\pi))
\end{tikzcd}
\end{equation}
is injective. Hence the bottom horizontal map is also injective.

Recall that Hermitian forms are the $\Sigma_2$-fixed points of sesquilinear forms. So $\ev^*$ is the restriction of the bottom horizontal map in diagram~\eqref{eq:sesq} to Hermitian forms. Since the bottom horizontal map is $\Sigma_2$-equivariant, the first part of the proposition, that $\ev^*$ is injective, follows by taking $\Sigma_2$-fixed points.

Now suppose that $(\pi,w)$ is admissible. Then $\pi$ is a free product of groups that are
cyclic, isomorphic to $\Z\times \Z/2$, or $PD_3$-groups, as in \eqref{eq:pi-decomp}. If $G$ is cyclic or isomorphic to $\Z\times \Z/2$, then $H^2(G;\Z G)=H^3(G;\Z G)=0$. Write $\pi=H*\big(*_{j=1}^sG_j\big)$, where each $G_j$ is a $PD_3$-group and $H$ is the free product of all the remaining factors. By \cref{lem:cohomology-free-product} we see that $H^2(H;\Z H)=H^3(H;\Z H)=0$. Thus $\ev^\dagger$ and $\Hom_{\Z\pi}(\ev,H^2(B;\Z\pi)^\dagger)$ are surjective by \cref{lem:hom-surj}. Arguing as above, this implies that $(\ev^\dagger)_*$, and therefore $\ev^*$, is an isomorphism, as claimed.
\end{proof}

In particular, \cref{prop:ev-inj} shows that \cref{thm:homotopyclass}\,\eqref{item:3} holds for $4$-manifolds with $3$-manifold fundamental group, with no admissibility condition on subgroups or the orientation character.

\section{Injectivity results for \texorpdfstring{$\B$}{B}}\label{sec:injectiveB}

Again, we fix a finitely presented group $\pi$ and a map $w\colon \pi\to C_2$. In this section we give general criteria under which the map $\B_A \colon \Z^w\otimes_{\Z\pi}\Gamma(A)\to\Her^w(A^\dagger)$ is injective, where $A$ is a $\Z\pi$-module that is free as an abelian group. The map $\B_A$ was defined in~\eqref{eq:B-def}.
Note that if $A$ is finitely generated and projective it is a subgroup of $\oplus^k \Z\pi$ for some $k$, and hence is free as an abelian group. So the map $\mB_A$ is defined in this case.

We start the section with the following special case of \cite{hillman-matrix}*{Theorem~1}.

\begin{proposition}[\cite{hillman-matrix}*{Theorem~1}]\label{prop:free}
Let $A$ be a finitely generated projective $\Z\pi$-module. Assume that there is no $g\in\pi$ of order two with $w(g)=-1$. Then the map $\mB_A\colon \Z^w\otimes_{\Z\pi}\Gamma(A)\to\Her^w(A^\dagger)$ is injective.
\end{proposition}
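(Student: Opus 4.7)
The plan is to reduce in two stages to the case $A=\Z\pi$ and then handle that case by an explicit computation, with the hypothesis on order-two elements entering only at the final step to rule out a family of potential $\Z/2$-torsion classes in the kernel.

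First, writing $A$ as a direct summand of a finitely generated free module $F=A\oplus A'$ and using the cross-effect decomposition
\[\Gamma(A\oplus A')\cong \Gamma(A)\oplus\Gamma(A')\oplus (A\otimes_\Z A'),\]
with $\pi$ acting diagonally on the cross term, together with the compatible splitting of $\Her^w(F^\dagger)$ into two Hermitian blocks and a sesquilinear coupling block, one sees that $\mB_F$ respects these decompositions. Hence injectivity of $\mB_F$ implies injectivity of $\mB_A$, so I may assume $A=F$ is free. Iterating the same decomposition, it suffices to verify (a) injectivity of the diagonal map $\mB_{\Z\pi}$ and (b) injectivity of the cross-term map $\Z^w\otimes_{\Z\pi}(\Z\pi\otimes_\Z\Z\pi)\to \Sesq^w(\Z\pi,\Z\pi)$. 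For (b), the change of basis $g\otimes h\mapsto g\otimes g^{-1}h$ is a $\Z\pi$-isomorphism from $\Z\pi\otimes_\Z\Z\pi$ with diagonal action onto a free $\Z\pi$-module, and in this basis a direct computation shows the cross-term map becomes the identity on $\Z\pi$; so (b) is an isomorphism independent of the hypothesis.

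The key step is (a). In the same basis the transposition acts by $\alpha(1\otimes k)\mapsto \alpha k(1\otimes k^{-1})$, so $\Gamma(\Z\pi)$ splits as a $\Z$-summand generated by $1\otimes 1$; a free $\Z\pi$-summand with generator $1\otimes k+k\otimes 1$ for each inversion orbit $\{k,k^{-1}\}$ with $k^2\neq 1$; and, for each order-two $k\neq 1$, a cyclic summand $\Z\pi\cdot(1+k)(1\otimes k)\cong \Z\pi/\Z\pi(1-k)$. Applying $\Z^w\otimes_{\Z\pi}(-)$ yields $\Z$ on the first two types of summands and $\Z/(1-w(k))$ on the third (so $\Z$ when $w(k)=1$ and $\Z/2$ when $w(k)=-1$). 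A direct polarization computation using $q(a)(f,g)=\overline{f(a)}g(a)$ gives respective images $1$, $k+w(k)k^{-1}$, and $(1+w(k))k$ in $\Her^w(\Z\pi)$. The first two families are part of a $\Z$-basis of $\Her^w(\Z\pi)$, so injectivity reduces to whether the third-type generator $(1+w(k))k$ is nonzero; this fails precisely for $k$ of order two with $w(k)=-1$, which is exactly what the hypothesis rules out.

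The main obstacle is the treatment of these order-two summands: identifying them as the only source of possible failure, and showing that the given hypothesis is both the necessary and sufficient condition to eliminate the resulting $\Z/2$-torsion in $\ker \mB_{\Z\pi}$. Once this is in hand, reassembling the diagonal and cross-term injections across the free summands of $F$, and restricting to the direct summand $A$, delivers the desired injectivity of $\mB_A$.
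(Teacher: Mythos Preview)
The paper does not give its own proof of this proposition; it is quoted as a special case of \cite{hillman-matrix}*{Theorem~1}. So there is nothing to compare against directly, and the question is simply whether your argument is correct. It is.

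Your reduction to the free case via the cross-effect decomposition is exactly the content of \cref{lemma:baues-splitting-of-Gamma-A-plusA'} and \cref{lem:splitting-of-B} in the paper, and the observation that the cross-term map for two copies of $\Z\pi$ is an isomorphism is straightforward. Your analysis of $\mB_{\Z\pi}$ via the change of basis $g\otimes h\mapsto g\otimes g^{-1}h$ is clean and correctly isolates the contribution of each inversion orbit $\{k,k^{-1}\}$. The identification of the order-two summands as the only possible source of kernel, with the kernel being $\Z/2$ precisely when $w(k)=-1$, is exactly right and explains the hypothesis.

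One small slip: you write that $\Gamma(\Z\pi)$ has ``a $\Z$-summand generated by $1\otimes1$''. This should be a $\Z\pi$-summand; the $k=1$ piece is all of $\Z\pi\cdot(1\otimes1)\cong\Z\pi$, since every $g\otimes g$ is fixed by the swap. Your subsequent computation (``applying $\Z^w\otimes_{\Z\pi}(-)$ yields $\Z$'') is consistent with this, so the error is only in the description, not in the argument. It is also worth saying explicitly that the images $1$, $k+w(k)k^{-1}$ for $k^2\neq1$, and $2k$ for $k^2=1$ with $w(k)=1$ hit linearly independent elements of $\Her^w(\Z\pi)$, so injectivity on each summand assembles to injectivity of the whole map; you gesture at this with ``part of a $\Z$-basis'' but it deserves a sentence.
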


The following definition appeared in~\citelist{\cite{hambleton-stabilityrange}*{Definition~6.4}\cite{bass-finitistic}*{\S 4.4, pp.~476-477}}.

\begin{definition}\label{def:torsionless}
A $\Z\pi$-module $L$ is called \emph{torsionless} if there exists a $\Z\pi$-embedding $L\to F$ where $F$ is a finitely generated, free $\Z\pi$-module. The module $L$ is called \emph{$w$-strongly torsionless} if additionally the induced map $\Z^w\otimes_{\Z\pi}\Gamma(L)\to \Z^w\otimes_{\Z\pi}\Gamma(F)$ is injective.
\end{definition}

Note that an arbitrary torsionless $\Z\pi$-module $A$ is free as an abelian group, since it is a subgroup of $\oplus^k \Z\pi$ for some $k$, which is free as an abelian group. Therefore the map $\B_A$ is defined. We will use this fact in the sequel without comment.

\begin{proposition}\label{prop:torsionless-inj}
Let $L$ be a $w$-strongly torsionless $\Z\pi$-module. Assume that there is no $g\in\pi$ of order two with $w(g)=-1$. Then $\mB_L\colon \Z^w\otimes_{\Z\pi}\Gamma(L)\to\Her^w(L^\dagger)$ is injective.
\end{proposition}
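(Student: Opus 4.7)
My plan is to reduce the injectivity of $\mB_L$ to that of $\mB_F$ for a finitely generated free $\Z\pi$-module $F$, where the result is already known from \cref{prop:free}, by exploiting the naturality of $\mB$ with respect to the embedding of $L$ into $F$ supplied by $w$-strong torsionlessness. Fix such an embedding $\iota\colon L\hookrightarrow F$. Dualising produces $\iota^\dagger\colon F^\dagger\to L^\dagger$, and pulling back Hermitian forms along $\iota^\dagger$ gives a homomorphism
\[(\iota^\dagger)^*\colon \Her^w(L^\dagger)\to \Her^w(F^\dagger),\qquad \theta\mapsto \bigl((\phi,\psi)\mapsto \theta(\phi\circ\iota,\psi\circ\iota)\bigr).\]

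The main step is to check the commutativity of the square
\[
\begin{tikzcd}
\Z^w\otimes_{\Z\pi}\Gamma(L) \arrow[r,"\mB_L"] \arrow[d,"\Gamma(\iota)_*"'] & \Her^w(L^\dagger) \arrow[d,"(\iota^\dagger)^*"] \\
\Z^w\otimes_{\Z\pi}\Gamma(F) \arrow[r,"\mB_F"] & \Her^w(F^\dagger).
\end{tikzcd}
\]
This is a direct computation on elements of the form $1\otimes(a\otimes b)$ with $a,b\in L$: both compositions send such an element to the form $(\phi,\psi)\mapsto \overline{\phi(\iota(a))}\,\psi(\iota(b))$ on $F^\dagger$, so commutativity follows by $\Z$-linearity on $L\otimes_{\Z}L$ and restriction to $\Gamma(L)$.

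With the square established, the conclusion is formal. Suppose $x\in \Z^w\otimes_{\Z\pi}\Gamma(L)$ satisfies $\mB_L(x)=0$. By commutativity, $\mB_F(\Gamma(\iota)_*(x))=0$. Because $F$ is finitely generated and free (hence projective) and $\pi$ contains no element of order two with $w(g)=-1$, \cref{prop:free} tells us $\mB_F$ is injective, so $\Gamma(\iota)_*(x)=0$. The defining property of $w$-strongly torsionless (\cref{def:torsionless}) then gives $x=0$, proving injectivity of $\mB_L$. I do not expect any serious obstacle here: the only nontrivial step is unwinding the definition of $\mB_A$ through the universal property of $\Gamma$ to confirm commutativity, after which the two injectivity statements chain together.
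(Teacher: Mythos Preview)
Your proof is correct and follows essentially the same route as the paper: both set up the commutative square relating $\mB_L$ and $\mB_F$ via the embedding into a finitely generated free module, then conclude from the injectivity of $\mB_F$ (\cref{prop:free}) and the $w$-strongly torsionless hypothesis. Your version spells out the commutativity check in more detail than the paper does, but the argument is the same.
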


\begin{proof}
Let $L\xrightarrow{\theta} F$ be an embedding as in \cref{def:torsionless}. Consider the commutative diagram
\[
\begin{tikzcd}
\Z^w\otimes_{\Z\pi}\Gamma(L)\ar[d,"\theta_*"]\ar[r,"\mB_L"]&\Her^w(L^\dagger)\ar[d,"\theta_*"]\\
\Z^w\otimes_{\Z\pi}\Gamma(F)\ar[r,"\mB_F"]&\Her^w(F^\dagger).
\end{tikzcd}\]
The map $\theta_*\colon \Z^w\otimes_{\Z\pi}\Gamma(L)\to \Z^w\otimes_{\Z\pi}\Gamma(F)$ is injective by assumption and the map $\mB_F$ is injective by \cref{prop:free}. Hence $\mB_L$ is injective as claimed.
\end{proof}

It is immediate from the definition (\cref{defn:twisted-augmentation-ideals}) that twisted augmentation ideals are torsionless.  We now show that they are moreover $w$-strongly torsionless.	The case of $\pi$ finite and $v$ trivial was done in~\cite{Hambleton-Kreck:1988-1}*{Lemma~2.3}.

\begin{lemma}\label{lem:gamma-I}
Let $v\colon \pi\to C_2$ be a homomorphism. There is an isomorphism
\[
\Gamma(I\pi^v)\oplus \Z\pi\cong \Gamma(\Z\pi),
\]
which on $\Gamma(I\pi^v)$ is induced by the inclusion $I\pi^v\to \Z\pi$. In particular, $I\pi^v$ is $w$-strongly torsionless.
\end{lemma}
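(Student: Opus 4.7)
The plan is to construct an explicit $\Z\pi$-module isomorphism $\phi\colon \Gamma(I\pi^v)\oplus \Z\pi \xrightarrow{\cong} \Gamma(\Z\pi)$ whose restriction to $\Gamma(I\pi^v)$ is the map $\iota_*$ induced by $\iota\colon I\pi^v\hookrightarrow \Z\pi$; the $w$-strongly torsionless conclusion will then follow formally from the existence of a $\Z\pi$-linear retraction of $\iota_*$. For the second summand of the domain I would use the $\Z\pi$-linear map $\alpha\colon \Z\pi\to \Gamma(\Z\pi)$ determined by $\alpha(1) := 1\otimes 1$, which is well defined since $1\otimes 1\in \Z\pi\otimes_\Z\Z\pi$ is swap-invariant and hence lies in $\Gamma(\Z\pi)$; by $\Z\pi$-linearity it satisfies $\alpha(g)=g\otimes g$ for all $g\in\pi$. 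Set $\phi(x,y) := \iota_*(x) + \alpha(y)$. This is tautologically $\Z\pi$-linear, so it only remains to verify that it is bijective on the underlying abelian groups.

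For bijectivity I would use the abelian group splitting $\Z\pi = I\pi^v\oplus \Z\cdot 1$ coming from $\varepsilon_v(1) = 1$ in the short exact sequence $0\to I\pi^v\to \Z\pi\xrightarrow{\varepsilon_v}\Z^v\to 0$. The additivity formula $\Gamma(A\oplus B)\cong \Gamma(A)\oplus (A\otimes_\Z B)\oplus \Gamma(B)$ for free abelian groups then gives an abelian group decomposition
\[\Gamma(\Z\pi)\cong \Gamma(I\pi^v)\oplus I\pi^v\oplus \Z,\]
under which the middle summand $I\pi^v$ corresponds to $a\mapsto a\otimes 1 + 1\otimes a$ and the last summand $\Z$ to $n\mapsto n(1\otimes 1)$. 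Writing $g = \tilde g + v(g)\cdot 1$ with $\tilde g := g - v(g)\cdot 1\in I\pi^v$ and expanding $g\otimes g$, one computes that for $y=\sum_g a_g g$,
\[\alpha(y) = \Big(\sum_g a_g\,(\tilde g \otimes \tilde g),\; \omega(y) - \varepsilon(y)\cdot 1,\; \varepsilon(y)\Big) \in \Gamma(I\pi^v)\oplus I\pi^v\oplus \Z,\]
where $\varepsilon$ is the standard augmentation and $\omega\colon \Z\pi\to\Z\pi$, $\sum a_g g\mapsto \sum a_g v(g) g$ is the involutive abelian group automorphism from \cref{lemma:tensoring-with-Zpi-w-gives-iso} (with $v$ in place of $w$).

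The assignment $y\mapsto (\omega(y)-\varepsilon(y)\cdot 1,\varepsilon(y))$ is an abelian group bijection $\Z\pi\to I\pi^v\oplus \Z$ with inverse $(b,n)\mapsto \omega(b)+n$ (the check uses that $\varepsilon\circ\omega$ vanishes on $I\pi^v$). Once $y$ is fixed, $\phi$ acts on the $\Gamma(I\pi^v)$ summand by a bijective translation, so $\phi$ is bijective, and hence a $\Z\pi$-module isomorphism.

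Finally, the isomorphism $\phi$ provides a $\Z\pi$-linear retraction of $\iota_*\colon \Gamma(I\pi^v)\to \Gamma(\Z\pi)$, so since $\Z^w\otimes_{\Z\pi}-$ preserves split monomorphisms, the induced map $\Z^w\otimes_{\Z\pi}\Gamma(I\pi^v)\to \Z^w\otimes_{\Z\pi}\Gamma(\Z\pi)$ is injective. As $\Z\pi$ is a finitely generated free $\Z\pi$-module containing $I\pi^v$, this shows that $I\pi^v$ is $w$-strongly torsionless in the sense of \cref{def:torsionless}. The only non-structural step in the argument is the bookkeeping expansion of $\alpha(y)$ in the abelian group decomposition; everything else is formal.
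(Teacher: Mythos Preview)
Your proof is correct. Both arguments define the $\Z\pi$-map $\alpha\colon\Z\pi\to\Gamma(\Z\pi)$, $1\mapsto 1\otimes 1$, and aim to show that $\iota_*\oplus\alpha$ is an isomorphism, but they diverge at the verification step. The paper constructs an explicit $\Z\pi$-map $\theta\colon\Gamma(\Z\pi)/\Z\pi\to\Gamma(I\pi^v)$ by a formula on the basis $\{g\otimes h+h\otimes g\}$, checks $\psi\circ\theta=\id$, and then spends most of the effort proving surjectivity of $\theta$ by reducing arbitrary basis elements of $\Gamma(I\pi^v)$ step by step. You instead pass to abelian groups, use the additivity $\Gamma(A\oplus B)\cong\Gamma(A)\oplus(A\otimes_\Z B)\oplus\Gamma(B)$ applied to the non-equivariant splitting $\Z\pi=I\pi^v\oplus\Z\cdot 1$, and observe that in those coordinates $\phi$ is block upper-triangular with the identity and the bijection $y\mapsto(\omega(y)-\varepsilon(y),\varepsilon(y))$ on the diagonal. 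Your route is shorter and avoids the basis manipulations entirely; the paper's route is more hands-on but makes no appeal to the Baues decomposition. Either way the $w$-strongly torsionless conclusion follows identically, since $\iota_*$ is exhibited as a split $\Z\pi$-monomorphism.
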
	

\begin{proof}
As $\Z\pi$ is free as an abelian group, $\Gamma(\Z\pi)$ is isomorphic to the group of symmetric elements of $\Z\pi\otimes \Z\pi$. Therefore $\Gamma(\Z\pi)$ has a $\Z$-basis $\{g\otimes g,g\otimes h+h\otimes g\mid g,h\in \pi,g\neq h\}$. We also have an inclusion $\Z\pi\to \Gamma(\Z\pi)$ induced by $1\mapsto 1\otimes 1$. Similarly $\Gamma(I\pi^v)$ consists of the symmetric elements of $I\pi^v\otimes I\pi^v$.

By sending $g\otimes h+h\otimes g$ to $-v(gh)\big(v(g)g - v(h)h)\otimes (v(g)g-v(h)h)\big)$ we obtain a map
\[\theta\colon \Gamma(\Z\pi)/\Z\pi\to \Gamma(I\pi^v).\]
To see this, since $g\otimes h+h\otimes g$ for $g\neq h$ is a $\Z$-basis of $\Gamma(\Z\pi)/\Z\pi$, it suffices to show that the map is $\pi$-equivariant, which is a straightforward verification. Recall that the $\pi$ action on both $\Gamma$ groups is diagonal, and the action is not twisted by $v$.

The map $\Gamma(I\pi^v)\to \Gamma(\Z\pi)$ induced by the inclusion $I\pi^v\to \Z\pi$ yields a map $\psi\colon \Gamma(I\pi^v)\to \Gamma(\Z\pi)/\Z\pi$. We have \[-v(gh)\big((v(g)g-v(h)h)\otimes (v(g)g-v(h)h)\big)=h\otimes g+g\otimes h-v(gh)h\otimes h-v(gh)g\otimes g.\]
Hence $\psi\circ \theta$ is the identity on $\Gamma(\Z\pi)/\Z\pi$ because $g\otimes g$ and $h\otimes h$ lie in $\Z\pi\subseteq \Gamma(\Z\pi)$. So $\theta$ is injective.

We will show below that $\theta$ is also surjective. This will imply that we have the following commuting diagram
\[
\begin{tikzcd}
0\arrow[r]	&\Z\pi\arrow[r]	&\Gamma(\Z\pi)\arrow[r]	&\Gamma(\Z\pi)/\Z\pi\arrow[d, "\theta", "\cong"']\arrow[r]	&0\\
&	&	&\Gamma(I\pi^v)\arrow[lu, "i_*"]
\end{tikzcd}
\]		
where $i_*$ denotes the inclusion-induced map.  Thus $i_*$ provides a splitting of the short exact  sequence, and we deduce that $\Gamma(\Z\pi) \cong \Gamma(I\pi^v) \oplus \Z\pi$.

We complete the argument by showing that the map $\theta$ is surjective. Note that $\{v(g)g-v(h)h\mid g,h\in\pi\}$ gives a $\Z$-basis for $I\pi^v$, and therefore
\begin{align*}
&\{(v(g)g-v(h)h)\otimes v(g)g-v(h)h\mid g,h\in \pi\}\\
&\cup \{(v(g)g-v(h)h)\otimes (v(g')g'-v(h')h')+(v(g')g'-v(h')h')\otimes(v(g)g-v(h)h) \mid g,g',h,h'\in \pi\}
\end{align*}
is a $\Z$-basis for $\Gamma(I\pi^v)$. By definition of $\theta$,
\[\theta\big(-v(gh)(g \otimes h + h \otimes g)\big)= v(g)g-v(h)h\otimes v(g)g-v(h)h,\] for $g,h\in \pi$. So it remains to show that elements of the form
\begin{equation}\label{eqn:some-Z-basis-element-of-gamma}
    (v(g)g-v(h)h)\otimes (v(g')g'-v(h')h')+(v(g')g'-v(h')h')\otimes(v(g)g-v(h)h)
\end{equation}
lie in the image of $\theta$.
Set $\wt{g} := h^{-1}g$, $\wt{g}' := h^{-1}g'$, and $\wt{h}' := h^{-1}h'$. Then we rewrite \eqref{eqn:some-Z-basis-element-of-gamma} as
\begin{equation*}\label{eqn:another-Z-basis-element-of-gamma}
(v(h\wt{g})h\wt{g}-v(h)h)\otimes (v(h\wt{g}')h\wt{g}'-v(h\wt{h}')h\wt{h}')+(v(h\wt{g}')h\wt{g}'-v(h\wt{h}')h\wt{h}')\otimes(v(h\wt{g})h\wt{g}-v(h)h).
\end{equation*}
Since the image of $\theta$ is a $\Z\pi$-submodule, we can act by $v(h) h^{-1}$, and it suffices to show that the resulting elements, which are of the following form, lie in $\im \theta$:
\[(v(\wt{g})\wt{g}-1)\otimes (v(\wt{g}')\wt{g}'-v(\wt{h}')\wt{h}')+(v(\wt{g}')\wt{g}'-v(\wt{h}')\wt{h}')\otimes(v(\wt{g})\wt{g}-1).\]
For readability, and since we consider arbitrary $g$, $g'$, and $h'$, we drop the tildes from the notation and consider:
\begin{align*}\label{eqn:a-displayed-equation}
(v(g)&g-1)\otimes (v(g')g'-v(h')h')+(v(g')g'-v(h')h')\otimes(v(g)g-1)\\
\nonumber =&(v(g)g-1)\otimes (v(g')g'-1)+(v(g)g-1)\otimes(1-v(h')h')\\
\nonumber &+(v(g')g'-1)\otimes(v(g)g-1)+(1-v(h')h')\otimes(v(g)g-1).
\end{align*}
Hence it suffices to show that elements of the form
\begin{equation}\label{eqn:another-thing-it-suffices-to-consider}
    (v(g)g-1)\otimes (v(g')g'-1)+(v(g')g'-1)\otimes(v(g)g-1)
\end{equation}
lie in $\im \theta$. Since
\begin{align*}
    (1-v(g)g)\otimes(v(g)g-1) &= \theta\big(-v(g)(1\otimes g+g\otimes 1)\big) \text{ and }\\
(1-v(g')g')\otimes(v(g')g'-1) &= \theta\big(-v(g')(1\otimes g'+g'\otimes 1)\big)
\end{align*}
are in the image of $\theta$ by definition, we can add these to \eqref{eqn:another-thing-it-suffices-to-consider}, to obtain the following expression, which lies in $\im \theta$ if and only if \eqref{eqn:another-thing-it-suffices-to-consider} does.
\begin{align*}
(v(g)&g-1)\otimes (v(g')g'-1)+(v(g')g'-1)\otimes(v(g)g-1)\\
+&(1-v(g')g')\otimes(v(g')g'-1)+(1-v(g)g)\otimes(v(g)g-1)\\
&=(v(g)g-v(g')g')\otimes(v(g')g'-1)+(v(g')g'-v(g)g)\otimes(v(g)g-1)\\
&=(v(g)g-v(g')g')\otimes(v(g')g'-v(g)g).
\end{align*}
But $\theta\big(-v(gg')(g \otimes g' + g' \otimes g)\big) = (v(g)g-v(g')g')\otimes(v(g')g'-v(g)g)$, so this lies in the image of $\theta$, and thus indeed \eqref{eqn:another-thing-it-suffices-to-consider} lies in $\im \theta$.
This completes the proof that $\theta$ is surjective, and hence an isomorphism.

For the final sentence of the lemma note that since $\Gamma(I\pi^v)$ is a summand of $\Gamma(\Z\pi)$, it follows that $\Z^w\otimes_{\Z\pi} \Gamma(I\pi^v)$ is also a summand of $\Z^w\otimes_{\Z\pi} \Gamma(\Z\pi)$. So $I\pi^v$ is $w$-strongly torsionless, with $F=\Z\pi$, as desired.
\end{proof}

\begin{corollary}
\label{lem:B-I-injective}
Assume that there is no $g\in\pi$ of order two with $w(g)=-1$.
Let $v\colon \pi\to C_2$ be a homomorphism.
The map $\B_{I\pi^v}\colon \Z^w\otimes_{\Z\pi}\Gamma(I\pi^v)\to \Her^w((I\pi^v)^\dagger)$ is injective.
\end{corollary}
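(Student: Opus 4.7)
The plan is to observe that this corollary follows directly by combining the two preceding results: \cref{lem:gamma-I} established that the twisted augmentation ideal $I\pi^v$ is $w$-strongly torsionless for any homomorphism $v\colon \pi \to C_2$ (in fact, with $F = \Z\pi$ and $\Gamma(I\pi^v)$ appearing as a direct summand of $\Gamma(\Z\pi)$), while \cref{prop:torsionless-inj} asserts that for any $w$-strongly torsionless module $L$, under the standing hypothesis that no element $g \in \pi$ of order two satisfies $w(g) = -1$, the map $\mB_L$ is injective.

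Concretely, I would first invoke \cref{lem:gamma-I} to obtain that the inclusion $I\pi^v \hookrightarrow \Z\pi$ induces an injection (indeed, a split injection) $\Gamma(I\pi^v) \to \Gamma(\Z\pi)$, whose image is a direct summand. Tensoring with $\Z^w$ over $\Z\pi$ preserves this direct summand decomposition, so the induced map $\Z^w \otimes_{\Z\pi} \Gamma(I\pi^v) \to \Z^w \otimes_{\Z\pi} \Gamma(\Z\pi)$ is injective. This verifies the defining property of $w$-strong torsionlessness for $L = I\pi^v$ with $F = \Z\pi$.

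Then I would apply \cref{prop:torsionless-inj} with $L = I\pi^v$: under the hypothesis on $w$, the map $\mB_{I\pi^v} \colon \Z^w \otimes_{\Z\pi} \Gamma(I\pi^v) \to \Her^w((I\pi^v)^\dagger)$ is injective, which is exactly the statement to be proved. There is no real obstacle here, since all the work has been done in \cref{lem:gamma-I} (the computation with the symmetric tensors) and in \cref{prop:free,prop:torsionless-inj} (the diagram chase reducing injectivity of $\mB_L$ for torsionless $L$ to injectivity of $\mB_F$ for free $F$).
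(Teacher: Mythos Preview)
Your proposal is correct and follows exactly the same approach as the paper: invoke \cref{lem:gamma-I} to see that $I\pi^v$ is $w$-strongly torsionless, then apply \cref{prop:torsionless-inj}. The paper's proof is just the two-sentence version of what you wrote.
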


\begin{proof}
By \cref{lem:gamma-I}, $I\pi^v$ is $w$-strongly torsionless. Hence $\B_{I\pi^v}$ is injective by \cref{prop:torsionless-inj}.
\end{proof}

\cref{lem:B-I-injective} shows that $\B_A$ is injective for $A$ a twisted augmentation ideal, assuming that $w$ is trivial on elements of order two. Soon, in \cref{cor:B-inj-stab-augideals}, we generalise this to show that $\B_A$ is also injective whenever $A$ is stably isomorphic to such a twisted augmentation ideal.

We will use the following lemma due to Baues.
In the statement, to form $A\otimes_{\Z\pi}A'$, we consider $A$ as a right $\Z\pi$-module using the involution on $\Z\pi$ given by $g\mapsto w(g)g^{-1}$.

\begin{lemma}[{\cite{baues}*{(1.2.7)}}] \label{lemma:baues-splitting-of-Gamma-A-plusA'}
Let $\pi$ be a group and let $A$ and $A'$ be $\Z\pi$-modules that are free as abelian groups. Let $\iota_A \colon A \to A \oplus A'$ and $\iota_{A'} \colon A' \to A \oplus A'$ be the canonical inclusions, and define
\begin{align*}
    \psi \colon A\otimes_{\Z}A' &\to \Gamma(A\oplus A') \\
    a\otimes a' &\mapsto a\otimes a'+a'\otimes a
\end{align*}
where we use the description of $\Gamma(A\oplus A')$ as the symmetric elements in $(A\oplus A')\otimes (A\oplus A')$.
Then
\[\Gamma(\iota_A) \oplus \Gamma(\iota_{A'}) \oplus \psi \colon \Gamma(A)\oplus \Gamma(A')\oplus A\otimes_{\Z}A'\xrightarrow{\cong} \Gamma(A\oplus A')\]
is an isomorphism.
\end{lemma}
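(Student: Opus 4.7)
The plan is to exploit the concrete description of $\Gamma$ on free abelian groups. Since $A$ and $A'$ are free abelian, so is $A\oplus A'$, and $\Gamma(A\oplus A')$ is the subgroup of symmetric elements of $(A\oplus A')\otimes_{\Z}(A\oplus A')$ under the swap involution $\tau(x\otimes y)=y\otimes x$. The strategy is simply to decompose this tensor square along the biproduct and identify the $\tau$-fixed points summand by summand, then verify this matches the prescribed splitting maps.

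First I would use distributivity of the tensor product to write
\[(A\oplus A')\otimes_{\Z}(A\oplus A')\cong (A\otimes_{\Z}A)\oplus (A\otimes_{\Z}A')\oplus (A'\otimes_{\Z}A)\oplus (A'\otimes_{\Z}A'),\]
as $\Z\pi$-modules, where $\pi$ acts diagonally on each summand. The swap $\tau$ preserves the first and the fourth summands, where it restricts to the usual involution whose fixed subgroups are, by definition, $\Gamma(A)$ and $\Gamma(A')$. On the middle two summands, $\tau$ interchanges $A\otimes_{\Z}A'$ and $A'\otimes_{\Z}A$ via $a\otimes a'\leftrightarrow a'\otimes a$.

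Next I would identify the fixed points of $\tau$ on $(A\otimes_{\Z}A')\oplus (A'\otimes_{\Z}A)$. The diagonal map $a\otimes a'\mapsto (a\otimes a',\,a'\otimes a)$ is a $\Z\pi$-linear injection from $A\otimes_{\Z}A'$ (with diagonal $\pi$-action) onto these fixed points, with inverse the projection to the first factor. Under the inclusion into $(A\oplus A')^{\otimes 2}$ this diagonal is exactly the image of $\psi$. Taking $\tau$-fixed points of the decomposition above therefore gives
\[\Gamma(A\oplus A')\cong \Gamma(A)\oplus \Gamma(A')\oplus \psi(A\otimes_{\Z}A'),\]
and it is immediate from the construction that the inclusion of the three summands on the left is precisely $\Gamma(\iota_A)\oplus \Gamma(\iota_{A'})\oplus \psi$, as claimed.

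I do not anticipate any real obstacle: once one commits to the symmetric-tensor description of $\Gamma$, the lemma is essentially the block decomposition of a tensor square under a swap. The only tiny checks are that all identifications respect the diagonal $\Z\pi$-action (automatic, since $\tau$ is $\pi$-equivariant and the summands above are $\pi$-stable) and that $\psi$ is well-defined on $A\otimes_{\Z}A'$ and not merely on a bilinear form (clear, because $(a,a')\mapsto a\otimes a'+a'\otimes a$ is $\Z$-bilinear).
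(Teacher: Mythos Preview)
Your argument is correct. Note that the paper does not supply its own proof of this lemma; it simply cites Baues, so there is nothing to compare against beyond observing that your direct verification via the block decomposition of the symmetric square is exactly the standard elementary argument one would expect.
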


\begin{remark}\label{remark:splitting-of-Her-for-A-plus-A'}
Furthermore the map
\[\Her^w((A\oplus A')^{\dagger}) \xrightarrow{\cong} \Her^w(A^\dagger)\oplus \Her^w((A')^\dagger)\oplus \Hom_{\Z\pi}((A')^\dagger,A^{\dagger\dagger}),\]
induced by the restrictions along the inclusions of $A^\dagger$ and $(A')^\dagger$ into $(A\oplus A')^\dagger$, is an isomorphism.
\end{remark}

\begin{lemma}
\label{lem:splitting-of-B}
Let $A$ and $A'$ be $($left$)$ $\Z\pi$-modules that are free as abelian groups. Then with respect to the decomposition of $\Gamma(A \oplus A')$ from \cref{lemma:baues-splitting-of-Gamma-A-plusA'}, and the decomposition of  $\Her^w((A\oplus A')^{\dagger})$ from \cref{remark:splitting-of-Her-for-A-plus-A'}, the map $\B_{A\oplus A'}$ is isomorphic to the direct sum of $\B_A$, $\B_{A'}$, and the map
\begin{align*}
A\otimes_{\Z\pi} A'&\to \Hom_{\Z\pi}((A')^\dagger,A^{\dagger\dagger}) \\
a\otimes a' &\mapsto (f\mapsto (g\mapsto \overline{g(a)}f(a'))).
\end{align*}
\end{lemma}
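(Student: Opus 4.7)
The plan is to apply the splitting of $\Gamma(A\oplus A')$ from the Baues lemma above and the splitting of $\Her^w((A\oplus A')^\dagger)$ from the subsequent remark, and then verify that $\B_{A\oplus A'}$ respects both decompositions by a direct summand-by-summand evaluation of the defining formula $\B_C(c\otimes d)(f,g) = \overline{f(c)}\,g(d)$.

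I would treat the three summands separately. For the $\Gamma(A)$ summand, take symmetric generators $a\otimes a$ or $a\otimes b + b\otimes a$ with $a,b\in A$. The image under $\B_{A\oplus A'} \circ \Gamma(\iota_A)$ is the Hermitian form on $(A\oplus A')^\dagger$ given by $(F,G)\mapsto \overline{F(a)}G(b)+ \overline{F(b)}G(a)$ (or the analogous expression when $a=b$). Since $a,b\in A$, this form vanishes whenever either argument lies in $(A')^\dagger$, so its components in $\Her^w((A')^\dagger)$ and in the off-diagonal piece $\Hom_{\Z\pi}((A')^\dagger, A^{\dagger\dagger})$ are zero, while its restriction to $A^\dagger\times A^\dagger$ reproduces $\B_A$ on the original element. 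The symmetric computation handles $\Gamma(A')$.

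For the cross term, $\psi(a\otimes a') = a\otimes a' + a'\otimes a$ is sent to the form $(F,G) \mapsto \overline{F(a)}G(a') + \overline{F(a')}G(a)$. Restricting to $A^\dagger\times A^\dagger$ or to $(A')^\dagger\times(A')^\dagger$ kills both summands (in the first case $F(a')=G(a')=0$, in the second $F(a)=G(a)=0$), so the first two components vanish. Evaluating the off-diagonal component at $(g,f) \in A^\dagger\times (A')^\dagger$ gives $\overline{g(a)}f(a')$, which is precisely the formula stated in the lemma. To land in $A\otimes_{\Z\pi}A'$ rather than $\Z^w\otimes_{\Z\pi}(A\otimes_\Z A')$, I would invoke the standard identification between these two groups, which follows by comparing the tensor relations after combining the diagonal $\pi$-action on $A\otimes_\Z A'$ with the involution $g\mapsto w(g)g^{-1}$ used to turn $A$ into a right $\Z\pi$-module.

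I do not expect a conceptual obstacle. The whole argument reduces to careful bookkeeping; the only pitfalls are keeping track of which of the two inclusions $\iota_A$, $\iota_{A'}$ each functional factors through, and ensuring that the symmetric-tensor presentation of $\Gamma$ is compatible with the definition of $\B$.
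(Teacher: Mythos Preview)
Your proposal is correct and follows essentially the same route as the paper. The paper's proof is in fact terser: it checks only the cross term $A\otimes_{\Z\pi}A'$ explicitly and leaves the $\Gamma(A)$ and $\Gamma(A')$ summands to the reader, whereas you spell out all three cases and also make explicit the identification $\Z^w\otimes_{\Z\pi}(A\otimes_\Z A')\cong A\otimes_{\Z\pi}A'$ (which the paper records separately in a remark on conventions).
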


\begin{remark}\label{remark:involution-for-otimes-and-Tor}
Here and throughout we adopt the convention that the tensor product $A\otimes_{\Z\pi} A'$ of two left $\Z\pi$-modules $A$ and $A'$ is formed by converting $A$ into a right $\Z\pi$-module using the involution, and then taking the tensor product.
We will use the same convention shortly when forming $\Tor_1^{\Z\pi}(A,A')$.
\end{remark}

\begin{proof}
The splittings of the domain and codomain, from \cref{lemma:baues-splitting-of-Gamma-A-plusA'} and \cref{remark:splitting-of-Her-for-A-plus-A'} respectively, induce the claimed splitting of $\B_{A\oplus A'}$.  We only show this for the image of $A\otimes_{\Z\pi}A'$ and leave the other cases to the reader. Here $a\otimes a'$ maps under $\B_{A\oplus A'}$ to the Hermitian form on $(A\oplus A')^\dagger$ given by $((f_1,g_1),(f_2,g_2))\mapsto \overline{g_1(a')}f_2(a)$. In particular, this is trivial when restricted to $\Her^w(A^\dagger)$ and $\Her^w((A')^\dagger)$. Hence it maps to $\Hom_{\Z\pi}((A')^\dagger,A^{\dagger\dagger})$, and this is given by the map  claimed.
\end{proof}

\begin{lemma}
\label{lem:stably-inj}
Assume that there is no $g\in\pi$ of order two with $w(g)=-1$.
Let $A$ be a torsionless $\Z\pi$-module and let $k\geq 0$. Then the canonical map from the kernel of $\B_A\colon \Z^w\otimes_{\Z\pi} \Gamma(A)\to \Her^w(A^\dagger)$ to the kernel of $\B_{A\oplus \Z\pi^k}\colon \Z^w\otimes_{\Z\pi} \Gamma(A\oplus \Z\pi^k)\to \Her^w((A\oplus \Z\pi^k)^\dagger)$ is an isomorphism.
\end{lemma}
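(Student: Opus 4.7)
The plan is to apply \cref{lem:splitting-of-B} with $A' = \Z\pi^k$. This decomposes $\B_{A \oplus \Z\pi^k}$ as the direct sum of $\B_A$, $\B_{\Z\pi^k}$, and the cross-term
\[ \tau\colon \Z^w\otimes_{\Z\pi}(A\otimes_\Z\Z\pi^k) \longrightarrow \Hom_{\Z\pi}\big((\Z\pi^k)^\dagger, A^{\dagger\dagger}\big). \]
Since the identification of $\Gamma(A\oplus A')$ in \cref{lemma:baues-splitting-of-Gamma-A-plusA'} is compatible with $\Gamma(\iota_A)$ in the sense that the first summand of the direct sum is literally the image of $\Gamma(\iota_A)$, the canonical map $\ker \B_A \to \ker \B_{A\oplus\Z\pi^k}$ corresponds under this identification to inclusion into the first summand. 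Consequently, it suffices to show that $\ker\B_{\Z\pi^k} = 0$ and $\ker \tau = 0$.

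The vanishing of $\ker\B_{\Z\pi^k}$ is immediate from \cref{prop:free}, using that $\Z\pi^k$ is finitely generated free and that the hypothesis on $w$ is in force. For the cross-term, I would use the standard basis of $\Z\pi^k$ to identify $(\Z\pi^k)^\dagger \cong \Z\pi^k$, giving $\Hom_{\Z\pi}((\Z\pi^k)^\dagger, A^{\dagger\dagger}) \cong (A^{\dagger\dagger})^k$, and similarly unfold the coinvariants to obtain $\Z^w\otimes_{\Z\pi}(A\otimes_\Z \Z\pi^k) \cong A^k$ (with the isomorphism sending $a \otimes e_i$ to the $i$-th basis vector). Under these identifications, $\tau$ becomes the $k$-fold direct sum of the canonical evaluation map $\phi\colon A \to A^{\dagger\dagger}$, $a \mapsto (g \mapsto \overline{g(a)})$. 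Hence injectivity of $\tau$ reduces to injectivity of $\phi$.

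To finish, if $A$ is torsionless with embedding $A \hookrightarrow F$ into a finitely generated free $\Z\pi$-module $F$, then the naturality square
\[
\begin{tikzcd}
A \ar[r,hook]\ar[d,"\phi_A"] & F\ar[d,"\phi_F","\cong"']\\
A^{\dagger\dagger}\ar[r] & F^{\dagger\dagger}
\end{tikzcd}
\]
commutes; since $\phi_F$ is an isomorphism (for $F$ finitely generated free) and the top arrow is injective, $\phi_A$ must be injective too. The main obstacle I anticipate is purely bookkeeping: carefully checking that the abstract cross-term map of \cref{lem:splitting-of-B} really becomes $k$ copies of $\phi$ under the natural identifications, keeping track of the involution conventions used to form $A^\dagger$, $A^{\dagger\dagger}$, and $\otimes_{\Z\pi}$. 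Once those identifications are pinned down, the argument is immediate.
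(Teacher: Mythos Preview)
Your proof is correct and follows essentially the same route as the paper's: both use the direct-sum decomposition of $\B_{A\oplus\Z\pi^k}$ from \cref{lem:splitting-of-B}, the injectivity of $\B$ on free modules from \cref{prop:free}, and the injectivity of the evaluation map $A\to A^{\dagger\dagger}$ for torsionless $A$ via the naturality square with $F$. The only cosmetic difference is that the paper reduces to $k=1$ by induction (so the cross-term is exactly $\ev_A$), whereas you treat general $k$ directly and identify the cross-term as $k$ copies of $\ev_A$; both are equally valid.
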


\begin{proof}
Since $A\oplus \Z\pi^k$ is again torsionless, it suffices to consider the case $k=1$ by induction. Let $A\to F$ be an embedding into a finitely generated, free $\Z\pi$-module.
By the commutative square
\[\begin{tikzcd}
A\ar[r,hook]\ar[d,"\ev_A"]&F\ar[d,"\ev_F","\cong"']\\
A^{\dagger\dagger}\ar[r]&F^{\dagger\dagger},
\end{tikzcd}\]
the map $\ev_A$ sending $a\in A$ to $(f\mapsto \overline{f(a)})\in A^{\dagger\dagger}$ is injective.

By \cref{lem:splitting-of-B}, $\B_{A\oplus \Z\pi}$ is given by the direct sum of $\B_A$, $\B_{\Z\pi}$, and $\ev_A$.
By the previous paragraph $\ev_A$ is injective and $\B_{\Z\pi}$ is injective by \cref{prop:free}. It follows that the kernel of $\B_A$ is isomorphic to the kernel of $\B_{A\oplus \Z\pi^k}$, as claimed.
\end{proof}

\begin{corollary}\label{cor:B-inj-stab-augideals}
Assume that there is no $g\in\pi$ of order two with $w(g)=-1$.
Let $v\colon \pi\to C_2$ be a homomorphism and let $A$ be a $\Z\pi$-module. Suppose there exists $k,j\geq 0$ such that $A\oplus \Z\pi^k\cong I\pi^v\oplus \Z\pi^j$. Then $\B_A$ is defined and injective.
\end{corollary}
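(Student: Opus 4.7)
The plan is to reduce the statement to the case $A = I\pi^v$, for which injectivity was established in \cref{lem:B-I-injective}, by exploiting the stable invariance of $\ker \B$ provided by \cref{lem:stably-inj}.

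First I would verify that $A$ is torsionless, so that $\B_A$ is defined and \cref{lem:stably-inj} applies. Indeed, since $A$ is a direct summand of $A\oplus \Z\pi^k \cong I\pi^v\oplus \Z\pi^j$, composing the canonical inclusion $A \hookrightarrow A\oplus \Z\pi^k$ with the given isomorphism and with the inclusion $I\pi^v\oplus \Z\pi^j \hookrightarrow \Z\pi\oplus \Z\pi^j = \Z\pi^{j+1}$ exhibits $A$ as a submodule of a finitely generated free $\Z\pi$-module. In particular $A$ is free as an abelian group, so $\B_A$ is defined. The module $I\pi^v$ is torsionless by definition, via the inclusion $I\pi^v \hookrightarrow \Z\pi$.

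Next, I would apply \cref{lem:stably-inj} twice, using the hypothesis that there is no element of order two on which $w$ is nontrivial, to obtain canonical isomorphisms
\[\ker \B_A \xrightarrow{\cong} \ker \B_{A\oplus \Z\pi^k} \quad \text{and} \quad \ker \B_{I\pi^v} \xrightarrow{\cong} \ker \B_{I\pi^v\oplus \Z\pi^j}.\]
By naturality of the assignment $A \mapsto \B_A$ with respect to $\Z\pi$-module isomorphisms (which is immediate from the defining formula for $\B$), the hypothesised isomorphism $A\oplus \Z\pi^k \cong I\pi^v \oplus \Z\pi^j$ induces an isomorphism between $\ker \B_{A\oplus \Z\pi^k}$ and $\ker \B_{I\pi^v\oplus \Z\pi^j}$. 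Combining these three identifications with \cref{lem:B-I-injective}, which asserts $\ker \B_{I\pi^v} = 0$, yields $\ker \B_A = 0$, as desired.

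The substantive work has already been carried out in the preceding lemmas; the only subtlety worth flagging is that \cref{lem:stably-inj} requires a torsionless hypothesis, which is why the initial verification that $A$ embeds into a finitely generated free module is essential. No further obstacle is expected.
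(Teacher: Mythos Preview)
Your proposal is correct and follows essentially the same route as the paper's proof: verify that $A$ embeds in $\Z\pi^{j+1}$ so that $\B_A$ is defined and $A$ is torsionless, then apply \cref{lem:stably-inj} on both sides of the stable isomorphism and conclude from \cref{lem:B-I-injective}. Your write-up is slightly more explicit than the paper's about why naturality transports the kernel across the isomorphism $A\oplus\Z\pi^k\cong I\pi^v\oplus\Z\pi^j$, but the argument is the same.
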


\begin{proof}
Note that by hypothesis $A$ is a subgroup of $(\Z\pi)^{j+1}$ and is therefore free as an abelian group. Thus the map $\B_A$ is defined. By \cref{lem:stably-inj}, we know that the kernel of $\B_A$ is isomorphic to the kernel of $\B_{A\oplus \Z\pi^k}$, via a natural map. By hypothesis this kernel can be identified with that of $\B_{I\pi^v\oplus \Z\pi^k}$. Another application of \cref{lem:stably-inj} shows this is isomorphic to the kernel of $\B_{I\pi^v}$, which is trivial by \cref{lem:B-I-injective}.
\end{proof}

\begin{lemma}\label{lem:sum-kernel}
Let $A$ and $A'$ be torsionless $\Z\pi$-modules and suppose that $\B_A \colon \Z^w\otimes_{\Z\pi}\Gamma(A)\to\Her^w(A^\dagger)$ and $\B_{A'} \colon \Z^w\otimes_{\Z\pi}\Gamma(A')\to\Her^w((A')^\dagger)$ are injective. Assume that $0\to A\xrightarrow{j} F\to Q\to 0$ is exact with $F$ a finitely generated, free $\Z\pi$-module. Then the kernel of \[\B_{A\oplus A'} \colon \Z^w\otimes_{\Z\pi}\Gamma(A \oplus A')\to\Her^w((A\oplus A')^\dagger)\] is contained in the image of \[\Tor_1^{\Z\pi}(Q,A')\to A\otimes_{\Z\pi} A'\leq \Z^w\otimes_{\Z\pi}\Gamma(A\oplus A').\]
\end{lemma}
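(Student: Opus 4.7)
The plan is to exploit the splitting of $\B_{A\oplus A'}$ provided by \cref{lem:splitting-of-B}. Under that splitting, $\B_{A\oplus A'}$ is identified with $\B_A \oplus \B_{A'} \oplus \phi_A$, where
\[\phi_A \colon A\otimes_{\Z\pi} A' \to \Hom_{\Z\pi}((A')^\dagger, A^{\dagger\dagger}),\qquad a\otimes a' \mapsto \big(f\mapsto f(a')\cdot \ev_A(a)\big).\]
Since by hypothesis $\B_A$ and $\B_{A'}$ are both injective, the kernel of $\B_{A\oplus A'}$ equals $\ker(\phi_A)$, regarded as a subgroup of the summand $A\otimes_{\Z\pi} A'$. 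Thus it remains to show that $\ker(\phi_A)$ lies in the image of $\Tor_1^{\Z\pi}(Q, A') \to A\otimes_{\Z\pi} A'$.

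Next I would apply $-\otimes_{\Z\pi} A'$ to the short exact sequence $0\to A \xrightarrow{j} F \to Q\to 0$. Since $F$ is free we have $\Tor_1^{\Z\pi}(F,A')=0$, so the resulting long exact sequence gives
\[\im\big(\Tor_1^{\Z\pi}(Q,A') \to A\otimes_{\Z\pi} A'\big) = \ker\big(j\otimes \id \colon A\otimes_{\Z\pi} A' \to F\otimes_{\Z\pi} A'\big).\]
Therefore the conclusion reduces to showing the inclusion $\ker(\phi_A)\subseteq \ker(j\otimes \id)$. To this end, define the analogous map $\phi_F \colon F\otimes_{\Z\pi} A' \to \Hom_{\Z\pi}((A')^\dagger, F^{\dagger\dagger})$; naturality of $\ev$ makes the square
\[\begin{tikzcd}
A\otimes_{\Z\pi} A' \ar[r,"\phi_A"]\ar[d,"j\otimes \id"'] & \Hom_{\Z\pi}((A')^\dagger, A^{\dagger\dagger})\ar[d,"(j^{\dagger\dagger})_*"]\\
F\otimes_{\Z\pi} A' \ar[r,"\phi_F"] & \Hom_{\Z\pi}((A')^\dagger, F^{\dagger\dagger})
\end{tikzcd}\]
commute. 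Given $x\in\ker(\phi_A)$, commutativity then forces $\phi_F((j\otimes\id)(x))=0$, so it suffices to prove $\phi_F$ is injective.

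The main technical point is the injectivity of $\phi_F$, which I expect to be the one delicate step. Since $F$ is finitely generated free, $\ev_F\colon F\to F^{\dagger\dagger}$ is an isomorphism, and the map $\phi_F$ decomposes as a finite direct sum of copies of the evaluation map $\ev_{A'}\colon A' \to (A')^{\dagger\dagger}$. Concretely, for $F=\Z\pi^n$ the identifications $F\otimes_{\Z\pi} A' \cong (A')^n$ and $\Hom_{\Z\pi}((A')^\dagger,F^{\dagger\dagger}) \cong ((A')^{\dagger\dagger})^n$ identify $\phi_F$ with $\ev_{A'}^{\oplus n}$. Since $A'$ is torsionless, it embeds into a finitely generated free module $G$, and from the commutative square with $\ev_G$ an isomorphism one sees $\ev_{A'}$ is injective. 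Thus $\phi_F$ is injective, completing the argument.
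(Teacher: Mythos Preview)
Your proof is correct and follows essentially the same approach as the paper's: both use the splitting from \cref{lem:splitting-of-B} to reduce to analysing the kernel of the cross-term map $A\otimes_{\Z\pi}A'\to\Hom_{\Z\pi}((A')^\dagger,A^{\dagger\dagger})$, then compare with the analogous map for $F$ via the commuting square, and conclude by showing $\phi_F$ is injective because it is a sum of copies of $\ev_{A'}$, which is injective since $A'$ is torsionless. Your write-up is slightly more explicit in identifying the image of $\Tor_1^{\Z\pi}(Q,A')$ with $\ker(j\otimes\id)$, but the argument is the same.
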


\begin{proof}
By \cref{lem:splitting-of-B}, $\B_{A\oplus A'}$ is the direct sum of $\B_A$, $\B_{A'}$, and $A\otimes_{\Z\pi}A'\to\Hom((A')^\dagger,A^{\dagger\dagger})$.
Since $\B_A$ and $\B_{A'}$ are assumed to be injective, the kernel of $\B_{A\oplus A'}\colon \Z^w\otimes_{\Z\pi} \Gamma(A\oplus A')\to \Her^w((A\oplus A')^\dagger)$ is the kernel of $A\otimes_{\Z\pi}A'\to\Hom_{\Z\pi}((A')^\dagger,A^{\dagger\dagger})$.
Consider the following commuting diagram, where the latter map is the left vertical map:
\[\begin{tikzcd}[column sep = large]
\Tor_1^{\Z\pi}(Q,A')\ar[r]&A\otimes_{\Z\pi} A'\ar[r,"j \otimes \Id_{A'}"] \ar[d] & F\otimes_{\Z\pi} A'\ar[d,hook]\\
&\Hom_{\Z\pi}((A')^\dagger,A^{\dagger\dagger})\ar[r,"{\Hom (-,j^{\dagger\dagger})}"] & \Hom_{\Z\pi}((A')^\dagger,F^{\dagger\dagger}).
\end{tikzcd}\]
Since $F$ is free, the right hand vertical map can be identified with a sum of copies of $\ev_{A'}$. 
Since $\ev_{A'}\colon A'\to (A')^{\dagger\dagger}$ is injective as in the proof of \cref{lem:stably-inj}, the right hand vertical map is injective. The proof is then completed by a diagram chase.
\end{proof}

We end this section with the following result, which we will need in \cref{section:Property-H,sec:oldresults}.

\begin{proposition}
\label{prop:finitepi}
Let $\pi$ be a finite group and let $A$ be a $\Z\pi$-module that is free as a $\Z$-module. Then the kernel of $\mB_A$ equals the torsion subgroup of $\Z^w\otimes_{\Z\pi}\Gamma(A)$.
\end{proposition}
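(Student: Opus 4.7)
The plan is to establish the two inclusions separately. The easy direction, that $\ker\mB_A$ contains the torsion subgroup of $\Z^w\otimes_{\Z\pi}\Gamma(A)$, is immediate: the codomain $\Her^w(A^\dagger)$ is torsion-free as an abelian group, since it embeds into $\Hom_\Z(A^\dagger\otimes_\Z A^\dagger,\Z\pi)$ and $\Z\pi$ is a free abelian group.

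For the reverse inclusion, it suffices to show that $\mB_A\otimes_\Z\Q$ is injective, since for any abelian group $M$ the kernel of $M\to M\otimes_\Z\Q$ is precisely the torsion subgroup of $M$. Set $A_\Q:=A\otimes_\Z\Q$, a $\Q\pi$-module. Using that $A$ is free as a $\Z$-module, so that $\Gamma(A)=(A\otimes_\Z A)^{\Sigma_2}$, and that $2$ is invertible in $\Q$ so that tensoring with $\Q$ commutes with taking $\Sigma_2$-invariants, one obtains natural isomorphisms $\Q\otimes_\Z\Gamma(A)\cong \Sym^2(A_\Q)$ and $\Q\otimes_\Z\Her^w(A^\dagger)\cong \Her^w((A_\Q)^{\dagger_\Q})$, where the operations on the right are formed over $\Q\pi$. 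Under these, $\mB_A\otimes\Q$ is identified with the analogous $\Q\pi$-version $\mB_{A_\Q}$, so it is enough to prove $\mB_{A_\Q}$ is injective.

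Since $\pi$ is finite, by Maschke's theorem $\Q\pi$ is semisimple, so $A_\Q$ is a projective $\Q\pi$-module, realised as a $\Q\pi$-summand of a free $\Q\pi$-module. Applying the $\Q\pi$-version of \cref{lem:splitting-of-B} reduces the injectivity of $\mB_{A_\Q}$ to that of $\mB_{\Q\pi}$. The main obstacle will be handling this last step: \cref{prop:free} cannot be invoked directly on $\mB_{\Q\pi}$ if $\pi$ contains an order-two element $g$ with $w(g)=-1$. However, a direct computation shows that the kernel of $\mB_{\Z\pi}$ consists only of $2$-torsion in such cases, which dies after $\otimes_\Z\Q$. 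More structurally, the Artin--Wedderburn decomposition of $\Q\pi$ into simple algebras is compatible with the involution---which either preserves each simple factor or swaps a pair---and injectivity of $\mB_{\Q\pi}$ can then be verified factor by factor.
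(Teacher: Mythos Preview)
Your approach is correct but differs from the paper's. The paper does not rationalise; it invokes \cite{KPT-long}*{Lemma~4.19}, which gives the analogous statement for the map $\Phi_B\circ\overline{\omega}\colon\Z^w\otimes_{\Z\pi}\Gamma(B^\dagger)\to\Her^w(B)$, $f\otimes g\mapsto\big((a,b)\mapsto f(a)\overline{g(b)}\big)$, and then uses that $A$ is reflexive (since $\pi$ is finite and $A$ is $\Z$-free, citing \cite{Nicholson}*{Lemma~2.5}) to identify $\mB_A$ with $\Phi_{A^\dagger}\circ\overline{\omega}$ via $\ev\colon A\xrightarrow{\cong}A^{\dagger\dagger}$. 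Your rationalisation-plus-Maschke route is more self-contained, but the final step is only sketched. To complete it, note first that the reduction via \cref{lem:splitting-of-B} also requires the cross-term maps to be injective; for free modules these are evaluation maps and hence isomorphisms. For $\mB_{\Q\pi}$ itself, your claim that $\ker\mB_{\Z\pi}$ is $2$-torsion can be verified explicitly: the classes $[1\otimes1]$ and $[1\otimes m+m\otimes1]$ for $m\neq1$ span $\Z^w\otimes_{\Z\pi}\Gamma(\Z\pi)$, subject only to $[1\otimes m+m\otimes1]=w(m)[1\otimes m^{-1}+m^{-1}\otimes1]$, and $\mB_{\Z\pi}$ sends these to $1$ and $m+w(m)m^{-1}\in\Z\pi$, which have pairwise disjoint supports---so the map is injective on the free part---except when $m^2=1$ and $w(m)=-1$, where the image vanishes and the generator is already $2$-torsion.
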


\begin{proof}
For any $\Z\pi$-module $B$ that is free as a $\Z$-module, there is a map
\[
\Phi_B\circ \overline{\omega}\colon\Z^w\otimes_{\Z\pi}\Gamma(B^\dagger)\to \Her^w(B)
\]
induced by $f\otimes g\mapsto ((a,b)\mapsto f(a)\overline{g(b)})$. Our notation corresponds to \cite{KPT-long}*{Lemma~4.19}, where it was shown that the kernel of $\Phi_B\circ \overline{\omega}$ is the torsion in $\Z^w\otimes_{\Z\pi}\Gamma(B^\dagger)$. Note that \cite{KPT-long}*{Lemma~4.19} is stated for a special $\Z\pi$-module and $w=0$. However this proof can be adapted in a straightforward way to show the statement claimed by replacing the standard norm element with the twisted norm element $\sum_{g\in\pi}w(g)g$. We apply this with $B = A^\dagger$ below.

By \cite{Nicholson}*{Lemma~2.5}, $A$ is reflexive, i.e.\ the map $\ev\colon A\to A^{\dagger\dagger}$ given by $a\mapsto (f\mapsto\overline{f(a)})$ is an isomorphism. Considering the commutative diagram
\[\begin{tikzcd}
\Z^w\otimes_{\Z\pi}\Gamma(A)\ar[dr,"\B_A"']\ar[r,"\ev","\cong"']&\Z^w\otimes_{\Z\pi}\Gamma(A^{\dagger\dagger})\ar[d,"\Phi_{A^\dagger}\circ \overline{\omega}"]\\
&\Her^w(A^\dagger)
\end{tikzcd}\]
we see that the kernel of $\B_A$ is the torsion  in $\Z\otimes_{\Z\pi}\Gamma(A)$ as claimed.
\end{proof}

\section{Proving \texorpdfstring{\cref{thm:homotopyclass}\,\eqref{item:2}}{Theorem 2.2 (3)} for \texorpdfstring{$3$}{3}-manifold fundamental groups}\label{section:proving-thm-22(3)}

Let $\pi$ be a $3$-manifold group and let $w \colon \pi \to C_2$ be a homomorphism with $(\pi,w)$ admissible.
The goal of this section is to work towards proving \cref{thm:homotopyclass}\,\eqref{item:2} for $4$-manifolds $M$ with fundamental group $\pi$ and orientation character~$w$.
Recall that this condition states: \emph{the kernel of $\mB_{H_2(B;\Z\pi)}\circ \Upsilon$ is contained in the kernel of $\varphi_B \colon \Z^w\otimes_{\Z\pi}H_4(B;\Z\pi)\to H_4(B;\Z^w)$}, where $B$ is the Postnikov $2$-type of $M$. We recall that by \cref{prop:H2FA-3groups}, $H_2(B;\Z\pi)$ is free as an abelian group, so the map $\mB_{H_2(B;\Z\pi)}$ is defined.

In \cref{subsection:bounds-on-kernel-B} we obtain upper bounds on the size of the kernel of $\B_{H_2(M;\Z\pi)}$.
In \cref{sec:ZZ2} we focus on the group $\Z \times \Z/2$, and we obtain lower bounds on the size of the kernel of $\varphi_B$, when $B$ is the $2$-type of a $4$-manifold with fundamental group $\Z \times \Z/2$, for a specific stable isomorphism class of $\pi_2(M)$.
In \cref{subsection:bounds-kernel-B-general-3-mfld-groups} we again consider $4$-manifolds $M$ of type $(\pi,w)$. For $B = P_2(M)$ we use the results from \cref{sec:ZZ2} to obtain  analogous lower bounds on the size of the kernel of $\varphi_B$ for all the fundamental groups considered in \cref{thm:main-homotopy}.

The output of this section is summarised in \cref{cor:kernelB-equals-kernelphi}. As explained in the proof of that corollary, we know that $\ker \varphi_B$ is contained in  $\ker \B_{H_2(M;\Z\pi)} \circ \Upsilon$ in general.  Our upper bounds from \cref{subsection:bounds-on-kernel-B} and lower bounds from \cref{subsection:bounds-kernel-B-general-3-mfld-groups} coincide, and we deduce in \cref{cor:kernelB-equals-kernelphi} that $\ker (\B_{H_2(M;\Z\pi)}  \circ \Upsilon) = \ker \varphi_B$ in our setting. Of course it then follows that $\ker (\B_{H_2(M;\Z\pi)}  \circ \Upsilon) \subseteq \ker \varphi_B$, so \cref{thm:homotopyclass}\,\eqref{item:2} is satisfied.

\subsection{Upper bounds on the kernel of \texorpdfstring{$\B_{H_2(M;\Z\pi)}$}{B} for \texorpdfstring{$3$}{3}-manifold fundamental groups}\label{subsection:bounds-on-kernel-B}

As in \cref{subsection-pi_2-where-pi1-is-free-product}, for the remainder of this section, let $\pi$ be a $3$-manifold group and let $w \colon \pi \to C_2$ be a homomorphism with $(\pi,w)$ admissible.  Then, by the prime decomposition theorem for $3$-manifolds, we know there is a decomposition of the form
\begin{equation}
\label{eq:pi-decomp-new}
\pi\cong F*\big(\ast_{i=1}^rZ_i\big)*\big(\ast_{j=1}^sG_j\big)*\big(\ast_{k=1}^tH_k\big),
\end{equation}
for some $r,s,t\geq 0$, with $F$ a free group, $Z_i$ a finite cyclic group for each $i$, $G_j$ a $PD_3$-group for each $j$ and $H_k\cong \Z\times \Z/2$ for each $k$.

\begin{proposition}
\label{prop:kernelB}
Fix a decomposition for $\pi$ as in \eqref{eq:pi-decomp-new}. Let $M$ be a closed $4$-manifold with fundamental group $\pi$
and orientation character $w \colon \pi \to C_2$ such that $(\pi,w)$ is admissible.
By reordering the factors of $\pi$ if needed, we assume that there exists $t' \leq t$ such that the image of the fundamental class $[M]$ in $H_4(\pi;\Z^w)\cong \bigoplus_{k=1}^tH_4(H_k;\Z^w)\cong (\Z/2)^t$ is trivial in the first $t'$ summands and nontrivial for $k>t'$.
Then the kernel of $\B_{H_2(M;\Z\pi)}$ is isomorphic to $(\Z/2)^{\tau}$ for some $\tau \leq t'$.
\end{proposition}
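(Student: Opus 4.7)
The plan is to stably decompose $\pi_2(M)$ via \cref{prop:pi2-decomp}, and then use Baues' splitting of $\Gamma$ together with \cref{lem:splitting-of-B} to break $\B$ into diagonal and cross-term pieces. The diagonal pieces will turn out to be injective, and the cross-term kernels will be controlled by $\Tor$ groups which I will compute using the free-product structure of $\pi$ and Shapiro's lemma.

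First, by \cref{prop:pi2-decomp} and \cref{lem:stably-inj} (whose hypothesis follows from admissibility), I may replace $\pi_2(M)$ with the stable representative $A := \Ind_\Gamma^\pi I\Gamma^v \oplus \Ind_{\Gamma'}^\pi I\Gamma'$. Iterating \cref{lem:adding-I} further decomposes $A \cong \bigoplus_m A_m$ with each $A_m$ of the form $\Ind_L^\pi IL^{v_L}$ for $L$ a factor of $\Gamma$ or $\Gamma'$: concretely two copies of $\Ind_{Z_i}^\pi IZ_i$ per $i$, one copy of $\Ind_{G_j}^\pi IG_j^{v|_{G_j}}$ per $j$, and for each $k\leq t'$ both $\Ind_{H_k}^\pi IH_k$ and $\Ind_{H_k}^\pi IH_k^{v|_{H_k}}$. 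Iterating \cref{lemma:baues-splitting-of-Gamma-A-plusA'}, \cref{remark:splitting-of-Her-for-A-plus-A'}, and \cref{lem:splitting-of-B} then yields
\[\ker \B_A \;\cong\; \bigoplus_m \ker \B_{A_m} \;\oplus\; \bigoplus_{m<n} \ker \mu_{mn},\]
where $\mu_{mn}\colon A_m \otimes_{\Z\pi} A_n \to \Hom_{\Z\pi}(A_n^\dagger, A_m^{\dagger\dagger})$ is the cross map.

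Next I will show each diagonal $\B_{A_m}$ is injective. By \cref{lem:adding-I}, the summand $A_m$ is a direct summand of $I\pi^{\tilde w}$ for a suitable $\tilde w\colon \pi\to C_2$ extending $v_L$, so $\B_{A_m}$ is a direct summand of $\B_{I\pi^{\tilde w}}$ by \cref{lem:splitting-of-B}, and the latter is injective by \cref{lem:B-I-injective}. To bound each cross term I will apply \cref{lem:sum-kernel} using the short exact sequence $0\to A_m\to \Z\pi\to \Ind_{L_1}^\pi \Z^{v_1}\to 0$ (with $v_1:=v|_{L_1}$) induced by $IL_1^{v_1}\hookrightarrow \Z L_1$, obtaining $\ker \mu_{mn}\subseteq \Tor_1^{\Z\pi}(\Ind_{L_1}^\pi \Z^{v_1}, A_n)\cong \Tor_1^{\Z L_1}(\Z^{v_1}, \Res_{L_1}^\pi A_n)$ by Shapiro's lemma. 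The Mackey formula in the free product $\pi$, together with the fact that $L_1 \cap x L_2 x^{-1}=\{e\}$ for every $x\in\pi$ whenever $L_1\neq L_2$, implies that $\Res_{L_1}^\pi A_n$ is a free $\Z L_1$-module in that case, so the $\Tor$ vanishes; when $L_1=L_2=L$ the same formula gives $\Res_L^\pi A_n\cong IL^{v_2}\oplus(\text{free})$, and dimension shifting along $0\to IL^{v_2}\to \Z L\to \Z^{v_2}\to 0$ identifies the relevant $\Tor$ with $H_2(L;\Z^{w|_L\,v_1 v_2})$.

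A case-check then concludes the argument. For $L=Z_i$ (the two identical copies) we have $v_1=v_2=0$ and $w|_{Z_i}$ trivial by admissibility, so $H_2(Z_i;\Z)=0$; for $L=G_j$ only one summand is present and no cross term arises; for $L=H_k$ with $k\leq t'$ the two summands have $v_1\neq v_2$, and a direct K\"unneth computation (carried out in both cases of $w|_{H_k}$) gives $H_2(H_k;\Z^{w|_{H_k}\,v_1 v_2})\cong \Z/2$. Therefore $\ker \B_A$ injects into $\bigoplus_{k\leq t'}\Z/2\cong (\Z/2)^{t'}$ and, being a subgroup of an elementary abelian $2$-group, is isomorphic to $(\Z/2)^\tau$ for some $\tau\leq t'$, as required. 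The main difficulty lies in the combined Mackey--$\Tor$ analysis of the third paragraph, and in particular in the explicit $H_2$ computation for $H_k=\Z\times\Z/2$, which needs to accommodate the admissible but possibly nontrivial restriction $w|_{H_k}$ on the $\Z$ factor.
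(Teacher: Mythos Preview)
Your proof is correct and uses the same underlying ingredients as the paper, but with a finer decomposition. The paper keeps the two-summand form $A=\Ind_\Gamma^\pi I\Gamma^v\oplus\Ind_{\Gamma'}^\pi I\Gamma'$, embeds the first summand as a direct summand of the full twisted augmentation ideal $I\pi^{\bar v}$ (extending $v$ trivially on the factors of $\pi$ not in $\Gamma$), and then applies \cref{lem:sum-kernel} \emph{once} to the pair $(I\pi^{\bar v},\Ind_{\Gamma'}^\pi I\Gamma')$, obtaining $\ker\B_{H_2(M;\Z\pi)}\subseteq\im\big(\Tor_1^{\Z\pi}(\Z^{\bar v},\Ind_{\Gamma'}^\pi I\Gamma')\to I\pi^{\bar v}\otimes_{\Z\pi}\Ind_{\Gamma'}^\pi I\Gamma'\big)$; a single Shapiro plus dimension-shift gives $\Tor_1^{\Z\pi}(\Z^{\bar v},\Ind_{\Gamma'}^\pi I\Gamma')\cong H_2(\Gamma';\Z^{v'})\cong(\Z/2)^{t'}$. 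You instead split $A$ all the way into its atomic induced pieces $A_m=\Ind_L^\pi IL^{v_L}$ and handle every pairwise cross map separately, invoking the Mackey double-coset formula for free products to show that $\Res_{L_1}^\pi\Ind_{L_2}^\pi(-)$ is free when $L_1\neq L_2$, and then doing a case-by-case $H_2$ check. Your route makes transparent exactly which pairs contribute (only the two $H_k$-summands for each $k\le t'$), at the cost of the extra Mackey step and the casework; the paper's route is shorter because the embedding into $I\pi^{\bar v}$ collapses all cross terms into a single $\Tor$ computation over $\Gamma'$.
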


\begin{proof}
By \cref{prop:pi2-decomp}, $H_2(M;\Z\pi)\cong\pi_2(M)$ is stably isomorphic to $\Ind_\Gamma^\pi (I\Gamma^v)\oplus \Ind_{\Gamma'}^\pi (I\Gamma')$, where
\[\Gamma = \big(\ast_{i=1}^rZ_i\big)* \big(\ast_{j=1}^sG_j\big)* \big(\ast_{k=1}^{t'}H_k\big)\]
and
\[\Gamma' = \big(\ast_{i=1}^rZ_i\big) *  \big(\ast_{k=1}^{t'}H_k\big)\] are subgroups of $\pi$ in the canonical way.  Here $v=wv'\colon \Gamma \to C_2$, where $v'$ is trivial on each $Z_i$, is the projection onto the second factor on $\Z\times \Z/2$ (followed by the canonical isomorphism $\Z/2\to C_2$), and on each $G_j$ factor it is the orientation character $u_j$ of the aspherical $PD_3$-complex with fundamental group $G_j$. By \cref{lem:stably-inj} we therefore have
\[\ker \B_{H_2(M;\Z\pi)} \cong \ker \B_{\Ind_\Gamma^\pi I\Gamma^v \oplus \Ind_{\Gamma'}^\pi I\Gamma'}. \]

Extend $v \colon \Gamma \to C_2$ to $\ol{v} \colon \pi \to C_2$ by taking the trivial map on the factors of $\pi$ not in $\Gamma$.
By \cref{lem:adding-I}, $\Ind_\Gamma^\pi I\Gamma^v \leq I\pi^{\ol{v}}$ and $\Ind_{\Gamma'}^\pi I\Gamma' \leq I\pi$, where each inclusion is of a summand. Similarly, for the inclusion $\Ind_{\Gamma'}^\pi I\Gamma' \leq I\pi$, we extend the trivial map on $\Gamma'$ to the trivial map on $\pi$. Since $\Ind_\Gamma^\pi I\Gamma^v \oplus \Ind_{\Gamma'}^\pi I\Gamma'$ is a summand of $I\pi^{\ol{v}} \oplus \Ind_{\Gamma'}^\pi I\Gamma'$, by \cref{lem:splitting-of-B} we have that
\[\ker \B_{\Ind_\Gamma^\pi I\Gamma^v\oplus \Ind_{\Gamma'}^\pi I\Gamma'} \leq \ker \B_{I\pi^{\ol{v}} \oplus \Ind_{\Gamma'}^\pi I\Gamma'}\]
under the natural inclusion from that lemma.

By \cref{lem:B-I-injective}, the map $\B_{I\pi^u}$ is injective for every twisted augmentation ideal $I\pi^u$, where $u \colon \pi \to C_2$ is an arbitrary homomorphism that vanishes on elements of $\pi$ of order two.
Applying this to $u=\ol{v}$, we see that $\B_{I\pi^{\ol{v}}}$ is injective. Also, we have that $\B_{I\pi}$ is injective, by setting $u$ to be the trivial map. It then follows from \cref{lem:splitting-of-B} and the fact that $\Ind_{\Gamma'}^\pi I\Gamma'$ is a summand of $I\pi$ that $\B_{\Ind_{\Gamma'}^\pi I\Gamma'}$ is also injective.

Now we want to apply \cref{lem:sum-kernel} with $0 \to A \to F \to Q \to 0$ as $0 \to I\pi^{\ol{v}} \to \Z\pi \to \Z^{\ol{v}} \to 0$ and $A' = \Ind_{\Gamma'}^\pi I\Gamma'$. The
lemma applies because $\B_{I\pi^{\ol{v}}}$ and $\B_{\Ind_{\Gamma'}^\pi I\Gamma'}$ are injective. We deduce using this and the two previous displayed equations that
\[
    \ker \B_{H_2(M;\Z\pi)}\leq \ker B_{I\pi^{\ol{v}}\oplus \Ind_{\Gamma'}^\pi I\Gamma'}\leq \im\big(\Tor_1^{\Z\pi}(\Z^{\ol{v}},\Ind_{\Gamma'}^\pi I\Gamma') \to I\pi^{\ol{v}} \otimes_{\Z\pi} \Ind_{\Gamma'}^\pi I\Gamma'\big).
\]
Then by the definition, Shapiro's lemma, and dimension shifting, we have
\begin{align*}
\Tor_1^{\Z\pi}(\Z^{\ol{v}},\Ind_{\Gamma'}^\pi I\Gamma') &\cong H_1(\pi; (\Ind_{\Gamma'}^\pi I\Gamma')^{w\ol{v}})\cong H_1(\Gamma'; (I\Gamma')^{v'})\cong H_2(\Gamma';\Z^{v'})
\\
&\cong \bigoplus_{i=1}^rH_2(Z_i;\Z)\oplus \bigoplus_{k=1}^{t'}H_2(H_k;\Z^{v'})
    \cong 0\oplus \bigoplus_{k=1}^{t'}\Z/2
    \cong (\Z/2)^{t'}.
\end{align*}
Here the first isomorphism makes use of \cref{remark:involution-for-otimes-and-Tor}, which says that the $w$-twisted involution is implicitly used in defining $\Tor_1^{\Z\pi}(\Z^{\ol{v}},\Ind_{\Gamma'}^\pi I\Gamma')$; to incorporate this into the group homology $H_1(\pi; (\Ind_{\Gamma'}^\pi I\Gamma')^{w\ol{v}})$, we must give the coefficients an extra $w$-twisting.
Since the image of $\Tor_1^{\Z\pi}(\Z^{\ol{v}},\Ind_{\Gamma'}^\pi I\Gamma') \cong (\Z/2)^{t'}$ under a homomorphism is $(\Z/2)^{\tau}$ for some $\tau \leq t'$, this completes the proof.
\end{proof}

\subsection{
Lower bounds on the kernel of \texorpdfstring{$\varphi_B$}{phi-B} for fundamental group
\texorpdfstring{$\Z\times \Z/2$}{Z x Z/2}}
\label{sec:ZZ2}

For this subsection let $\pi=\Z\times \Z/2=\langle t,T\mid [T,t],T^2\rangle$ and let $v'\colon \pi\to C_2$ be given by $v'(t)=1$ and $v'(T)=-1$. In other words, $v'$ is trivial on the $\Z$ factor and nontrivial on the $\Z/2$ factor.

We first consider the case of a specific Postnikov $2$-type $P$ with fundamental group $\Z/2$. Note that this need not be the Postnikov $2$-type of a $4$-manifold. This will a useful ingredient in \cref{lem:map-B-P,lem:nontriv-kernel} where we consider a Postnikov $2$-type $B$ with fundamental group $\pi$, by comparing with~$P$.

Let $\Z^-$ denote the integers as a $\Z/2$-module, so that $1\in \Z/2$ acts by multiplication by $-1$. We use $\Z$ to denote the integers considered as a trivial $\Z/2$-module.

\begin{lemma}
\label{lem:ker-P}
Let $P$ be a connected, $3$-coconnected CW complex with $\pi_1(P)\cong \Z/2$, $\pi_2(P)\cong \Z^-\oplus \Z$, and nontrivial $k$-invariant $0\neq k_P\in H^3(\Z/2,\Z^-\oplus \Z)\cong \Z/2\oplus 0$.
Then we have isomorphisms
\[
\begin{tikzcd}
    \Z\otimes_{\Z[\Z/2]}H_4(P;\Z[\Z/2])\ar[r,"\varphi_P"]\ar[d,"\cong"] &H_4(P;\Z)\ar[d,"\cong"]\\
    \Z^2\oplus\Z/2\ar[r,"\proj_1"]  &\Z^2
\end{tikzcd}
\]
\end{lemma}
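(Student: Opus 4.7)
The plan is to first split $P$ as a product, then compute both sides of the diagram via Kunneth, and finally identify $\varphi_P$ with $\proj_1$ geometrically. Since $H^3(\Z/2;\Z)=0$, the $k$-invariant $k_P\in H^3(\Z/2;\Z^-\oplus\Z)\cong\Z/2\oplus 0$ lies entirely in the $H^3(\Z/2;\Z^-)$ component, so the Postnikov tower splits: $P\simeq Q\times K(\Z,2)$ where $Q\simeq P_2(\RP^2)$ is the 2-type with $\pi_1(Q)=\Z/2$, $\pi_2(Q)=\Z^-$, and nontrivial $k$-invariant.

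For the left-hand side, I identify $H_4(P;\Z[\Z/2])$ with $H_4(\widetilde P;\Z)$. The universal cover is $\widetilde P\simeq\CP^\infty\times\CP^\infty$, with $\Z/2$ acting by complex conjugation on the first factor (inducing $\pi_2(Q)=\Z^-$) and trivially on the second. Kunneth together with $H_*(\CP^\infty;\Z)=\Z[x]$ gives a $\Z$-basis $x^2, xy, y^2$ for $H_4(\widetilde P;\Z)$, on which the generator $T\in\Z/2$ fixes $x^2$ and $y^2$ and sends $xy\mapsto -xy$. Therefore $H_4(P;\Z[\Z/2])\cong\Z\oplus\Z^-\oplus\Z$ as $\Z[\Z/2]$-modules, and tensoring with $\Z$ yields $\Z^2\oplus\Z/2$ with the $\Z/2$-summand generated by $xy$.

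For the right-hand side, Kunneth for $P\simeq Q\times\CP^\infty$ (together with the freeness of $H_*(\CP^\infty;\Z)$) gives $H_4(P;\Z)\cong H_4(Q)\oplus H_2(Q)\otimes\Z\oplus H_0(Q)\otimes\Z$. The 3-equivalence $\RP^2\hookrightarrow Q$ yields $H_0(Q)=\Z$ and $H_2(Q)=0$, so the middle summand vanishes. To obtain $H_4(Q)\cong\Z$, I will use the Cartan--Leray spectral sequence for $\CP^\infty\to Q\to B\Z/2$; the potentially nonzero contributions to $H_4(Q)$ are $E^\infty_{0,4}$ (with $E^2_{0,4}=\Z$) and $E^\infty_{2,2}$ (with $E^2_{2,2}=\Z/2$), and the latter vanishes because the differential $d_3\colon E^3_{5,0}=\Z/2\to E^3_{2,2}=\Z/2$ is an isomorphism. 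The nontriviality of this $d_3$ is tied to the nontrivial $k$-invariant of $Q$, and can be verified by showing $H^4(Q;\Z/p)\cong\Z/p$ for every prime $p$ and $H^4(Q;\Q)\cong\Q$ (the latter via $H^*(Q;\Q)=H^*(\CP^\infty;\Q)^{\Z/2}=\Q[c^2]$), then invoking universal coefficients. Together, $H_4(P;\Z)\cong\Z^2$.

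Finally, I identify $\varphi_P$ with $\proj_1$ by tracing the edge map of the Cartan--Leray spectral sequence for $\widetilde P\to P$ through the Kunneth decompositions. The class $x^2$ represents $[\CP^2]$ in the first factor of $\widetilde P$ and maps to a generator of $H_4(Q)\subset H_4(P;\Z)$; the class $y^2$ maps to a generator of $H_4(\CP^\infty)\subset H_4(P;\Z)$; and $xy=[\CP^1\times\CP^1]$ projects into $H_2(Q)\otimes H_2(\CP^\infty)=0$ under Kunneth. Hence $\varphi_P$ is precisely the projection killing the $\Z/2$-summand. The hard part is the computation of $H_4(Q;\Z)$: showing the absence of $2$-torsion requires checking that the $\Z/2$-classes in $E^2_{2,2}$ and $E^2_{3,2}$ are annihilated by spectral sequence differentials, which ultimately traces back to the nontriviality of $k_P$.
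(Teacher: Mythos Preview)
Your argument is correct and takes a route different from the paper's, though both begin with a product splitting. You use the full splitting $P\simeq Q\times\CP^\infty$ with $Q=P_2(\RP^2)$; the paper instead builds a finite $4$-dimensional model $X=(\RP^2\cup_\alpha D^4)\times\CP^2$ (so the first factor is the $4$-skeleton of your $Q$) and computes $H_4(X;\Z)=\Z^2$ via K\"unneth. Your computation of the left-hand side via the $\Z/2$-action on $H_4(\CP^\infty\times\CP^\infty)$ is exactly the paper's $\Gamma(\Z^-\oplus\Z)$ computation using \cref{lemma:baues-splitting-of-Gamma-A-plusA'}, just phrased geometrically. The substantive difference is in establishing $H_4(P;\Z)\cong\Z^2$: the paper observes that $H_4(P)$ is a quotient of $H_4(X)=\Z^2$, then uses the commutative square involving $\mB$, $\ev^*$, and $\Theta_P$ (together with \cref{cor:ev*-pi-finite} and \cref{prop:finitepi}) to see that $\Theta_P\circ\varphi_P$ is precisely the quotient by torsion, which forces $H_4(P)=\Z^2$ and identifies $\varphi_P$ in one stroke. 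Your approach avoids this machinery entirely and is self-contained, but in exchange you must verify the $d_3\colon E^3_{5,0}\to E^3_{2,2}$ differential; this is cap product with $k_Q\in H^3(\Z/2;\Z^-)$, and its nontriviality can be checked by reducing mod~$2$ (the map $H^3(\Z/2;\Z^-)\to H^3(\Z/2;\Z/2)$ is injective, and cap products in $H_*(\Z/2;\Z/2)$ are nondegenerate). You can also bypass the $d_3$ analysis altogether: since $Q$ is obtained from $Y:=\RP^2\cup_\eta D^4$ by attaching cells of dimension $\geq 5$, $H_4(Q)$ is a quotient of $H_4(Y)=\Z$, and since $E^\infty_{0,4}=\Z$ injects into $H_4(Q)$, you get $H_4(Q)=\Z$ immediately.
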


\begin{proof}
Since $P$ is $3$-coconnected, $\pi_3(P) =\pi_4(P)=0$.
Thus $H_4(P;\Z[\Z/2])\cong \Gamma(H_2(P;\Z[\Z/2]))\cong\Gamma(\pi_2(P))\cong \Gamma(\Z^-\oplus \Z)$.
By \cref{lemma:baues-splitting-of-Gamma-A-plusA'}
we also have
\[\Gamma(\Z^-\oplus \Z)\cong \Gamma(\Z^-)\oplus\Gamma(\Z)\oplus (\Z^-\otimes_{\Z} \Z)\cong \Z^2\oplus \Z^-\]
and hence
\[\Z\otimes_{\Z[\Z/2]}H_4(P;\Z[\Z/2])\cong \Z\otimes_{\Z[\Z/2]}\Gamma(\Z^-\oplus \Z)\cong \Z^2\oplus\Z/2.\]
Next we build the following model for $P$, where the $4$-skeleton is explicit. Let $\alpha\in\pi_3(\RP^2)\cong \Z$ be a generator and $Y:=\RP^2\cup_\alpha D^4$. Then $X:=Y\times \CP^2$ has
\begin{align*}
\pi_1(X) \cong \Z/2;\quad
\pi_2(X) \cong\pi_2(\RP^2)\oplus\pi_2(\CP^2)\cong \Z^-\oplus\Z; \text{ and }
\pi_3(X)=0.
\end{align*}
Furthermore, the inclusion $\RP^2 \to Y$ induces an isomorphism \[H^3(\Z/2;\pi_2(\RP^2)) \xrightarrow{\cong} H^3(\Z/2;\pi_2(Y))\] sending  the $k$-invariant of $\RP^2$ to the $k$-invariant of $Y$. Note that the first $k$-invariant of $\RP^2$ is nontrivial; if not, there would be a retraction of the inclusion $\RP^2 \to \RP^{\infty}$ over the $3$-skeleton, i.e.\ there would be a retraction $\RP^3 \to \RP^2$, which by a cup product argument in $\Z/2$-cohomology cannot exist.

Using the projection $X \to Y$, we have a map $H^3(\Z/2;\pi_2(X)) \to H^3(\Z/2;\pi_2(Y))$ sending  the $k$-invariant of $X$ to the $k$-invariant of $Y$.
It follows that the $k$-invariant of $X$ is nontrivial.
So it follows that up to homotopy equivalence we can build $P$ from $X$ by attaching cells of dimensions $5$ and higher. This finishes the construction of $X$.

By the Künneth theorem we have
\[H_4(X;\Z)\cong H_4(Y;\Z)\oplus H_2(Y;\Z)\oplus H_0(Y;\Z)\cong \Z\oplus 0\oplus \Z\cong \Z^2.\]
Hence $H_4(P;\Z)$ is some quotient of $\Z^2$. We will now show that it is in fact isomorphic to $\Z^2$.

Consider the diagram
\[
\begin{tikzcd}
\Z\otimes_{\Z[\Z/2]}H_4(P;\Z[\Z/2])	\arrow[r,"\varphi_P"]\arrow[d, "\mB_{H_2(P;\Z[\Z/2])}  \circ \Upsilon"]	&H_4(P;\Z)\arrow[d,"{\Theta_P}"]\\
\Her(H_2(P;\Z[\Z/2])^\dagger)\arrow[r,hook,"\ev^*"]	&\Her(H^2(P;\Z[\Z/2])),
\end{tikzcd}
\]
where the map $\Theta_P$ is given by $x \mapsto \big( (\alpha,\beta) \mapsto \langle \beta,\alpha\cap x\rangle \big)$.
The diagram commutes as in \cite{hillman}*{Lemma~10}.
Here we know that $\ev^*$ is injective by~\cref{cor:ev*-pi-finite}. We also know from \cref{prop:finitepi}, and the fact that $\Upsilon$ is an isomorphism, that the kernel of the map $\mB_{H_2(P;\Z[\Z/2])}  \circ \Upsilon$ equals the torsion in $\Z\otimes_{\Z[\Z/2]}H_4(P;\Z[\Z/2])$. Here we used the fact that $H_2(P;\Z[\Z/2])\cong \pi_2(P)\cong \Z^-\oplus \Z$ is free as an abelian group. Thus we know that the composition $\Theta_P\circ \varphi_P$ is precisely the quotient by the torsion subgroup. Therefore $\Theta_P$ has domain a quotient of $\Z^2$ and image isomorphic to $\Z^2$. Hence, it follows that $H_4(P;\Z)\cong \Z^2$, as claimed.
\end{proof}

\begin{lemma}\label{lem:claim:map}
For $\pi = \Z \times \Z/2$, let $w\colon \pi\to C_2$ be a homomorphism with $w(T)=1$, and let $v=wv'$.
Let $B$ be a connected, $3$-coconnected CW complex with $\pi_1(B)= \pi$, $\pi_2(B)\cong I\pi\oplus I\pi^v$, and $k$-invariant given by $0\neq k_B\in \Z/2\cong  H^3(\pi;I\pi)\leq H^3(\pi;I\pi\oplus I\pi^v)$.  Let $P$ be a CW complex as in \cref{lem:ker-P}. Then there exist maps $P\xrightarrow{f} B\xrightarrow{g} P$ such that the composition is a self-homotopy equivalence of $P$.
\end{lemma}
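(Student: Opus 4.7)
I will construct $f$ and $g$ as morphisms between Postnikov $2$-types. Recall that for Postnikov $2$-types $X$ and $Y$, a pair $(\varphi_1,\varphi_2)$ consisting of a homomorphism $\varphi_1\colon \pi_1(X)\to \pi_1(Y)$ and a $\Z[\pi_1(X)]$-module map $\varphi_2\colon \pi_2(X)\to \varphi_1^*\pi_2(Y)$ is realised by a map $X\to Y$ if and only if the $k$-invariant compatibility $(\varphi_2)_*(k_X)=\varphi_1^*(k_Y)$ holds in $H^3(\pi_1(X);\varphi_1^*\pi_2(Y))$. My plan is to choose $(f_1,f_2)$ and $(g_1,g_2)$ satisfying this compatibility and with $(g_1\circ f_1, g_2\circ f_2)=(\id,\id)$; the composition $g\circ f$ will then induce the identity on all homotopy groups of $P$ (which vanish above degree $2$), hence be a self-homotopy equivalence by Whitehead's theorem.

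The data is as follows. Let $q\colon \pi\to \Z/2$ be the projection killing $t$ and $i\colon \Z/2\to \pi$ the inclusion $T\mapsto T$, so that $q\circ i=\id$; set $g_1:=q$ and $f_1:=i$. On $\pi_2$, define $f_2\colon \Z^-\oplus \Z\to \Res^\pi_{\Z/2}(I\pi\oplus I\pi^v)$ by sending the generators to $T-1\in I\pi$ and $T+1\in I\pi^v$ respectively. These are $\Z[\Z/2]$-equivariant because $T(T-1)=-(T-1)$ matches the $\Z^-$-action and $T(T+1)=T+1$ matches the trivial $\Z$-action. Define $g_2\colon I\pi\oplus I\pi^v\to \Z^-\oplus \Z$ on the first summand as the restriction of the ring map $q_*\colon \Z\pi\to \Z[\Z/2]$ (whose image on $I\pi$ lies in $I(\Z/2)\cong \Z^-$), and on the second as $\tfrac{1}{2}\varepsilon$, where $\varepsilon$ is the standard augmentation; this is integer-valued because $v(T)=-1$ forces $\varepsilon(x)=2\sum_{v(g)=1}n_g$ to be even for any $x=\sum n_g g\in I\pi^v$. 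Direct substitution gives $g_2\circ f_2=\id_{\Z^-\oplus \Z}$.

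The main obstacle is verifying the two $k$-invariant compatibilities. Since $k_P$ is supported in the $\Z^-$ summand and $k_B$ in the $I\pi$ summand, each compatibility reduces to an equality in a group isomorphic to $\Z/2$. The plan is to use the long exact sequences of $0\to I\pi\to \Z\pi\to \Z\to 0$ and $0\to I(\Z/2)\to \Z[\Z/2]\to \Z\to 0$ together with the vanishings $H^n(\pi;\Z\pi)=0$ for $n\geq 2$ and $H^n(\Z/2;\Z[\Z/2])=0$ for $n\geq 1$, which follow from Künneth and the $2$-periodic resolution of $\Z$ over $\Z[\Z/2]$. Combined with the analogous vanishing for $\Res^\pi_{\Z/2}\Z\pi$ (a direct sum of copies of $\Z[\Z/2]$), these promote the connecting maps $H^2(\pi;\Z)\to H^3(\pi;I\pi)$, $H^2(\Z/2;\Z)\to H^3(\Z/2;\Z^-)$, and $H^2(\Z/2;\Z)\to H^3(\Z/2;\Res^\pi_{\Z/2}I\pi)$ all to isomorphisms $\Z/2\xrightarrow{\cong}\Z/2$. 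Applying naturality of the connecting homomorphism to $q$ and $i$, and to the module maps $g_2|_{I\pi}$ and $\iota\colon \Z^-\cong I(\Z/2)\hookrightarrow \Res^\pi_{\Z/2}I\pi$, then identifies every relevant induced map on $H^3$ as an isomorphism of $\Z/2$'s, so both compatibilities amount to matching the unique nontrivial class with itself. With $f$ and $g$ realised as maps of spaces, $g\circ f$ is the desired self-homotopy equivalence of $P$.
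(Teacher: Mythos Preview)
Your proof is correct and follows essentially the same approach as the paper's. Both construct $f$ and $g$ as morphisms of Postnikov $2$-types by specifying the inclusion/projection on $\pi_1$ and the natural maps $1\mapsto 1\pm T$ (and their retracts) on $\pi_2$, then verify the $k$-invariant compatibilities by dimension shifting along the augmentation sequences, reducing everything to identifications of $\Z/2$'s; the paper likewise sketches the construction of $g$ and appeals to Whitehead's theorem for the conclusion. Your explicit description of $g_2$ on $I\pi^v$ as $\tfrac{1}{2}\varepsilon$ is a nice touch not made explicit in the paper.
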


\begin{proof}
We will define the map $f\colon P \to B$, and will sketch the construction of the map $B \to P$.
Let $\Z/2\xrightarrow{i} \Z\times\Z/2\xrightarrow{p} \Z/2$ be the inclusion and the projection maps.

We begin the construction of a map $f\colon P \to B$.
Define the module homomorphism  $f_1\colon \Z^-\to I\pi$ determined by $1\mapsto 1-T$, and the homomorphism $f_2\colon \Z\to I\pi^v$ determined by $1\mapsto 1+T$. To show that $i \colon \pi_1(P) \to \pi_1(B)$ and $(f_1,f_2) \colon \pi_2(P) \to \pi_2(B)$ induce a map $P\to B$, note that since both $P$ and $B$ are connected and $3$-coconnected, to determine a map $P \to B$, it suffices to exhibit homomorphisms $\pi_j(P) \to \pi_j(B)$ for $j=1,2$ that respect the $k$-invariants.
More precisely, we have to show that
\[i^*k_B=(f_1,f_2)_*k_P\in H^3(\Z/2;\Res^{\pi}_{\Z/2}(I\pi\oplus I\pi^v)).\]
For this, since both $k$-invariants are nontrivial and live in a group isomorphic to $\Z/2$, it suffices to show that the maps $i^*\colon H^3(\pi;I\pi)\to H^3(\Z/2;\Res^{\pi}_{\Z/2} I\pi)$ and $(f_1)_*\colon H^3(\Z/2;\Z^-)\to H^3(\Z/2;\Res^{\pi}_{\Z/2} I\pi)$ are isomorphisms. Here we used the fact that $k_B$ lies in $H^3(\pi;I\pi)\leq H^3(\pi;I\pi\oplus I\pi^v)$. We have a diagram as follows from dimension shifting using the Bockstein homomorphism and the sequence $0 \to I\pi \to \Z\pi \to \Z \to 0$:
\[\begin{tikzcd}
H^2(\pi;\Z)\ar[d,"i^*","\cong"']\ar[r,"\cong"]&H^3(\pi;I\pi)\ar[d,"i^*"]\\
H^2(\Z/2;\Z)\ar[r,"\cong"]&H^3(\Z/2;\Res^{\pi}_{\Z/2} I\pi)
\end{tikzcd}\]
To conclude that the horizontal maps are isomorphisms we used that $\Res^{\pi}_{\Z/2} \Z\pi$ is a free $\Z[\Z/2]$-module, and $H^k(\Z/2;Q)=0$ for every free $\Z[\Z/2]$-module $Q$. Similarly we can compute directly that $H^i(\pi;\Z\pi)=0$ for $i=2,3$.
The diagram commutes by naturality of the Bockstein map. That the left vertical map is an isomorphism is a straightforward computation in group cohomology, which can be performed by comparing standard free resolutions of $\Z$.
We deduce that the right vertical map $i^*$ is an isomorphism as well.

It remains to show that $(f_1)_*$ is an isomorphism. For this consider the commutative diagram of $\Z[\Z/2]$-modules:
\[
\begin{tikzcd}
0\ar[r]&\Z^-\ar[r,"1-T"]\ar[d,"f_1"]&\Z[\Z/2]\ar[d,hook]\ar[r]&\Z\ar[d,"="]\ar[r]&0\\
0\ar[r]&\Res^{\pi}_{\Z/2} I\pi\ar[r]&\Res^{\pi}_{\Z/2} \Z\pi \ar[r]&\Z\ar[r]&0.
\end{tikzcd}
\]
From this we obtain the following commuting diagram, where again the horizontal  Bockstein maps are isomorphisms because both $\Z[\Z/2]$ and $\Res^{\pi}_{\Z/2} (\Z\pi)$ are free $\Z[\Z/2]$-modules.
\[\begin{tikzcd}
H^2(\Z/2;\Z)\ar[d,"="]\ar[r,"\cong"]&H^3(\Z/2;\Z^-)\ar[d,"(f_1)_*"]\\
H^2(\Z/2;\Z)\ar[r,"\cong"]	&H^3(\Z/2;\Res^{\pi}_{\Z/2} I\pi).
\end{tikzcd}\]
Thus $(f_1)_*$ is an isomorphism and so $i$ together with $(f_1,f_2)$ induces a map $f\colon P\to B$.

Now we sketch the construction of a map $g\colon B \to P$.
Define an equivariant maps $g_1\colon I\pi\to \Z^-$ given by sending $1-T$ to $1$ and $1-t$ to $0$, and define $g_2\colon I\pi^v\to \Z$ by sending $1+T$ to $1$ and $1-t$ to $0$. A very similar argument to that in the construction of the map $f\colon P \to B$, which we omit, shows that $p$ and $(g_1,g_2)$ induces a map $B\to P$.
It is straightforward to check that the composition is the identity on $\pi_1$ and $\pi_2$, and hence, since $P$ is $3$-coconnected and a CW complex, by Whitehead's theorem the composition $P \xrightarrow{f} B \xrightarrow{g} P$ is a self-homotopy equivalence of~$P$.
\end{proof}

\begin{lemma}
\label{lem:nontriv-kernel}\label{lem:map-B-P}
For $\pi=\Z \times \Z/2$, let $w\colon \pi\to C_2$ be a homomorphism with $w(T)=1$ and let $v=wv'$.
Let $B$ be a connected, $3$-coconnected CW complex with $\pi_1(B)= \pi$, $\pi_2(B)\cong I\pi\oplus I\pi^v$, and $k$-invariant given by $0\neq k_B\in \Z/2\cong  H^3(\pi;I\pi)\leq H^3(\pi;I\pi\oplus I\pi^v)$. Then the kernel of the map $\varphi_B \colon \Z^w\otimes_{\Z\pi} H_4(B;\Z\pi)\to H_4(B;\Z^w)$ contains a $\Z/2$ subgroup that injects into $\bbF_2\otimes_{\bbF_2[\pi]}H_4(B;\bbF_2[\pi])$ under the change of coefficients map \[\red_2^B \colon \Z^w\otimes_{\Z\pi} H_4(B;\Z\pi) \to \bbF_2\otimes_{\bbF_2[\pi]}H_4(B;\bbF_2[\pi]).\]
\end{lemma}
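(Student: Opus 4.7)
The plan is to exploit the maps $P \xrightarrow{f} B \xrightarrow{g} P$ constructed in \cref{lem:claim:map}, whose composition is a self-homotopy equivalence of $P$, to transport the torsion kernel of $\varphi_P$ identified in \cref{lem:ker-P} into $\ker\varphi_B$. Recall that \cref{lem:ker-P} gives $\Z \otimes_{\Z[\Z/2]} H_4(P;\Z[\Z/2]) \cong \Z^2 \oplus \Z/2$ with $\varphi_P$ the projection onto $\Z^2$, so $\ker\varphi_P$ is precisely the $\Z/2$ torsion summand.

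First I would set up the relevant naturality squares. Since $w(T)=1$, the pullback of $w$ along $i\colon \pi_1(P)=\Z/2 \to \pi$ is trivial, and a $\pi_1$-equivariant lift $\wt f\colon \wt P \to \wt B$ of $f$ descends to a push-forward on coinvariants
\[
f_*\colon \Z \otimes_{\Z[\Z/2]} H_4(P;\Z[\Z/2]) \to \Z^w \otimes_{\Z\pi} H_4(B;\Z\pi),
\]
together with its analogue using $\bbF_2[\Z/2]$ and $\bbF_2[\pi]$ coefficients. Naturality of the reduction-of-coefficients map gives $\varphi_B\circ f_* = f_*\circ \varphi_P$, and naturality of mod-$2$ reduction gives $\red_2^B \circ f_* = f_* \circ \red_2^P$. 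Because $g\circ f$ is a self-homotopy equivalence of $P$, the composition $g_*\circ f_*$ is an isomorphism on both $\Z \otimes_{\Z[\Z/2]} H_4(P;\Z[\Z/2])$ and its mod-$2$ analogue, so $f_*$ is injective in both settings. Consequently $f_*(\ker\varphi_P)\cong \Z/2$ lies inside $\Z^w \otimes_{\Z\pi} H_4(B;\Z\pi)$, and the first naturality square places it inside $\ker\varphi_B$, giving the first claim.

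For the assertion about $\red_2^B$, I would note that the $\Z/2 \subseteq \ker\varphi_P$ arises via the Baues splitting (\cref{lemma:baues-splitting-of-Gamma-A-plusA'}) from the summand $\Z \otimes_{\Z[\Z/2]}(\Z^- \otimes_{\Z}\Z) \cong \Z/2$, whose mod-$2$ reduction is the corresponding summand of $\bbF_2 \otimes_{\bbF_2[\Z/2]} H_4(P;\bbF_2[\Z/2])$, and the reduction map $\Z/2 \to \bbF_2$ on that summand is an isomorphism; hence $\red_2^P$ is injective on $\ker\varphi_P$. Combining with the injectivity of the $\bbF_2$-version of $f_*$ (via the retraction $g_* \circ f_* = \id$) and the second naturality square, the composite $\red_2^B\circ f_*$ is injective on $\ker\varphi_P$, which is exactly the conclusion of the lemma.

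The main obstacle, and essentially the only nontrivial step, will be verifying with due care that $f_*$ on coinvariants (in both the $\Z$- and $\bbF_2$-contexts) is well defined and fits into the naturality squares with $\varphi$ and $\red_2$; once this bookkeeping is in hand the rest of the argument follows formally from \cref{lem:ker-P,lem:claim:map} together with the observation that a homotopy equivalence induces the identity on the relevant coinvariants.
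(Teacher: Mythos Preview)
Your outline follows the same strategy as the paper's proof --- pushing the torsion class $K=\ker\varphi_P\cong\Z/2$ from \cref{lem:ker-P} forward along the maps of \cref{lem:claim:map} --- but is more direct: you send $K$ straight into $\Z^w\otimes_{\Z\pi}H_4(B;\Z\pi)$, whereas the paper first pushes into the intermediate group $\Z\otimes_{\Z[\Z/2]}H_4(B;\Z[\Z/2]^w)$ and only afterwards identifies this with $\Z^w\otimes_{\Z\pi}H_4(B;\Z\pi)$ via a Leray--Serre spectral sequence for $\wt B\to\wh B\to S^1$. Your route avoids that spectral sequence, which is a genuine simplification.

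There is one gap, however. Your claim that the \emph{integral} $f_*$ is injective because $g_*\circ f_*$ is an isomorphism breaks down when $w(t)=-1$. In that case the natural target of $g_*$ is $\Z^w\otimes_{\Z\pi}H_4(\wt P;\Z)$, where $\pi$ acts on $H_4(\wt P;\Z)$ via the projection $p$; since $t$ acts trivially but $w(t)=-1$, this target is $2$--torsion, so cannot be identified with $\Z\otimes_{\Z[\Z/2]}H_4(P;\Z[\Z/2])\cong\Z^2\oplus\Z/2$, and the composite cannot be an isomorphism. The paper sidesteps this by using $g_*$ only with $\bbF_2$--coefficients, where the twist disappears.

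The fix is already contained in your second paragraph: you only need $f_*$ to be injective on $K$, and this follows from the mod--$2$ argument. Since $\red_2^P|_K$ is injective and the $\bbF_2$--version of $f_*$ is injective (by the retraction $g_*\circ f_*=\id$, which \emph{is} valid over $\bbF_2$), the naturality square $\red_2^B\circ f_*=f_*\circ\red_2^P$ forces $f_*(K)\neq 0$ in $\Z^w\otimes_{\Z\pi}H_4(B;\Z\pi)$. So simply drop the claim of global integral injectivity and deduce $f_*(K)\cong\Z/2$ from the mod--$2$ commutative square instead.
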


\begin{proof}
We first show that the kernel of the map $\Z\otimes_{\Z[\Z/2]} H_4(B;\Z[\Z/2]^w)\to H_4(B;\Z^w)$ contains a $\Z/2$ subgroup. Let $P$ be a CW complex as in \cref{lem:ker-P}. By \cref{lem:claim:map}, there exist maps $P\xrightarrow{f} B\xrightarrow{g} P$ such that the composition is a self-homotopy equivalence.
Then we have the following commutative diagram, from considering the latter maps as well as the change of coefficients map.
\[
\begin{tikzcd}
 \bbF_2\otimes_{\bbF_2[\Z/2]} H_4(P;\bbF_2[\Z/2]) \ar[d, hook, "f_*"] &  \Z\otimes_{\Z[\Z/2]}H_4(P;\Z[\Z/2])  \ar[r, "\varphi_P"] \ar[d, "f_*"]\ar[l]  &  H_4(P;\Z)   \ar[d,  "f_*"]    \\
\bbF_2\otimes_{\bbF_2[\Z/2]} H_4(B;\bbF_2[\Z/2]) \ar[d, "g_*"]    & \Z\otimes_{\Z[\Z/2]}H_4(B;\Z[\Z/2]^w) \ar[r]\ar[l]   &   H_4(B;\Z^w) \ar[from=u]
   \\
 \bbF_2\otimes_{\bbF_2[\Z/2]}H_4(P;\bbF_2[\Z/2]) & &
\end{tikzcd}
\]
The composition $g\circ f$ is a homotopy equivalence, and therefore the induced map $g_* \circ f_*$, which is the left vertical composition, is an isomorphism. It follows that the left vertical map $\bbF_2\otimes_{\bbF_2[\Z/2]} H_4(P;\bbF_2[\Z/2])\xrightarrow{f_*} \bbF_2\otimes_{\bbF_2[\Z/2]} H_4(B;\bbF_2[\Z/2])$ is injective.  The top right horizontal arrow was considered in \cref{lem:ker-P} and we know that the kernel, which we denote by $K$, is precisely the $\Z/2$ summand in $\Z^2\oplus \Z/2\cong \Z\otimes_{\Z[\Z/2]} H_4(P;\Z[\Z/2])$. Note that in this map we do not see a contribution from $w$, since $w(T)=1$. As $K$ is a summand of $\Z\otimes_{\Z[\Z/2]} H_4(P;\Z[\Z/2])$, it maps nontrivially to $\bbF_2\otimes_{\bbF_2[\Z/2]} H_4(P;\bbF_2[\Z/2])$, which then maps also nontrivially to $\bbF_2\otimes_{\bbF_2[\Z/2]} H_4(B;\bbF_2[\Z/2])$ since the map $f_*$ is injective. By the commutativity of the diagram, we see that $K$ must also map nontrivially to $\Z\otimes_{\Z[\Z/2]}H_4(B;\Z[\Z/2]^w)$, along the central vertical map $f_*$. By definition $K$ maps trivially to $H_4(B;\Z^w)$ under the composition $f_*\circ \varphi_P$. Therefore, by commutativity of the diagram, the image $f_*(K)$ in $\Z\otimes_{\Z[\Z/2]} H_4(B;\Z[\Z/2]^w)$, which we know is nontrivial, must map trivially to $H_4(B;\Z^w)$. In other words the kernel of the map $\Z\otimes_{\Z[\Z/2]} H_4(B;\Z[\Z/2]^w)\to H_4(B;\Z^w)$ contains a $\Z/2$ subgroup as claimed. Moreover, we have shown that this subgroup maps nontrivially to $\bbF_2\otimes_{\bbF_2[\Z/2]}H_4(B;\bbF_2[\Z/2])$.

Let $\wh B$ be the covering of $B$ with respect to the projection $\pi_1(B)=\Z\times \Z/2\to \Z/2$, and let $\wt{B}$ denote the universal cover. We consider the fibration $\wt B\to \wh B\to S^1$ and the associated Leray--Serre spectral sequence $H_p(S^1;H_q(\wt B;\Z)^w)\Rightarrow H_{p+q}(\wh{B};\Z^w) \cong H_{p+q}(B;\Z[\Z/2]^w)$. We have that $\pi_3(\wt{B}) =0 \to H_3(\wt{B};\Z)$ is surjective by the Hurewicz theorem, and hence $H_1(S^1;H_3(\wt{B};\Z))=0$. From the spectral sequence we see that $E^2_{0,4} = \Z^w\otimes_{\Z[\Z]} H_4(B;\Z\pi)\to H_4(B;\Z[\Z/2]^w)$ is an isomorphism, where the codomain is the homology of the total space. This isomorphism is preserved under tensoring with $\Z$, and is the left vertical map in the commuting diagram
\[
\begin{tikzcd}
\Z\otimes_{\Z[\Z/2]}(\Z^w\otimes_{\Z[\Z]} H_4(B;\Z\pi))\arrow[r,"\cong"]\arrow[d,"\cong"]	&\Z^w\otimes_{\Z\pi} H_4(B;\Z\pi)\arrow[d, "\varphi_B"]\\
\Z\otimes_{\Z[\Z/2]} H_4(B;\Z[\Z/2]^w)\arrow[r]	&H_4(B;\Z^w).
\end{tikzcd}
\]
The kernel of the right vertical map $\varphi_B$ has a $\Z/2$ subgroup, as needed, since this is true for the kernel of the bottom horizontal map by our previous argument. Repeating this argument with $\bbF_2$ coefficients shows that this subgroup of $\ker \varphi_B$ is mapped injectively to $\bbF_2\otimes_{\bbF_2[\pi]}H_4(B;\bbF_2[\pi])$ under the change of coefficients map. This completes the proof of \cref{lem:nontriv-kernel}.
\end{proof}

Now we deduce the fact corresponding to \cref{lem:nontriv-kernel} for the Postnikov $2$-type $B_M$ of a closed $4$-manifold $M$ with fundamental group $\pi = \Z \times \Z/2$ and $\pi_2(M)$ stably isomorphic to $I\pi \oplus I\pi^v$.

\begin{corollary}
\label{cor:ker-B}
For $\pi=\Z \times \Z/2$, let $w\colon \pi\to C_2$ be a homomorphism with $w(T)=1$ and let $v=wv' \colon \pi \to C_2$.
Let $M$ be a closed $4$-manifold with fundamental group $\pi$ and orientation character~$w$.
Assume further that $\pi_2(M)$ is stably isomorphic to $I\pi\oplus I\pi^v$. Let $B_M:= P_2(M)$ be the Postnikov $2$-type of~$M$. Then the kernel of the map $\varphi_{B_M} \colon \Z^w\otimes_{\Z\pi} H_4(B_M;\Z\pi)\to H_4(B_M;\Z^w)$ contains a $\Z/2$ subgroup that injects into $\bbF_2\otimes_{\bbF_2[\pi]}H_4(B_M;\bbF_2[\pi])$ under the change of coefficients map \[\red_2^M \colon \Z^w\otimes_{\Z\pi} H_4(B_M;\Z\pi) \to \bbF_2\otimes_{\bbF_2[\pi]}H_4(B_M;\bbF_2[\pi]).\]
\end{corollary}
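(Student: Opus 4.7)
The plan is to adapt the proof of \cref{lem:nontriv-kernel} to the Postnikov $2$-type $B_M$ of $M$. The spectral sequence argument in the final paragraph of that proof, using the fibration $\widetilde{B}_M \to \widehat{B}_M \to S^1$ and the vanishing $\pi_3(\widetilde{B}_M)=0$ from the Hurewicz theorem, applies verbatim to $B_M$ since $B_M$ is $3$-coconnected. This reduces the corollary to showing that the kernel of the map $\Z \otimes_{\Z[\Z/2]} H_4(B_M; \Z[\Z/2]^w) \to H_4(B_M; \Z^w)$ contains a $\Z/2$ subgroup that injects into $\bbF_2 \otimes_{\bbF_2[\Z/2]} H_4(B_M; \bbF_2[\Z/2])$.

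For this reduced statement I plan to mimic the first part of the proof of \cref{lem:nontriv-kernel}: produce maps $P \xrightarrow{f} B_M \xrightarrow{g} P$, with $P$ the CW complex of \cref{lem:ker-P}, whose composition is a self-homotopy equivalence of $P$. Following \cref{lem:claim:map}, both $f$ and $g$ will act as the canonical inclusion and projection on fundamental groups. For the $\pi_2$-part, I fix a stable isomorphism $\alpha \colon \pi_2(M) \oplus \Z\pi^m \xrightarrow{\cong} I\pi \oplus I\pi^v \oplus \Z\pi^n$ and define the $\pi_2$-maps $\pi_2(P) \to \pi_2(M)$ and $\pi_2(M) \to \pi_2(P)$ by composing the module maps from \cref{lem:claim:map} with the obvious inclusions and projections between $\pi_2(M)$, $\pi_2(M) \oplus \Z\pi^m$, $I\pi \oplus I\pi^v \oplus \Z\pi^n$, and $I\pi \oplus I\pi^v$. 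A careful choice of $\alpha$ then ensures that the composition $g_\# \circ f_\# \colon \pi_2(P) \to \pi_2(P)$ is an isomorphism, so by Whitehead's theorem $g \circ f$ is a self-homotopy equivalence of $P$. It follows that $f_* \colon H_4(P; \bbF_2[\Z/2]) \to H_4(B_M; \bbF_2[\Z/2])$ is injective, and the diagram chase from the first paragraph of the proof of \cref{lem:nontriv-kernel} then produces the required $\Z/2$ subgroup of $\ker \varphi_{B_M}$.

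The main obstacle will be verifying the $k$-invariant compatibility underlying the construction of $f$ and $g$. Using that $H^3(\pi; \Z\pi) = 0$ for $\pi = \Z \times \Z/2$, which may be checked via the Lyndon--Hochschild--Serre spectral sequence, the $k$-invariant of $B_M$ is insensitive to stabilisation by free $\Z\pi$-summands, so the compatibility reduces to the nontriviality of the component of $k_{B_M}$ in $H^3(\pi; I\pi) \cong \Z/2$. I expect to establish this nonvanishing via Poincar\'e duality on $M$, along the lines of the analysis of the equivariant intersection form in \cite{hillman}; alternatively one can exhibit an explicit model closed $4$-manifold with the required invariants for which the nonvanishing is visible, and transport the conclusion to the general case using that $\pi_2(M)$ being stably isomorphic to $I\pi \oplus I\pi^v$ determines the relevant part of the Postnikov data up to free stabilisation.
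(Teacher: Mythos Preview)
Your reduction via the fibration $\wt B_M\to\wh B_M\to S^1$ is correct and matches the paper. The gap is in producing $P\xrightarrow{f}B_M\xrightarrow{g}P$ with $g\circ f\simeq\id_P$ directly.

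Two linked problems. First, ``a careful choice of $\alpha$'' is not justified: for the composite $g_\#\circ f_\#$ on $\pi_2$ to be an isomorphism you essentially need $\alpha^{-1}(I\pi\oplus I\pi^v)\subseteq\pi_2(M)$ inside $\pi_2(M)\oplus\Z\pi^m$, i.e.\ that $I\pi\oplus I\pi^v$ sits as a summand of $\pi_2(M)$ itself, and nothing in the hypotheses guarantees this. Second, even granting such an $\alpha$, the $k$-invariant compatibility you identify as the main obstacle is genuinely unresolved. Since $H^3(\pi;\Z\pi)=0$ the $k$-invariant of $M$ transports to an element of $H^3(\pi;I\pi)\oplus H^3(\pi;I\pi^v)\cong\Z/2\oplus\Z/2$, and for \cref{lem:nontriv-kernel} (equivalently \cref{lem:claim:map}) to apply you need it to be nonzero in the first summand, for some choice of $\alpha$ that \emph{also} makes $g_\#\circ f_\#$ invertible. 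Your two suggested fixes do not close this: Poincar\'e duality on $M$ does not directly constrain $k_M$, and ``transport from a model up to free stabilisation'' presupposes that the $2$-type is determined by the stable class of $\pi_2(M)$, which is false in general since the $k$-invariant is extra data.

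The paper takes a different route. It proves that the conclusion (existence of a $\Z/2\subseteq\ker\varphi_B$ injecting under $\red_2$) is invariant under replacing $M$ by $M\#\pm\CP^2$, i.e.\ $B_M$ by $P_2(B_M\vee S^2)$. The nontrivial direction uses the Leray--Serre spectral sequence for $\wt B\to B\to B\pi$ with $\bbF_2$-coefficients to show that the inclusion-induced map restricts to an isomorphism $\ker\varphi_{B_M}^{\bbF_2}\xrightarrow{\cong}\ker\varphi_{B_{M'}}^{\bbF_2}$; the point is that the relevant $E^2_{p,q}$ and $d_3$-differentials for $p>0$ depend only on the stable class of $\pi_2$. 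Since $\pi_2(M)$ stably isomorphic to $I\pi\oplus I\pi^v$ forces $c_*[M]=0\in H_4(\pi;\Z^w)$ by \cref{lem:pi2-ZZ2}, all such $M$ are $\CP^2$-stably homeomorphic, so one model suffices. The paper takes $X=\partial W$ with $W$ a $5$-dimensional thickening of the standard presentation $2$-complex for $\pi$: then $\pi_2(X)\cong I\pi\oplus I\pi^v$ on the nose (not just stably), and the $k$-invariant is nonzero in the required summand by Brunner--Ratcliffe together with the explicit description from \cite{KPT-long}, so \cref{lem:nontriv-kernel} applies directly to $B_X$.
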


\begin{proof}
Let \[M':= M\#\pm \CP^2.\]
We have maps $\iota$ and $\rho$ defined by
\[B_M:=P_2(M)\xrightarrow{\iota} P_2(M\vee S^2)\simeq P_2(M')=: B_{M'} \xrightarrow{\rho} P_2(M),\]
where $\rho$ collapses the $S^2$ wedge-summand, and whose composition is homotopic to the identity.
This induces a sequence of homomorphisms
\[\Z^w \otimes_{\Z\pi} H_4(B_M;\Z\pi) \xrightarrow{\iota_*} \Z^w \otimes_{\Z\pi}H_4(B_{M'};\Z\pi) \xrightarrow{\rho_*} \Z^w \otimes_{\Z\pi} H_4(B_M;\Z\pi),\]
whose composition is the identity. It follows that $\iota_*$ is injective.
Similarly,
\[\iota_*^{\bbF_2} \colon \bbF_2 \otimes_{\bbF_2[\pi]} H_4(B_M;\bbF_2[\pi]) \xrightarrow{} \bbF_2 \otimes_{\bbF[\pi]}H_4(B_{M'};\bbF_2[\pi])\]
is injective, and we have a map
 \[\rho_*^{\bbF_2} \colon \bbF_2 \otimes_{\bbF[\pi]}H_4(B_{M'};\bbF_2[\pi])  \xrightarrow{} \bbF_2 \otimes_{\bbF_2[\pi]} H_4(B_M;\bbF_2[\pi]).\]

\begin{claim}
The kernel of $\varphi_{B_M}$ contains a $\Z/2$ subgroup that maps injectively under the reduction of coefficients map $\red_2^M$ if and only if the kernel of $\varphi_{B_{M'}}$  contains a $\Z/2$ subgroup that maps injectively under~$\red_2^{M'}$.
\end{claim}

The only if direction is easier, and we prove this first.
By naturality of $\varphi$ and since $\iota_*$ is injective, if $x \in  \Z^w\otimes_{\Z\pi} H_4(B_M;\Z\pi)$ is an order two element in the kernel of $\varphi_{B_M}$, then $\iota_*(x) \in \Z^w \otimes_{\Z\pi}H_4(B_{M'};\Z\pi)$ is an element of order two in the kernel of $\varphi_{B_{M'}} \colon \Z^w \otimes_{\Z\pi}H_4(B_{M'};\Z\pi) \to H_4(B_{M'};\Z^w)$.
Since $\iota_*^{\bbF_2}$ is injective, the image of $x$ under the reduction of coefficients map $\red_2^M$ is nontrivial if and only if this is true for $\iota_*(x)$ with respect to $\red_2^{M'}$, which can be seen from the following commuting diagram.
\[
\begin{tikzcd}
\Z^w\otimes_{\Z\pi} H_4(B_M;\Z\pi) \ar[r,"\iota_*", hook] \ar[d,"\red_2^M"] &  \Z^w \otimes_{\Z\pi}H_4(B_{M'};\Z\pi) \ar[d,"\red_2^{M'}"] \\
\bbF_2 \otimes_{\bbF_2[\pi]} H_4(B_M;\bbF_2[\pi]) \ar[r,"\iota_*^{\bbF_2}", hook] & \bbF_2 \otimes_{\bbF[\pi]}H_4(B_{M'};\bbF_2[\pi])
\end{tikzcd}
\]
This completes the proof of the only if direction of the claim.

Conversely, we start with an element $x \in \ker \varphi_{B_{M'}} \subseteq  \Z^w\otimes_{\Z\pi} H_4(B_{M'};\Z\pi)$ of order two,
and assume that the image $\red_2^{M'}(x) \in \bbF_2 \otimes_{\bbF[\pi]}H_4(B_{M'};\bbF_2[\pi])$ of $x$ is nontrivial.
Then by naturality $\rho_*(x) \in \Z^w\otimes_{\Z\pi} H_4(B_M;\Z\pi)$ lies in the kernel of $\varphi_{B_M}$ and since $\rho_*$ is a homomorphism $\rho_*(x)$ has order at most two.   We need to show that $\rho_*(x)$ has order exactly two, and that $\red_2^M(\rho_*(x))$ is nontrivial.

For $N =M,M'$, let $\varphi_{B_N}^{\bbF_2} \colon \bbF_2\otimes_{\bbF[\pi]}H_4(B_N;\bbF_2[\pi])\to H_4(B_N;\bbF_2)$
be the version of $\varphi_{B_N}$ with $\bbF_2$-coefficients.
We assert for the moment that $\iota_*^{\bbF_2}$ restricts to an isomorphism \[\iota_*^{\bbF_2}| \colon \ker \varphi_{B_{M}}^{\bbF_2}  \xrightarrow{\cong} \ker \varphi_{B_{M'}}^{\bbF_2}.\]
Since $\rho_*^{\bbF_2}| \circ \iota_*^{\bbF_2}| = \Id_{\ker \varphi_{B_{M}}^{\bbF_2}}$, we deduce that $\rho_*^{\bbF_2}$ restricts to an isomorphism
\[\rho_*^{\bbF_2} | \colon  \ker \varphi_{B_{M'}}^{\bbF_2} \xrightarrow{\cong} \ker \varphi_{B_{M}}^{\bbF_2}.\]
Since $\red_2^{M'}(x)$ is assumed nontrivial, it follows that $\rho_*^{\bbF_2} (\red_2^{M'}(x))$ is nontrivial. Observe that $\rho^{\bbF_2}_*(\red_2^{M'}(x)) = \red_2^{M}\circ \rho_*(x)$ by naturality of reduction of coefficients.
Since the former is nontrivial, so is the latter, i.e.\ $\rho_*(x)$ has nontrivial image under the reduction of coefficients map.  In particular $\rho_*(x)$ is nontrivial, and so it has order exactly two.
Thus to prove the if direction of the claim, it remains to prove that $\iota_*^{\bbF_2}|$ is an isomorphism, which we do next.

For both $B=B_M$ and $B= B_{M'}$, we consider the fibration $\wt{B} \to B \to B\pi$, where $\wt{B}$ denotes the universal cover, and the associated Leray--Serre spectral sequence $H_p(B\pi;H_q(\wt{B};\bbF_2))\Rightarrow H_{p+q}(B;\bbF_2)$. We have $E^2_{0,4} \cong \bbF_2 \otimes_{\bbF_2[\pi]} H_4(B;\bbF_2[\pi])$. Since $H_1(\wt{B};\bbF_2) =0 = H_3(\wt{B};\bbF_2)$, several differentials are necessarily trivial. In particular, there is no $d_2$- or $d_4$-differential with image in $E^2_{0,4}$, so we have homomorphisms as follows, which decompose $\varphi_B^{\bbF_2}$.
\begin{align}\label{eqn:long-comp-from-LRSS}
\varphi_B^{\bbF_2} \colon \bbF_2 \otimes_{\bbF_2[\pi]} H_4(B;\bbF_2[\pi]) & \cong E^2_{0,4} \xrightarrow{\cong} E^3_{0,4} \xrightarrow{-/\im d_3} E^4_{0,4} \xrightarrow{\cong} E^5_{0,4}  \xrightarrow{-/\im d_5} E^6_{0,4} \\  &\xrightarrow{\cong} E^{\infty}_{0,4} \hookrightarrow H_4(B;\bbF_2).\nonumber
\end{align}
We will investigate the $d_3$- and $d_5$-differentials presently.
First, we have  $d_3$-differentials as shown below, with a commuting diagram with exact rows by naturality of the spectral sequence with respect to $\iota$:
\[
\begin{tikzcd}
    0\ar[r] &\ker{d_3^M}\ar[r]\ar[d]  &H_5(\pi;\bbF_2)\ar[r,"d_3^M"]\ar[d,"="]    &H_2(\pi;H_2(\wt{B}_{M};\bbF_2))\ar[d,"\cong", "\iota_*"']\\
    0\ar[r] &\ker{d_3^{M'}}\ar[r] &H_5(\pi;\bbF_2)\ar[r,"d_3^{M'}"]    &H_2(\pi;H_2(\wt{B}_{M'};\bbF_2)).
\end{tikzcd}
\]
Here the right-most isomorphism uses that $H_2(\wt{B}_{M};\bbF_2)$ and $H_2(\wt{B}_{M'};\bbF_2)$ are stably isomorphic.
It follows that the induced map $\ker d_3^M \xrightarrow{\cong} \ker d_3^{M'}$ is an isomorphism.
Next we return to the analysis of $\varphi_{B_M}^{\bbF_2}$ and $\varphi_{B_{M'}}^{\bbF_2}$ from \eqref{eqn:long-comp-from-LRSS}.
Both maps are visible in the following commutative diagram, where the straight and the diagonal rows are exact. We already showed that $\iota_*^{\bbF_2}$ is injective.
The fact that moreover
$\iota_*^{\bbF_2}$ restricts to an isomorphism \[\iota_*^{\bbF_2}| \colon \ker \varphi_{B_{M}}^{\bbF_2}  \xrightarrow{\cong} \ker \varphi_{B_{M'}}^{\bbF_2}.\]
follows from chasing this diagram.
\[
\begin{tikzcd}[row sep=tiny, column sep=tiny, trim left={([xshift=4cm]current bounding box.west)}, trim right={([xshift=-3cm]current bounding box.east)}]
        &   &\ker{d_3^M}\ar[dr, "d_5^M"]\ar[dd,near start, "\cong"]\\
    H_3(\pi;H_2(\wt{B}_M;\bbF_2))\ar[r,"d_3^M"']\ar[dd, "\cong"]  &\bbF_2 \otimes_{\bbF_2[\pi]}H_4(B_M;\bbF_2[\pi])\ar[rr,crossing over]\ar[dd, hook, "\iota_*^{\bbF_2}"]   &    &\coker{d_3^M}\ar[rr]\ar[dr]\ar[dd]    &   &0&\\
        &   &\ker{d_3^{M'}}\ar[dr,"d_5^{M'}"]   &   &E^\infty_{0,4}(M)\ar[rr,hook]    & &|[xshift=5mm, overlay]|H_4(B_M;\bbF_2)\ar[dd]\\
    H_3(\pi;H_2(\wt{B}_{M'};\bbF_2))\ar[r,"d_3^{M'}"']  &\bbF_2 \otimes_{\bbF_2[\pi]}H_4(B_{M'};\bbF_2[\pi])\ar[rr]   &    &\coker{d_3^{M'}}\ar[rr]\ar[dr]    &   &0&\\
        &   &   &   &E^\infty_{0,4}(M')\ar[rr, hook]\ar[from=uu, crossing over]    &    &|[xshift=5mm, overlay]|H_4(B_{M'};\bbF_2)
\end{tikzcd}
\]
To chase the diagram, let $x \in \ker \varphi_{B_{M'}}^{\bbF_2} \subseteq \bbF_2 \otimes_{\bbF_2[\pi]}H_4(B_{M'};\bbF_2[\pi])$. By assumption this maps to $0$ in $H_4(B_{M'};\bbF_2)$, and hence by injectivity of the bottom right horizontal map goes to $0$ in $E^\infty_{0,4}(M')$.  Thus the image of $x$ in $\coker{d_3^{M'}}$ lifts to $\ker d_3^{M'}$ and hence to $\ker d_3^{M}$. The image of this lift in $\coker d_3^M$ maps by exactness of the diagonal top row to $0$ in $H_4(B_M;\bbF_2)$, and also by exactness of the top straight row  lifts to  $y \in \ker \varphi_{B_{M}}^{\bbF_2}  \subseteq \bbF_2 \otimes_{\bbF_2[\pi]}H_4(B_M;\bbF_2[\pi])$.  By commutativity $\iota_*^{\bbF_2}(y) - x$ maps to zero in $\coker d_3^{M'}$ and so lifts to  $H_3(\pi;H_2(\wt{B}_{M'};\bbF_2))$ and hence to $H_3(\pi;H_2(\wt{B}_{M};\bbF_2))$. Use the image of this lift in   $\bbF_2 \otimes_{\bbF_2[\pi]}H_4(B_M;\bbF_2[\pi])$ to alter $y$ to $y'$ with $\iota_*^{\bbF_2}(y') = x$. By exactness we still have that $y' \in \ker \varphi_{B_{M}}^{\bbF_2}$. Hence $\iota_*^{\bbF_2}|$ is surjective as required.
This completes the proof of the claim that $\iota_*^{\bbF_2}|$ is an isomorphism.

It follows that for a $4$-manifold $X$ that is $\CP^2$-stably homeomorphic to $M$,
the kernel of $\varphi_M \colon \Z^w\otimes_{\Z\pi} H_4(B_M;\Z\pi)\to H_4(B_M;\Z^w)$ contains a $\Z/2$ subgroup that injects into $\bbF_2\otimes_{\bbF_2[\pi]}H_4(B_M;\bbF_2[\pi])$ under $\red_2^M$ if and only if, writing $B_X := P_2(X)$, the kernel of $\varphi_X \colon \Z^w\otimes_{\Z\pi} H_4(B_X;\Z\pi)\to H_4(B_X;\Z^w)$ contains a $\Z/2$ subgroup that injects into $\bbF_2\otimes_{\bbF_2[\pi]}H_4(B_X;\bbF_2[\pi])$ under $\red_2^X$.

By \cref{lem:pi2-ZZ2}, $c_*([M])=0\in H_4(\pi;\Z^w)\cong \Z/2$, for a classifying map $c\colon M\to B\pi$ inducing an isomorphism on fundamental groups. Suppose we have a $4$-manifold $X$ with fundamental group $\pi$ and $c_*([X])=0\in H_4(\pi;\Z^w)$.
Then such an $X$ is $\CP^2$-stably homeomorphic to our $M$ by \cite{KPT}*{Theorem~1.2} (which is due to Kreck~\cite{kreck}; however the citation we provided gives the explicit statement).
So it suffices to show that the kernel of $\varphi_{B_X}$ contains a $\Z/2$ subgroup that injects into $\bbF_2\otimes_{\bbF_2[\pi]}H_4(B_X;\bbF_2[\pi])$ under the change of coefficients map $\red_2^X$, and then by the previous paragraph we deduce the analogous fact for $M$.

To construct such a $4$-manifold, let $X$ be a \emph{double} for $\pi$ and orientation character $w$, obtained by definition as the boundary of a $5$-dimensional thickening $W$ of the standard $2$-complex corresponding to a presentation for the group $\pi$, with the thickening chosen to have orientation character~$w$. Since $X$ is the boundary of a $5$-manifold with fundamental group $\pi$ and orientation character $w$, and therefore $(c_X)_*([X])=0\in H_4(\pi;\Z^w)$ under a classifying map $c_X\colon X\to B\pi$. By \cite{brunner-ratcliffe}*{Theorem~1}, the first $k$-invariant of a 2-complex $K$ is trivial if and only if $\pi_1(K)$ has cohomological dimension at most~2. Thus the $k$-invariant of $W$, and therefore of $X$, is nontrivial. By \cite{KPT-long}*{Lemmas~7.11, 7.12, and~7.14}, $\pi_2(X)\cong I\pi\oplus I\pi^v$. Hence $B_X$ is homotopy equivalent to a CW complex $B$ as in \cref{lem:map-B-P}, and we deduce from that lemma that the kernel of the map $\varphi_{X} \colon \Z^w\otimes_{\Z\pi} H_4(B_X;\Z\pi)\to H_4(B_X;\Z^w)$ contains a $\Z/2$ subgroup that injects into $\bbF_2\otimes_{\bbF_2[\pi]}H_4(B_X;\bbF_2[\pi])$ under the change of coefficients map.
This completes the proof of the corollary.
\end{proof}

\subsection{Lower bounds on the kernel of \texorpdfstring{$\varphi_B$}{phi-B} for \texorpdfstring{$3$}{3}-manifold fundamental groups that are free products}\label{subsection:bounds-kernel-B-general-3-mfld-groups}

With the results of the previous subsection in hand, we can now prove the following result bounding the size of the kernel of the map $\varphi_B$, for $B$ the Postnikov $2$-type of a $4$-manifold with fundamental group $\pi$ and orientation character $w$ as in \eqref{eq:pi-decomp-new}. First we need a lemma showing how the kernel of the reduction of coefficients map $\varphi$ changes under stabilisation.

\begin{lemma}\label{lem:kernel-of-phi-and-stabilisation}
    Let $M$ be a closed $4$-manifold with fundamental group $\pi$ and orientation character $w\colon \pi\to C_2$. Let $B:=P_2(M)$ and let $B^s:=P_2(M\#(S^2\times S^2)) \simeq P_2(M \vee S^2 \vee S^2)$. The map $M \to M \vee S^2 \vee S^2$ induces an isomorphism between $\ker\varphi_B$ and $\ker\varphi_{B^s}$.
\end{lemma}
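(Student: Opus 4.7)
The plan is to combine a retraction argument with a universal coefficient spectral sequence comparison. The collapse $\rho\colon M\vee S^2\vee S^2\to M$ is a retraction of the inclusion $\iota\colon M\hookrightarrow M\vee S^2\vee S^2$, so on Postnikov $2$-types I obtain maps $\iota_P\colon B\to B^s$ and $\rho_P\colon B^s\to B$ with $\rho_P\circ\iota_P\simeq \id_B$. By naturality of $\varphi$, the induced maps on both $\Z^w\otimes_{\Z\pi}H_4(-;\Z\pi)$ and $H_4(-;\Z^w)$ are split inclusions and retractions, yielding a direct sum decomposition $\varphi_{B^s}\cong \varphi_B\oplus \varphi_E$, where $\varphi_E\colon K\to L$ is the restriction to $K:=\ker(\rho_P)_*$ in the domain and $L:=\ker(\rho_P)_*$ in the codomain. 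Hence $\ker\varphi_{B^s}\cong \ker\varphi_B\oplus\ker\varphi_E$ with the first summand equal to $(\iota_P)_*(\ker\varphi_B)$, and the lemma reduces to showing that $\varphi_E$ is injective.

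For this I would use the universal coefficient spectral sequence
\[E^2_{p,q}(X)=\Tor_p^{\Z\pi}\bigl(\Z^w,H_q(\wt X;\Z)\bigr)\Longrightarrow H_{p+q}(X;\Z^w),\]
under which $\varphi_X$ is the edge map $E^2_{0,4}\twoheadrightarrow E^\infty_{0,4}\hookrightarrow H_4(X;\Z^w)$; hence $\ker\varphi_X$ is generated by the images of the incoming differentials $d_r\colon E^r_{r,5-r}\to E^r_{0,4}$. The splitting produced by $\iota_P,\rho_P$ promotes to a splitting of the entire spectral sequence into a ``$B$-part'' and an ``extra'' part, and $\varphi_E$ is the edge map of the extra subsequence. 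It therefore suffices to prove that the extra parts of $E^2_{r,5-r}$ vanish for $r\in\{2,3,4,5\}$.

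Since $\wt{B^s}$ is an Eilenberg--MacLane space $K(\pi_2(B^s),2)$ with $\pi_2(B^s)\cong\pi_2(B)\oplus\Z\pi^2$, it is homotopy equivalent to $\wt B\times K(\Z\pi,2)\times K(\Z\pi,2)$. Because $\Z\pi$ is torsion-free as an abelian group, $H_*(K(\Z\pi,2);\Z)$ is concentrated in even degrees with $H_0=\Z$ and $H_2=\Z\pi$. The K\"unneth theorem then yields $H_q(\wt{B^s};\Z)=H_q(\wt B;\Z)$ for $q\leq 3$, while the extra part of $H_2(\wt{B^s};\Z)$ is the flat $\Z\pi$-module $\Z\pi^2$. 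These vanishings force the extra parts of $E^2_{r,5-r}$ to be zero for $r=2,3,4,5$, so $\varphi_E$ is injective as required. The main work lies in the spectral sequence bookkeeping; given the K\"unneth decomposition, the vanishing of the relevant differentials is then immediate.
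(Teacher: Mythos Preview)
Your proof is correct and follows essentially the same approach as the paper. Both arguments analyse the edge map at $(0,4)$ in the spectral sequence $H_p(\pi;H_q(\wt X;\Z)^w)\Rightarrow H_{p+q}(X;\Z^w)$ (your $\Tor$ formulation is the same thing, since $\Tor_p^{\Z\pi}(\Z^w,-)\cong H_p(\pi;(-)^w)$), and both reduce to showing that the ``extra'' contributions at the sources $E^2_{r,5-r}$ of the incoming differentials vanish. The paper phrases this as ``the maps $B\to B^s\to B$ induce isomorphisms on $H_p(\pi;H_q(-;\Z\pi))$ for $p>0$ and $q<4$'', which is equivalent to your ``the extra part of $E^2_{r,5-r}$ vanishes''; your retraction/splitting packaging makes the logic slightly more explicit but is not materially different.

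One small wording issue: the sentence ``$H_q(\wt{B^s};\Z)=H_q(\wt B;\Z)$ for $q\leq 3$'' is false at $q=2$ (as you yourself note immediately afterwards, the extra part there is $\Z\pi^2$). You presumably mean $q\in\{0,1,3\}$; this does not affect the argument since $\Tor_p^{\Z\pi}(\Z^w,\Z\pi^2)=0$ for $p>0$.
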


\begin{proof}
The $E^2_{0,4}$ term of the Leray--Serre spectral sequence for the fibration $\wt B\to B\to B\pi$ converging to $H_*(B;\Z^w)$ is $H_0(\pi;H_4(\wt B;\Z)^w)\cong H_0(\pi;H_4(B;\Z\pi)^w)$.
Quotienting by the images of the iterated differentials in the spectral sequence with codomain the terms $E^k_{0,4}$ induces a map $H_0(\pi;H_4(B;\Z\pi)^w) \to F_{0,4}^B$. The  codomain~$F_{0,4}^B$ is then a subgroup of the output $H_4(B;\Z^w)$ of the spectral sequence, and, under the identification $H_0(\pi;H_4(B;\Z\pi)^w) \cong \Z^w \otimes_{\Z\pi} H_4(B;\Z\pi)$, the kernel is precisely the kernel of $\varphi_B$. We have a similar quotient map $H_0(\pi;H_4(B^s;\Z\pi)^w) \to F_{0,4}^{B^s}$ for $B^s$. The idea of the proof is to compare the kernels of these two maps using naturality of the Leray-Serre spectral sequence.

The inclusion map $M \to M \vee S^2 \vee S^2$ and the collapse map $M \vee S^2 \vee S^2\to M$ induce isomorphisms on fundamental groups, and hence induce isomorphisms between $\pi_1(B)$ and $\pi_1(B^s)$. These maps also induce respectively the inclusion $\pi_2(B) \cong \pi_2(M)\to \pi_2(M)\oplus \Z\pi^2 \cong \pi_2(B^s)$ and the projection $\pi_2(B^s) \cong \pi_2(M)\oplus \Z\pi^2\to \pi_2(M) \cong \pi_2(B)$ on second homotopy groups. We have that $H_p(\pi;\Z\pi)=0$ for all $p>0$, and that $H_3(B;\Z\pi) \cong H_3(\wt{B};\Z)=0$, and similarly for $B^s$. The latter claim holds because $0=\pi_3(\wt{B}) \to H_3(\wt{B};\Z)$ is surjective in degree three by the Hurewicz theorem.  It follows that the induced maps $B\to B^s$ and $B^s\to B$ induce isomorphisms on $H_p(\pi;H_q(-;\Z\pi))$ for all $p>0$ and all $q<4$. Whence naturality of the Leray-Serre spectral sequence implies that the iterated images of the differentials in $H_0(\pi;H_4(-;\Z\pi)^w)$ and its iterated quotients, for $B$ and $B^s$, are identified by the inclusion and collapse maps. Thus the kernels of $\varphi_B$ and $\varphi_{B^s}$ are isomorphic as claimed.
\end{proof}

\begin{lemma}
\label{lem:kernel}
Let $M$ be a closed $4$-manifold with fundamental group $\pi$ as in \eqref{eq:pi-decomp-new}, and orientation character $w \colon \pi \to C_2$, such that $(\pi,w)$ is admissible.
Let $t' \leq t$ be such that, for some identification as in \eqref{eq:pi-decomp-new}, the image of the fundamental class $[M]$ in $H_4(\pi;\Z^w)\cong \bigoplus_{k=1}^tH_4(H_k;\Z^w)\cong (\Z/2)^t$ is trivial in the first $t'$ summands and nontrivial for $k>t'$.
Let $B:=P_2(M)$. Then the kernel of $\varphi_B\colon \Z^w\otimes_{\Z\pi}H_4(B;\Z\pi)\to H_4(B;\Z^w)$ is an abelian group that needs at least $t'$ generators.
\end{lemma}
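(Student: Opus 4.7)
The plan is to produce $t'$ independent $\Z/2$ subgroups of $\ker \varphi_B$, one contributed by each summand $M^{H_k}$ with $k\leq t'$.

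First, by \cref{lem:kernel-of-phi-and-stabilisation} the group $\ker \varphi_B$ is invariant under connected sum of $M$ with copies of $S^2\times S^2$, so by \cref{lemma:stable-splitting} we may assume that
\[M = M^F \,\#\, \Big(\bighash_i M^{Z_i}\Big) \,\#\, \Big(\bighash_j M^{G_j}\Big) \,\#\, \Big(\bighash_k M^{H_k}\Big),\]
with each summand having fundamental group the indicated free factor of $\pi$. Additivity of classifying map images under connected sum shows that $(c_{M^{H_k}})_*[M^{H_k}]$ vanishes in $H_4(H_k;\Z^{w|_{H_k}})\cong \Z/2$ precisely for $k\leq t'$. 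For each such $k$, by \cref{lem:pi2-ZZ2} the module $\pi_2(M^{H_k})$ is stably isomorphic to $IH_k\oplus IH_k^v$, so \cref{cor:ker-B} yields a subgroup $K_k\cong \Z/2$ inside $\ker \varphi_{B^k}$, where $B^k:=P_2(M^{H_k})$.

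Second, the inclusion $M^{H_k}\setminus D^4 \hookrightarrow M$ induces on fundamental groups the free factor inclusion $H_k\hookrightarrow \pi$, and the collapse map $M\to M^{H_k}$ obtained by pinching the separating spheres of the other summands induces the canonical projection $\pi\to H_k$. Passing to Postnikov $2$-types, we obtain maps $\iota_k\colon B^k\to B$ and $q_k\colon B\to B^k$ with $q_k\circ \iota_k\simeq \id_{B^k}$, since the composition $M^{H_k}\setminus D^4 \hookrightarrow M \to M^{H_k}$ induces the identity on $\pi_1$ and on $\pi_2(M^{H_k})$. Using the standard identification $\Z^{w|_{H_k}}\otimes_{\Z H_k}(-) \cong \Z^w\otimes_{\Z\pi}\Ind_{H_k}^\pi(-)$ together with naturality of $\varphi$, the image $\wt K_k := (\iota_k)_*(K_k)\subseteq \Z^w\otimes_{\Z\pi}H_4(B;\Z\pi)$ lies in $\ker \varphi_B$ for every $k\leq t'$.

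Third, to show that the $\wt K_k$ are independent, I would use the retractions $q_k$ and the observation that for $k'\ne k$ the collapse $q_k$ sends $M^{H_{k'}}$ to a point and therefore acts as zero on the corresponding induced summand $\Ind_{H_{k'}}^\pi\pi_2(M^{H_{k'}})$ of $\pi_2(M)$. Hence by naturality $(q_k)_*(\wt K_k) = K_k$ while $(q_k)_*(\wt K_{k'})=0$ for $k'\neq k$, so the homomorphism $\bigoplus_k (q_k)_*\colon \wt K_1+\cdots+\wt K_{t'}\to \bigoplus_{k\leq t'}K_k\cong (\Z/2)^{t'}$ is surjective. Thus $\ker\varphi_B$ requires at least $t'$ generators.

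The principal obstacle is making the maps $\iota_k$ and $q_k$ precise at the level of Postnikov $2$-types, since these are not naively functorial under connected sum. However, one can work with chosen CW models for the $2$-types and appeal to Whitehead's theorem: since $B^k$ and $B$ are both $3$-coconnected, the homomorphisms on $\pi_1$ and $\pi_2$ together with the induced correspondence of $k$-invariants (which is automatic because the maps are induced by genuine maps of $4$-manifolds) suffice to determine $\iota_k$ and $q_k$ up to homotopy with the required composition property.
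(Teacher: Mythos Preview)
Your approach is essentially the paper's: stabilise using \cref{lem:kernel-of-phi-and-stabilisation}, split $M$ along the free-product decomposition via \cref{lemma:stable-splitting}, build inclusion/collapse maps $\iota_k,q_k$ between Postnikov $2$-types, and transport the subgroups $K_k\cong\Z/2$ of $\ker\varphi_{B^k}$ supplied by \cref{cor:ker-B} into $\ker\varphi_B$, using the retractions to separate them.

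There is, however, a gap in your independence step. You claim a map $(q_k)_*$ landing in $\Z^{w_k}\otimes_{\Z H_k}H_4(B^k;\Z H_k)$ with $(q_k)_*(\wt K_k)=K_k$. But $q_k$ induces the \emph{surjection} $p_k\colon\pi\to H_k$ on fundamental groups, so the natural target of the induced map on $\Z^w\otimes_{\Z\pi}H_4(-;\Z\pi)$ is
\[
\Z^w\otimes_{\Z\pi}\bigl(p_k^*\,H_4(B^k;\Z H_k)\bigr),
\]
which is the quotient of $\Z^{w_k}\otimes_{\Z H_k}H_4(B^k;\Z H_k)$ by the additional relations $(w(g)-1)\cdot(-)=0$ for $g\in\ker p_k$. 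Admissibility allows $w$ to be nontrivial on another free factor of $\pi$ (e.g.\ on $F$, on some $G_j$, or on the infinite cyclic part of some $H_{k'}$), in which case these relations force multiplication by $2$ to vanish in the target. The composite $(q_k)_*\circ(\iota_k)_*$ is then only the mod-$2$ reduction, and you must still argue that the $2$-torsion group $K_k$ survives it. That is precisely the second clause of \cref{cor:ker-B}, namely that $K_k$ injects into $\bbF_2\otimes_{\bbF_2[H_k]}H_4(B^k;\bbF_2[H_k])$; you cite that corollary but do not use this part of its statement. The paper sidesteps the issue by running the retraction argument in $\bbF_2[H_k]$-coefficients from the start (its third commutative diagram), where the $w$-twist disappears and $(\iota_k)_*$ is visibly split-injective. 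Your proof is repaired by replacing the integral detection maps $(q_k)_*$ by their $\bbF_2$-coefficient analogues and invoking the full strength of \cref{cor:ker-B}.
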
	

\begin{proof}
As $M$ is stably a connected sum by \cref{lemma:stable-splitting},
there exists $m\geq 0$ and a connected sum decomposition
\[M \# \bighash^m S^2 \times S^2 \cong N \# M_1 \# \cdots \# M_{t}\]
with $\pi_1(N) \cong F * \big(\ast_{i=1}^rZ_i\big) * \big(\ast_{j=1}^sG_j\big)$ and $\pi_1(M_k) \cong H_k \cong \Z \times \Z/2$.  Let $B^s := P_2(M \# m(S^2 \times S^2))\simeq P_2(M \vee \bigvee^{2m} S^2)$. The map $M \to M \vee \bigvee^{2m} S^2$ induces a map $B \to B^s$. By \cref{lem:kernel-of-phi-and-stabilisation}, this map induces an isomorphism $\ker \varphi_B \xrightarrow{\cong} \ker \varphi_{B^s}$. Hence it suffices to prove the statement of the lemma with $B^s$ in place of $B$.

Let $\pi':=\Z\times \Z/2=\langle t,T\mid [T,t],T^2\rangle$ and $v'\colon \pi' \to C_2$ be given by $v'(t)=1$ and $v'(T)=-1$. Let $\ol{w}$ denote the composition $\pi'\hookrightarrow \pi\xrightarrow{w}C_2$, where the first map is the inclusion as one of the factors in \eqref{eq:pi-decomp-new}. Let $v=\ol{w}v'$.
Let $M'$ be a closed $4$-manifold with fundamental group $\pi':=\Z\times \Z/2$ and $\pi_2(M')\cong I\pi\oplus I\pi^v$. Let $B'$ be the Postnikov $2$-type of $M'$.

For each factor $H_k$ of $\pi$, $k = 1,\dots,t'$, by \cref{lem:pi2-ZZ2} there are maps $B'\xrightarrow{\iota_k} B^s$ and $B^s\xrightarrow{p_k} B'$ such that $p_k\circ \iota_k\simeq \id_{B'}$ and $p_k\circ\iota_{k'}$ is homotopic to the constant map for $k\neq k'$.

Consider the commutative diagram
\[\begin{tikzcd}
\Z^w\otimes_{\Z\pi}H_4(B';\Z\pi)\ar[r,"\cong"]\ar[d,"(\iota_k)_*"]&\Z^w\otimes_{\Z\pi'}H_4(B';\Z\pi')\ar[d,"(\iota_k)_*"]\ar[r]&H_4(B';\Z^w)\ar[d,"(\iota_k)_*"]\\
\Z^w\otimes_{\Z\pi}H_4(B^s;\Z\pi)\ar[r]&\Z^w\otimes_{\Z\pi'}H_4(B^s;\Z\pi')\ar[r]&H_4(B^s;\Z^w)
\end{tikzcd}\]
Here $\Z\pi'$ is defined to be a $\Z\pi$-module via the map $p_k$. Our goal is to show that the composition of the two bottom horizontal maps has kernel needing at least $t'$ generators. By our condition above on the maps $\iota_k$ and $p_k$, it will suffice to show that the bottom right horizontal map has nontrivial kernel. The result will then follow since we have $t'$ distinct $H_k$ factors in $\pi$.

For this final step, consider the following commutative diagram, where we have used the change of coefficients map corresponding to $\Z\to \bbF_2$.
\[
\begin{tikzcd}
\bbF_2\otimes_{\bbF_2[\pi']} H_4(B';\bbF_2[\pi']) \ar[d, hook, "(\iota_k)_*"] & \Z^w\otimes_{\Z\pi'}H_4(B';\Z\pi')\ar[r]\ar[d, "(\iota_k)_*"]\ar[l]  &   H_4(B';\Z^w)\ar[d,  "(\iota_k)_*"]
   \\
\bbF_2\otimes_{\bbF_2[\pi']}H_4(B^s;\bbF_2[\pi']) \ar[d, "(p_k)_*"]    & \Z^w\otimes_{\Z\pi'}H_4(B^s;\Z\pi')\ar[r]\ar[l]   &   H_4(B^s;\Z^w)\ar[from=u]
   \\
   \bbF_2\otimes_{\bbF_2[\pi']}H_4(B';\bbF_2[\pi'])& &
\end{tikzcd}
\]
The composition $B'\xrightarrow{\iota_k} B^s \xrightarrow{p_k} B'$ is homotopic to the identity, and hence $(p_k)_* \circ (\iota_k)_* =\Id$. It follows that the top left vertical map is injective. By \cref{cor:ker-B}, the kernel of the top right horizontal map has a $\Z/2$ subgroup, call it $K$, which is mapped nontrivially to $\bbF_2\otimes_{\bbF_2[\pi']}H_4(B';\bbF_2[\pi'])$, and onward by the vertical injection $(\iota_k)_*$ to $\bbF_2\otimes_{\bbF_2[\pi']}H_4(B^s;\bbF_2[\pi'])$. By the commutativity of the diagram, $K$ is mapped nontrivially to $\Z^w\otimes_{\Z\pi'}H_4(B^s;\Z\pi')$ under the middle vertical $(\iota_k)_*$. By definition, $K$ maps trivially to $H_4(B';\Z^w)$, and so maps trivially to $H_4(B^s;\Z^w)$ in the bottom right. Hence by the commutativity of the diagram, the image of $K$ in $\Z^w\otimes_{\Z\pi'}H_4(B^s;\Z\pi')$, which we know is nontrivial, maps trivially to  $H_4(B^s;\Z^w)$.
In other words, the map $\Z^w\otimes_{\Z\pi'}H_4(B^s;\Z\pi')\to H_4(B^s;\Z^w)$ has nontrivial kernel, as desired.
\end{proof}

The following corollary shows that \cref{thm:homotopyclass}\,\eqref{item:2} holds for $4$-manifolds with fundamental group $\pi$ as in \eqref{eq:pi-decomp-new}. We will also use it in the proof of \cref{cor:propH-3mfd}, showing that for a $3$-manifold group $\pi$ and homomorphism $w\colon \pi\to C_2$ such that $(\pi,w)$ is admissible, the pair $(\pi,w)$ has {\propH}.

\begin{corollary}\label{cor:kernelB-equals-kernelphi}
    Let $M$ be a closed $4$-manifold with fundamental group $\pi$ as in \eqref{eq:pi-decomp-new} and orientation character $w \colon \pi \to C_2$ such that $(\pi,w)$ is admissible.
    Let $t' \leq t$ be such that, for some identification as in \eqref{eq:pi-decomp-new}, the image of the fundamental class $[M]$ in $H_4(\pi;\Z^w)\cong \bigoplus_{k=1}^tH_4(H_k;\Z^w)\cong (\Z/2)^t$ is trivial in the first $t'$ summands and nontrivial for $k>t'$.
    Let $B:=P_2(M)$. Then the kernel of $\B_{H_2(B;\Z\pi)} \circ \Upsilon$ equals the kernel of $\varphi_B\colon \Z^w\otimes_{\Z\pi}H_4(B;\Z\pi)\to H_4(B;\Z^w)$.
\end{corollary}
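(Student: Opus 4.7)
The plan is to leverage the upper bound on $\ker \mB_{H_2(B;\Z\pi)}$ from \cref{prop:kernelB} together with the lower bound on $\ker \varphi_B$ from \cref{lem:kernel}, once one containment has been established via the commuting square from the proof of \cref{thm:homotopyclass}.

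First, I would establish the containment $\ker \varphi_B \subseteq \ker(\mB_{H_2(B;\Z\pi)} \circ \Upsilon)$. Recall the commutative square
\[
\begin{tikzcd}
\Z^w\otimes_{\Z\pi}H_4(B;\Z\pi) \arrow[r,"\varphi_B"]\arrow[d, "\mB_{H_2(B;\Z\pi)} \circ \Upsilon"] & H_4(B;\Z^w)\arrow[d,"\Theta_B"]\\
\Her^w(H_2(B;\Z\pi)^\dagger)\arrow[r,"\ev^*"] & \Her^w(H^2(B;\Z\pi))
\end{tikzcd}
\]
from the proof of \cref{thm:homotopyclass}. Since $(\pi,w)$ is admissible, \cref{prop:ev-inj} gives that $\ev^*$ is injective (indeed an isomorphism). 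Hence if $\varphi_B(x)=0$ for some $x \in \Z^w\otimes_{\Z\pi}H_4(B;\Z\pi)$, then $\ev^*(\mB_{H_2(B;\Z\pi)} \circ \Upsilon(x))=0$ by commutativity, so $\mB_{H_2(B;\Z\pi)} \circ \Upsilon(x)=0$ by injectivity of $\ev^*$. This gives $\ker \varphi_B \subseteq \ker(\mB_{H_2(B;\Z\pi)} \circ \Upsilon)$.

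Next I would combine the two bounds to force equality. By \cref{prop:kernelB}, together with the fact that $\Upsilon$ is an isomorphism and $H_2(B;\Z\pi) \cong H_2(M;\Z\pi) \cong \pi_2(M)$, the group $\ker(\mB_{H_2(B;\Z\pi)} \circ \Upsilon)$ is isomorphic to $(\Z/2)^{\tau}$ for some $\tau \leq t'$. In particular it is an elementary abelian $2$-group of rank at most $t'$, so every subgroup of it is also an elementary abelian $2$-group of rank at most $t'$, and therefore needs at most $t'$ generators. On the other hand, \cref{lem:kernel} tells us that $\ker \varphi_B$ needs at least $t'$ generators. Since $\ker \varphi_B$ is a subgroup of $(\Z/2)^{\tau}$ by the first step, it follows that $\tau = t'$ and $\ker \varphi_B = \ker(\mB_{H_2(B;\Z\pi)} \circ \Upsilon)$, as required.

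There is essentially no hard step here: the main content is packaged inside the preceding results (\cref{prop:kernelB,lem:kernel,prop:ev-inj}). The only subtlety is the observation that a subgroup of an elementary abelian $2$-group of rank $\tau$ cannot need more than $\tau$ generators, which is what allows the upper and lower bounds to pinch together.
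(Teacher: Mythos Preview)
Your proof is correct and follows essentially the same approach as the paper: establish the inclusion $\ker\varphi_B\subseteq\ker(\mB_{H_2(B;\Z\pi)}\circ\Upsilon)$ via the commutative square and injectivity of $\ev^*$ from \cref{prop:ev-inj}, then pinch using the upper bound from \cref{prop:kernelB} and the lower bound from \cref{lem:kernel}. Your explicit remark that $H_2(B;\Z\pi)\cong H_2(M;\Z\pi)$ (needed to invoke \cref{prop:kernelB}) is a detail the paper leaves implicit.
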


\begin{proof}
    By the commutativity of the diagram in~\eqref{eq:main-square}, we know that $\ev^*\circ \mB_{H_2(B;\Z\pi)} \circ \Upsilon = \Theta_B\circ \varphi_B$. Since $\pi$ is a $3$-manifold group, by \cref{prop:ev-inj} $\ev^*$ is an isomorphism. Therefore, the kernel of $\varphi_B$ is contained in the kernel of $\mB_{H_2(B;\Z\pi)} \circ \Upsilon$.

    By \cref{prop:kernelB}, the kernel of $\B_{H_2(B;\Z\pi)} \circ \Upsilon$ is a subgroup of $(\Z/2)^{t'}$, where the image of the fundamental class $[M]$ in $H_4(\pi;\Z^w)\cong \bigoplus_{k=1}^tH_4(H_k;\Z^w)\cong (\Z/2)^t$ is trivial in the first $t'$ summands and nontrivial for $k>t'$. On the other hand, the kernel of $\varphi_B$ needs at least $t'$ generators by \cref{lem:kernel}. Therefore the kernel of $\B_{H_2(B;\Z\pi)} \circ \Upsilon$ equals the kernel of $\varphi_B$, as needed.
\end{proof}

\section{\propH}\label{section:Property-H}
In this section we discuss the \emph{$4$th homology lifting property}, henceforth known as \propH.  This  will help us to show that \cref{thm:homotopyclass}\,\eqref{item:1} holds. More precisely, it seeks to establish criteria for deciding whether an  element in the codomain of $\varphi_X \colon \Z^w\otimes_{\Z\pi} H_4(X;\Z\pi)\to H_4(X;\Z^w)$ lies in the image, for an arbitrary CW complex $X$ with $\pi_1(X)=\pi$.
We will first define a version for $3$-coconnected CW complexes and homomorphisms $w\colon \pi\to C_2$, and later we will define a version for groups. We will end this section by showing that $(\pi,w)$ has {\propH}, where $\pi$ is a $3$-manifold group and $w\colon \pi\to C_2$ is a homomorphism, such that $(\pi,w)$ is admissible (\cref{cor:propH-3mfd}).

\subsection{\propH\ for CW complexes}

\begin{definition}[\propH]\label{defn:Property-H-spaces}
Let $X$ be a connected $CW$ complex and write $\pi:= \pi_1(X)$. Let $w\colon \pi\to C_2$ be a homomorphism. 
We say that $(X,w)$ has \emph{\propH} if for every 2-connected map $c \colon X \to B\pi$ and for every
\[x\in \ker(c_* \colon H_4(X;\Z^w)\to H_4(\pi;\Z^w))\] such that \[\langle \alpha,\beta\cap x\rangle =0\in \Z\pi\] for all $\alpha,\beta\in H^2(X;\Z\pi)$, we have that $x$ lies in the image of $\varphi_X \colon \Z^w\otimes_{\Z\pi} H_4(X;\Z\pi)\xrightarrow{} H_4(X;\Z^w)$.
\end{definition}

\begin{remark}
    Different choices of 2-connected maps $c \colon X \to B\pi$ are related by automorphisms of $\pi$, and therefore they determine the same kernel $K:=\ker(H_4(X;\Z^w)\to H_4(\pi;\Z^w))$.  So to verify that \propH\ holds it suffices to fix one choice of $c$.

    We also note that the image of $\varphi_X$ equals the image of the map $H_4(X;\Z\pi)\to H_4(X;\Z^w)$, so we could have equivalently asked for $x$ to lie in the image of the latter map in the definition of {\propH}.
\end{remark}

The next lemma shows that {\propH} is independent of stabilisations for $3$-coconnected CW complexes.

\begin{lemma}
\label{lem:H-stable}
Let $B$ be a connected, $3$-coconnected $CW$ complex and let $\pi:=\pi_1(B)$. Let $c_B \colon B \to B\pi$ be a 2-connected map.  Let $w\colon \pi\to C_2$ be a homomorphism. Let $B^s:=P_2(B\vee S^2)$. Then $(B,w)$ has {\propH} if and only if $(B^s,w)$ has {\propH}.
\end{lemma}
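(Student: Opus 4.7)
First I would set up the natural maps $\iota\colon B\to B^s$ and $p\colon B^s\to B$ coming from the inclusion $B\hookrightarrow B\vee S^2$ and the collapse $B\vee S^2\to B$ (passed to Postnikov $2$-types using that $B\simeq P_2(B)$ because $B$ is already $3$-coconnected). These satisfy $p\circ\iota\simeq \id_B$, induce the summand inclusion $\pi_2(B)\hookrightarrow \pi_2(B^s)\cong\pi_2(B)\oplus\Z\pi$ with its corresponding projection on $\pi_2$, and may be chosen so that $c_{B^s}\simeq c_B\circ p$ for compatible $2$-connected classifying maps to $B\pi$.

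Next, I would show that both directions of the biconditional reduce to a single claim about $\ker p_*$ on $H_4$. For $(\Leftarrow)$, given $y\in H_4(B;\Z^w)$ satisfying the hypotheses of \propH\ for $B$, set $x:=\iota_*(y)$; naturality of $c_*$ gives $(c_{B^s})_*x=(c_B)_*y=0$, and the projection formula for cap products gives $\langle \alpha,\beta\cap x\rangle=\langle \iota^*\alpha,\iota^*\beta\cap y\rangle=0$ for every $\alpha,\beta\in H^2(B^s;\Z\pi)$, so $x$ satisfies the hypotheses for $B^s$. Hence $x=\varphi_{B^s}(z)$ for some $z$, and applying $p_*$ with naturality of $\varphi$ yields $y=\varphi_B(p_*z)\in \im \varphi_B$. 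For $(\Rightarrow)$, start from $x$ satisfying the hypotheses for $B^s$ and set $y:=p_*(x)$; symmetric naturality arguments show that $y$ satisfies the hypotheses for $B$, so $y=\varphi_B(z)$, hence $\iota_*(y)=\varphi_{B^s}(\iota_* z)$, and the residue $x_0:=x-\iota_*(y)$ lies in $\ker p_*$. It then suffices to prove the \emph{main claim}: $\ker(p_*\colon H_4(B^s;\Z^w)\to H_4(B;\Z^w))\subseteq \im\varphi_{B^s}$.

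The hard part will be this main claim, which I would attack by comparing Leray--Serre spectral sequences
\[
E^2_{p,q}(X)=H_p(\pi;H_q(\wt X;\Z)^w)\Rightarrow H_{p+q}(X;\Z^w)
\]
for $X=B$ and $X=B^s$, with $p$ inducing a morphism of spectral sequences. Since $B$ and $B^s$ are $3$-coconnected, their universal covers are $1$-connected with vanishing $\pi_3$ and $\pi_4$, so Whitehead's exact sequence yields $H_1(\wt X;\Z)=H_3(\wt X;\Z)=0$ and $H_4(\wt X;\Z)\cong\Gamma(\pi_2(X))$. Since $\Z\pi^u\cong \Z\pi$ is a free $\Z\pi$-module for any $u\colon \pi\to C_2$ by \cref{lemma:tensoring-with-Zpi-w-gives-iso}, we have $H_p(\pi;\Z\pi^u)=0$ for all $p\ge 1$; combined with the splitting $\pi_2(B^s)\cong\pi_2(B)\oplus\Z\pi$, this forces $p_*$ to be an isomorphism on $E^2_{p,q}$ for every $p\ge 1$ and $q\in\{0,1,2,3\}$. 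A direct inspection of the differentials into and out of $E^r_{2,2}$ and $E^r_{4,0}$---the only potentially nonzero entries on the diagonal $p+q=4$ with $p\ge 1$---then shows that $p_*$ induces isomorphisms on each $E^\infty_{p,4-p}$ for $p\ge 1$. Consequently, any $x\in H_4(B^s;\Z^w)$ with $p_*(x)=0$ must descend through each filtration quotient and hence lie in the lowest filtration piece $F_0 H_4(B^s;\Z^w)=E^\infty_{0,4}(B^s)=\im\varphi_{B^s}$, proving the main claim and completing the proof.
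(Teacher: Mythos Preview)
Your proof is correct and follows essentially the same approach as the paper's. Both arguments use the maps $\iota$ and $p$ (the paper writes $\rho$) between $B$ and $B^s$, reduce via naturality of cap products to a statement about the Leray--Serre spectral sequence of $\wt X\to X\to B\pi$, and exploit that $H_p(\pi;\Z\pi^w)=0$ for $p\ge 1$ together with $H_1(\wt X;\Z)=H_3(\wt X;\Z)=0$ to see that $p_*$ is an isomorphism on the $E^\infty_{2,2}$ and $E^\infty_{4,0}$ terms. The paper packages this as an exact sequence $\im\varphi_B\to K\to E^\infty_{2,2}\to 0$ and a three-row commuting diagram, whereas you isolate the slightly stronger statement $\ker p_*\subseteq\im\varphi_{B^s}$ and prove it by descending through the filtration; these are the same computation.
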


\begin{proof}
First we need some setup. Let $K:=\ker(H_4(B;\Z^w)\to H_4(\pi;\Z^w))$. We consider the Leray--Serre spectral sequence for the fibration $\wt{B} \to B \xrightarrow{c_B} B\pi$, where $\wt{B}$ is the universal cover, converging to $H_*(B;\Z^w)$. Note that $H_1(\wt{B};\Z) =0$. In addition, since $B$ is $3$-coconnected, we know $\wt{B} \simeq K(\pi_2(B),2)$, and so $\pi_3(\wt{B})=0$. By the Hurewicz theorem, the Hurewicz map $\pi_3(\wt{B}) \to H_3(\wt{B};\Z)$ is surjective, and hence $H_3(\wt{B};\Z)=0$.  Note that $K$ is the term $F_{2,2}$ in the filtration of $H_4(B;\Z^w)$ given by convergence of the spectral sequence and there is an exact sequence
\[\Z^w \otimes_{\Z\pi} H_4(\wt{B};\Z)\to K\to E\to 0\]
with $E:= E_{2,2}^{\infty}$ a quotient of $H_2(\pi;\pi_2(B)^w)$. Here we used that $\pi_2(B)\cong H_2(\wt{B};\Z)$.
Similarly, for $K^s:=\ker(H_4(B^s;\Z^w)\to H_4(\pi;\Z^w))$ we have an exact sequence
\[\Z^w \otimes_{\Z\pi} H_4(B^s;\Z\pi) \to K^s\to E^s \to 0\]
with $E^s$ the $E^{\infty}_{2,2}$ term of the corresponding Leray--Serre spectral sequence for $\wt{B}^s \to B^s \xrightarrow{c_{B^s}} B\pi$. Again $E^s$ is a quotient of $H_2(\pi;\pi_2(B^s)^w)$. The isomorphism $H_2(\pi;\pi_2(B^s)^w)\cong H_2(\pi;\pi_2(B)^w\oplus \Z\pi)\cong H_2(\pi;\pi_2(B)^w)$ induces an isomorphism $E^s\xrightarrow{\cong} E$. To see this use the definition of $E^s$ and $E$ as the quotients of $H_2(\pi;\pi_2(B^s)^w)\cong H_2(\pi;\pi_2(B)^w)$ by the image of the $d_3$ map on $H_4(B\pi;\Z^w)$. Then naturality of the spectral sequences shows that the quotients agree.

Consider the inclusion and collapse maps $B\to B\vee S^2\to B$. Passing to the Postnikov $2$-types yields maps $B\xrightarrow{\iota}B^s \xrightarrow{\rho} B$.
Then, comparing the spectral sequences, using $\iota$ and $\rho$, we obtain the following induced commutative diagram with exact rows.
\begin{equation}\label{eq:stab-proph-comm-diag}
\begin{tikzcd}
\Z^w \otimes_{\Z\pi} H_4(B;\Z\pi)\ar[r]\ar[d, "\iota_*"]&K\ar[r]\ar[d]&E\ar[r]\ar[d,"\cong"]&0\\
\Z^w \otimes_{\Z\pi} H_4(B^s;\Z\pi)\ar[r]\ar[d,"\rho_*"]&K^s\ar[r]\ar[d]&E^s\ar[r]\ar[d,"\cong"]&0\\
\Z^w \otimes_{\Z\pi} H_4(B;\Z\pi)\ar[r]&K\ar[r]& E \ar[r]&0
\end{tikzcd}
\end{equation}

Now we are ready to prove that $(B,w)$ has {\propH} if and only if $(B^s,w)$ has {\propH}. First suppose that $(B^s,w)$ has {\propH}. We will show that $(B,w)$ has {\propH}. So let $x\in K$ and assume that $\langle \alpha,\beta\cap x\rangle=0\in \Z\pi$ for all $\alpha,\beta\in H^2(B;\Z\pi)$. Define $x^s:= \iota_*(x)\in H_4(B^s,\Z^w)$. Then $x^s$ lies in $K^s$, because $c_B\colon B \to B\pi$ factors through~$\iota$. Moreover for all $\alpha^s,\beta^s\in H^2(B^s;\Z\pi)$ we have, in $\Z\pi$, that
\[\langle \alpha^s,\beta^s\cap x^s\rangle
= \langle \alpha^s,\beta^s\cap \iota_*(x)\rangle
= \langle \alpha^s,\iota_*(\iota^*(\beta^s)\cap x)\rangle
= \langle \iota^*(\alpha^s),\iota^*(\beta^s)\cap x\rangle
=0.\]
Then, since $(B^s,w)$ has {\propH}, we know that $x^s\in K^s$ lies in the image of $\Z^w \otimes_{\Z\pi} H_4(B^s;\Z\pi)$. So $x^s$ has trivial image in $E^s$. By the top right square of \eqref{eq:stab-proph-comm-diag}, $x$ has trivial image in $E$, and so lies in the image of $\Z^w \otimes_{\Z\pi} H_4(B;\Z\pi)$. Thus $(B,w)$ has {\propH}.

It remains to show that \propH\ for $(B,w)$ implies \propH\ for $(B^s,w)$. We will apply essentially the same argument as above, but now using the bottom two rows of \eqref{eq:stab-proph-comm-diag}. Assume that $(B,w)$ has {\propH}. Let $x^s \in K^s$ such that $\langle \alpha^s,\beta^s\cap x^s\rangle=0\in \Z\pi$ for all $\alpha^s,\beta^s\in H^2(B^s;\Z\pi)$. Define $x:=\rho_*(x^s)\in H_4(B;\Z^w)$. Then $x\in K$ since the map $c_{B^s}$ factors through $\rho$. We also have
\[\langle \alpha,\beta\cap x\rangle
= \langle \alpha,\beta\cap \rho_*(x^s)\rangle
= \langle \alpha,\rho_*(\rho^*(\beta)\cap x^s)\rangle
= \langle \rho^*(\alpha),\rho^*(\beta)\cap x^s\rangle
=0,\]
for all $\alpha,\beta\in H^2(B;\Z\pi)$. So by \propH\ for $(B,w)$, the element $x$ lies in the image of $\Z^w \otimes_{\Z\pi} H_4(B;\Z\pi)$, and thus has trivial image in $E$. By the bottom right square of \eqref{eq:stab-proph-comm-diag}, the element $x^s \in K^s$ also has trivial image in $E^s$, and hence lies in the image of $\Z^w \otimes_{\Z\pi} H_4(B^s;\Z\pi)$. Thus \propH\ for $(B,w)$ implies \propH\ for~$(B^s,w)$. This completes the proof.
\end{proof}

\begin{lemma}
\label{lem:H-prod}
For $i=1,\dots,n$, let $B_i$ be a connected, $3$-coconnected CW complex with fundamental group $G_i$. Let $X:=\bigvee_{i=1}^nB_i$ and let $\pi:=\pi_1(X)\cong \ast_{i=1}^nG_i$. Let $w\colon \pi \to C_2$ be a homomorphism. If each $(B_i,w|_{G_i})$ has {\propH}, for $i=1,\dots,n$, then $(X,w)$ has {\propH}.
\end{lemma}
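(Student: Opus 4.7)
The plan is to reduce {\propH} for $(X, w)$ to {\propH} for each wedge factor $(B_i, w|_{G_i})$ using natural decompositions arising from the wedge structure.

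First I would choose a model for $B\pi$ of the form $\bigvee_{i=1}^n BG_i$, so that a $2$-connected classifying map $c \colon X \to B\pi$ can be taken as the wedge of $2$-connected classifying maps $c_i \colon B_i \to BG_i$. At the chain level of the universal cover one has, in positive degrees, the $\Z\pi$-chain complex decomposition
\[C_*(\wt X) \cong \bigoplus_{i=1}^n \Ind_{G_i}^\pi C_*(\wt{B_i}).\]
Tensoring with $\Z^w$ over $\Z\pi$ and taking homology yields the splitting
\[H_4(X; \Z^w) \cong \bigoplus_{i=1}^n H_4(B_i; \Z^{w|_{G_i}}),\]
together with the analogous splitting $H_4(\pi; \Z^w) \cong \bigoplus_i H_4(G_i; \Z^{w|_{G_i}})$, under which $c_*$ becomes $\bigoplus_i (c_i)_*$. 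Similarly $\Z^w \otimes_{\Z\pi} H_4(X; \Z\pi) \cong \bigoplus_i \Z^{w|_{G_i}} \otimes_{\Z G_i} H_4(B_i; \Z G_i)$, and $\varphi_X$ becomes the direct sum of the $\varphi_{B_i}$. Writing $x = \sum_i x_i$, each summand $x_i$ lies in $\ker (c_i)_*$.

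Next I would verify the cohomology pairing hypothesis for each factor. Given $\alpha_i, \beta_i \in H^2(B_i; \Z G_i)$, I would extend them to classes $\wt \alpha_i, \wt \beta_i \in H^2(X; \Z\pi)$ by composing $\alpha_i$ (resp.\ $\beta_i$) with the inclusion $\Z G_i \hookrightarrow \Res_{G_i}^\pi \Z\pi$ and using the adjunction $\Hom_{\Z\pi}(\Ind_{G_i}^\pi -, \Z\pi) \cong \Hom_{\Z G_i}(-, \Res_{G_i}^\pi \Z\pi)$ on the $i$-th summand, taking the extension to be zero on the other summands. Since the boundary maps of $C_*(\wt X)$ preserve this direct sum in positive degrees, such extensions are automatically cocycles. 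The central computation, obtained by tracing through the chain-level definition of the cap product, yields
\[\langle \wt \alpha_i, \wt \beta_i \cap x \rangle = j_i\big(\langle \alpha_i, \beta_i \cap x_i \rangle\big) \in \Z\pi,\]
where $j_i \colon \Z G_i \hookrightarrow \Z\pi$ is the natural ring inclusion. Injectivity of $j_i$ and the hypothesis then force $\langle \alpha_i, \beta_i \cap x_i \rangle = 0 \in \Z G_i$ for all $\alpha_i, \beta_i$.

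Applying {\propH} for each $(B_i, w|_{G_i})$ gives $x_i \in \im \varphi_{B_i}$, and since $\varphi_X = \bigoplus_i \varphi_{B_i}$ under the splittings above, we conclude $x \in \im \varphi_X$, proving {\propH} for $(X, w)$. The main obstacle will be the pairing comparison in the second step: the careful chain-level verification that the extension-by-zero construction intertwines the cap products on $X$ and on the individual $B_i$. The remaining steps follow formally from the exactness of induction and from the behaviour of the universal cover under wedges.
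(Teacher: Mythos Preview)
Your proposal is correct and follows essentially the same approach as the paper: decompose $H_4$, $\ker c_*$, and $\varphi_X$ along the wedge (the paper invokes Mayer--Vietoris, you work at the chain level via $\Ind_{G_i}^\pi$), then verify the pairing condition componentwise and apply {\propH} for each factor. The paper's proof is terser on the cohomology comparison---it works with $\alpha_i,\beta_i\in H^2(B_i;\Z\pi)$ and implicitly uses that the vanishing there implies vanishing for $\alpha_i,\beta_i\in H^2(B_i;\Z G_i)$---whereas you spell out the extension-by-zero and the compatibility $\langle\wt\alpha_i,\wt\beta_i\cap x\rangle=j_i(\langle\alpha_i,\beta_i\cap x_i\rangle)$ explicitly; this is the same content, just made more visible.
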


\begin{proof}
We will write $w_i$ for $w|_{G_i}$ to make the proof more readable.
Let \[K_i:=\ker(H_4(B_i;\Z^{w_i})\to H_4(G_i;\Z^{w_i})) \text{ and } K:=\ker(H_4(X;\Z^w)\to H_4(\pi;\Z^w)).\]

Using the Mayer--Vietoris sequence, we have that $K\cong \bigoplus_{i=1}^n K_i$ and $\Z^w\otimes_{\Z\pi}H_4(\wt{X};\Z)\cong \bigoplus_{i=1}^n\Z^{w_i}\otimes_{\Z G_i}H_4(\wt{B}_i;\Z)$, where $\wt{X}$ and $\wt{B_i}$ are the universal covers. For each $x\in K$ with $\langle \alpha,\beta\cap x\rangle=0\in \Z\pi$ for all $\alpha,\beta\in H^2(X;\Z\pi)$, write $x_i$ for the image of $x$ in  $K_i$. We can view $\alpha_i,\beta_i\in H^2(B_i;\Z\pi)$ as elements of $H^2(X;\Z\pi)$, again using a Mayer--Vietoris sequence. Then
\[\langle \alpha_i,\beta_i\cap x_i\rangle=\langle \alpha_i,\beta_i\cap x_i\rangle+\sum_{j\neq i}\langle 0,0\cap x_j\rangle=\langle \alpha_i,\beta_i\cap x\rangle=0.\]
By assumption $(B_i,{w_i})$ has {\propH} and hence for each $i$ there exists a preimage $y_i\in \Z^w\otimes_{\Z G_i}H_4(\wt{B}_i;\Z)$ of $x_i \in K_i$. By naturality of the Mayer--Vietoris sequence, taking $\sum_i y_i \in \bigoplus_{i=1}^n K_i \cong K$ we have that $\varphi_X(\sum_i y_i) = x \in H_4(X;\Z^w)$.
Hence $(X,w)$ has {\propH}.
\end{proof}

We will need the following lemma on $3$-coconnected CW complexes later in this section to establish {\propH} for $\Z\times \Z/2$, as well as in the proof of \cref{cor:2-dim}.

\begin{lemma}
\label{lem:sequence-for-pi2-projective}
Let $B$ be a connected, $3$-coconnected CW complex with fundamental group $\pi:=\pi_1(B)$ and let $w\colon \pi\to C_2$ be a homomorphism. Assume that $\pi_2(B)$ is projective as a $\Z\pi$-module. Then there is an exact sequence
\[H_5(\pi;\Z^w)\to \Z^w\otimes_{\Z\pi}H_4(B;\Z\pi)\xrightarrow{\varphi_B} H_4(B;\Z^w)\to H_4(\pi;\Z^w)\to 0,\]
where the third map is induced by some classifying map $B\to B\pi$ inducing an isomorphism on fundamental groups.
\end{lemma}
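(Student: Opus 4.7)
The plan is to extract the sequence from the Leray--Serre spectral sequence of the fibration $\wt B\to B\to B\pi$ with local coefficients $\Z^w$ pulled back from $B\pi$, where $\wt B$ denotes the universal cover of $B$. This spectral sequence has $E^2$-page
\[E^2_{p,q}=H_p(\pi;H_q(\wt B;\Z)^w)\Longrightarrow H_{p+q}(B;\Z^w),\]
with diagonal $\pi$-action on $H_q(\wt B;\Z)^w$. Under the standard identification $H_*(B;\Z\pi)\cong H_*(\wt B;\Z)$, the edge map $E^{\infty}_{0,4}\hookrightarrow H_4(B;\Z^w)$ is the reduction of coefficients map $\varphi_B$, and the edge map $H_4(B;\Z^w)\twoheadrightarrow E^{\infty}_{4,0}\subseteq H_4(\pi;\Z^w)$ is induced by the very classifying map used in forming the fibration.

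Next I will pin down the relevant $E^2$-terms. Since $B$ is $3$-coconnected with $\pi_1(B)=\pi$, its universal cover satisfies $\wt B\simeq K(\pi_2(B),2)$, so Hurewicz and Whitehead give $H_1(\wt B;\Z)=H_3(\wt B;\Z)=0$, $H_2(\wt B;\Z)\cong \pi_2(B)$, and $H_4(\wt B;\Z)\cong \Gamma(\pi_2(B))$. The last isomorphism, combined with $H_4(\wt B;\Z)\cong H_4(B;\Z\pi)$, recovers the isomorphism $\Upsilon$ of \cref{sub:criterion}, so that $E^2_{0,4}\cong \Z^w\otimes_{\Z\pi}H_4(B;\Z\pi)$. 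The key observation is that projectivity of $\pi_2(B)$ implies that $\pi_2(B)^w$ is also a projective $\Z\pi$-module (the functor $M\mapsto M^w$ is an involution on the category of $\Z\pi$-modules), whence $H_p(\pi;\pi_2(B)^w)=0$ for every $p\geq 1$, killing the row $q=2$ except at $p=0$.

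Combining this with the vanishing rows $q=1,3$, in total degree $4$ only $E^2_{0,4}$ and $E^2_{4,0}$ survive, and among the entries of total degree $5$ that could target $E^r_{0,4}$ via a differential, only $E^2_{5,0}=H_5(\pi;\Z^w)$ is nonzero. All outgoing differentials from $E^r_{4,0}$ land in terms already known to vanish, so $E^{\infty}_{4,0}=H_4(\pi;\Z^w)$ and the edge map is surjective; similarly $E^5_{5,0}=E^2_{5,0}=H_5(\pi;\Z^w)$ and the only possibly nonzero differential affecting $E^r_{0,4}$ is $d_5\colon H_5(\pi;\Z^w)\to E^2_{0,4}$. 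Since $E^{\infty}_{1,3}=E^{\infty}_{2,2}=E^{\infty}_{3,1}=0$, the filtration reduces to a short exact sequence $0\to E^{\infty}_{0,4}\to H_4(B;\Z^w)\to H_4(\pi;\Z^w)\to 0$ with $E^{\infty}_{0,4}=\coker(d_5)$; splicing these two pieces produces the required four-term exact sequence. The main obstacle is really only bookkeeping: verifying that the two edge maps in play coincide with $\varphi_B$ and with the classifying-map-induced map, both of which follow from naturality of the Leray--Serre spectral sequence.
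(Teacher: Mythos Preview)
Your argument is correct and follows essentially the same route as the paper: both run the Leray--Serre spectral sequence of $\wt B\to B\to B\pi$, use projectivity of $\pi_2(B)$ to kill the $q=2$ row in positive $p$, observe that $H_1(\wt B;\Z)=H_3(\wt B;\Z)=0$, and read off the four-term sequence from the surviving terms $E^2_{0,4}$, $E^2_{4,0}$ and the lone differential $d_5$. The only cosmetic difference is that the paper deduces $H_3(\wt B;\Z)=0$ from $\wt B\simeq\prod\CP^\infty$ (using that $\pi_2(B)$ is free abelian), whereas you invoke the surjectivity of $\pi_3(\wt B)\to H_3(\wt B;\Z)$ from Whitehead's exact sequence; both are valid.
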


\begin{proof}
Since $\pi_2(B)$ is projective, there is a decomposition $\pi_2(B) \oplus Q \cong \Z\pi^m$   for some $\Z\pi$-module $Q$ and for some $m \in \mathbb{N}$, and hence
\[H_n(\pi;\pi_2(B)^w) \leq  H_n(\pi;\pi_2(B)^w) \oplus H_n(\pi;Q^w) \cong H_n(\pi;(\pi_2(B) \oplus Q)^w) \cong H_n(\pi;\Z\pi^m) =0\]
for all $n>0$. Furthermore, $\pi_2(B)$ is free as an abelian group, and therefore projective. Thus for the universal cover $\wt B\simeq K(\pi_2(B),2)\simeq \prod \CP^\infty$ we have $H_{2k+1}(B;\Z\pi)\cong H_{2k+1}(\wt B;\Z)=0$ for all $k$.

Consider the Leray--Serre spectral sequence for the fibration $\wt B\to B\to B\pi$. On the $4$-line the only nontrivial terms are $H_0(\pi;H_4(\wt B;\Z)^w)\cong \Z^w\otimes_{\Z\pi}H_4(B;\Z\pi)$ and $H_4(B\pi;H_0(\wt B;\Z)^w)\cong H_4(\pi;\Z^w)$.
The codomains of all differentials going out of $H_4(\pi;\Z^w)$ are trivial, while the only possibly nontrivial differential into $\Z^w\otimes_{\Z\pi}H_4(B;\Z\pi)$ is $d_5\colon H_5(\pi;\Z^w)\to \Z^w\otimes_{\Z\pi}H_4(B;\Z\pi)$.

Hence $H_4(B;\Z^w)$ fits into an extension
\[0\to \Z^w\otimes_{\Z\pi}H_4(B;\Z\pi)/\im d_5\to H_4(B;\Z^w)\to H_4(\pi;\Z^w)\to 0\]
and the lemma follows.
\end{proof}

\subsection{\propH\ for groups}
\label{sec:proph-groups-general}
\begin{definition}
Let $\pi$ be a finitely presented group and let $w\colon\pi\to C_2$ be a homomorphism. We say that $(\pi,w)$ has \emph{\propH} if the pair $(P_2(M),w)$ has {\propH}, for every closed $4$-manifold $M$ with fundamental group $\pi$ and orientation character $w$.
\end{definition}

\begin{lemma}
\label{lem:propH-finite}
Let $\pi$ be a finite group and let $w\colon \pi\to C_2$ be a homomorphism.
Then $(\pi,w)$ has {\propH}.
\end{lemma}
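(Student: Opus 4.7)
The plan is to analyse the Leray--Serre spectral sequence of the Postnikov fibration $\wt B \to B \to B\pi$, where $B := P_2(M)$ and $\wt B \simeq K(\pi_2(M),2)$, in order to identify $\ker c_*$ and $\im \varphi_B$ as specific filtration pieces of $H_4(B;\Z^w)$, and then to combine this with the injectivity of $\ev^*$ from \cref{cor:ev*-pi-finite} and the description of $\ker\mB_{\pi_2(M)}$ from \cref{prop:finitepi}.

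First I would set up the spectral sequence $E^2_{p,q} = H_p(\pi;H_q(\wt B;\Z)^w)\Rightarrow H_{p+q}(B;\Z^w)$. Since $\pi_3(\wt B) = 0$ and $\wt B$ is simply connected, the Hurewicz theorem gives $H_1(\wt B;\Z) = H_3(\wt B;\Z) = 0$, while $H_4(\wt B;\Z) \cong \Gamma(\pi_2(M))$ by Whitehead's computation. Consequently the filtration $\{F_{p,4-p}\}$ of $H_4(B;\Z^w)$ satisfies $F_{0,4} = F_{1,3}$ and $F_{2,2} = F_{3,1}$, and the edge homomorphism identifies $\ker c_* = F_{2,2}$ and $\im\varphi_B = F_{0,4}$. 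The quotient $F_{2,2}/F_{0,4}$ embeds into $E^\infty_{2,2}$, a subquotient of $E^2_{2,2} = H_2(\pi;\pi_2(M)^w)$; in particular it is annihilated by $|\pi|$.

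A diagram chase in the commutative square \eqref{eq:main-square}, using that $\ev^*$ is injective and that $\ker\mB_{\pi_2(M)}$ is the torsion subgroup $T$ of $\Z^w\otimes_{\Z\pi}\Gamma(\pi_2(M))$, then gives $\ker\Theta_B \cap F_{0,4} = \varphi_B(T)$. So given $x\in F_{2,2}$ with $\Theta_B(x) = 0$, the main task is to show that the image $\bar x$ of $x$ in $F_{2,2}/F_{0,4}$ vanishes.

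The main obstacle is this final step: to show that the map $F_{2,2}/F_{0,4} \to \Her^w(H^2(B;\Z\pi))/\Theta_B(F_{0,4})$ induced by $\Theta_B$ is injective. My plan for this is to identify $H^2(B;\Z\pi) \cong \pi_2(M)^\dagger$ via the universal coefficient spectral sequence \eqref{eq:ucss-one zero} and the vanishing $H^2(\pi;\Z\pi) = H^3(\pi;\Z\pi) = 0$ for finite $\pi$, and then to use the reflexivity of $\pi_2(M)$ from \cite{Nicholson}*{Lemma~2.5} together with the compatibility of the cap product with the Leray--Serre filtrations to reinterpret $\Theta_B$ as a pairing that detects the class $\bar x \in E^\infty_{2,2}$. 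Carrying out this last compatibility, and verifying that the resulting pairing is nondegenerate on the relevant subquotient, is the technically delicate part.
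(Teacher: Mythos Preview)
Your spectral sequence setup is correct: for $B=P_2(M)$ with $\pi$ finite, one has $H_1(\wt B;\Z)=H_3(\wt B;\Z)=0$, so $\im\varphi_B=F_{0,4}$ and $\ker c_*=F_{2,2}$, and $F_{2,2}/F_{0,4}$ is a subquotient of $H_2(\pi;\pi_2(M)^w)$, hence $|\pi|$-torsion. Your identification $\ker\Theta_B\cap F_{0,4}=\varphi_B(T)$ via \cref{cor:ev*-pi-finite} and \cref{prop:finitepi} is also fine.

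The gap is precisely the step you flag as ``technically delicate'': showing that $\Theta_B$ induces an injection on $F_{2,2}/F_{0,4}$. Your plan to interpret the cap product on associated gradeds and extract a nondegenerate pairing on $E^\infty_{2,2}$ is not carried out, and it is not clear this can be done directly; the cap product is filtration-compatible, but controlling what it does on the $E^\infty_{2,2}$ piece, and in particular showing nondegeneracy there, is exactly the hard content. The paper does \emph{not} attempt this. Instead it argues: by a transfer argument (see \cite{KT}*{(3.3)}) one has $kx\in\im\varphi_B$ for some $k$; the diagram chase you already did then shows any preimage lies in the torsion $T$, so $kx$ and hence $x$ is torsion in $H_4(B;\Z^w)$; finally it invokes Teichner's theorem \cite{teichner-phd} (see \cite{KT}*{Proof of Theorem~3.4}) that for finite $\pi$ the entire torsion subgroup of $H_4(B;\Z^w)$ lies in $\im\varphi_B$. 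Teichner's result is the nontrivial input that replaces your missing injectivity step; what you are proposing to do amounts to reproving a version of it from scratch. If you want to keep your approach self-contained, you should either supply the filtered cap-product argument in full or, more economically, follow the paper and cite the transfer and Teichner's theorem directly.
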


\begin{proof}
    Let $M$ be a closed $4$-manifold with fundamental group $\pi$ and orientation character $w$. Let $B:=P_2(M)$.
    Let $x\in H_4(B;\Z^w)$ be such that $\langle \alpha,\beta\cap x\rangle=0$ for all $\alpha,\beta\in H^2(B,\Z\pi)$. We will show that $x$ lies in the image of the map $\varphi_B\colon \Z^w \otimes_{\Z\pi} H_4(B;\Z\pi)\to H_4(B;\Z^w)$.
    We will use the commutative diagram
    \[
\begin{tikzcd}
\Z^w\otimes_{\Z\pi}H_4(B;\Z\pi)	\arrow[r,"\varphi_B"]\arrow[d, "\mB_{H_2(B;\Z\pi)} \circ \Upsilon"]	&H_4(B;\Z^w)\arrow[d,"\Theta_B"]\\
\Her^w(H_2(B;\Z\pi)^\dagger)\arrow[r,"\ev^*", hook]	&\Her^w(H^2(B;\Z\pi)),
\end{tikzcd}
\]		
analogous to~\eqref{eq:main-square}. Recall that we know $\ev^*$ is injective by \cref{cor:ev*-pi-finite}.
Since $\pi$ is finite, there exists $k\in \N$ such that $kx=\varphi_B(y)$ -- see for example \cite{KT}*{(3.3)}. By \cref{prop:finitepi}, $y$ is in the torsion subgroup of $\Z^w\otimes_{\Z\pi}H_4(B;\Z\pi)$ since its image in $\Her^w(H^2(B;\Z\pi))$ is zero and $\ev^*$ is injective. As a result, $kx$, and therefore also $x$, is in the torsion subgroup of $H_4(B;\Z^w)$. Teichner \cite{teichner-phd} showed that the torsion subgroup of $H_4(B;\Z^w)$ lies in the image of $\varphi_B$ for every finite group (see also \cite{KT}*{Proof of Theorem~3.4}). This completes the proof of the lemma.
\end{proof}

\begin{lemma}\label{lem:Z-proph}
    Let $\pi=\Z$ and let $w\colon \pi\to C_2$ be a homomorphism. Then $(\pi,w)$ has {\propH}.
\end{lemma}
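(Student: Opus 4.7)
The plan is to observe that for $\pi=\Z$ the reduction of coefficients map $\varphi_B$ is already surjective, so the \propH\ condition is vacuously satisfied.

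Let $M$ be a closed $4$-manifold with $\pi_1(M)\cong\Z$ and orientation character $w$, and set $B:=P_2(M)$. First, by \cref{lem:pi2-z} we know that $\pi_2(M)\cong H_2(B;\Z\pi)$ is free as a $\Z\pi$-module, hence in particular projective. So \cref{lem:sequence-for-pi2-projective} applies and yields the exact sequence
\[H_5(\pi;\Z^w)\to\Z^w\otimes_{\Z\pi}H_4(B;\Z\pi)\xrightarrow{\varphi_B}H_4(B;\Z^w)\to H_4(\pi;\Z^w)\to 0,\]
where the final map is induced by a map $c\colon B\to B\pi$ inducing the identity on fundamental groups.

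Next I would use that $\Z$ has cohomological dimension one, since the standard free resolution $0\to\Z[\Z]\xrightarrow{t-1}\Z[\Z]\to\Z\to 0$ has length one. Therefore $H_n(\Z;A)=0$ for all $n\geq 2$ and every $\Z[\Z]$-module $A$. In particular $H_4(\Z;\Z^w)=0$ and $H_5(\Z;\Z^w)=0$, so the displayed sequence collapses to show that $\varphi_B$ is surjective (in fact an isomorphism).

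Given this, there is essentially nothing left to verify: for any $x\in\ker(c_*\colon H_4(B;\Z^w)\to H_4(\Z;\Z^w))=H_4(B;\Z^w)$, surjectivity of $\varphi_B$ produces a preimage, regardless of whether $\langle\alpha,\beta\cap x\rangle=0$ for all $\alpha,\beta\in H^2(B;\Z\pi)$. Thus $(\pi,w)$ has {\propH}. There is no real obstacle here; the work has already been done in \cref{lem:pi2-z} (which ensures projectivity of $\pi_2$) and in \cref{lem:sequence-for-pi2-projective} (which converts that projectivity into an exact sequence controlling $\varphi_B$).
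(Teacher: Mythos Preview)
Your proof is correct and follows essentially the same route as the paper: invoke \cref{lem:pi2-z} to see that $\pi_2(M)$ is projective, apply \cref{lem:sequence-for-pi2-projective} to get the exact sequence, and then use that $\Z$ has cohomological dimension one to conclude $\varphi_B$ is an isomorphism, making the \propH\ condition vacuous. The paper phrases the vanishing step as ``$\pi$ is geometrically $1$-dimensional'' rather than spelling out the free resolution, but the content is identical.
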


\begin{proof}
    Let $M$ be a closed $4$-manifold with fundamental group $\pi=\Z$ and orientation character $w$. Let $B:=P_2(M)$. By \cref{lem:pi2-z}, we know that $\pi_2(M)\cong\pi_2(B)$ is stably free and therefore projective. So we can apply \cref{lem:sequence-for-pi2-projective} to yield the exact sequence
    \[H_5(\pi;\Z^w)\to \Z^w\otimes_{\Z\pi}H_4(B;\Z\pi)\xrightarrow{\varphi_B} H_4(B;\Z^w)\to H_4(\pi;\Z^w)\to 0.\]
    Since $\pi$ is geometrically 1-dimensional, it follows that the map $\varphi_B$ is an isomorphism. This implies that $(\pi,w)$ has {\propH}.
\end{proof}

\subsection{\propH\ for \texorpdfstring{$PD_3$-groups}{PD3-groups}}
\label{sec:proph-pd3}

\begin{lemma}
\label{lem:PD3H}
Let $\pi$ be a $PD_3$-group and let $w\colon \pi\to C_2$ be a homomorphism. Then $(\pi,w)$ has {\propH}.
\end{lemma}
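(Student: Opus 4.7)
The plan is to use the Leray--Serre spectral sequence
\[E^2_{p,q} = H_p(\pi; H_q(\wt B;\Z)^w)\Rightarrow H_{p+q}(B;\Z^w)\]
for the fibration $\wt B\to B\to B\pi$, where $B:=P_2(M)$ for a closed $4$-manifold $M$ with fundamental group $\pi$ and orientation character $w$. Since $B$ is $3$-coconnected, $\wt B\simeq K(\pi_2(B),2)$; and since $\pi_2(B)$ is free as an abelian group by \cref{ex:H2FA}, the Hurewicz theorem gives $H_1(\wt B;\Z)=0=H_3(\wt B;\Z)$. Together with the fact that $\pi$ has cohomological dimension at most $3$, the only potentially nonzero entries on the $n=4$ diagonal of the $E^2$-page are $E^2_{0,4}\cong \Z^w\otimes_{\Z\pi} H_4(B;\Z\pi)$ and $E^2_{2,2}=H_2(\pi;\pi_2(B)^w)$. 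I would first verify that no nontrivial differential acts on $E^r_{2,2}$, that the only potentially nontrivial differential into $E^r_{0,4}$ is $d_3\colon H_3(\pi;\pi_2(B)^w)\to E^3_{0,4}$, and that the edge map $E^2_{0,4}\to H_4(B;\Z^w)$ coincides with $\varphi_B$. The associated filtration on $H_4(B;\Z^w)$ then yields a short exact sequence
\[0\to \im\varphi_B\to H_4(B;\Z^w)\to H_2(\pi;\pi_2(B)^w)\to 0.\]

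Since $H_4(\pi;\Z^w)=0$, the subgroup $\ker c_*$ appearing in \cref{defn:Property-H-spaces} equals all of $H_4(B;\Z^w)$. Thus I need to show that for any $x\in H_4(B;\Z^w)$ with $\Theta_B(x)=0$ (equivalently, $\langle\alpha,\beta\cap x\rangle=0$ for all $\alpha,\beta\in H^2(B;\Z\pi)$), the image $\bar x\in H_2(\pi;\pi_2(B)^w)$ vanishes. By \cref{lem:pi2-pd3}, $\pi_2(B)\cong \pi_2(M)$ is stably isomorphic to $I\pi^v$ with $v:=wv'$, where $v'$ is the orientation character of the aspherical $PD_3$-complex associated to $\pi$. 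Using \cref{lem:H-stable} to stabilise $B$, I may assume $\pi_2(B)\cong I\pi^v\oplus\Z\pi^n$. Poincar\'e duality for $\pi$, the short exact sequence $0\to I\pi^v\to\Z\pi\to\Z^v\to 0$, and the isomorphism from \cref{lemma:tensoring-with-Zpi-w-gives-iso} give an explicit description of $H_2(\pi;\pi_2(B)^w)$: the stably free summand contributes nothing since $H_p(\pi;\Z\pi)=0$ for $p>0$, and what remains is (stably) a single copy of $\Z$ arising from $H^1(\pi;I\pi)$.

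The hard part will be the detection step: showing that a nonzero $\bar x$ forces $\Theta_B(x)\neq 0$. My plan for this is to compare the Leray--Serre filtration on $H_4(B;\Z^w)$ with the corresponding filtration on $H^2(B;\Z\pi)$ coming from the universal coefficient spectral sequence of \cref{remark:ucss-subgroup-pi2}, using naturality of the cap product under the map $B\to B\pi$. The key ingredients are that $\ev\colon H^2(B;\Z\pi)\to \pi_2(B)^\dagger$ is injective with cokernel $\Z^{v'}\cong H^3(\pi;\Z\pi)$, and that $\pi_2(B)^\dagger$ is stably free by \cref{cor:pd3-free-dual-pi2}. Using these I would produce explicit classes $\alpha,\beta\in H^2(B;\Z\pi)$ for which the pairing $\langle\alpha,\beta\cap x\rangle\in\Z\pi$ computes the image of $\bar x$ in the copy of $\Z$ above; vanishing of $\Theta_B(x)$ would then force $\bar x=0$, and hence $x\in\im\varphi_B$, as required.
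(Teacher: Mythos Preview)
Your spectral-sequence setup is correct and matches the paper: the short exact sequence $0\to\im\varphi_B\to H_4(B;\Z^w)\to H_2(\pi;\pi_2(B)^w)\to 0$ holds, and the quotient is indeed a single copy of $\Z$ after stabilising $\pi_2(B)$ to $I\pi^v\oplus\Z\pi^n$. The gap is entirely in your ``hard part''.

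The paper does not try to detect $\bar x$ abstractly via filtrations or naturality. Instead it fixes a \emph{specific} model $4$-manifold $N$, supplied by \cite{KPT-long}*{Lemma~7.12}, with $\pi_2(N)\cong I\pi^v\oplus\Z\pi$ and intersection form $\lambda_N$ whose cross term in the decomposition $\Her^w(I\pi^v\oplus\Z\pi)\cong\Her^w(I\pi^v)\oplus\Her^w(\Z\pi)\oplus\Z\pi$ equals $1\in\Z\pi$. Comparing Leray--Serre spectral sequences for $N$ and $B:=P_2(N)$ shows that $f_*[N]$ generates the quotient $\Z$. Given $y\in\ker\Theta_B$ with image $k\in\Z$, the difference $kf_*[N]\mp y$ lies in $\im\varphi_B$, so $k\lambda_N=\Theta_B(kf_*[N]\mp y)$ lies in $\im(\ev^*\circ\mB_{H_2(B;\Z\pi)}\circ\Upsilon)$. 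By the splitting of $\mB$ from \cref{lem:splitting-of-B}, the cross component of this image is exactly the image of the inclusion $I\pi^v\hookrightarrow\Z\pi$; since the cross component of $k\lambda_N$ is $k\cdot 1$, this forces $k=0$.

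Your proposed route does not reach this. Naturality of the cap product under $c\colon B\to B\pi$ gives nothing, because for a $PD_3$-group both $H_4(B\pi;\Z^w)$ and $H^2(B\pi;\Z\pi)$ vanish, so the target pairing is identically zero. The essential point is that to understand $\Theta_B$ on classes \emph{outside} $\im\varphi_B$ one needs a concrete class there with known $\Theta_B$-image; the only available source is the fundamental class of an actual $4$-manifold with explicitly computed intersection form. Without importing such a reference class and then invoking the algebraic description of $\im\mB$ via \cref{lem:splitting-of-B}, the detection step cannot be carried out.
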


\begin{proof}
Let $X$ be an aspherical $PD_3$-complex with fundamental group $\pi$, orientation character $v'\colon \pi\to C_2$, and a single top cell. Then $K:=X^{(2)}$ has $H_2(K;\Z\pi)\cong \Z\pi$ and $H^2(K;\Z\pi)\cong I\pi^{v'}$. Let $v:=wv'$. By \cite{KPT-long}*{Lemma~7.12}, there is a $4$-manifold $N$ with fundamental group $\pi$, orientation character $w$, second homotopy group $\pi_2(N)\cong I\pi^v\oplus \Z\pi$, and, under the decomposition \[\Her^w(I\pi^v\oplus \Z\pi)\cong \Her^w(I\pi^v)\oplus \Her^w(\Z\pi)\oplus \Hom_{\Z\pi}(I\pi^v,\Z\pi^\dagger)\cong \Her^w(I\pi)\oplus \Her^w(\Z\pi)\oplus \Z\pi,\]
where the second isomorphism uses \cite{KLPT}*{Lemma 7.5}, the intersection form $\lambda_N$ maps to $(0,*,1)$.

By~\cite{kreck} (see also \cite{KPT}*{Theorem~1.2, Section~1.5}), and since $H_4(\pi;\Z^w)=0$, every other $4$-manifold with fundamental group $\pi$ and orientation character $w$ is homeomorphic to $N$ modulo  connected sum with copies of $\CP^2, \overline{\CP}^2$, and $\star \CP^2$. Using \cref{lem:H-stable}, it suffices to show that $(B,w)$ has {\propH} for $B:=P_2(N)$.

The Leray--Serre spectral sequence for the fibration $\wt B\to B\to B\pi$ yields the short exact sequence
\[0\to \Z^w\otimes_{\Z\pi}H_4(\wt B;\Z)\to H_4(B;\Z^w)\xrightarrow{q} \Z\to 0.\]
Let $y\in H_4(B;\Z^w)$ be a class such that for all $\alpha,\beta\in H^2(B;\Z\pi)$ we have $\langle \alpha,\beta\cap y\rangle=0$, i.e.\ $y \in \ker \Theta_B$.  Since $\pi$ is a $PD_3$-group we have that $H_4(\pi;\Z^w)=0$ so there is no corresponding condition on $y$.
 Let $k := q(y)$ be the image of $y$ in $\Z$.
We then have to show that $k$ is zero, so that $y$ lies in the image of $\Z^w\otimes_{\Z\pi}H_4(\wt B;\Z)$.  It will follow that $(\pi,w)$ has {\propH}.

By comparing the above Leray--Serre spectral sequence for $\wt B\to B\to B\pi$ with the corresponding Leray--Serre spectral sequence for $\wt N\to N\to B\pi$, via a $3$-connected map $f\colon N\to B$, we obtain a commutative diagram with exact rows:
\[
\begin{tikzcd}
&0\arrow[r]\arrow[d]	&H_4(N;\Z^w)\arrow[r, "\cong"]\arrow[d,"f_*"]	&\Z\arrow[r]\arrow[d,"="]	&0\\
0\arrow[r]	&\Z^w\otimes_{\Z\pi}H_4(\wt B;\Z)\arrow[r]	&H_4(B;\Z^w)\arrow[r]	&\Z\arrow[r]	& 0.
\end{tikzcd}
\]		
Therefore,  $f_*[N]\in H_4(B;\Z^w)$ maps to $\pm 1\in \Z$. Thus $kf_*[N]\mp y$ is in the image of~$H_4(B;\Z\pi)$.
We have the commutative diagram
\[
\begin{tikzcd}
\Z^w\otimes_{\Z\pi}H_4(B;\Z\pi)	\arrow[r,"\varphi_B"] \arrow[d, "\mB_{H_2(B;\Z\pi)} \circ \Upsilon"]	&H_4(B;\Z^w)\arrow[d,"\Theta_B"]\\
\Her^w(H_2(B;\Z\pi)^\dagger)\arrow[r,"\ev^*"]	&\Her^w(H^2(B;\Z\pi)),
\end{tikzcd}
\]		
again following~\eqref{eq:main-square}.
Since $y \in \ker \Theta_B$ we have that $\Theta_B(kf_*[N]\pm y) = kf_*\lambda_N$. Let $\lambda:=f_*\lambda_N$. Then we have that $k\lambda$ lies in the image of $\Theta_B \circ \varphi_B$. By the diagram, $k\lambda$ lies in the image of $\ev^*\circ\,\, \B_{H_2(B;\Z\pi)} \circ \Upsilon$.  Since $\Upsilon$ is an isomorphism this implies $k\lambda$ lies in the image of $\ev^*\circ\,\, \B_{H_2(B;\Z\pi)}$.

Under the decomposition \[\Her^w(I\pi^v\oplus \Z\pi)\cong \Her^w(I\pi^v)\oplus \Her^w(\Z\pi)\oplus \Hom_{\Z\pi}(I\pi^v,\Z\pi^\dagger)\cong \Her^w(I\pi^v)\oplus \Her^w(\Z\pi)\oplus \Z\pi,\] where the second isomorphism uses \cite{KLPT}*{Lemma 7.5}, the element $k\lambda$ maps to $(0,*,k)$ using the description of $\lambda$ at the start of the proof.
By \cref{lem:splitting-of-B}, the component of the image of $k\lambda$ in $\Hom_{\Z\pi}(I\pi^v,\Z\pi^\dagger)\cong \Z\pi$ lies in the image of $I\pi^v\otimes_{\Z\pi} \Z\pi\to \Hom_{\Z\pi}(I\pi^v,\Z\pi^\dagger)$. Under the isomorphisms $I\pi^v\otimes_{\Z\pi} \Z\pi\cong I\pi^v$ and $\Hom_{\Z\pi}(I\pi^v,\Z\pi^\dagger)\cong \Z\pi$, this map corresponds to the inclusion $I\pi^v\hookrightarrow \Z\pi$. In particular, $k$ lies in the image only for $k=0$. Since above we had that $kf_*[N] \mp y$ lies in the image of $\varphi_B$, and we now know that $k=0$, we deduce that $y$ lies in the image of $\varphi_B$. Hence $(\pi,w)$ has {\propH}.
\end{proof}

\subsection{\propH\ for \texorpdfstring{$\Z\times\Z/2$}{Z+Z/2}}
\label{sec:proph-ZZ2}
In this subsection let $\pi=\Z\times \Z/2=\langle t,T\mid [T,t],T^2\rangle$ and let $v'\colon \Z\pi\to C_2$ be given by $v'(t)=1$ and $v'(T)=-1$. Let $w\colon \pi\to C_2$ be such that $w(T)=1$. We will show in \cref{lem:propH-ZxZ2} that $(\pi,w)$ has \propH, but we will need a
couple of preliminary lemmas.
Let $v=wv'$. We will need to consider both the untwisted augmentation ideal $I\pi$, as well as the twisted augmentation ideals $I\pi^w$, $I\pi^{v'}$, and $I\pi^v$.

\begin{lemma}
\label{lem:extension-ZxZ2}
Let $M$ be a closed $4$-manifold with fundamental group $\pi$ and orientation character $w$ such that $w(T)=1$. Let $B$ be a connected, $3$-coconnected CW complex, and let $f\colon M\to B$ be $3$-connected.
Then there is an exact sequence \[\Z^w\otimes_{\Z\pi}H_4(\wt B;\Z)\to H_4(B;\Z^w)\to \Z/2\to 0,\]
and  the image of $f_*[M]$ in $\Z/2$ is nontrivial.
\end{lemma}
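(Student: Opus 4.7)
The plan is to derive the exact sequence from the Leray--Serre spectral sequence of the fibration $\wt B \to B \to B\pi$. Since $\pi_2(B) \cong \pi_2(M)$ is free as an abelian group (indeed $\pi_2(M)$ is stably free or stably $I\pi \oplus I\pi^v$ by \cref{lem:pi2-ZZ2}, both of which are free abelian), the universal cover $\wt B \simeq K(\pi_2(B),2)$ is homotopy equivalent to a product of $\CP^\infty$'s, so $H_{\mathrm{odd}}(\wt B; \Z) = 0$. Consequently, on the $4$-line only $E^2_{p,q}$ with $q \in \{0,2,4\}$ contribute, and the relevant groups are $E^2_{0,4} = \Z^w \otimes_{\Z\pi} H_4(\wt B;\Z)$, $E^2_{p,2} = H_p(\pi;\pi_2(M)^w)$, and $E^2_{p,0} = H_p(\pi;\Z^w)$. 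The edge map on $E^2_{0,4}$ is exactly $\varphi_B$, with image $E^\infty_{0,4}$.

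Split the analysis using \cref{lem:pi2-ZZ2}. In the case $c_*([M]) \neq 0 \in H_4(\pi;\Z^w) \cong \Z/2$, the module $\pi_2(M)^w$ is stably free, so $E^2_{p,2} = H_p(\pi;\pi_2(M)^w) = 0$ for $p > 0$. Every differential into or out of $E^2_{2,2}$ or $E^2_{4,0}$ therefore lands in a trivial group, giving $E^\infty_{2,2} = 0$ and $E^\infty_{4,0} = E^2_{4,0} \cong \Z/2$. The spectral sequence yields the required short exact sequence with the map to $\Z/2$ being the edge homomorphism $c_*$, and the image of $f_*([M])$ equals $(c_M)_*([M]) \neq 0$.

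In the case $c_*([M]) = 0$, $\pi_2(M)$ is stably $I\pi \oplus I\pi^v$, so $\pi_2(M)^w$ is stably $I\pi^w \oplus I\pi^{v'}$. Dimension-shifting along $0 \to I\pi^u \to \Z\pi \to \Z^u \to 0$ for $u \in \{w, v'\}$, together with explicit Künneth computations for $\pi = \Z \times \Z/2$ (using $H_*(\Z;\Z^\pm)$ and the $2$-periodic resolution for $\Z/2$), yields $E^2_{1,2} \cong E^2_{2,2} \cong (\Z/2)^2$ and $E^2_{4,0} \cong E^2_{5,0} \cong \Z/2$ for both admissible choices of $w$. The plan is then to show that the differentials $d_3 \colon E^2_{4,0} \to E^2_{1,2}$ and $d_3 \colon E^2_{5,0} \to E^2_{2,2}$ conspire so that $E^\infty_{4,0} = 0$ and $E^\infty_{2,2} \cong \Z/2$, giving cokernel $\Z/2$ for $\varphi_B$ as required. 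These differentials are controlled by the $k$-invariant of $B = P_2(M)$, which in this stable class is the nonzero element of $H^3(\pi; I\pi) \leq H^3(\pi; I\pi \oplus I\pi^v)$ (as exploited in \cref{lem:map-B-P}). The cleanest route is to verify the spectral sequence behaviour for a specific reference model $X$ in this stable class --- such as the double of $\pi$ used in the proof of \cref{cor:ker-B} --- and then transfer the conclusion to $M$ via $\CP^2$-stable homeomorphism, arguing as in \cref{lem:kernel-of-phi-and-stabilisation} that the kernel/cokernel structure is $\CP^2$-stably invariant.

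The main obstacle is this second case: pinning down both differentials and proving that $f_*([M])$ is nontrivial in the resulting $\Z/2$. For the latter, the plan is to use that $f_*([M])$ and $f_*([X])$ have the same image modulo $\im \varphi_B$ under $\CP^2$-stabilisation, and that $f_*([X])$ is nontrivial because $X = \partial W$ for a $5$-dimensional thickening $W$ with $\pi_1(W) = \pi$, so the image in $\Z/2$ detects a secondary invariant that can be read off from $W$ directly.
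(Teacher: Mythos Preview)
Your treatment of the stably free case is correct and matches the paper's argument via \cref{lem:sequence-for-pi2-projective}.

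The second case, however, has a genuine gap. You correctly set up the Leray--Serre spectral sequence and compute the relevant $E^2$-terms, but your proposed methods for determining the $d_3$ differentials (reading them off from the $k$-invariant, or transferring from a reference double $X$) are not worked out, and the ``secondary invariant from $W$'' argument for the nontriviality of $f_*[X]$ is not correct as stated: the fact that $X=\partial W$ with $\pi_1(W)\cong\pi$ tells you $[X]$ dies in $H_4(\pi;\Z^w)$, which is the wrong direction for what you need.

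The paper's key idea, which you are missing, is to run the Leray--Serre spectral sequence for the fibration $\wt M\to M\to B\pi$ \emph{first}, rather than for $B$. The point is that the output $H_*(M;\Z^w)$ is known by Poincar\'e duality: both $H_3(M;\Z^w)\cong\Z$ and $H_4(M;\Z^w)\cong\Z$ are cyclic. This forces the relevant $d_3$ differentials $H_4(\pi;\Z^w)\to E^2_{1,2}$ and $H_5(\pi;\Z^w)\to E^2_{2,2}$ to be injective, since otherwise the associated graded of a cyclic group would contain too many $\Z/2$ summands. You then compare with the spectral sequence for $\wt B\to B\to B\pi$ via the $3$-connected map $f$: naturality gives the same nontrivial differentials for $B$, while $H_3(\wt B;\Z)=0$ eliminates the extra terms present for $M$. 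This yields the exact sequence. The nontriviality of $f_*[M]$ in $\Z/2$ then comes for free: the commutative square comparing the two spectral sequences shows that the map $H_4(M;\Z^w)\to\Z/2$ factors through $H_4(B;\Z^w)\to\Z/2$, and since $[M]$ generates $H_4(M;\Z^w)\cong\Z$, which surjects onto $\Z/2$, its image is nonzero.
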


\begin{proof}
If $\pi_2(M)\cong \pi_2(B)$ is stably free as a $\Z\pi$-module, then there is an exact sequence
\[\Z^w\otimes_{\Z\pi}H_4(B;\Z\pi)\to H_4(B^w;\Z)\to H_4(\pi;\Z^w)\to 0\]
by \cref{lem:sequence-for-pi2-projective}. By \cref{lem:pi2-ZZ2}, $\pi_2(M)$ is stably free if and only if the image of $[M]$ in $H_4(\pi;\Z^w)\cong \Z/2$ is nontrivial. This completes the proof of the lemma in this case.

If $\pi_2(M)$ is not stably free, then it is stably isomorphic to $I\pi\oplus I\pi^v$ by \cref{lem:pi2-ZZ2}, where $v=wv'$. We now prove the lemma in this case.	
Let $\wt{M}$ denote the universal cover of $M$. Note that $H_3(M;\Z^w)\cong H^1(M;\Z)\cong H^1(\pi;\Z)\cong \Z$ and $H_3(\wt M;\Z)\cong H_3(M;\Z\pi)\cong H^1(M;\Z\pi^w)\cong H^1(\pi;\Z\pi^w)\cong \Z^w$, where the last isomorphism follows from the fact that $\pi$ has two ends~\cite{Geoghegan}*{Theorem~13.5.5}. Consider the spectral sequence with $E^2$ term $H_p(\pi;H_q(\wt M;\Z)^w)$ converging to $H_{p+q}(M;\Z^w)$. On the $3$-line of the $E^2$ page we have
\begin{align*}
\Z^w\otimes_{\Z\pi}H_3(\wt M;\Z)&\cong \Z^w\otimes_{\Z\pi}\Z^w\cong \Z;\\
H_1(\pi;I\pi^w\oplus I\pi^{v'})&\cong H_1(\pi;I\pi^w)\oplus H_1(\pi;I\pi^{v'})\cong H_2(\pi;\Z^w)\oplus H_2(\pi;\Z^{v'})\cong  \Z/2\oplus\Z/2;\\
H_3(\pi;\Z^w)&\cong \Z/2.
\end{align*}
On the $E^\infty$ page, these terms will produce the associated graded group for the cyclic group $H_3(M;\Z^w)$. Since only cyclic groups can arise in such an associated graded group, it follows that the $d_3$-differential $\Z/2\cong H_4(\pi;\Z^w)\to H_1(\pi;I\pi^w\oplus I\pi^{v'})$ is nontrivial and thus injective.

On the $4$-line of the $E^2$ page we have
\begin{align*}
H_1(\pi;H_3(\wt M;\Z)^w)&\cong H_1(\pi;\Z)\cong \Z\oplus \Z/2;\\
H_2(\pi;I\pi^w\oplus I\pi^{v'})&\cong H_3(\pi;\Z^w)\oplus H_3(\pi;\Z^{v'})\cong \Z/2\oplus\Z/2; \\
H_4(\pi;\Z^w)&\cong \Z/2.
\end{align*}
We already saw that the $d_3$ map out of $H_4(\pi;\Z^w)$ is injective, so that term does not survive to the $E^\infty$ page. The $d_2$-differential $H_2(\pi;I\pi^w\oplus I\pi^{v'})\cong \Z/2\oplus \Z/2\to H_0(\pi;H_3(\wt{M};\Z)^w)\cong \Z$ is trivial, so $H_2(\pi;I\pi^w\oplus I\pi^{v'})$ survives to the $E^3$ page. Since $H_4(M;\Z^w)$ is again cyclic, the $d_3$-differential $H_5(\pi;\Z^w)\cong \Z/2\to H_2(\pi;I\pi^w\oplus I\pi^{v'})\cong \Z/2\oplus \Z/2$ is nontrivial and therefore injective. Hence from the $E^\infty$ page we obtain an extension $\Z\to H_4(M;\Z^w)\to H_2(\pi;I\pi^w\oplus I\pi^{v'})/\im d_2\cong \Z/2\to 0$.

Now we  consider the analogous spectral sequence with $E^2$ term $H_p(\pi;H_q(\wt B;\Z)^w)$ converging to $H_{p+q}(B;\Z^w)$, where $\wt{B}$ is the universal cover of $B$. In this case the $d_2$-differential $H_2(\pi;I\pi^w\oplus I\pi^{v'})\cong \Z/2\oplus \Z/2\to H_0(\pi;H_3(\wt{B};\Z)^w)= 0$ is necessarily trivial. Since $f\colon M\to B$ is $3$-connected, the same $d_3$-differentials as above are nontrivial, and we have the following commutative diagram with exact rows.
\[
\begin{tikzcd}
\Z\arrow[r]	&H_4(M;\Z^w)\arrow[r]\arrow[d,"f_*"]	&\Z/2\arrow[r]\arrow[d,"="]	&0\\
\Z^w\otimes_{\Z\pi}H_4(\wt{B};\Z)\arrow[r]	&H_4(B;\Z^w)\arrow[r]	&\Z/2\arrow[r]	&0
\end{tikzcd}
\]
Since $[M]$ is a generator of $H_4(M;\Z^w)$ it follows that the image of $f_*[M]$ is nontrivial in $H_2(\pi;I\pi^w\oplus I\pi^{v'})/\im d_2\cong \Z/2$ as claimed.
\end{proof}

\begin{lemma}\label{lem:e-iso}
    The map $e\colon I\pi\to I\pi^{\dagger\dagger}$ sending $x \mapsto (f \mapsto f(x))$ is an isomorphism.
\end{lemma}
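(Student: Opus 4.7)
The plan is to prove injectivity and surjectivity of $e$ separately, with injectivity immediate and surjectivity following from a double application of $\Hom_{\Z\pi}(-,\Z\pi)$ to the sequence $0\to I\pi\to\Z\pi\to\Z\to 0$. For injectivity, note that the inclusion $\iota\colon I\pi\hookrightarrow\Z\pi$ is itself a $\Z\pi$-linear map, hence lies in $I\pi^\dagger$; if $e(x)=0$ then evaluating at $\iota$ gives $x=0$.

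For surjectivity, the first application of $\Hom_{\Z\pi}(-,\Z\pi)$ yields, using $\Hom_{\Z\pi}(\Z,\Z\pi)=0$ (since $\pi$ is infinite) and $\Ext^1_{\Z\pi}(\Z\pi,\Z\pi)=0$ (since $\Z\pi$ is free), the short exact sequence
\[
0\to\Z\pi\xrightarrow{r} I\pi^\dagger\to H^1(\pi;\Z\pi)\to 0,
\]
where $r$ is restriction along $\iota$. Since $\pi=\Z\times\Z/2$ has two ends and every element of $\pi$ preserves these ends under right multiplication, a standard computation, verifiable either from the $2$-periodic resolution used in the proof of \cref{lem:pi2-ZZ2} or from the description of $H^1$ for two-ended groups, shows $H^1(\pi;\Z\pi)\cong\Z$ as a $\Z\pi$-module with trivial action. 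The second application of $\Hom_{\Z\pi}(-,\Z\pi)$ then gives
\[
0\to I\pi^{\dagger\dagger}\xrightarrow{r^*}\Z\pi\xrightarrow{\delta}\Z,
\]
where $(\Z\pi)^\dagger$ is identified with $\Z\pi$. The map $\delta$ is $\Z\pi$-linear into a trivial module, so $\delta=k\varepsilon$ for some $k\in\Z$ and the augmentation $\varepsilon$.

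To conclude I would show $k\neq 0$, or equivalently that the SES above is non-split as $\Z\pi$-modules. A splitting would exhibit a nonzero $\pi$-fixed element $f\in I\pi^\dagger$ satisfying $f(x)\bar g=f(x)$ for all $g\in\pi$ and $x\in I\pi$; specialising to $g=t$ gives $f(x)(\bar t-1)=0$, which forces $f\equiv 0$ because $\bar t-1=w(t)t^{-1}-1$ is a non-zero-divisor in $\Z\pi$ (right multiplication by this element permutes $\pi$ with only infinite orbits). Hence $\ker\delta=I\pi$, so $r^*$ is an isomorphism $I\pi^{\dagger\dagger}\xrightarrow{\cong}I\pi$. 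Naturality of the evaluation map relative to $\iota$ and the canonical isomorphism $e_{\Z\pi}\colon\Z\pi\xrightarrow{\cong}(\Z\pi)^{\dagger\dagger}$ identifies $e$ with this isomorphism. The main technical obstacle is verifying triviality of the $\Z\pi$-action on $H^1(\pi;\Z\pi)$ under the involution-twisted module structure defining the dagger, which requires careful tracking of the right $\pi$-action through cohomology.
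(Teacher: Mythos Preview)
Your argument is correct and takes a genuinely different route from the paper's. The paper proceeds by explicit computation: it quotes an isomorphism $\theta\colon I\pi^{v'}\xrightarrow{\cong}I\pi^\dagger$ from \cite{KPT-long}, writes down a companion isomorphism $\theta'\colon I\pi\xrightarrow{\cong}(I\pi^{v'})^\dagger$ on generators, and then checks directly that $\theta'=\theta^\dagger\circ e$, whence $e$ is an isomorphism. Your approach is homological: dualising the augmentation sequence twice and using that $\pi$ is infinite together with the identification $H^1(\pi;\Z\pi)\cong\Z$ (trivial action), you reduce to showing the extension $0\to\Z\pi\to I\pi^\dagger\to\Z\to 0$ is non-split, which you do by observing $I\pi^\dagger$ has no nonzero $\pi$-fixed elements. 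The naturality square $\iota^{\dagger\dagger}\circ e_{I\pi}=e_{\Z\pi}\circ\iota$ then identifies $e$ with the inverse of $r^*\colon I\pi^{\dagger\dagger}\xrightarrow{\cong}I\pi$.

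The paper's argument is entirely explicit and avoids any module-structure subtleties on $H^1(\pi;\Z\pi)$, at the cost of being specific to $\pi=\Z\times\Z/2$ and relying on an external reference. Your argument is more conceptual and would adapt to other two-ended groups with trivial action on their ends, but, as you correctly flag, requires verifying that the induced left $\Z\pi$-module structure on $H^1(\pi;\Z\pi)$ (coming from the right action on coefficients, converted via the $w$-twisted involution) is indeed trivial. For $w$ trivial this follows because both generators $t$ and $T$ fix each end under right translation; the paper notes the statement is independent of $w$, so assuming $w$ trivial loses no generality. Your non-zero-divisor argument for $t-1$ via infinite orbits under right multiplication is fine.
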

\begin{proof}
	The statement of the lemma is independent of $w$, but $w$ appears indirectly in the proof, in the involution used to consider $I\pi^\dagger$ as a left module. For simplicity, we assume that $w$ is trivial. 
	
    By \cite{KPT-long}*{Lemma~7.14}, the map $\theta \colon I\pi^{v'}  \xrightarrow{\cong} I\pi^\dagger$ sending
	  \begin{align*}
    1-t &\mapsto \Bigg\{\begin{array}{rcl}
	      1-t &\mapsto & 1-t  \\
	      1-T &\mapsto& 1-T
	    \end{array}\Bigg\} \text{ and }
	    1+T  \mapsto
	    \Bigg\{\begin{array}{rcl}
	      1-t &\mapsto & 1+T  \\
	      1-T &\mapsto& 0
	    \end{array}\Bigg\}
	  \end{align*}
    is an isomorphism. Similarly one sees that the map $\theta' \colon I\pi  \xrightarrow{\cong} (I\pi^{v'})^\dagger$ sending
   \begin{align*}
     1-t \mapsto \Bigg\{\begin{array}{rcl}
	      1-t &\mapsto & 1-t  \\
	      1+T &\mapsto& 1+T
	    \end{array}\Bigg\} \text{ and }
	    1-T \mapsto
	    \Bigg\{\begin{array}{rcl}
	      1-t &\mapsto & 1-T  \\
	      1+T &\mapsto& 0
	    \end{array}\Bigg\}
	  \end{align*}
   is an isomorphism. By a straightforward computation, \[\theta' = \theta^\dagger \circ e \colon I\pi \to  (I\pi^{v'})^\dagger.\]
   It follows that $e$ is an isomorphism as claimed.
\end{proof}

Now we are ready to show that the group $\Z\times \Z/2$ has \propH\ with respect to a certain map $\Z\times \Z/2\to C_2$.

\begin{lemma}
\label{lem:propH-ZxZ2}
Let $w\colon \Z\times \Z/2\to C_2$ be such that $w(T)=1$.
The pair $(\Z\times \Z/2,w)$ has {\propH}.
\end{lemma}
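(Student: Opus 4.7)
The plan is to split the argument according to the dichotomy of \cref{lem:pi2-ZZ2}: either $\pi_2(M)$ is stably free, equivalently $c_*[M]\neq 0\in H_4(\pi;\Z^w)\cong\Z/2$, or $\pi_2(M)\cong I\pi\oplus I\pi^v$ stably, equivalently $c_*[M]=0$. In the first case $\pi_2(B)\cong \pi_2(M)$ is projective, so \cref{lem:sequence-for-pi2-projective} supplies the exact sequence
\[H_5(\pi;\Z^w)\to \Z^w\otimes_{\Z\pi}H_4(B;\Z\pi)\xrightarrow{\varphi_B} H_4(B;\Z^w)\to H_4(\pi;\Z^w)\to 0,\]
so any $x\in \ker(c_*)$ is automatically in $\im\varphi_B$ and \propH{} holds for free, with no appeal to the intersection-form hypothesis.

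For the remaining case I would apply \cref{lem:H-stable} to assume $\pi_2(M)=I\pi\oplus I\pi^v$ on the nose, and then invoke~\cite{kreck} (see \cite{KPT}*{Theorem~1.2}) to replace $M$ by any convenient representative of its $\CP^2$-stable homeomorphism class, for instance the double $M=\partial W$ of a $5$-dimensional thickening of the standard $2$-complex for $\pi=\langle t,T\mid [T,t],T^2\rangle$ with orientation character~$w$; for this $M$ the form $\lambda_M$ is given explicitly in \cite{KPT-long}*{Lemmas~7.11,~7.12,~7.14}. \cref{lem:extension-ZxZ2} then gives
\[\Z^w\otimes_{\Z\pi}H_4(\wt B;\Z)\xrightarrow{\varphi_B} H_4(B;\Z^w)\xrightarrow{q} \Z/2\to 0\]
with $q(f_*[M])$ the generator. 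Since $c_*[M]=0$ forces $H_4(B;\Z^w)\to H_4(\pi;\Z^w)$ to kill $f_*[M]$, this map vanishes identically, so the kernel hypothesis in \propH{} is vacuous and the task reduces to showing $\Theta_B(x)=0\Rightarrow q(x)=0$. Writing a candidate $x$ with $q(x)=1$ as $x=f_*[M]+\varphi_B(z)$, the commutative diagram~\eqref{eq:main-square}, together with the fact that $\ev^*$ is an isomorphism by \cref{prop:ev-inj} (since $(\pi,w)$ is admissible) and that $\Upsilon$ is an isomorphism, reduces $\Theta_B(x)=0$ to the statement $\lambda_M\in \im\B_{I\pi\oplus I\pi^v}\subseteq \Her^w((I\pi\oplus I\pi^v)^\dagger)$.

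The main technical obstacle is ruling out this containment, following the template of \cref{lem:PD3H}. The plan is to apply \cref{lem:splitting-of-B} and \cref{remark:splitting-of-Her-for-A-plus-A'} to decompose $\B_{I\pi\oplus I\pi^v}$ as the direct sum of $\B_{I\pi}$, $\B_{I\pi^v}$, and the off-diagonal pairing $I\pi\otimes_{\Z\pi}I\pi^v\to \Hom_{\Z\pi}((I\pi^v)^\dagger,(I\pi)^{\dagger\dagger})$, and to rule out $\lambda_M\in \im\B$ summand-by-summand. For the double $M$, $\lambda_M$ is hyperbolic on $I\pi\oplus (I\pi)^\dagger\cong I\pi\oplus I\pi^v$, so under \cref{lem:e-iso} for $(I\pi)^{\dagger\dagger}\cong I\pi$ and the duality identification from \cite{KPT-long}*{Lemma~7.14} for $(I\pi^v)^\dagger$, its off-diagonal component corresponds to the canonical evaluation pairing. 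The key step, and the main technical obstacle, is an explicit $\Z\pi$-module calculation showing that this evaluation lies outside the image of the off-diagonal pairing; the image is a proper submodule (analogous to the inclusion $I\pi^v\hookrightarrow \Z\pi$ appearing in the final contradiction of \cref{lem:PD3H}'s proof), and the evaluation-representing element lies outside it. Once this non-containment is established, $\lambda_M\notin \im\B$, forcing $q(x)=0$ and establishing \propH{} for $(\pi,w)$.
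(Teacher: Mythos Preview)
Your proposal is correct and follows essentially the same route as the paper's proof: the same case split via \cref{lem:pi2-ZZ2}, the projective case handled by \cref{lem:sequence-for-pi2-projective}, the reduction via \cref{lem:H-stable} and the $\CP^2$-stable classification to the double model, the use of \cref{lem:extension-ZxZ2} to get the $\Z/2$ quotient, and the contradiction argument reducing to showing the hyperbolic $\lambda_M$ is not in the image of $\B_{I\pi\oplus I\pi^v}$ via the off-diagonal component in \cref{lem:splitting-of-B}. The paper carries out the final explicit computation you flag as the main technical obstacle by showing that, under the identifications from \cref{lem:e-iso} and \cite{KPT-long}*{Lemma~7.14}, the question becomes whether $1\otimes(1-t)$ vanishes in $\Z\otimes_{\Z\pi}I\pi^{v'}$, which is ruled out by the $\Tor$ long exact sequence for $0\to I\pi^{v'}\to\Z\pi\to\Z^{v'}\to 0$.
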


\begin{proof}
Let $\pi:=\Z\times \Z/2$. Since $w(T)=1$, the orientation character is determined by $w(t) = \pm 1 \in C_2$.
For both $w(t) = \pm 1$, we have that $H_4(\pi;\Z^w)\cong \Z/2$ and thus by ~\cite{kreck} (see also \cite{KPT}*{Theorem~1.2, Section~1.5}) up to connected sum with $\CP^2, \overline{\CP}^2$ and $\star \CP^2$ there are only two homeomorphism classes with fundamental group $\pi$ and orientation character $w$. By \cref{lem:H-stable}, it suffices to consider one manifold from both classes.

First we consider the case of $w$ trivial.
The nontrivial class in $H_4(\pi;\Z)\cong \Z/2$ is represented by $M = S^1 \times \RP^3$, and the trivial class by a $4$-manifold $N$ obtained as the boundary of a 5-dimensional thickening of the 2-skeleton of $B\pi$.

Consider the nontrivial class in $H_4(\pi;\Z)$ and $B:=P_2(M)$, where $M=S^1\times \RP^2$. Since $\pi_2(B)=0$ and so is certainly projective, by \cref{lem:sequence-for-pi2-projective}  there is an exact sequence
\[\Z\otimes_{\Z\pi}H_4(\wt B;\Z)\xrightarrow{\varphi_B} H_4(B;\Z)\to H_4(\pi;\Z)\to 0.\]
In particular, every class in the kernel of $H_4(B;\Z)\to H_4(\pi;\Z)$ belongs to the image of $\Z^w \otimes_{\Z\pi} H_4(\wt B;\Z)$ and thus $(B,w)$ has {\propH}.

Now we consider the trivial class in $H_4(\pi;\Z)$.
By \cite{KPT-long}*{Lemma~7.12}, the manifold $N$ has $\pi_1(N)\cong \pi$, the second homotopy group $\pi_2(N)\cong I\pi\oplus I\pi^\dagger$, and hyperbolic intersection form. Let $P:=P_2(N)$. We need to show that $(P,w)$ has {\propH}.

Let $f\colon N\to P$ be $3$-connected. By \cref{lem:extension-ZxZ2}, there is an exact sequence
\begin{equation}\label{eq:exact-seq-H4P-Z2}
\Z\otimes_{\Z\pi}H_4(\wt P;\Z)\xrightarrow{\varphi_P} H_4(P;\Z)\to \Z/2\to 0
\end{equation}
and the image of $f_*[N]$ in $\Z/2$ is nontrivial. Let $y\in H_4(P;\Z)$ be a class such that $\langle \alpha,\beta\cap y\rangle=0$ for all $\alpha,\beta\in H^2(P;\Z\pi)$, i.e.\ $y \in \ker \Theta_P$. It follows from the spectral sequence computation in the proof of \cref{lem:extension-ZxZ2} that the map $H_4(P;\Z) \to H_4(\pi;\Z)$ is the trivial map, so we must consider all $y \in \ker \Theta_P$.
By exactness of \eqref{eq:exact-seq-H4P-Z2}, we have to show that the image of $y$ in $\Z/2$ is trivial. Assume for a contradiction that $y$ maps nontrivially to $\Z/2$.  Then $f_*[N]-y$ maps to $0 \in \Z/2$ and hence lies in the image of $\varphi_P \colon \Z\otimes_{\Z\pi}H_4(\wt P;\Z) \to H_4(P;\Z)$.
We have the commutative diagram
\[
\begin{tikzcd}
\Z\otimes_{\Z\pi}H_4(P;\Z\pi)	\arrow[r,"\varphi_P"]\arrow[d, "\mB_{H_2(P;\Z\pi)} \circ \Upsilon"]	&H_4(P;\Z)\arrow[d,"\Theta_P"]\\
\Her(H_2(P;\Z\pi)^\dagger)\arrow[r,"\ev^*"]	&\Her(H^2(P;\Z\pi)).
\end{tikzcd}
\]		
We have that $\Theta_P(f_*[N]-y) = f_*\lambda_N$. Let $\lambda:=f_*\lambda_N$. Then $\Theta_P(f_*[N] - y)=\lambda$, and so we deduce that $\lambda$ lies in the image of $\Theta_P \circ \varphi_P$.
By the diagram, it follows that $\lambda$ lies in the image of $\ev^*\circ\,\, \B_{H_2(P;\Z\pi)} \circ \Upsilon$, or equivalently, since $\Upsilon$ is an isomorphism, that $\lambda$ lies in the image of $\ev^*\circ\,\, \B_{H_2(P;\Z\pi)}$.

Since $P = P_2(N)$, we have that $H^2(P;\Z\pi) \cong H^2(N;\Z\pi) \cong H_2(N;\Z\pi) \cong I\pi \oplus I\pi^\dagger$.
Under the decomposition $\Her(I\pi\oplus I\pi^\dagger)\cong \Her(I\pi)\oplus \Her(I\pi^\dagger)\oplus \Hom_{\Z\pi}(I\pi,I\pi^{\dagger\dagger})$, the element $\lambda$ maps to $(0,0,e)$, where $e \colon I\pi \to I\pi^{\dagger\dagger}$ is as in \cref{lem:e-iso}, since $\lambda$ is hyperbolic.

We will obtain a contradiction by showing that  $\lambda$ is not in the image of $\ev^*\circ\,\,\B_{H_2(B;\Z\pi)}$. The desired contradiction will prove that the image of $y$ in $\Z/2$ is trivial after all, and hence by exactness of \eqref{eq:exact-seq-H4P-Z2} that $y$ lies in the image of $\varphi_P$.

We examine the pre-image of $\lambda$ under $\ev^*$.
Note that $(I\pi\oplus I\pi^\dagger)^\dagger \cong I\pi^\dagger \oplus I\pi^{\dagger\dagger}$. The bottom left group of Hermitian forms $\Her(H_2(P;\Z\pi)^\dagger)$ has a similar  decomposition as in \cref{remark:splitting-of-Her-for-A-plus-A'}, with
\[\Her(I\pi^\dagger \oplus I\pi^{\dagger\dagger}) \cong \Her(I\pi^\dagger)\oplus \Her(I\pi^{\dagger\dagger})\oplus \Hom_{\Z\pi}(I\pi^{\dagger\dagger},I\pi^{\dagger\dagger}).\]
The map $\ev \colon H^2(P;\Z\pi) \to H_2(P;\Z\pi)^\dagger$ translates to a map $\ev\colon I\pi\oplus I\pi^\dagger \to  I\pi^\dagger \oplus I\pi^{\dagger\dagger}$, given by $(x,g)\mapsto (g,e(x))$.
The coordinates are switched because we used Poincar\'{e} duality in the identification $H^2(P;\Z\pi) \cong I\pi \oplus I\pi^\dagger$.   A straightforward check shows that $e$ lies in the image of $\ev^*$, and that
$\ev^*(\id_{I\pi^{\dagger\dagger}}) = e$.
That is, one has to check that, $(0,0,\id_{I\pi^{\dagger\dagger}})$ induces, under the map $\ev$, the form corresponding to $(0,0,e)$.
Moreover $\ev^*$ is injective by \cref{prop:ev-inj}, so $(0,0,\id_{I\pi^{\dagger\dagger}})$ represents the unique pre-image of $\lambda$ in $\Her(H_2(P;\Z\pi)^\dagger)$. So we have to show that $(0,0,\id_{I\pi^{\dagger\dagger}})$ is not in the image of $\mB_{H_2(P;\Z\pi)}$.

Now we consider the map $\mB_{H_2(P;\Z\pi)}$.  The compatibility of the decomposition in~\cref{lemma:baues-splitting-of-Gamma-A-plusA'} and the decomposition from \cref{remark:splitting-of-Her-for-A-plus-A'} with respect to $\mB_{H_2(P;\Z\pi)}$, as in \cref{lem:splitting-of-B}, means it suffices to show that $\id_{I\pi^{\dagger\dagger}}$ does not lie in the image of the map
\[I\pi\otimes_{\Z\pi}I\pi^\dagger \to \Hom_{\Z\pi}(I\pi^{\dagger\dagger},I\pi^{\dagger\dagger})\]
from \cref{lem:splitting-of-B}.

The map $e\colon I\pi\to I\pi^{\dagger\dagger}$ is an isomorphism by  \cref{lem:e-iso}. Using this we have an isomorphism $\Hom_{\Z\pi}(I\pi^{\dagger\dagger},I\pi^{\dagger\dagger}) \cong \Hom_{\Z\pi}(I\pi,I\pi)$ sending $\id_{I\pi^{\dagger\dagger}}$ to $\id_{I\pi}$,  and a commutative diagram
\[\begin{tikzcd}
I\pi\otimes_{\Z\pi}I\pi^\dagger\ar[r,"j \otimes \id"] \ar[d]&\Z\pi\otimes_{\Z\pi}I\pi^\dagger\ar[d,"\cong"]\ar[d,"\cong"]\\
\Hom_{\Z\pi}(I\pi^{\dagger\dagger},I\pi^{\dagger\dagger})\ar[r,"j^{\dagger\dagger}_*"] \ar[d,"\cong"] & \Hom_{\Z\pi}(I\pi^{\dagger\dagger},\Z\pi^{\dagger\dagger}) \ar[d,"\cong"] \\
\Hom_{\Z\pi}(I\pi,I\pi)\ar[r,"j_*"] & \Hom_{\Z\pi}(I\pi,\Z\pi).
\end{tikzcd}\]
The top vertical maps are those induced by $\B_{I\pi\oplus I\pi^\dagger}$ and $\B_{\Z\pi\oplus I\pi^\dagger}$, described in \cref{lem:splitting-of-B}. The lower vertical maps are induced by $e$. The horizontal maps are induced by the inclusion $j \colon I\pi \to \Z\pi$.  We want to see that $\Id_{I\pi}$ is not in the image of the left vertical composition.
Under the lower horizontal map, $\id_{I\pi}$ maps to the inclusion $j\colon I\pi\to \Z\pi$, which lifts to $1 \otimes j$ in the top right $\Z\pi \otimes_{\Z\pi} I\pi$.
Thus if we show that $1\otimes j$ is not in the image of the top horizontal map $I\pi\otimes_{\Z\pi}I\pi^\dagger \to \Z\pi\otimes_{\Z\pi}I\pi^\dagger$, it follows from the diagram that $\id_{I\pi}$ is not in the image of the left vertical composition, and hence that $\id_{I\pi^{\dagger\dagger}}$ is not in the image of the top left vertical map, as desired.

Recall that $v'\colon \pi \to C_2$ is given by $v'(t)=1$ and $v'(T)=-1$.
Using the group presentation $\pi\cong \langle t,T\mid [t,T],T^2\rangle$, there is an isomorphism $\theta^{-1} \colon I\pi^\dagger \to I\pi^{v'}$ sending $j$ to  $(1-t)$; see the proof of \cref{lem:e-iso}.
The composition
\[I\pi \otimes_{\Z\pi} I\pi^\dagger \xrightarrow{j \otimes \Id} \Z\pi \otimes_{\Z\pi} I\pi^\dagger \xrightarrow{\varepsilon \otimes \theta^{-1}} \Z \otimes_{\Z\pi} I\pi^{v'}\]
is the zero homomorphism, where $\varepsilon \colon \Z\pi \to \Z$ is the augmentation, and $(\varepsilon \otimes \theta^{-1})(1 \otimes j) = 1 \otimes (1-t)$. Thus if $1 \otimes j$ is in the image of $j \otimes \Id$, it follows that $1 \otimes (1-t) = 0 \in \Z \otimes_{\Z\pi} I\pi^{v'}$.  It therefore  suffices to show that $1\otimes(1-t)$ is nontrivial. For this we consider the exact sequence
\[0\to \Tor_1^{\Z\pi}(\Z,\Z^{v'})\to \Z\otimes_{\Z\pi} I\pi^{v'}\to \Z\otimes_{\Z\pi}\Z\pi\to \Z\otimes_{\Z\pi}\Z^{v'}\to 0.\]
This can be identified with the exact sequence
\[0\to \Z/2\to \Z\otimes_{\Z\pi} I\pi^{v'}\to \Z\to \Z/2\to 0\]
where $1\otimes(1-t)$ maps to $0 \in \Z$ and $1\otimes(1+T)$ maps to $2 \in \Z$. Since $1\otimes(1-t)$ and $1\otimes(1+T)$ generate $\Z\otimes_{\Z\pi}I\pi^{v'}$, it follows that $1\in \Z/2$ maps to $1\otimes (1-t)$. In particular, the latter is nontrivial as needed.

Recapping, this means that $1 \otimes j$ is not in the image of $j \otimes \Id$, which implies that $\Id_{I\pi^{\dagger\dagger}}$ is not in the image of the restriction of $\mB_{H_2(P;\Z\pi)}$ to the map $I\pi \otimes_{\Z\pi} I\pi^{\dagger} \to \Hom_{\Z\pi}(I\pi^{\dagger\dagger},I\pi^{\dagger\dagger})$ that is the top left vertical map of the previous diagram. This in turn implies that $\lambda = f_*\lambda_N$ is not in the image of $\ev^* \circ \mB_{H_2(P;\Z\pi)}$, which as explained above leads to the desired contradiction. So $y$ maps to $0 \in \Z/2$, and therefore $y$ lies in the image of $\varphi_B$, as required.  Thus $(\pi,w)$ has \propH\ when $w$ is trivial.

The case where $w(t)=-1$ is similar. Here one has to consider the mapping torus $M'$ of the orientation reversing involution on $\RP^3$ in place of $S^1 \times \RP^3$, representing the nontrivial element of $H_4(\pi;\Z^w) \cong \Z/2$. Since $\pi_2(M')=0$, letting $B' := P_2(M')$, we again have an exact sequence
\[\Z\otimes_{\Z\pi}H_4(\wt B';\Z^w)\xrightarrow{\varphi_{B'}} H_4(B';\Z^w)\to H_4(\pi;\Z^w)\to 0\]
by \cref{lem:sequence-for-pi2-projective}.  Again, this implies that every element in the kernel of $H_4(B';\Z^w)\to H_4(\pi;\Z^w)$ lies in the image of $\varphi_{B'}$.

For $0 \in H_4(\pi;\Z^w)$ there is a  $4$-manifold $N'$ with $\pi_2(N')\cong I\pi\oplus I\pi^{wv'}\cong I\pi\oplus I\pi^\dagger$ and hyperbolic intersection form. Here we use that the involution of $\Z\pi$ in this case is given by $g\mapsto w(g)g^{-1}$ so that as a left module $I\pi^{\dagger}$ is isomorphic to $I\pi^{wv'}$.
Let $P' := P_2(N')$. We just observed that $H_2(P';\Z\pi)$ is isomorphic to  $H_2(P;\Z\pi)$. We also have the exact sequence from \cref{lem:extension-ZxZ2} for $w(t) =-1$. The analysis above applies unchanged, to show that for $y \in \ker \Theta_{P'}$ we have that $y \mapsto 0 \in \Z/2$, and hence $y \in \im \varphi_{P'}$.  This shows that $(\pi,w)$ has \propH~for $w(T)=1$ and $w(t)=-1$.
\end{proof}

\subsection{{\propH} for \texorpdfstring{$3$}{3}-manifold groups that are nontrivial free products}\label{section:prop-H-for-3-mfld-groups-free-products}

In this section we establish \propH\ for all admissible $(\pi,w)$,
deducing it for $3$-manifold groups that are nontrivial free products of factors for which we already proved {\propH} in \cref{sec:proph-groups-general,sec:proph-pd3,sec:proph-ZZ2}.

We work initially with an arbitrary finitely presented group $\pi$, a subgroup $G \leq \pi$, and a character $w \colon \pi \to C_2$.  Only at the end of the section, in \cref{cor:propH-3mfd}, do we restrict to admissible $3$-manifold groups and orientation characters. For $G\leq \pi$, we also prove results for an arbitrary left $\Z G$-module $A$. Recall that $\Sigma_2$ denotes the symmetric group on two elements.

\begin{definition}
\label{def:psi-and-chi}
    Let $G\leq \pi$ be a subgroup. Let $w\colon \pi\to C_2$ be a homomorphism and define the composition $\ol{w}\colon G\hookrightarrow \pi \xrightarrow{w} C_2$. For every $\Z G$-module $A$ we define
    \begin{align*}
        \psi\colon \Ind_G^\pi(A\otimes_\Z A)&\to (\Ind_G^\pi A)\otimes_{\Z\pi} (\Ind_G^\pi A)\\
        \gamma\otimes (a\otimes a')&\mapsto (\gamma\otimes a)\otimes (\gamma\otimes a')
    \end{align*}
    and
    \begin{align*}
        \xi\colon \Sesq^{\overline{w}}(A)&\to \Sesq^w(\Ind_G^\pi A)\\
        \lambda &\mapsto \big((\gamma\otimes a,\gamma'\otimes a')\mapsto \gamma\lambda(a,a')\ol{\gamma'}\big).
    \end{align*}
    In the case that $A$ is free as an abelian group, recall that $\Gamma(A)$ is isomorphic to the group of fixed points of $A\otimes_{\Z} A$ under the $\Sigma_2$-action permuting the two copies of $A$. Therefore, taking fixed points, we obtain
    \begin{align*}
        \psi^{\Sigma_2}\colon \Ind_G^\pi \Gamma(A) &\to \Gamma(\Ind_G^\pi A) \text{ and}\\
    \xi^{\Sigma_2}\colon \Her^{\overline{w}}(A) &\to \Her^w(\Ind_G^\pi A).
    \end{align*}
\end{definition}

Recall that for $G\leq \pi$, and a $\Z G$-module $A$, we denote the module $\Hom_{\Z G}(A,\Z G)$ by $A^\star$.

\begin{lemma}\label{lem:ind-sesq-square}
    Let $G\leq \pi$ be a subgroup, let $A$ be a $\Z G$-module, and let $w\colon \pi\to C_2$ be a homomorphism. Define the composition $\ol{w}\colon G\hookrightarrow \pi \xrightarrow{w} C_2$. Then we have the commutative diagram
    \begin{equation}\label{eq:ind-sesq-square}
    \begin{tikzcd}[row sep =scriptsize, column sep =scriptsize]
        \Ind_G^\pi(A\otimes_\Z A)\ar[r,"\psi"]\ar[d,"\wt{\mB}_A"]   &\Ind_G^\pi A \otimes_{\Z\pi} \Ind_G^\pi A 
        \ar[d,"\wt\mB_{\Ind_G^\pi A}"] 
        \\
        \Sesq^{\ol{w}}(A^\star)\ar[r, "\xi"] &\Sesq^w((\Ind_G^\pi A)^\dagger),
    \end{tikzcd}
    \end{equation}
    where
    \begin{enumerate}
        \item $\wt\mB_A(\gamma\otimes(a\otimes a'))=\big( (f,f')\mapsto w(\gamma)\overline{f(a)}f(a')\big)$, and
        \item $\wt\mB_{\Ind_G^\pi A}((\gamma\otimes a)\otimes(\gamma'\otimes a'))=\big((h,h')\mapsto \overline{h(\gamma\otimes a)}h'(\gamma'\otimes a')\big)$,
    \end{enumerate}
    and the maps $\psi$ and $\xi$ are from \cref{def:psi-and-chi}.

    Here we used the isomorphism $\Sesq^w((\Ind_G^\pi A)^\dagger)\cong \Sesq^w(\Ind_G^\pi(A^\star))$ induced by the isomorphism from \cref{lem:ind-star-dagger-isom}, in order to use the map $\xi$.
\end{lemma}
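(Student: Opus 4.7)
The plan is to verify commutativity of the diagram by a direct diagram chase on generators, transporting everything through the isomorphism $\alpha\colon \Ind_G^\pi(A^\star) \xrightarrow{\cong} (\Ind_G^\pi A)^\dagger$ from \cref{lem:ind-star-dagger-isom}. Since the target $\Sesq^w((\Ind_G^\pi A)^\dagger)$ is an abelian group and both $\psi$ and $\xi$ are additive, and $\wt\mB_A$, $\wt\mB_{\Ind_G^\pi A}$ are bi-additive on generators, it suffices to check agreement on a generator $\gamma \otimes (a \otimes a') \in \Ind_G^\pi(A \otimes_\Z A)$ evaluated at a pair of the form $(\alpha(\gamma_1 \otimes f), \alpha(\gamma_2 \otimes f'))$ in $(\Ind_G^\pi A)^\dagger$; by surjectivity of $\alpha$ this exhausts all pairs.

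First I would compute the upper-right composition. Applying $\psi$ sends $\gamma \otimes (a \otimes a')$ to $(\gamma \otimes a) \otimes (\gamma \otimes a')$, and then $\wt\mB_{\Ind_G^\pi A}$ yields the sesquilinear form $(h, h') \mapsto \overline{h(\gamma \otimes a)}\, h'(\gamma \otimes a')$. To evaluate this, I would use the explicit formula extracted from \cref{lem:ind-star-dagger-isom}, namely $\alpha(\gamma_i \otimes f^{(i)})(\delta \otimes b) = \delta \, f^{(i)}(b) \, \overline{\gamma_i}$. Substituting $h = \alpha(\gamma_1 \otimes f)$ and $h' = \alpha(\gamma_2 \otimes f')$ and expanding the expression $\overline{h(\gamma \otimes a)}\, h'(\gamma \otimes a')$, the factor $\overline{\gamma}\,\gamma$ appears; using $\overline{\gamma}\,\gamma = w(\gamma)$ this collapses to $\gamma_1\, w(\gamma)\, \overline{f(a)}\, f'(a')\, \overline{\gamma_2}$.

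For the lower-left composition, $\wt\mB_A(\gamma \otimes (a \otimes a'))$ is the element of $\Sesq^{\ol w}(A^\star)$ with value $w(\gamma)\, \overline{f(a)}\, f'(a')$ on $(f,f')$, and by the definition of $\xi$ from \cref{def:psi-and-chi}, $\xi$ sends this to the form taking $(\gamma_1 \otimes f, \gamma_2 \otimes f') \in \Ind_G^\pi(A^\star) \times \Ind_G^\pi(A^\star)$ to $\gamma_1\, w(\gamma)\, \overline{f(a)}\, f'(a')\, \overline{\gamma_2}$. Under the identification $\Sesq^w(\Ind_G^\pi(A^\star)) \cong \Sesq^w((\Ind_G^\pi A)^\dagger)$ induced by $\alpha$, this is precisely the value computed along the other path.

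The main technical obstacle is bookkeeping with the two involutions — the one on $\Z G$ twisted by $\ol w$ and the one on $\Z\pi$ twisted by $w$ — and ensuring that the scalar $w(\gamma)$, which appears explicitly in the definition of $\wt\mB_A$, emerges naturally from the $\overline{\gamma}\gamma = w(\gamma)$ cancellation on the other side. Once this relation is invoked at the right moment, the verification is routine.
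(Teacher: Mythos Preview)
Your proposal is correct and follows essentially the same approach as the paper's proof: both are direct computations on generators, substituting the explicit formula $\alpha(\gamma_i\otimes f)(\delta\otimes a)=\delta f(a)\overline{\gamma_i}$ from \cref{lem:ind-star-dagger-isom} and using the identity $\overline{\gamma}\gamma=w(\gamma)$ to match the two compositions. The paper simply writes $\delta,\delta'$ where you write $\gamma_1,\gamma_2$, and omits your preliminary remark about additivity reducing the check to generators.
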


\begin{proof}
    We have \[\wt\mB_{\Ind_G^\pi(A)}(\psi(\gamma\otimes(a\otimes a')))=\big((h,h')\mapsto \overline{h(\gamma\otimes a)}h'(\gamma\otimes a')\big).\]
    Applying the isomorphism from \cref{lem:ind-star-dagger-isom}, we obtain the element of $\Sesq^w(\Ind_G^\pi(A^\star))$ given by
    \[(\delta\otimes f,\delta'\otimes f')\mapsto (\overline{\gamma  f(a)\overline{\delta}})(\gamma f'(a')\overline{\delta'})=w(\gamma)\delta \overline{f(a)}f'(a')\overline{\delta'}.\]
    On the other hand, we also see
    \[
    \xi(\wt\mB_A(\gamma\otimes(a\otimes a')))=\xi\big((h,h')\mapsto w(\gamma)\ol{h(a)}h'(a')\big)=\big((\delta\otimes f,\delta'\otimes f')\mapsto  w(\gamma)\delta\ol{f(a)}f'(a')\ol{\delta'}\big)
    \]
    as well.
    Hence the diagram commutes as claimed.
\end{proof}

\begin{lemma}
\label{lem:baby-bees}
    Let $G\leq \pi$ be a subgroup, let $A$ be a $\Z G$-module, and let $w\colon \pi\to C_2$ be a homomorphism. Define the composition $\ol{w}\colon G\hookrightarrow \pi \xrightarrow{w} C_2$. Consider the diagram \eqref{eq:ind-sesq-square}. Let $x\in \Ind_G^\pi(A)\otimes_\Z\Ind_G^\pi(A)$ such that $\wt{\mB}_{\Ind_G^\pi(A)}(x)$ is in the image of $\xi$. Then there exists $b\in \Ind_G^\pi(A\otimes_\Z A)$ such that $x-\psi(b)$ is in the kernel of $\wt{\mB}_{\Ind_G^\pi(A)}$.

    Furthermore, if $x$ is a $\Sigma_2$-fixed point, then $b$ can be chosen to be a $\Sigma_2$-fixed point.
\end{lemma}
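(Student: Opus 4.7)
The plan is to construct $b$ explicitly from a coset decomposition of $x$ and then to verify that the off-diagonal remainder lies in $\ker \wt\mB_{\Ind_G^\pi A}$ via a double-coset analysis in $\Z\pi$. First I would fix a set $R$ of left coset representatives for $G$ in $\pi$, so $\pi = \bigsqcup_{\gamma \in R} \gamma G$, and assume $1 \in R$. Then both $\Ind_G^\pi A = \bigoplus_{\gamma \in R} \gamma \otimes A$ and $\Ind_G^\pi(A \otimes_\Z A) = \bigoplus_{\gamma \in R} \gamma \otimes (A \otimes_\Z A)$ are $\Z$-module direct sums, and accordingly $\Ind_G^\pi A \otimes_\Z \Ind_G^\pi A = \bigoplus_{(\gamma, \gamma') \in R \times R} (\gamma \otimes A) \otimes_\Z (\gamma' \otimes A)$. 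Writing $x = \sum_{\gamma, \gamma'} x_{\gamma, \gamma'}$ with $x_{\gamma, \gamma'}$ the $(\gamma, \gamma')$-component corresponding to $u_{\gamma, \gamma'} \in A \otimes_\Z A$, I would set
\[b := \sum_{\gamma \in R} \gamma \otimes u_{\gamma, \gamma} \;\in\; \Ind_G^\pi(A \otimes_\Z A),\]
so that $\psi(b) = \sum_\gamma x_{\gamma, \gamma}$ is the diagonal part of $x$ and $x - \psi(b) = \sum_{\gamma \neq \gamma'} x_{\gamma, \gamma'}$ is the off-diagonal remainder.

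The heart of the proof is to show $\wt\mB_{\Ind_G^\pi A}(x - \psi(b)) = 0$. Since $\wt\mB_{\Ind_G^\pi A}(y)$ is sesquilinear over $\Z\pi$, it is determined by its values on pairs corresponding to $1 \otimes f, 1 \otimes f' \in \Ind_G^\pi A^\star$ under the identification from \cref{lem:ind-star-dagger-isom}. That identification sends $1 \otimes f$ to the functional $\gamma \otimes a \mapsto \gamma f(a)$, so writing $u_{\gamma,\gamma'} = \sum_j a_{\gamma, \gamma', j} \otimes a'_{\gamma, \gamma', j}$, a direct computation gives
\[\wt\mB_{\Ind_G^\pi A}(x)(1 \otimes f, 1 \otimes f') = \sum_{\gamma, \gamma', j} \overline{f(a_{\gamma, \gamma', j})} \cdot \ol\gamma \gamma' \cdot f'(a'_{\gamma, \gamma', j}) \;\in\; \Z\pi.\]

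Next I would appeal to the $\Z$-module decomposition $\Z\pi = \Z G \oplus \bigoplus_{[\eta] \neq [1]} \Z(G \eta G)$ over $G \backslash \pi / G$, where $\Z(G\eta G)$ denotes the $\Z$-span in $\Z\pi$ of the double coset $G\eta G$. A diagonal pair $(\gamma, \gamma)$ contributes $w(\gamma) \sum_j \overline{f(a_{\gamma,\gamma,j})} f'(a'_{\gamma,\gamma,j}) \in \Z G$, while an off-diagonal pair $(\gamma, \gamma')$ with $\gamma \neq \gamma'$ in $R$ contributes an element of $\Z G \cdot \gamma^{-1}\gamma' \cdot \Z G \subseteq \Z(G\gamma^{-1}\gamma' G)$, which is disjoint from $\Z G$ because $\gamma^{-1}\gamma' \notin G$ for distinct elements of $R$. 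By hypothesis $\wt\mB_{\Ind_G^\pi A}(x) = \xi(\lambda)$, and evaluating $\xi(\lambda)(\delta \otimes f, \delta' \otimes f') = \delta \lambda(f, f') \ol{\delta'}$ at $\delta = \delta' = 1$ forces the total sum into $\Z G$. Disjointness of the double-coset summands then forces the off-diagonal contributions to vanish, so $\wt\mB_{\Ind_G^\pi A}(x - \psi(b))(1 \otimes f, 1 \otimes f') = 0$ for all $f, f'$; sesquilinearity promotes this to $\wt\mB_{\Ind_G^\pi A}(x - \psi(b)) = 0$.

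For the $\Sigma_2$-equivariant refinement, if $x$ is fixed by the swap $\tau$ of the two tensor factors, then symmetry of the bidecomposition yields $u_{\gamma, \gamma'} = \tau(u_{\gamma', \gamma})$ in $A \otimes_\Z A$; in particular each diagonal $u_{\gamma, \gamma}$ is $\tau$-fixed and hence lies in $\Gamma(A)$. Therefore $b \in \Ind_G^\pi \Gamma(A)$, the $\Sigma_2$-fixed subgroup of $\Ind_G^\pi(A \otimes_\Z A)$. I expect the main obstacle to be the double-coset bookkeeping, but once we identify the image of $\xi$ with the $\Z G$-summand of the $\Sesq^w$-valued target and recognise that off-diagonal pairs contribute outside $\Z G$, the rest follows mechanically.
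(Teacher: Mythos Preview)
Your proof is correct and follows essentially the same route as the paper's. Both arguments separate $x$ into ``diagonal'' terms (those with $g_i^{-1}g'_i\in G$, equivalently your $(\gamma,\gamma)$-components) and ``off-diagonal'' terms, take $b$ to be the diagonal part, and kill the off-diagonal remainder by observing that its evaluation at $(1\otimes f,1\otimes f')$ lands outside the $\Z G$-summand of $\Z\pi$, while the hypothesis $\wt\mB_{\Ind_G^\pi A}(x)=\xi(\lambda)$ forces the total value into $\Z G$. The only cosmetic difference is that you fix coset representatives and phrase the disjointness via the double-coset decomposition $\Z\pi=\bigoplus_{[\eta]}\Z(G\eta G)$, whereas the paper works directly with arbitrary simple tensors and the cruder splitting $\Z\pi=\Z G\oplus(\text{rest})$; your bookkeeping is slightly more systematic but the content is the same. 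One small wording point: your final sentence identifies the $\Sigma_2$-fixed points of $\Ind_G^\pi(A\otimes_\Z A)$ with $\Ind_G^\pi\Gamma(A)$, which the paper only asserts when $A$ is free as an abelian group; the lemma's conclusion is just that $b$ is $\Sigma_2$-fixed, which your argument establishes regardless.
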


\begin{proof}
    Let $x=\sum_{i=1}^n(g_i\otimes a_i)\otimes (g_i'\otimes a_i')$. Using the isomorphism \[\Sesq^w((\Ind_G^\pi A)^\dagger)\cong \Sesq^w(\Ind_G^\pi(A^\star))\] coming from \cref{lem:ind-star-dagger-isom}, we have that
    \[\wt{\mB}_{\Ind_G^\pi}(A)(x)=((\gamma\otimes f,\gamma'\otimes f')\mapsto\sum_{i=1}^n \gamma \overline{f(m_i)g_i}g_i'f'(m'_i)\overline{\gamma'}).\]
    Since $\wt{\mB}_{\Ind_G^\pi(A)}(x)$ is in the image of $\xi$, there exists a $\lambda\in \Sesq^w(A^\star)$ such that for all $f,f'\in A^\star$ we have
    \[\lambda(f,f')=\sum_{i=1}^n\overline{f(m_i)g_i}g_i'f'(m'_i).\]
    Up to reordering, we can assume that there exists an $m$ such that $g_i^{-1}g_i'\in G$ for $i\leq m$ and $g_i^{-1}g_i'\notin G $ for $i> m$.
    Note that $\overline{f(m_i)g_i}g_i'f'(m'_i)\in \Z[G]$ if and only if $g_i^{-1}g_i'\in G$. Since $\lambda(f,f')\in \Z[G]$, it follows that
    \[\lambda(f,f')=\sum_{i=1}^m\overline{f(m_i)g_i}g_i'f'(m'_i),\]
    and that $\sum_{i=m+1}^n(g_i\otimes a_i)\otimes (g_i'\otimes a_i')$ is in the kernel of $\wt{\mB}_{\Ind_G^\pi(A)}$.

    By assumption, for each $i\leq m$ there exists $h_i\in G$ such that $g_i'=g_ih^{-1}$. Then $(g_i\otimes m_i)\otimes(g_i'\otimes m_i')=(g_i\otimes a_i)\otimes(g_i\otimes ha_i')$ and hence
    \[\sum_{i=1}^m(g_i\otimes a_i)\otimes (g_i'\otimes a_i')=\psi(\sum_i g_i\otimes(a_i\otimes ha_i')).\]
    Thus setting $b:=\sum_i g_i\otimes(a_i\otimes ha_i')$ we have that $x-\psi(b)$ is in the kernel of $\wt{\mB}_{\Ind_G^\pi(A)}$ as required.

    Since $g_i^{-1}g_i'\in G$ if and only if $(g_i')^{-1}g_i\in G$, the element $b$ defined above is a $\Sigma_2$-fixed point if $x$ is a $\Sigma_2$-fixed point.
\end{proof}

Now we consider the case that $\pi\cong *_{i=1}^n G_i$, where the groups $G_i$, for $i=1,\dots,n$ are finitely presented, and for each $i$ we consider left $\Z G_i$-modules $A_i$.
Let $A:=\bigoplus\limits_i\Ind_{G_i}^\pi(A_i)$ and let $s_i \colon \Ind_{G_i}^\pi(A_i) \to A$ denote the canonical inclusion map for each $i$.
Let $w\colon \pi\to C_2$ be a homomorphism, and for each $i$, consider the composition $\ol{w}_i\colon G_i\hookrightarrow \pi\xrightarrow{w}C_2$.

\begin{lemma}
\label{lem:the-square}
     Let $x\in \Z^w\otimes_{\Z\pi} \Gamma(A)$. The diagram
    \[
    \begin{tikzcd}        \bigoplus\limits_{i}\Z^w\otimes_{\Z\pi}\Ind_{G_i}^\pi \Gamma(A_i) \ar[r,"\oplus_i\psi_i^{\Sigma_2}"]\ar[d,"\oplus_i\mB_{A_i}", start anchor={[yshift=2mm]}] &\bigoplus\limits_{i}\Z^w\otimes_{\Z\pi}\Gamma(\Ind_{G_i}^\pi A_i )\ar[d,"\oplus_i\mB_{\Ind_{G_i}^\pi A}"]\ar[r,"s
"]&\Z^w\otimes_{\Z\pi} \Gamma(A)\ar[d,"\mB_{A}"]\\
        \bigoplus\limits_i\Her^{\ol{w}_i}(A_i^\star)\ar[r,"\oplus_i\xi_{i}^{\Sigma_2}"]  &\bigoplus\limits_i\Her^w((\Ind_{G_i}^\pi A_i)^\dagger)\ar[r,"s^\dagger
"]&\Her^w(A^\dagger)
    \end{tikzcd}
    \]
   commutes, where $s$ and $s^\dagger$ are the maps induced by $(\Gamma(s_i))_i$ and $(s_i^\dagger)_i$ respectively, and each $\psi_i$ and $\xi_i$ comes from \cref{def:psi-and-chi}.
   We denote the top composition by $\Psi^{\Sigma_2}:= s \circ (\oplus_i\psi_i^{\Sigma_2})$ and the bottom composition by $\Xi^{\Sigma_2} := s^\dagger \circ (\oplus_i\xi_{i}^{\Sigma_2})$.
     If $\mB_{A}(x)$ lies in the image of $\Xi^{\Sigma_2}$, then there exists $b\in\bigoplus\limits_{i} \Z^w\otimes_{\Z\pi}\Ind_{G_i}^\pi \Gamma(A_i)$ such that $x-\Psi^{\Sigma_2}(b) \in \ker \mB_{A}$.
\end{lemma}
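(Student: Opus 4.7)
The plan is to verify commutativity and then prove the main assertion via a block decomposition. For the left square, we assemble the $\Sigma_2$-fixed-point, $\Z^w$-tensored versions of diagram~\eqref{eq:ind-sesq-square} in \cref{lem:ind-sesq-square} applied separately with $G=G_i$ and $A=A_i^\star$, so that $\wt{\mB}$ becomes $\mB$, $B\otimes_\Z B$ becomes $\Gamma(B)$, and $\Sesq^w$ becomes $\Her^w$. The right square commutes by naturality of $\mB$ with respect to the inclusions $s_i \colon B_i := \Ind_{G_i}^\pi A_i \hookrightarrow A$: since each $s_i$ is a $\Z\pi$-module map, $\mB_A \circ \Gamma(s_i) = (s_i^\dagger)_* \circ \mB_{B_i}$, and summing over $i$ yields the right square.

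To prove the main claim, we first iterate \cref{lemma:baues-splitting-of-Gamma-A-plusA'} and \cref{remark:splitting-of-Her-for-A-plus-A'} to obtain natural splittings
\[\Gamma(A) \cong \bigoplus_i \Gamma(B_i) \oplus \bigoplus_{i<j} \big(B_i\otimes_\Z B_j\big) \quad \text{and} \quad \Her^w(A^\dagger) \cong \bigoplus_i \Her^w(B_i^\dagger) \oplus \bigoplus_{i<j} \Hom_{\Z\pi}(B_j^\dagger, B_i^{\dagger\dagger}),\]
and apply the iterated form of \cref{lem:splitting-of-B} to see that under these splittings $\mB_A$ is block-diagonal, with diagonal entries the $\mB_{B_i}$ and off-diagonal entries the evaluation-type maps of \cref{lem:splitting-of-B}. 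We write $x = \sum_i x_i + \sum_{i<j} y_{ij}$ accordingly. By construction, the image of $\Xi^{\Sigma_2}$ lies entirely in the diagonal part of $\Her^w(A^\dagger)$, so the hypothesis $\mB_A(x) \in \im \Xi^{\Sigma_2}$ forces each off-diagonal component $\mB_A(y_{ij})$ to vanish; in particular each $y_{ij}$ already lies in the appropriate kernel.

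For the diagonal components, we write $\mB_A(x) = \Xi^{\Sigma_2}(\bigoplus_i \mu_i)$ so that $\mB_{B_i}(x_i) = \xi_i^{\Sigma_2}(\mu_i)$, and choose a representative $\wt{x}_i \in \Gamma(B_i) \subseteq B_i \otimes_\Z B_i$ of the class $x_i \in \Z^w \otimes_{\Z\pi} \Gamma(B_i)$. Then $\wt{\mB}_{B_i}(\wt{x}_i)$ lies in $\im \xi_i$, so \cref{lem:baby-bees} applied with $G = G_i$ and $A = A_i$, together with its $\Sigma_2$-equivariance refinement, produces a $\Sigma_2$-fixed element $b_i$ in $\Ind_{G_i}^\pi(A_i \otimes_\Z A_i)$ with $\wt{x}_i - \psi_i(b_i) \in \ker \wt{\mB}_{B_i}$. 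Since $\Z\pi$ is free, hence flat, over $\Z G_i$, induction commutes with taking $\Sigma_2$-fixed points, so in fact $b_i \in \Ind_{G_i}^\pi\Gamma(A_i)$. Passing to the $\Z^w$-tensored quotient then yields $x_i - \psi_i^{\Sigma_2}(b_i) \in \ker \mB_{B_i}$.

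Setting $b := (b_i)_i \in \bigoplus_i \Z^w \otimes_{\Z\pi} \Ind_{G_i}^\pi \Gamma(A_i)$, we obtain $x - \Psi^{\Sigma_2}(b) = \sum_i \big(x_i - \psi_i^{\Sigma_2}(b_i)\big) + \sum_{i<j} y_{ij}$, which by the block-diagonal form of $\mB_A$ lies in $\ker \mB_A$. The main difficulty will be bookkeeping: verifying that the block decomposition of $\mB_A$ from the iterated \cref{lem:splitting-of-B} is fully compatible with the Baues splitting of $\Gamma(A)$ used to decompose $x$ and with the splitting of $\Her^w(A^\dagger)$ into whose diagonal part $\Xi^{\Sigma_2}$ lands, and carefully tracking the correspondence between pre-tensor representatives $\wt{x}_i \in \Gamma(B_i)$ and their classes in $\Z^w \otimes_{\Z\pi} \Gamma(B_i)$ when invoking \cref{lem:baby-bees}.
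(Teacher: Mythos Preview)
Your proof is correct and follows essentially the same approach as the paper's. The paper invokes \cref{lem:splitting-of-B} to produce an element $x'$ in the diagonal part $\bigoplus_i \Z^w\otimes_{\Z\pi}\Gamma(\Ind_{G_i}^\pi A_i)$ with $x - s(x') \in \ker \mB_A$, then applies \cref{lem:baby-bees} to $x'$; you make this explicit by writing $x = \sum_i x_i + \sum_{i<j} y_{ij}$ and taking $x' = (x_i)_i$, and you are also more careful than the paper about lifting to representatives before invoking \cref{lem:baby-bees} and about why the $\Sigma_2$-fixed $b_i$ lands in $\Ind_{G_i}^\pi\Gamma(A_i)$ via flatness.
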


\begin{proof}
For every $\Z\pi$-module $U$, the map $\mB_U$ is obtained from $\wt\mB_U$ by taking $\Sigma_2$-fixed points and noticing that the result factors through $\Z^w\otimes_{\Z\pi}\Gamma(U)$. The left hand square thus commutes by \cref{lem:ind-sesq-square}. The right hand square commutes by naturality of $\mB$.

Now assume that $\mB_A(x)$ lies in the image of $\Xi^{\Sigma_2}$. Let $y \in \bigoplus\limits_i\Her^{\ol{w}_i}(A_i^\star)$ be a preimage. Then $s^\dagger(\oplus_i\xi_{i}^{\Sigma_2}(y)) = \mB_A(x)$. By \cref{lem:splitting-of-B}, there exists  $x'\in \bigoplus\limits_{i}\Z^w\otimes_{\Z\pi}\Gamma(\Ind_{G_i}^\pi A_i)$ with
\[\oplus_i\mB_{\Ind_{G_i}^\pi A}(x') =\oplus_i\xi_{i}^{\Sigma_2}(y)\]
such that
$x-s(x') \in \ker \mB_A \subseteq \Z^w\otimes_{\Z\pi} \Gamma(A)$. By \cref{lem:baby-bees}, there exists $b\in \bigoplus\limits_{i}\Z^w\otimes_{\Z\pi}\Ind_{G_i}^\pi(\Gamma(A_i))$ such that $x'-\bigoplus_i\psi_i^{\Sigma_2}(b) \in \ker \big(\oplus_i\mB_{\Ind_{G_i}^\pi A}\big)$.
Then
\begin{align*}
    \mB_{A}(x-\Psi^{\Sigma_2}(b))&=\mB_{A}(x-s(x')+s(x'-\oplus_i\psi_i^{\Sigma_2}(b))) \\
    &= 0 + s^\dagger(\oplus_i\mB_{\Ind_{G_i}^\pi(A)}(x'-\oplus_i\psi_i^{\Sigma_2}(b)))\\
    &=0.
\end{align*}
So $x-\Psi^{\Sigma_2}(b)$ lies in the kernel of $\mB_{A}$ as needed.
\end{proof}

\begin{lemma}
\label{lem:top-right-square}
    For $i=1,\dots, n$, let $B_i$ be a connected, $3$-coconnected CW complex. Let $B:=P_2(\bigvee_i B_i)$. Let $f^i\colon B_i\to B$ be the composition $B_i\hookrightarrow \bigvee_i B_i\to B$.
    Let $\pi:=\pi_1(B)$ and $G_i:=\pi_1(B_i)$ for each $i$.
    Then the square
    \[\begin{tikzcd}
    \Ind_{G_i}^\pi \Gamma(H_2(B_i;\Z G_i))\ar[r,"\psi_i^{\Sigma_2}"]\ar[d,"\cong"] & \Gamma(\Ind_{G_i}^\pi H_2(B_i,\Z G_i))\ar[r,"\Gamma(f^i_*)"]&\Gamma(H_2(B;\Z\pi))\ar[d,"\cong"]\\
        \Ind_{G_i}^\pi H_4(B_i;\Z G_i)\ar[rr,"f^i_*"] &&H_4(B;\Z\pi)
    \end{tikzcd}\]
    commutes, where $\psi_i^{\Sigma_2}$ is as in \cref{def:psi-and-chi}.
\end{lemma}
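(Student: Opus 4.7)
The plan is to identify each horizontal map in the square as the $\Z\pi$-linear adjoint, via the Ind-Res adjunction, of a natural $\Z G_i$-linear map, and then to invoke the naturality of Whitehead's isomorphism $\Gamma(H_2(-;\Z\pi_1)) \cong H_4(-;\Z\pi_1)$ applied to $f^i\colon B_i\to B$.

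First, recall from \cite{whitehead-certainsequence}*{Sections~10 and 13} that for any $3$-coconnected CW complex $X$ with fundamental group $H$, the isomorphism $\Gamma(H_2(X;\Z H)) \xrightarrow{\cong} H_4(X;\Z H)$ is obtained by lifting to the universal cover $\wt{X}$, which is a $K(\pi_2(X),2)$, and applying Whitehead's calculation of $H_4$ of an Eilenberg--MacLane space of type $(\pi_2,2)$. The $\Z H$-module structures on both sides come from the deck action on $\wt{X}$, and the isomorphism is $\Z H$-equivariant since the Eilenberg--MacLane calculation is natural. For any map $g\colon X\to Y$ of $3$-coconnected CW complexes inducing $\phi\colon H\to K$ on fundamental groups, naturality at the level of universal covers yields a commuting square
\[
\begin{tikzcd}
\Gamma(H_2(X;\Z H)) \ar[r,"\Gamma(g_*)"]\ar[d,"\cong"'] & \Res_H^K\Gamma(H_2(Y;\Z K)) \ar[d,"\cong"]\\
H_4(X;\Z H) \ar[r,"g_*"] & \Res_H^K H_4(Y;\Z K)
\end{tikzcd}
\]
with $\Z H$-linear horizontal maps. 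Here we used that $\Gamma\circ\Res=\Res\circ\Gamma$, which holds because the diagonal $H$-action on $\Gamma(A)$ only depends on the $H$-action on $A$. Applying this with $g=f^i$ and $\phi\colon G_i\hookrightarrow \pi$ yields such a commuting square involving $\Gamma(f^i_*)$ and $f^i_*$ as $\Z G_i$-linear maps.

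It remains to check that applying $\Ind_{G_i}^\pi$ to this square and post-composing each horizontal map with the adjunction counit $\Ind_{G_i}^\pi \Res_{G_i}^\pi M \to M$, $\gamma\otimes m\mapsto \gamma m$, recovers the horizontal composites in the lemma's diagram. For the bottom this is immediate from the definition of the induced $\Z\pi$-linear map. For the top, we unpack on a generator $\gamma\otimes (a\otimes a)\in\Ind_{G_i}^\pi\Gamma(H_2(B_i;\Z G_i))$: the adjunction route produces $\gamma\otimes(f^i_*(a)\otimes f^i_*(a))\mapsto (\gamma f^i_*(a))\otimes(\gamma f^i_*(a))$ in $\Gamma(H_2(B;\Z\pi))$, which equals $\Gamma(f^i_*)(\psi_i^{\Sigma_2}(\gamma\otimes(a\otimes a)))$ by the definition of $\psi_i^{\Sigma_2}$ in \cref{def:psi-and-chi}. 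Combining this with the commuting naturality square above gives the commutativity claimed in the lemma. The main subtlety, and the step requiring the most care, is the $\Z H$-equivariant naturality of Whitehead's isomorphism: it should follow from the standard construction via the universal cover together with equivariance of the Eilenberg--MacLane calculation with respect to deck transformations, but this is usually stated in~\cite{whitehead-certainsequence} only in the simply connected setting, so one must verify the equivariance by hand.
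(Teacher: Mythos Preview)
Your proposal is correct and follows essentially the same route as the paper's proof: both arguments invoke naturality of Whitehead's isomorphism $\Gamma(H_2)\cong H_4$ at the level of universal covers to obtain a $G_i$-equivariant square, and then pass to the induced $\Z\pi$-linear diagram via the $\Ind$--$\Res$ adjunction, finally identifying the resulting top composite with $\Gamma(f^i_*)\circ\psi_i^{\Sigma_2}$. The only cosmetic difference is that the paper phrases the adjunction step via the unit $u\colon H_2(B_i;\Z G_i)\to \Res_{G_i}^\pi\Ind_{G_i}^\pi H_2(B_i;\Z G_i)$ (observing that $\Ind_{G_i}^\pi\Gamma(u)$ becomes $\psi_i^{\Sigma_2}$), whereas you phrase it via the counit $\Ind_{G_i}^\pi\Res_{G_i}^\pi M\to M$; these are equivalent.
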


\begin{proof}
    By naturality of Whitehead's exact sequence~\cite{whitehead-certainsequence}, the square
    \[\begin{tikzcd}
    \Gamma(H_2(\wt B_i;\Z))\ar[r,"{\Gamma(\wt f^i_*)}"]&\Gamma(H_2(\wt B;\Z))\\
        H_4(\wt B_i;\Z)\ar[r,"\wt f^i_*"]\ar[u,"\cong"']&H_4(\wt B;\Z)\ar[u,"\cong"']
    \end{tikzcd}\]
    commutes.
    We use the isomorphisms $H_*(\wt B_i;\Z)\cong H_*(B_i;\Z G_i)$ and $H_*(\wt B;\Z)\cong H_*(B;\Z\pi)$. Under these, $\wt f^i_*\colon H_2(\wt B_i;\Z)\to H_2(\wt B;\Z)$ agrees with the composition
    \[H_2(B_i;\Z G_i)\xrightarrow{u} \Ind_{G_i}^\pi H_2(B_i;\Z G_i)\cong H_2(B_i;\Z\pi)\xrightarrow{f^i_*}H_2(B;\Z\pi),\]
    where we omit $\Res_{G_i}^\pi$ from the notation for readability.
    Also inverting the vertical isomorphisms, we obtain the commutative diagram
    \[\begin{tikzcd}
    \Gamma(H_2(B_i;\Z G_i)))\ar[r,"\Gamma(u)"]\ar[d,"\cong"] & \Gamma(\Ind_{G_i}^\pi H_2(B_i,\Z G_i)) \ar[r,"\Gamma(f^i_*)"]&\Gamma(H_2(B;\Z\pi))\ar[d,"\cong"]\\
        H_4(B_i;\Z G_i)\ar[rr,"f^i_*\circ u"] &&H_4(B;\Z\pi)
    \end{tikzcd}\]
    By naturality, the diagram is $G_i$-equivariant, and thus induces the diagram from the statement by applying $\Ind_{G_i}^\pi$, and also noticing that by doing so $\Gamma(u)$ agrees with $\psi_i^{\Sigma_2}$.
\end{proof}

\begin{lemma}
\label{lem:bottom-right-square}
    For $i=1,\dots, n$, let $B_i$ be a connected, $3$-coconnected CW complex. Let $B:=P_2(\bigvee_i B_i)$. Let $f^i\colon B_i\to B$ be the composition $B_i\hookrightarrow \bigvee_i B_i\to B$.
    Let $\pi:=\pi_1(B)$ and $G_i:=\pi_1(B_i)$ for each $i$. For each $i$, let $\ol{w}_i$ denote the composition $G_i \hookrightarrow \pi\xrightarrow{w} C_2$.    Then the diagram
    \[\begin{tikzcd}
        H_4(B_i;\Z^{\overline{w}_i})\ar[rr,"f^i_*"]\ar[d,"\Theta_{B_i}"]&&H_4(B;\Z^w)\ar[d,"\Theta_B"]\\
        \Her^{\overline{w}_i}(H^2(B_i;\Z G_i))\ar[r,"\xi_{i}^{\Sigma_2}"]&\Her^w(\Ind_{G_i}^\pi H^2(B_i;\Z G_i))\ar[r,"f^i_*"]&\Her^w(H^2(B;\Z\pi))
    \end{tikzcd}\]
    commutes, where $\xi_i^{\Sigma_2}$ is as in \cref{def:psi-and-chi}.
\end{lemma}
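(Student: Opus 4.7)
The plan is to evaluate both composite maps of the diagram on a class $x \in H_4(B_i;\Z^{\overline{w}_i})$ and verify that the resulting Hermitian forms on $H^2(B;\Z\pi)$ agree by a direct pairing-by-pairing comparison. Fix $\alpha,\beta \in H^2(B;\Z\pi)$. By the definition of $\Theta_B$ from \cref{sub:criterion}, the upper path gives
\[
\Theta_B(f^i_*(x))(\alpha,\beta) = \langle \beta,\, \alpha \cap f^i_*(x)\rangle_B,
\]
where $\langle -,-\rangle_B$ denotes the Kronecker pairing landing in $\Z\pi$. I would then apply naturality of the cap product with respect to $f^i$ to rewrite $\alpha \cap f^i_*(x) = f^i_*((f^i)^*\alpha \cap x)$, and subsequently naturality of the Kronecker pairing to pull the remaining $f^i_*$ across, yielding
\[
\Theta_B(f^i_*(x))(\alpha,\beta) = \langle (f^i)^*\beta,\, (f^i)^*\alpha \cap x\rangle_{B_i},
\]
now computed in $B_i$ with coefficients in $\Res^\pi_{G_i}\Z\pi$. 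Because $w$ restricts to $\overline{w}_i$ on $G_i$, the $w$-twisted involution on $\Z\pi$ is compatible with the $\overline{w}_i$-twist used to pair with $x$.

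For the lower path, the map labelled $f^i_*$ in the diagram is by construction the pullback along $(f^i)^*\colon H^2(B;\Z\pi) \to H^2(B_i;\Res^\pi_{G_i}\Z\pi) \cong \Ind^\pi_{G_i} H^2(B_i;\Z G_i)$, so
\[
\bigl(f^i_*\xi^{\Sigma_2}_i\,\Theta_{B_i}(x)\bigr)(\alpha,\beta) = \xi^{\Sigma_2}_i(\Theta_{B_i}(x))\bigl((f^i)^*\alpha,(f^i)^*\beta\bigr).
\]
To match the two expressions, I would fix a set of coset representatives for $G_i\backslash \pi$ to obtain the decomposition $\Res^\pi_{G_i}\Z\pi \cong \bigoplus_{G_i\backslash\pi} \Z G_i$, under which the isomorphism $H^2(B_i;\Res^\pi_{G_i}\Z\pi) \cong \Ind^\pi_{G_i} H^2(B_i;\Z G_i)$ becomes explicit. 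Writing $(f^i)^*\alpha = \sum_k \gamma_k \otimes a_k$ and $(f^i)^*\beta = \sum_\ell \gamma'_\ell \otimes a'_\ell$ with $a_k,a'_\ell \in H^2(B_i;\Z G_i)$, I would expand $\langle (f^i)^*\beta,(f^i)^*\alpha \cap x\rangle_{B_i}$ using that the cap product and Kronecker pairing distribute across the coset direct sum; this gives $\sum_{k,\ell} \gamma_k \langle a'_\ell, a_k \cap x\rangle_{B_i}\overline{\gamma'_\ell}$, where the inner pairing lies in $\Z G_i$ and is by definition $\Theta_{B_i}(x)(a_k,a'_\ell)$. On the other side, substituting the decomposition into the formula for $\xi^{\Sigma_2}_i$ from \cref{def:psi-and-chi} yields exactly the same sum $\sum_{k,\ell} \gamma_k\,\Theta_{B_i}(x)(a_k,a'_\ell)\,\overline{\gamma'_\ell}$, establishing commutativity.

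The main obstacle is the careful bookkeeping of twisted coefficients: one must check that after restricting $\Z\pi$ to a $\Z G_i$-module via $\bigoplus_{G_i\backslash\pi}\Z G_i$, the cap product $(f^i)^*\alpha \cap x$ of a class with coefficients in $\Z\pi$ and an $\overline{w}_i$-twisted fundamental class produces the expected coset-wise sum of caps valued in $\Z G_i$, and that the subsequent Kronecker pairing with $(f^i)^*\beta$ assembles the $\gamma_k$ and $\overline{\gamma'_\ell}$ factors with the $w$-involution precisely as prescribed by $\xi^{\Sigma_2}_i$. Once this bookkeeping is set up, the comparison is a term-by-term identification, and neither step requires more than naturality of standard cohomology operations.
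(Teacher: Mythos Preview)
Your proposal is correct and follows essentially the same strategy as the paper: both arguments reduce to naturality of the cap product and Kronecker pairing together with the explicit formula for $\xi_i^{\Sigma_2}$. The paper streamlines the computation by inserting a diagonal arrow $x\mapsto\langle-,-\cap x\rangle$ into the middle term $\Her^w(\Ind_{G_i}^\pi H^2(B_i;\Z G_i))$, which splits the rectangle into a right square (handled purely by naturality) and a left triangle (handled by the one-line identity $\langle g\otimes\alpha,(g'\otimes\beta)\cap x\rangle=g\langle\alpha,\beta\cap x\rangle\overline{g'}$); this avoids your coset-representative bookkeeping but is otherwise the same argument.
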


\begin{proof}
We can extend the diagram as follows.
    \[\begin{tikzcd}
        H_4(B_i;\Z^{\overline{w}_i})\ar[rr,"f^i_*"]\ar[dr,"{x\mapsto\langle-,-\cap x\rangle}"]\ar[d,"\Theta_{B_i}"]&&H_4(B;\Z^w)\ar[d,"\Theta_B"]\\
        \Her^{\overline{w}_i}(H^2(B_i;\Z G_i))\ar[r,"\xi_i^{\Sigma_2}"]&\Her^w(\Ind_{G_i}^\pi H^2(B_i;\Z G_i))\ar[r,"f^i_*"]&\Her^w(H^2(B;\Z\pi))
    \end{tikzcd}\]
    The right hand square then commutes by naturality of the Kronecker product since $\Theta_{B}(f^i_*(x))=\langle - , - \cap f^i_*(x)\rangle$.
    To see that the left hand triangle commutes, note that for all $x\in H_4(B_i;\Z^{\overline{w}_i})$, $\alpha,\beta\in H^2(B_i;\Z G_i)$ and $g,g'\in \pi$ we have \[\langle g\otimes \alpha,(g'\otimes \beta)\cap x\rangle=g\langle \alpha,\beta\cap x\rangle \overline{g'}=\xi_i^{\Sigma_2}(\Theta_{B_i}(x))(g\otimes \alpha,g'\otimes \beta).\qedhere\]
\end{proof}

\begin{proposition}
    \label{cor:propH-3mfd}
    Let $\pi$ be a $3$-manifold group and let $w\colon \pi\to C_2$ be a homomorphism such that $(\pi,w)$ is admissible.
    Then $(\pi,w)$ has {\propH}.
\end{proposition}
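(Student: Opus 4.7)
The plan is to combine \propH\ for each prime factor of $\pi$ via the commutative-diagram machinery of \cref{lem:ind-sesq-square}--\cref{lem:bottom-right-square}. By the prime decomposition theorem and admissibility, I will write $\pi \cong \ast_\ell G_\ell$ where each $G_\ell$ is one of: infinite cyclic, finite cyclic, a $PD_3$-group, or $\Z\times\Z/2$ with $\ol{w}_\ell:=w|_{G_\ell}$ vanishing on every finite-order element; each $(G_\ell,\ol{w}_\ell)$ already has \propH\ by \cref{lem:Z-proph,lem:propH-finite,lem:PD3H,lem:propH-ZxZ2}. Invoking \cref{lemma:stable-splitting} together with \cref{lem:H-stable}, I reduce to the case $M = \#_\ell M_\ell$ with $\pi_1(M_\ell)=G_\ell$, and set $B_\ell:=P_2(M_\ell)$, $B:=P_2(M)\simeq P_2(\bigvee_\ell B_\ell)$. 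The task then is: for $x \in \ker\bigl(H_4(B;\Z^w)\to H_4(\pi;\Z^w)\bigr)$ with $\Theta_B(x)=0$, produce $z'$ with $\varphi_B(z')=x$.

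From the Leray--Serre spectral sequence of $\wt{B}\to B\to B\pi$, the vanishings $H_1(\wt{B};\Z)=H_3(\wt{B};\Z)=0$ force the filtration on $\ker\bigl(H_4(B;\Z^w)\to H_4(\pi;\Z^w)\bigr)$ to have associated graded pieces $\im\varphi_B = E^\infty_{0,4}$ and $E^\infty_{2,2}(B)$, a quotient of $H_2(\pi;\pi_2(M)^w)$. Applying Mayer--Vietoris to $B\pi\simeq\bigvee_\ell BG_\ell$ together with the Mackey formula (using that $\Res^\pi_{G_\ell}\Ind^\pi_{G_k}N$ decomposes off the trivial double coset into \emph{free} $\Z G_\ell$-modules in a free product, hence has trivial positive-degree homology) gives $H_2(\pi;\pi_2(M)^w)\cong \bigoplus_\ell H_2(G_\ell;\pi_2(M_\ell)^{\ol{w}_\ell})$; naturality of the differential $d_3\colon H_5(\pi;\Z^w)\to H_2(\pi;\pi_2(M)^w)$ descends this to $E^\infty_{2,2}(B)\cong \bigoplus_\ell E^\infty_{2,2}(B_\ell)$, with the isomorphism induced by the maps $f^\ell\colon B_\ell\to B$ from \cref{lem:top-right-square,lem:bottom-right-square}.

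Projecting $x$ to $E^\infty_{2,2}(B)\cong \bigoplus_\ell E^\infty_{2,2}(B_\ell)$ yields components $\bar{x}_\ell$, each of which I will lift to $y_\ell \in F_2 H_4(B_\ell;\Z^{\ol{w}_\ell})$, so $x - \sum_\ell f^\ell_*(y_\ell)=\varphi_B(z)$ for some $z$. Applying $\Theta_B$ and using \cref{lem:bottom-right-square} along with the diagram \eqref{eq:main-square}, the equation $\Theta_B(x)=0$ becomes $\sum_\ell \ev^*\circ\xi^{\Sigma_2}_\ell(\Theta_{B_\ell}(y_\ell)) + \ev^*(\mB_{H_2(B;\Z\pi)}\Upsilon(z))=0$ in $\Her^w(H^2(B;\Z\pi))$. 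Since $(\pi,w)$ and each $(G_\ell,\ol{w}_\ell)$ are admissible, \cref{prop:ev-inj} makes $\ev^*$ and each $\ev^*_\ell$ isomorphisms, so writing $\mu_\ell:=(\ev^*_\ell)^{-1}\Theta_{B_\ell}(y_\ell)$ and invoking injectivity of $\ev^*$ reduces the above to $\mB_{H_2(B;\Z\pi)}\Upsilon(z)=-\Xi^{\Sigma_2}(\mu)\in \im \Xi^{\Sigma_2}$. \cref{lem:the-square} then supplies $b=(b_\ell)\in \bigoplus_\ell \Z^{\ol{w}_\ell}\otimes_{\Z G_\ell}\Gamma(H_2(B_\ell;\Z G_\ell))$ with $\mB_{H_2(B_\ell;\Z G_\ell)}(b_\ell)=-\mu_\ell$ and $\Upsilon(z)-\Psi^{\Sigma_2}(b)\in \ker \mB_{H_2(B;\Z\pi)}$; applying $\Upsilon^{-1}$ and invoking \cref{cor:kernelB-equals-kernelphi} gives $z - \Upsilon^{-1}\Psi^{\Sigma_2}(b)\in \ker\varphi_B$. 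Translating via \cref{lem:top-right-square} and naturality of $\varphi$, if $b_\ell^{H_4}$ denotes the image of $b_\ell$ in $\Z^{\ol{w}_\ell}\otimes_{\Z G_\ell}H_4(B_\ell;\Z G_\ell)$ under $\Upsilon_\ell^{-1}$, then $\varphi_B(z)=\sum_\ell f^\ell_*(\varphi_{B_\ell}(b_\ell^{H_4}))$. Setting $\tilde{y}_\ell:=y_\ell+\varphi_{B_\ell}(b_\ell^{H_4})\in F_2 H_4(B_\ell;\Z^{\ol{w}_\ell})$ yields $x=\sum_\ell f^\ell_*(\tilde{y}_\ell)$ and, by the defining property of $b_\ell$, $\Theta_{B_\ell}(\tilde{y}_\ell)=\Theta_{B_\ell}(y_\ell)+\ev^*_\ell\mB_{H_2(B_\ell;\Z G_\ell)}(b_\ell)=\Theta_{B_\ell}(y_\ell)-\Theta_{B_\ell}(y_\ell)=0$. \propH\ for each $(B_\ell,\ol{w}_\ell)$ then gives $\tilde{y}_\ell\in \im\varphi_{B_\ell}$, and naturality of $\varphi$ concludes $x\in \im\varphi_B$.

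The main obstacle is the final coordinated adjustment: a naive choice of the lifts $y_\ell$ will not satisfy $\Theta_{B_\ell}(y_\ell)=0$, and the point of the preparatory lemmas \cref{lem:ind-sesq-square}--\cref{lem:bottom-right-square} is precisely to guarantee that the resulting obstruction $\mB_{H_2(B;\Z\pi)}\Upsilon(z)$ factors through $\Xi^{\Sigma_2}$ in a way that admits, via \cref{lem:the-square}, an explicit factor-wise correction $b$ which simultaneously preserves the identity $x=\sum f^\ell_*(\tilde{y}_\ell)$ and kills every $\Theta_{B_\ell}(\tilde{y}_\ell)$, so that factor-wise \propH\ becomes applicable.
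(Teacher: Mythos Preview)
Your proof is correct and follows essentially the same strategy as the paper: reduce via \cref{lemma:stable-splitting,lem:H-stable} to a connected sum, then use \cref{lem:the-square} together with \cref{cor:kernelB-equals-kernelphi} to reduce to the factor-wise \propH\ established in \cref{lem:propH-finite,lem:Z-proph,lem:PD3H,lem:propH-ZxZ2}. The organisation differs slightly: the paper routes through the long exact sequence of the pair $(B,\bigvee_\ell B_\ell)$ and the relative Leray--Serre isomorphism to first lift $x$ to $H_4(\bigvee_\ell B_\ell;\Z^w)$, and then invokes \cref{lem:H-prod}; you instead decompose the graded piece $E^\infty_{2,2}(B)\cong\bigoplus_\ell E^\infty_{2,2}(B_\ell)$ directly via Shapiro/Mackey and naturality, and lift to the individual $H_4(B_\ell;\Z^{\ol w_\ell})$, in effect unfolding \cref{lem:H-prod} inline. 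Either packaging works.

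One point to tighten: \cref{lem:the-square} as stated only furnishes $b$ with $\Upsilon(z)-\Psi^{\Sigma_2}(b)\in\ker\mB_{H_2(B;\Z\pi)}$; it does not assert $\mB_{H_2(B_\ell;\Z G_\ell)}(b_\ell)=-\mu_\ell$ componentwise, which you need for the final verification $\Theta_{B_\ell}(\tilde y_\ell)=0$. The missing observation is that $\Xi^{\Sigma_2}$ is injective (each $\xi_\ell$ is visibly injective from \cref{def:psi-and-chi}, and the block-diagonal embedding $s^\dagger$ of \cref{remark:splitting-of-Her-for-A-plus-A'} is injective); then commutativity of \cref{lem:the-square} gives $\Xi^{\Sigma_2}\bigl(\oplus_\ell\mB_{H_2(B_\ell;\Z G_\ell)}(b_\ell)\bigr)=\mB_{H_2(B;\Z\pi)}(\Psi^{\Sigma_2}(b))=\mB_{H_2(B;\Z\pi)}\Upsilon(z)=-\Xi^{\Sigma_2}(\mu)$, and injectivity yields the componentwise identity. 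Also, in your displayed relation after applying $\Theta_B$, the map should read $f^\ell_*\circ\xi^{\Sigma_2}_\ell$ (as in \cref{lem:bottom-right-square}), not $\ev^*\circ\xi^{\Sigma_2}_\ell$; passing to $\Her^w(H_2(B;\Z\pi)^\dagger)$ then uses the compatibility $\ev^*\circ\Xi^{\Sigma_2}=\wh\Xi^{\Sigma_2}\circ(\oplus_\ell\ev^*_\ell)$, which is naturality of evaluation.
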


\begin{proof}
    Let $M$ be a closed $4$-manifold with fundamental group $\pi$ and orientation character $w$. We have to show that $(P_2(M),w)$ has {\propH}. By \cref{lem:H-stable}, it suffices to show this for some stabilisation of $M$. By \cref{lemma:stable-splitting}, we can thus assume that $M= \#_i M_i$, where each $M_i$ is a closed $4$-manifold with fundamental group $\pi_1(M_i)=:G_i$ either cyclic, isomorphic to $\Z\times \Z/2$, or a $PD_3$-group. Let $B_i:=P_2(M_i)$ and let $B:=P_2(M)\simeq P_2(\bigvee_iB_i)$.
    For each $i$, denote by $\ol{w}_i$ the composition $G_i\hookrightarrow \pi\xrightarrow{w} C_2$.

    Consider the following diagram with exact rows. The top left and the second-from-bottom left square are given in \cref{lem:top-right-square,lem:bottom-right-square}, respectively. The maps $\varphi_{B_i}$, $\varphi_B$, $\Theta_{B_i}$, and~$\Theta_B$ are as in \eqref{eq:main-square}.
        The map $\wh\Xi^{\Sigma_2}$ is the map induced by $(f^i_* \circ \xi_i^{\Sigma_2})_i$, where the terms come from \cref{lem:bottom-right-square}.
    The second and third rows are part of the long exact sequences for the pair $(B,\bigvee_i B_i)$. The diagram commutes by naturality of $\varphi$, $\xi^{\Sigma_2}$ and~$\Theta$, together with \cref{lem:top-right-square,lem:bottom-right-square}.
    \[
    \begin{tikzcd}[column sep=scriptsize]
        \Z^w\otimes_{\Z\pi}\bigoplus\limits_{i}\Ind_{G_i}^\pi \Gamma(H_2(B_i;\Z G_i)) \ar[r,"\Psi^{\Sigma_2}"]\ar[d,"\cong", start anchor={[yshift=2mm]},"\oplus_i \Upsilon_i^{-1}"'] &\Z^w\otimes_{\Z\pi} \Gamma(H_2(B;\Z\pi))\ar[r]\ar[d,"\cong","\Upsilon^{-1}"'] &\coker(\Psi^{\Sigma_2})\ar[d]\\
        \Z^w\otimes_{\Z\pi}H_4(\bigvee_i B_i;\Z\pi)\ar[r,"\iota_*"]\ar[d,"\vee_i \varphi_{B_i}"] &\Z^w\otimes_{\Z\pi} H_4(B;\Z\pi)\ar[r, twoheadrightarrow]\ar[d, "\varphi_B"] &\Z^w\otimes_{\Z\pi}H_4(B,\bigvee_i B_i;\Z\pi)\ar[d,"\cong"]\\
        H_4(\bigvee_i B_i;\Z^w)\ar[r,"\iota_*"]\ar[d,"\vee_i \Theta_{B_i}"] &H_4(B;\Z^w)\ar[r,]\ar[d, "\Theta_B"] &H_4(B,\bigvee_i B_i;\Z^w)\ar[d]\\
        \bigoplus\limits_i\Her^{\ol{w}_i}(H^2(B_i;\Z G_i))\ar[r,"\wh\Xi^{\Sigma_2}"]  &\Her^w(H^2(B;\Z\pi))\ar[r] &\coker(\wh\Xi^{\Sigma_2})\\        \bigoplus\limits_i\Her^{\ol{w}_i}(H_2(B_i;\Z G_i)^\star)\ar[r,"\Xi^{\Sigma_2}"]\ar[u, end anchor={[yshift=2mm]}, "\oplus \ev^*","\cong"']  &\Her^w(H_2(B;\Z\pi)^\dagger)\ar[r]\ar[u, "\ev^*", "\cong"']   &\coker(\Xi^{\Sigma_2})\ar[u, "\cong"']
    \end{tikzcd}
    \]
    The vertical map second from the top on the right is an isomorphism by the relative Leray--Serre spectral sequence \cite{switzer}*{Theorem~15.27, Remark~2, p.~351} since $H_j(B,\bigvee_i B_i;\Z\pi)=0$ for $j<4$. The maps denoted by $\ev^*$ are isomorphisms by \cref{prop:ev-inj} since $\pi$ is a $3$-manifold group. Note that the composition of the five maps in the middle column is the map $\mB_{H_2(B;\Z\pi)}$.

    Note that \[H_3\big(\bigvee_i B_i;\Z\pi\big)\cong \bigoplus_iH_3(B_i;\Z\pi)\cong \bigoplus_i\Ind_{G_i}^\pi H_3(B_i;\Z G_i),\]
    and $0=\pi_3(B_i)\to H_3(B_i;\Z G_i)$ is surjective for each $i$ by the Hurewicz theorem. Therefore $H_3\big(\bigvee_i B_i;\Z\pi\big)=0$ and the map $\Z^w\otimes_{\Z\pi}H_4(B;\Z\pi)\to\Z^w\otimes_{\Z\pi}H_4(B,\bigvee_i B_i;\Z G_i)$ on the second row is surjective.

    Let $x\in \ker(H_4(B;\Z^w)\to H_4(\pi;\Z^w))$ be such that $\langle \alpha,\beta\cap x\rangle=0$ for all $\alpha,\beta\in H^2(B;\Z\pi)$. We will show that $x$ admits a lift to $\Z^w\otimes_{\Z\pi} H_4(B;\Z\pi)$. This will complete the proof that $(B,w)$ has {\propH}.

    Let $\overline{x}$ be the image of $x$ in $H_4(B,\bigvee_i B_i;\Z^w)$
    and let $\ol{z}$ be the preimage of $\ol{x}$ in $\Z^w\otimes_{\Z\pi}H_4(B,\bigvee_i B_i;\Z\pi)$ under the isomorphism $\Z^w\otimes_{\Z\pi}H_4(B,\bigvee_i B_i;\Z\pi)\xrightarrow{\cong} H_4(B,\bigvee_i B_i;\Z^w)$. Let $z\in \Z^w\otimes_{\Z\pi}H_4(B;\Z\pi)$ be a preimage of $\ol{z}$ and let $y\in \Z^w\otimes_{\Z\pi}\Gamma(H_2(B;\Z\pi))$ be a preimage under the top middle isomorphism.  Let $u$ denote the image of $y$ in $\Her^w(H^2(B;\Z\pi))$. By the definition of the map $\Theta_B$, the element $x$ maps trivially to $\Her^w(H^2(B;\Z\pi))$. Therefore, $z$ maps to zero in $\coker(\wh\Xi^{\Sigma_2})$, which implies that $u$ lies in the image of the map $\wh\Xi^{\Sigma_2}$. Consider the image $v$ of $y$ in $\Her^w(H_2(B;\Z\pi)^\dagger)$ under the map $\mB_{H_2(B;\Z\pi)}$. Using the vertical isomorphisms between the two bottom rows of the diagram, we see that $v$ lies in the image of $\Xi^{\Sigma_2}$, since $u$ lies in the image of $\wh\Xi^{\Sigma_2}$.

    By \cref{lem:the-square}, there exists $\wt{b}\in \Z^w\otimes_{\Z\pi} \bigoplus_i\Gamma(H_2(B;\Z\pi))$ such that $y-\Psi^{\Sigma_2}(\wt{b})\in \Z^w\otimes_{\Z\pi} \Gamma(H_2(B;\Z\pi))$ maps trivially to $\Her^w(H_2(B;\Z\pi)^\dagger)$.
    In other words $y-\Psi^{\Sigma_2}(\wt{b})$ is in the kernel of $\mB_{H_2(B;\Z\pi)}$. We know from \cref{cor:kernelB-equals-kernelphi} that in our setting the kernel of $\mB_{H_2(B;\Z\pi)}$ equals the kernel of $\varphi_B$. Therefore, the element $y-\Psi^{\Sigma_2}(\wt{b})$ also maps trivially to $H_4(B;\Z^w)$. Mapping further to $H_4(B,\bigvee_i B_i;\Z^w)$, it follows that $\ol{z}=0$. This implies that $\ol{x}$ is zero, so $x$ lies in the image of $H_4(\bigvee_iB_i;\Z^w)$. Let $\wt{x}\in H_4(\bigvee_iB_i;\Z^w)$ be a preimage of $x$.

    We know that each individual $(B_i, w\vert_{\pi_1(B_i})$ has {\propH} by \cref{lem:propH-finite,lem:Z-proph,lem:PD3H,lem:propH-ZxZ2}. Therefore by \cref{lem:H-prod}, $(\bigvee_i B_i,w)$ has {\propH} as well. We will apply this momentarily. First we check that $\wt{x}$ satisfies the desired conditions. We note that $\wt{x}$ lies in $\ker(H_4(\bigvee_i B;\Z^w)\to H_4(\pi;\Z^w))$, since the map $\bigvee_i B\to B\pi$ factors through $B$ and since $x$ maps to $0$ in $H_4(\pi;\Z^w)$. Moreover, for any $\alpha, \beta\in H^2(\bigvee_i B_i;\Z\pi)$, we have
    \[
    \langle \alpha,\beta\cap \wt{x}\rangle
    =\langle \alpha,\beta\cap \iota_*(x)\rangle
    = \langle \alpha,\iota_*(\iota^*(\beta)\cap x)\rangle
    = \langle \iota^*(\alpha),\iota^*(\beta)\cap x\rangle
    =0,
    \]
    where the final equality follows by our assumption on $x$. Now we can apply the fact that $(\bigvee_i B, w)$ has {\propH} to conclude that $\wt{s}$ admits a lift $\wt{t}$ in $\Z^w\otimes_{\Z\pi}H_4(\bigvee_i B_i;\Z\pi)$. The image of $\wt{s}$ in $\Z^w\otimes_{\Z\pi}H_4(B;\Z\pi)$ is the required lift of $x$. It follows that $(B,w)$ has {\propH} as needed.
\end{proof}

\section{Proof of \texorpdfstring{\cref{thm:main-homotopy}}{the main theorem}}\label{sec:mainhomotopy-proof}

With all our preliminary results in hand, we are finally able to prove the main theorem.
We recall the statement for the convenience of the reader.

\begin{reptheorem}{thm:main-homotopy}
Let $M$ and $M'$ be closed $4$-manifolds with fundamental group $\pi$ and orientation character $w$, such that $\pi$ is a $3$-manifold group and $(\pi,w)$ is admissible.

Then every isomorphism $\smash{Q(M) \xrightarrow{\cong} Q(M')}$ between the quadratic $2$-types of $M$ and $M'$ is realised by a homotopy equivalence.
In particular, $M$ and $M'$ are homotopy equivalent if and only if they have isomorphic quadratic $2$-types.
Here, homotopy equivalences are assume to be basepoint and local orientation preserving.
\end{reptheorem}

\begin{proof}
We will apply \cref{cor:main-strategy}.
Assume that $M$ and $M'$ have isomorphic quadratic $2$-types. By definition this means that for $B$ the Postnikov $2$-type of $M$ (and equivalently of $M'$), we have $3$-connected maps $f\colon M\to B$ and $f'\colon M'\to B$ inducing a given identification of the quadratic $2$-type. Let $\pi:=\pi_1(B)$.
By \cref{prop:H2FA-3groups}, $\pi_2(M)$ and $\pi_2(M')$ are free as abelian groups, so \cref{thm:homotopyclass}\,\eqref{item:0} holds. \cref{thm:homotopyclass}\,\eqref{item:2} holds by \cref{cor:kernelB-equals-kernelphi}.
Also by \cref{prop:ev-inj}, $\ev^*$ is injective (and in fact an isomorphism), so \cref{thm:homotopyclass}\,\eqref{item:3} holds.

Next we show that $M$ and $M'$ satisfy \cref{thm:homotopyclass}\,\eqref{item:1}, for $k=1$. Consider the element \[x:= f_*[M]-(f')_*[M']\in H_4(B;\Z^w).\]
By \cref{cor:images-fund-classes-in-group-homology}, which applies since $M$ and $M'$ have isomorphic quadratic $2$-types, $x$ maps to the trivial element in $H_4(\pi;\Z^w)$.
Moreover, since $M$ and $M'$ have isomorphic equivariant intersection forms, we also know that $\langle \alpha, \beta\cap x\rangle=0$ for every $\alpha, \beta\in H^2(B;\Z\pi)$.
By \cref{cor:propH-3mfd}, $\pi$ has \propH.
Then by \propH, we conclude that $x\in H_4(B;\Z^w)$ is contained in the image of $\Z^w\otimes_{\Z\pi} H_4(B;\Z\pi)$. Therefore the given $M$ and $M'$ satisfy \cref{thm:homotopyclass}\,\eqref{item:1}, for $k=1$.

Since all the requirements of \cref{thm:homotopyclass} are satisfied, we see by \cref{cor:main-strategy} that $M$ and $M'$ are homotopy equivalent, via a homotopy equivalence inducing the desired maps $(f')_*^{-1}\circ f_*\colon \pi_1(M) \to \pi_1(M')$ and $(f')_*^{-1}\circ f_*\colon \pi_2(M) \to \pi_2(M')$.
\end{proof}

\section{Applications to geometrically 2-dimensional and finite groups}\label{sec:oldresults}
In this section we reprove a couple of results of Hambleton, Kreck, and Teichner, in order to demonstrate the wide applicability of our methods. That is, as well as dealing with new fundamental groups, our methods recover previously known results.

\subsection{Geometrically 2-dimensional groups}\label{sec:geom2d}

In \cite{HKT09}*{Theorem~C}, Hambleton, Kreck, and Teichner showed that closed, oriented $4$-manifolds with geometrically 2-dimensional fundamental group $\pi$ that satisfies the Farrell-Jones conjecture, are classified up to $s$-cobordism by their quadratic $2$-type together with their Kirby-Siebenmann invariant and their $w_2$-type. It follows that such $4$-manifolds are classified up to homotopy equivalence by their quadratic $2$-type together with the $w_2$-type.

The $w_2$-type is determined by the homotopy type but, as we show in the next example, the $w_2$-type is not determined by the quadratic $2$-type. Thus the $w_2$-type has to be included in the data for a complete homotopy classification, and the analogue of the statement of \cref{thm:main-homotopy} does not hold for geometrically 2-dimensional groups.

\begin{example}\label{example-Z-squared}
Let $S^2\to S^2\wt{\times} T^2\to T^2$ be the unique $S^2$-bundle over $T^2$ that is orientable but not spin. Then both $S^2\times T^2$ and $S^2\wt{\times} T^2$ have fundamental group $\Z^2$ and second homotopy group $\Z$ with the trivial action of $\Z^2$. The $k$-invariants are trivial since $\Z^2$ is geometrically $2$-dimensional. Since the radical of the intersection form is $H^2(\Z^2;\Z[\Z^2])\cong \Z$, the intersection forms are also trivial.

In particular, $S^2\times T^2$ and $S^2\wt{\times} T^2$ have quadratic $2$-type $(\Z^2,\Z,0,0)$, but are not homotopy equivalent since only one of them is spin.

We discuss the failure of the conditions in \cref{thm:homotopyclass}, which is that \eqref{item:2} does not hold. Thus these examples are consistent with \cref{thm:homotopyclass}.
The $2$-type $B$ is $\CP^{\infty} \times T^2$, so $H_*(B;\Z[\Z^2]) \cong H_*(\CP^{\infty};\Z)$, with a trivial $\Z^2$-action.
We therefore see using the K\"{u}nneth theorem that  \[\varphi_B \colon \Z \otimes_{\Z[\Z^2]} H_4(B;\Z[\Z^2]) \cong \Z \to H_4(B;\Z) \cong \Z^2\]  is injective.
On the other hand the map
\[\mathcal{B}_{H_2(B;\Z[\Z^2])} \circ \Upsilon \colon \Z \otimes_{\Z[\Z^2]} H_4(B;\Z[\Z^2]) \cong \Z \to \Her(H_2(B;\Z[\Z^2])^\dagger) \cong \Her(\Z^{\dagger})\]
is zero, because $\Z^{\dagger} := \Hom_{\Z[\Z^2]}(\Z,\Z[\Z^2]) =0$ and hence the codomain is trivial.  So it is not possible for the nontrivial kernel of $\mathcal{B}_{H_2(B;\Z[\Z^2])} \circ \Upsilon$ to be contained in the trivial kernel of $\varphi_B$.
\end{example}

The \emph{reduced Postnikov $2$-type} $P$ of a manifold $M$ is a $3$-coconnected CW complex that is determined up to homotopy equivalence by the existence of a $2$-connected map $c_M\colon M\to P$ whose kernel on $\pi_2$ is the radical $R$ of the equivariant intersection form $\lambda_M$. In particular, $\pi_2(P)\cong \pi_2(M)/R$. Such a map $c_M\colon M\to P$ is called a reduced $3$-equivalence.

Despite \cref{example-Z-squared}, the quadratic $2$-type determines the image of the fundamental class in the homology of the reduced Postnikov $2$-type, when the fundamental group is geometrically $2$-dimensional~\cite{HKT09}*{Theorem~5.13}.  This theorem is a key step in the proof of \cite{HKT09}*{Theorem~C}.  We perceive a problem with the proof of the former theorem, in particular with the diagram at the start of the proof. The fixed points $\Gamma(A)^{\pi}$ can be trivial, and the map $\Gamma(A)_{\pi} \to \Gamma(A)^{\pi}$ need not be defined. This map is used in showing that the diagram commutes.
We give a new proof of \cite{HKT09}*{Theorem~5.13} below, as a corollary of \cref{thm:homotopyclass}.

\begin{corollary}[{\cite{HKT09}*{Theorem~5.13}}]
\label{cor:2-dim}
Let $\pi$ be a geometrically 2-dimensional group and let $M$ and $N$ be closed $4$-manifolds with fundamental group $\pi$, orientation character $w$, and the same reduced Postnikov $2$-type $P$. Two reduced $3$-equivalences $c_M\colon M \to P$ and $c_N\colon N \to P$ satisfy
$(c_M)_*[M] = (c_N )_*[N]\in H_4(P;\Z^w)$
if and only if $(c_M)^*\lambda_M=(c_N)^*\lambda_N$.
\end{corollary}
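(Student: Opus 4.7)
The plan is to apply \cref{thm:homotopyclass} with $B = P$ and $k = 1$. Since $\pi$ is geometrically 2-dimensional, $\pi$ is torsion-free and $\operatorname{cd}\pi \leq 2$, so $H^i(\pi;\Z\pi) = 0$ and $H_i(\pi;A) = 0$ for all $i \geq 3$ and every $\Z\pi$-module $A$. Condition \eqref{item:3} of \cref{thm:homotopyclass} is immediate from \cref{lem:ev-inj} since $H^3(\pi;\Z\pi) = 0$.

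The key technical step is to show that $\pi_2(P)$ is finitely generated projective over $\Z\pi$. I would first use the universal coefficient spectral sequence (\cref{remark:ucss-subgroup-pi2}) together with $H^3(\pi;\Z\pi) = 0$ to identify the radical $R$ of $\lambda_M$ with $H^2(\pi;\Z\pi)$, giving $\pi_2(P) = \pi_2(M)/R \cong \pi_2(M)^\dagger$. Using a length-2 free resolution $0 \to F_2 \xrightarrow{d_2} F_1 \to F_0 \to \Z$ of $\Z$ over $\Z\pi$ (available since $\operatorname{cd}\pi \leq 2$), \cite{Hambleton-Kreck:1988-1}*{Proposition~2.4} gives that $\pi_2(M)$ is stably isomorphic to $\coker(d_2^\dagger)$. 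Dualising and using reflexivity of finite-rank free $\Z\pi$-modules, $\pi_2(P) \cong \pi_2(M)^\dagger$ is stably isomorphic to $(\coker d_2^\dagger)^\dagger = \ker d_2 = 0$, where $\ker d_2 = 0$ because $d_2$ is the last nontrivial map of a length-2 resolution. So $\pi_2(P)$ is stably free, hence finitely generated projective. Condition \eqref{item:0} follows, since a summand of a finite-rank free $\Z\pi$-module is free as an abelian group.

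Applying \cref{lem:sequence-for-pi2-projective} with the projectivity of $\pi_2(P)$ and the vanishings $H_4(\pi;\Z^w) = H_5(\pi;\Z^w) = 0$ shows that $\varphi_P$ is an isomorphism; this verifies condition \eqref{item:1} automatically. For condition \eqref{item:2}, since $\pi$ is torsion-free (so no element of order two has $w$-value $-1$) and $\pi_2(P)$ is finitely generated projective, \cref{prop:free} yields injectivity of $\mB_{H_2(P;\Z\pi)}$; composing with the isomorphism $\Upsilon$ gives $\ker(\mB_{H_2(P;\Z\pi)} \circ \Upsilon) = 0 \subseteq \ker\varphi_P$. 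With all four conditions of \cref{thm:homotopyclass} verified, the desired biconditional follows at once. The hard part will be establishing the projectivity of $\pi_2(P)$, which is the linchpin allowing both the reduction of the exact sequence in \cref{lem:sequence-for-pi2-projective} to an isomorphism (giving \eqref{item:1}) and the application of \cref{prop:free} (giving \eqref{item:2}).
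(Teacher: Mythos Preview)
Your proposal is correct and follows essentially the same route as the paper: verify conditions \eqref{item:0}--\eqref{item:3} of \cref{thm:homotopyclass} for $B=P$ and $k=1$, using that $\pi_2(P)$ is stably free together with \cref{lem:ev-inj}, \cref{prop:free}, and \cref{lem:sequence-for-pi2-projective}. The only difference is that the paper invokes \cite{HKT09}*{Corollary~4.4} for the stable freeness of $\pi_2(P)$, whereas you supply a self-contained argument (via $\pi_2(P)\cong\pi_2(M)^\dagger$ and dualising a length-two resolution) that amounts to reproving that corollary.
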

\begin{proof}
If $\pi$ is geometrically 2-dimensional, then $H^3(\pi;\Z\pi)=0$. Then $\ev^*$ is injective by~\cref{lem:ev-inj}, so \cref{thm:homotopyclass}\,\eqref{item:3} holds.  Furthermore, $\pi_2(M)/R\cong \pi_2(P)\cong H_2(P;\Z\pi)$ is stably free for any $4$-manifold with geometrically 2-dimensional fundamental group by \cite{HKT09}*{Corollary~4.4}.
Hence $H_2(P;\Z\pi)$ is projective as a $\Z\pi$-module, and is therefore free as an abelian group, so \cref{thm:homotopyclass}\,\eqref{item:0} holds. In addition $\B_{H_2(P;\Z\pi)}$ is injective by \cref{prop:free}, since $H_2(P;\Z\pi)$ is projective. Hence \cref{thm:homotopyclass}\,\eqref{item:2} holds automatically.

Since $\pi$ is geometrically 2-dimensional and $H_2(P;\Z\pi)$ is projective, $\varphi_P \colon \Z^w\otimes_{\Z\pi} H_4(P;\Z\pi)\to H_4(P;\Z^w)$ is an isomorphism by \cref{lem:sequence-for-pi2-projective}. Hence \cref{thm:homotopyclass}\,\eqref{item:1} holds, and in particular for $k=1$.  Thus the corollary follows from \cref{cor:main-strategy}.
\end{proof}

\subsection{Finite groups}\label{sec:finite}

We end this section by reproving the result, mentioned just below~\cref{prop:fundclass}, of Hambleton--Kreck~\cite{Hambleton-Kreck:1988-1}*{Theorem~1.1 (i)} and Teichner~\cite{teichner-phd} (cf.~\cite{KT}*{Corollary~1.6}).  In contrast with the previous subsection, no negative inference should be drawn about the published proofs.  Rather, the purpose is to demonstrate that our method is consistent with, and an extension of, the previously known methods.

\begin{corollary}[\cites{Hambleton-Kreck:1988-1,teichner-phd,KT}]\label{cor:finitepi}
Let $\pi$ be a finite group. Let $M$ and $M'$ be closed $4$-manifolds with fundamental group~$\pi$ and orientation character $w$.  Assume that $\Z^w\otimes_{\Z\pi}\Gamma(\pi_2(M))$ is torsion-free.

Then every isomorphism $Q(M) \xrightarrow{\cong} Q(M')$ between the quadratic $2$-types of $M$ and $M'$ is realised by a $($basepoint and local orientation preserving$)$ homotopy equivalence.
\end{corollary}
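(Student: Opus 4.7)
The plan is to apply \cref{thm:homotopyclass} with $k = |\pi|$ rather than $k = 1$, and then use Teichner's theorem on the torsion subgroup of $H_4(B;\Z^w)$ to pass from $|\pi|\cdot(f_*[M] - f'_*[M']) = 0$ to the genuine equality $f_*[M] = f'_*[M']$. The desired homotopy equivalence then comes from \cref{prop:fundclass}. Let $B := P_2(M) \simeq P_2(M')$ and choose $3$-connected maps $f\colon M \to B$ and $f'\colon M' \to B$ compatible with the given isomorphism $Q(M)\cong Q(M')$.

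First I verify conditions \eqref{item:0}, \eqref{item:2}, \eqref{item:3} of \cref{thm:homotopyclass}. Because $\pi$ is finite, $\Z\pi \cong \Ind_{1}^{\pi}\Z$, so Shapiro's lemma gives $H^n(\pi;\Z\pi) = 0$ for all $n\geq 1$. From $H^2(\pi;\Z\pi) = 0$ and \cref{lem:P2TA-H2free}, the group $\pi$ is \HtFA, so $H_2(B;\Z\pi)\cong \pi_2(M)$ is free as an abelian group, yielding \eqref{item:0}. From $H^3(\pi;\Z\pi) = 0$ and \cref{lem:ev-inj} (or \cref{cor:ev*-pi-finite}), the map $\ev^*$ is injective, yielding \eqref{item:3}. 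For \eqref{item:2}, \cref{prop:finitepi} identifies the kernel of $\mB_{H_2(B;\Z\pi)}$ with the torsion subgroup of $\Z^w\otimes_{\Z\pi}\Gamma(\pi_2(M))$, which vanishes by hypothesis; since $\Upsilon$ is an isomorphism, $\ker(\mB_{H_2(B;\Z\pi)}\circ\Upsilon) = 0$ and \eqref{item:2} holds trivially.

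Next I establish \eqref{item:1} with $k := |\pi|$. Set $x := f_*[M] - f'_*[M']\in H_4(B;\Z^w)$. Since $|\pi|$ annihilates the finite group $H_4(\pi;\Z^w)$, the class $|\pi|\cdot x$ lies in $\ker(H_4(B;\Z^w)\to H_4(\pi;\Z^w))$. The isomorphism of quadratic $2$-types gives $f_*\lambda_M = f'_*\lambda_{M'}$, hence $\langle\alpha,\,\beta\cap(|\pi|\cdot x)\rangle = 0$ for all $\alpha,\beta\in H^2(B;\Z\pi)$. By \cref{lem:propH-finite}, the pair $(\pi,w)$ has \propH, so $|\pi|\cdot x\in \im\varphi_B$, verifying \eqref{item:1} at $k=|\pi|$. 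Applying \cref{thm:homotopyclass} then gives $|\pi|\cdot x = 0$, so $x$ is torsion in $H_4(B;\Z^w)$.

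Finally I upgrade this to $x = 0$. Teichner's theorem (used in the proof of \cref{lem:propH-finite}) guarantees that for finite $\pi$ the torsion subgroup of $H_4(B;\Z^w)$ lies in the image of $\varphi_B$, so $x = \varphi_B(z)$ for some $z\in \Z^w\otimes_{\Z\pi}H_4(B;\Z\pi)$. Commutativity of \eqref{eq:main-square} together with $\Theta_B(x) = f_*\lambda_M - f'_*\lambda_{M'} = 0$ gives $\ev^*(\mB_{H_2(B;\Z\pi)}(\Upsilon(z))) = 0$; the injectivity of both $\ev^*$ and $\mB_{H_2(B;\Z\pi)}\circ\Upsilon$ established above then forces $z = 0$ and hence $x = 0$. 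An application of \cref{prop:fundclass} produces the required homotopy equivalence, which by construction induces the prescribed isomorphisms on $\pi_1$ and $\pi_2$. The main obstacle is that an isomorphism of quadratic $2$-types does not obviously force the fundamental classes to agree in $H_4(\pi;\Z^w)$, so \eqref{item:1} cannot be checked directly at $k = 1$; routing through $k = |\pi|$ and using Teichner's torsion result is precisely where the hypothesis that $\Z^w\otimes_{\Z\pi}\Gamma(\pi_2(M))$ is torsion-free is exploited.
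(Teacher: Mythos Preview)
Your argument is correct, but it takes a different route from the paper's own proof, and the difference is worth noting.

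The paper establishes \eqref{item:1} directly with $k=1$: it quotes the Hambleton--Kreck exact sequence
\[
0 \to \Z^w\otimes_{\Z\pi} H_4(B;\Z\pi) \xrightarrow{\varphi_B} H_4(B;\Z^w) \to \Z/|\pi|
\]
and then invokes a result of Teichner (cf.\ \cite{KT}*{Theorem~3.4}) that isomorphic quadratic $2$-types force $f_*[M]$ and $f'_*[M']$ to have the same image in $\Z/|\pi|$. This immediately gives $x := f_*[M]-f'_*[M'] \in \im\varphi_B$, and then \cref{cor:main-strategy} finishes. The injectivity of $\varphi_B$ (read off from the same exact sequence) is also what the paper uses to reduce \eqref{item:2} to the injectivity of $\mB$.

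Your route instead stays entirely inside the machinery the paper builds: you use \propH\ (\cref{lem:propH-finite}) together with $k=|\pi|$ in \cref{thm:homotopyclass} to deduce that $x$ is torsion, then invoke Teichner's \emph{other} result (the torsion subgroup of $H_4(B;\Z^w)$ lies in $\im\varphi_B$, as in the proof of \cref{lem:propH-finite}) to get $x\in\im\varphi_B$, and close with the diagram chase. This is a nice illustration that the general-$k$ case of \cref{thm:homotopyclass} is genuinely useful, something the paper remarks it does not exploit elsewhere. A minor streamlining: once you have $x\in\im\varphi_B$, you have verified \eqref{item:1} with $k=1$, so you could simply invoke \cref{cor:main-strategy} rather than re-doing its proof by hand in your final paragraph.
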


\begin{proof}
Certainly if $M$ and $M'$ are homotopy equivalent then they have isomorphic quadratic $2$-types. For the converse, suppose that $M$ and $M'$ have isomorphic quadratic $2$-types. We will apply \cref{cor:main-strategy}. Let $B$ denote the Postnikov  $2$-type of $M$ (and therefore also of $M'$). Therefore there are $3$-connected maps $f\colon M\to B$ and $f'\colon M'\to B$.

We need to check that the conditions of \cref{thm:homotopyclass} are satisfied.
Since $\pi$ is finite, the dual of the evaluation map $\ev^*\colon \Her(H_2(B;\Z\pi)^\dagger)\to \Her(H^2(B;\Z\pi))$ is injective by \cref{cor:ev*-pi-finite}. This establishes  \cref{thm:homotopyclass}\,\eqref{item:3}. Since $M$ is closed and has finite fundamental group, as an abelian group $H_2(B;\Z\pi)\cong H_2(M;\Z\pi)\cong H_2(\wt{M};\Z) \cong H^2(\wt{M};\Z)$ where $\wt{M}$ is the universal cover. This is free by the universal coefficient theorem, since $H_1(\wt{M};\Z)=0$.  This establishes \cref{thm:homotopyclass}\,\eqref{item:0}.

Further, there is an exact sequence $0\to \Z^w\otimes_{\Z\pi} H_4(B;\Z\pi)\to H_4(B;\Z^w)\to \Z/|\pi|$ by~\cite{Hambleton-Kreck:1988-1}*{p.~89}. Teichner~\cite{teichner-phd} (see also~\cite{KT}*{Theorem~3.4}) showed that when $M$ and $M'$ have isomorphic quadratic $2$-types, the  elements $f_*[M]$ and $f'_*[M']$ in  $H_4(B;\Z^w)$ map to the same element in $\Z/|\pi|$. Therefore, $f_*[M]-f'_*[M']$ lies in the image of the map $\Z^w\otimes_{\Z\pi} H_4(B;\Z\pi)\to H_4(B;\Z^w)$, giving \cref{thm:homotopyclass}\,\eqref{item:1} with $k=1$. Finally since the map $ \Z^w\otimes_{\Z\pi} H_4(B;\Z\pi)\to H_4(B;\Z^w)$ is injective, in order to establish \cref{thm:homotopyclass}\,\eqref{item:2} we have to show that the map  $\B_{H_2(B;\Z\pi)}$ is injective. But this follows from \cref{prop:finitepi} when $\Z^w\otimes_{\Z\pi}\Gamma(H_2(B;\Z\pi))$ is torsion-free, which holds by hypothesis.  The result now follows from \cref{cor:main-strategy}.
\end{proof}

\section{A homeomorphism classification for oriented \texorpdfstring{$4$-manifolds}{4-manifolds} with infinite dihedral fundamental group}\label{sec:homeo}

Let $D_\infty:=\langle a,b\mid a^2,b^2\rangle \cong \Z/2 * \Z/2$ be the infinite dihedral group. In this section we will prove \cref{thm:homeo-class} from the introduction. The following theorem will be a key ingredient.

\begin{theorem}
\label{thm:homeo-homotopy-stable}
Let $M$ and $M'$ be closed, oriented $4$-manifolds with fundamental group $D_\infty$. Then $M$ and $M'$ are homeomorphic over $D_\infty$ if and only if they are homotopy equivalent and stably homeomorphic over~$D_\infty$.
\end{theorem}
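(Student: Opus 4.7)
The plan is to apply the topological surgery exact sequence in dimension $5$. The group $D_\infty$ is good, being solvable via $1 \to \Z \to D_\infty \to \Z/2 \to 1$, so topological surgery is available; moreover $\Wh(D_\infty) = 0$, so $s$- and $h$-structure sets coincide. A homotopy equivalence $f \colon M \to M'$ over $D_\infty$ represents a class $[f] \in \sS^{\Top}(M')$, and my strategy is to show $[f]$ is the trivial element by computing its normal invariant and invoking $L_5(\Z[D_\infty]) = 0$.

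First I would analyse the image of $[f]$ under the normal invariant map $\sS^{\Top}(M') \to \cN^{\Top}(M') \cong [M',\, G/\Top]$. For an oriented closed topological $4$-manifold, obstruction theory on the low Postnikov stages of $G/\Top$ furnishes an injection $[M',\,G/\Top] \hookrightarrow H^2(M'; \Z/2) \oplus H^4(M'; \Z)$, under which the two components of $\nu(f)$ correspond, up to sign and standard normalisations, to the difference of $w_2$-classes and the difference of Kirby--Siebenmann invariants, respectively. The $H^2$ component vanishes by the Wu formula $w_2 = v_2$ in the oriented case, since $v_2$ is determined by the mod~$2$ intersection form and hence is a homotopy invariant preserved by $f$. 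The $H^4$ component vanishes because $\ks$ is additive under connected sum and $\ks(S^2 \times S^2) = 0$, so the stable homeomorphism hypothesis forces $\ks(M) = \ks(M')$. Hence $\nu(f) = 0$.

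By exactness of the surgery sequence, $[f]$ then lies in the image of $L_5(\Z[D_\infty]) \to \sS^{\Top}(M')$. The decisive input is the vanishing $L_5(\Z[D_\infty]) = 0$, which follows from Cappell's amalgamated-product decomposition
\[
L_5(\Z[D_\infty]) \cong L_5(\Z[\Z/2]) \oplus_{L_5(\Z)} L_5(\Z[\Z/2]) \oplus \UNil_5(\Z;\Z^-,\Z^-),
\]
together with Wall's computations $L_5(\Z) = L_5(\Z[\Z/2]) = 0$ and Cappell's vanishing of $\UNil$ groups in odd degrees. Thus $[f] = 0 \in \sS^{\Top}(M')$, so $f$ is homotopic to a homeomorphism; this homeomorphism is automatically over $D_\infty$ because $f$ is. The main substantive hurdle is the $L$-group computation $L_5(\Z[D_\infty]) = 0$, which ultimately rests on Cappell's vanishing theorem for $\UNil$ in odd degrees; a secondary issue is to identify the $H^2$- and $H^4$-components of the normal invariant with the stated characteristic-class differences, which requires some care with the Postnikov $k$-invariants of $G/\Top$ but is a standard computation for oriented $4$-manifolds.
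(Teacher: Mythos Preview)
Your argument has a genuine gap: the $H^2(M';\Z/2)$-component of the normal invariant is \emph{not} the difference of second Stiefel--Whitney classes. That primary obstruction in $H^2(M';\pi_2(G/\Top))$ is the codimension-two Kervaire--Arf splitting invariant, and it can be nonzero for a homotopy equivalence even though $w_2$ is always preserved by the Wu formula. Indeed, the paper computes $\sS(M) \cong H_2(M;\Z/2)$ (Proposition~\ref{prop:structure-set-dihedral}), so the structure set is typically large and every nontrivial element has nonzero $H^2$-component. Your argument, as written, would show that homotopy equivalent $D_\infty$-manifolds with equal $\ks$ are already homeomorphic; but in the $w_2$-type $x^2+y^2$ case there are four pairwise non-homeomorphic manifolds in a single homotopy type, all with the same Kirby--Siebenmann invariant, distinguished only by the invariant~$s$ (Theorem~\ref{thm:stableclasss}\,\eqref{it:dinfty-iii}). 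So the normal invariant cannot be killed by the hypotheses alone.

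The paper's route is different. It first computes $\sS(M) \cong H_2(M;\Z/2)$ from $L_5(\Z D_\infty)=0$ together with the torsion-freeness of $L_4(\Z D_\infty)$, and then analyses the quotient by self-homotopy equivalences of $M$. Stong's construction shows that any class in $H_2(M;\Z/2)$ represented by a map $R\colon\RP^2 \to M$ with $R^*w_2(M)=0$ is realised by a self-equivalence; depending on the $w_2$-type this leaves $1$, $2$, or $4$ classes in $\sS(M)/\hAut(M,D_\infty)$. These residual classes are then shown to coincide with the stable homeomorphism classes within the homotopy type, detected by $\ks$ and, in the $x^2+y^2$ case, by~$s$. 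Thus the stable homeomorphism hypothesis enters not by forcing $\nu(f)=0$, but by pinning down the orbit of $[f]$ under the $\hAut$-action.
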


Here and throughout the section, we assume that all homotopy equivalences, homeomorphisms, and stable homeomorphisms are basepoint and orientation preserving, as per our conventions.

 In \cref{subsection:D-inftystable-homeo-classn} we compute the stable classification using modified surgery.
In \cref{subsection:D-infty-structure-set} we compute the structure set using the surgery exact sequence.
In \cref{subsection:D-infty-homeo-classn} we then combine these two computations to obtain \cref{thm:homeo-homotopy-stable} and hence \cref{thm:homeo-class}.
Finally in \cref{subsection:D-infty-replacing-k-invariant} we show that in some situations one need not compute the $k$-invariant, and instead computing the $w_2$-type suffices.

\subsection{The stable homeomorphism classification}\label{subsection:D-inftystable-homeo-classn}

By modified surgery \cite{kreck}, the stable classification is determined by the bordism group of the normal 1-type of the manifold. For $M$ oriented this only depends on $\pi_1(M)$ and the second Stiefel--Whitney class $w_2(M)$. More precisely, it is given by $\pi_1(M)$ and the $w_2$-type. By definition, the $w_2$-type is $\infty$ if the universal cover $\wt M$ is not spin and otherwise the $w_2$-type is  given by an element $\theta\in H^2(\pi_1(M);\Z/2)$ such that $c^*\theta=w_2(M)$, where $c\colon M\to B\pi_1(M)$ is a classifying map. Up to automorphisms of $D_\infty$, there are four $w_2$-types, as follows.
Consider the standard projections $p_a,p_b \colon D_\infty\to \Z/2$ determined by $p_a(a) = 1, p_a(b) =0$ and $p_b(a)=0, p_b(b)=1$.  Let $\psi\in H^1(\Z/2;\Z/2)$ be the generator.
Define elements of $H^1(D_\infty;\Z/2)$ by \[x:= p_a^*(\psi) \text{ and } y := p_b^*(\psi).\]
Then the possibilities for the $w_2$-type are \[\theta\in\{\infty,0,x^2,x^2+y^2\}.\]

\begin{remark}
\label{khldflkhfglufdalkgualkguvf}
For $\theta=0$, the stable classification is determined by the signature.
For the cases $\theta=\infty,x^2$, the stable classification is determined by the signature and the Kirby--Siebenmann invariant. For $\theta=0$ or $x^2$ this is similar to the computation in \cite{teichner-star}*{Lemma~2} using that $\Omega_4^{\TopSpin}(B\Z/2)\cong 8\Z$ via the signature \cite{teichner-phd}*{Section~4.2}. For $\theta=\infty$, bordism over the normal $1$-type is given by oriented bordism over $BD_\infty$ \cite{teichner-phd}*{Example~2.1.2}, and we have $\Omega_4 (BD_\infty)\cong \Z$ via the signature.
\end{remark}

As mentioned in the introduction, we will use the invariants from \cref{def:s-invt}. If the $w_2$-type of $M$ is $0,\infty$, or $x^2$, this stable homeomorphism invariant is determined by the signature and the Kirby--Siebenmann invariant, since those determine the stable classification (see \cref{khldflkhfglufdalkgualkguvf}). Hence we focus on the case of $w_2$-type $x^2+y^2$.

\begin{lemma}\label{lem:stable-classes}
Let $\theta=x^2+y^2$. There are precisely four distinct stable homeomorphism classes with fixed identification of the fundamental group with $D_\infty$, $w_2$-type $\theta$ and a given signature $z\in 8\Z$. These classes are represented by homotopy equivalent manifolds and distinguished by $s$; recall that $s$ takes values in $\Z/2 \times \Z/2$.
\end{lemma}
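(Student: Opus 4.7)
The plan is to apply Kreck's modified surgery \cite{kreck}. With the identification $\pi_1\cong D_\infty$ fixed, the stable homeomorphism classes of oriented $4$-manifolds with a given normal $1$-type $B$ are in bijection with $\Omega_4^{\Top}(B)$. Since $BD_\infty \simeq B\Z/2 \vee B\Z/2$ and $\theta = x^2 + y^2$ restricts to $x^2$ and $y^2$ on the two factors, the normal $1$-type decomposes as a homotopy pushout
\[B \simeq B_a \cup_{B\TopSpin} B_b,\]
where $B_a$ and $B_b$ are the normal $1$-types for oriented $4$-manifolds with $\pi_1 \cong \Z/2$ and $w_2$-type $x^2$.

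Next I would apply Kreck's Mayer--Vietoris sequence for bordism over a pushout, together with the standard facts $\Omega_4^{\TopSpin} \cong \Z$ (detected by $\sigma/8$) and $\Omega_5^{\TopSpin} = 0$, and the computation $\Omega_4^{B_a} \cong \Z \oplus \Z/2$ (the $\Z$ summand recording the signature and the $\Z/2$ the scalar analogue of the $s$-invariant in the $\pi_1 = \Z/2$ case; see \cite{teichner-phd,KLT}). This should yield
\[\Omega_4^{B} \cong \Z \oplus \Z/2 \oplus \Z/2,\]
with the $\Z$ summand recording the total signature and the two $\Z/2$ summands being exactly the coordinates of the invariant $s \in \Z/2 \times \Z/2$ from \cref{def:s-invt}. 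Thus for fixed signature $z \in 8\Z$ there are precisely $4$ stable homeomorphism classes, distinguished by $s$.

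To produce four mutually homotopy equivalent representatives, I would start from Teichner's result \cite{teichner-star} that $E \# E \simeq \st E \# \st E$, and analogously establish that the four connected sums $E \# E$, $E \# \st E$, $\st E \# E$, $\st E \# \st E$ are all mutually homotopy equivalent: it suffices to check that they have isomorphic quadratic $2$-types and then apply \cref{thm:main-homotopy}, which in turn reduces to using that $E$ and $\st E$ have isomorphic Postnikov $2$-types and intersection forms together with the compatibility of the quadratic $2$-type with connected sum. All four variants have signature~$0$ and $w_2$-type $x^2 + y^2$, and they realise the four distinct values of~$s$; connect-summing with a simply-connected $4$-manifold of signature $z$ then produces representatives at every $z \in 8\Z$. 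The main obstacle is confirming that the two $\Z/2$ summands produced by the Mayer--Vietoris computation agree with the two coordinates of the $s$-invariant as defined in \cref{def:s-invt}; this requires matching the pushout decomposition of $B$ with the preimage decomposition $M = M_L \cup_S M_R$ used in \cref{def:s-invt}, and carefully tracking Rokhlin-type invariants on each side.
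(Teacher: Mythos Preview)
Your overall strategy matches the paper's: invoke Kreck's modified surgery so that stable classes correspond to the bordism group over the normal $1$-type, compute that this group is $8\Z \oplus \Z/2 \oplus \Z/2$, and exhibit four homotopy equivalent representatives hitting all four values of~$s$. The differences are in execution.

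First, the bordism computation. You propose to decompose the normal $1$-type as a pushout $B_a \cup_{B\TopSpin} B_b$ and run Mayer--Vietoris. This is essentially what Teichner does in \cite{teichner-star}*{Lemma~2}, and the paper simply cites that result directly rather than rederiving it. Your version is not wrong, but you would need to justify that the normal $1$-type really is this pushout (it is, because $BD_\infty \simeq \RP^\infty \vee \RP^\infty$ and $\theta$ restricts compatibly), and you would need the input $\Omega_4^{B_a} \cong \Z \oplus \Z/2$.

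Second, and more importantly, your ``main obstacle'' --- matching the two $\Z/2$ summands abstractly produced by Mayer--Vietoris with the two coordinates of $s$ --- is exactly what the paper's argument sidesteps. Rather than identifying bordism invariants with~$s$ at the level of the bordism group, the paper computes $s$ directly on the four explicit manifolds $E\#E\#^z E_8$, $E\#\star E\#^z E_8$, $\star E\#E\#^z E_8$, $\star E\#\star E\#^z E_8$ and obtains the four values $(0,0)$, $(0,1)$, $(1,0)$, $(1,1)$. Since $s$ is a stable homeomorphism invariant (\cref{def:s-invt}, \cite{KLT}*{Lemma~2.2}), this immediately shows there are \emph{at least} four classes, and the bordism bound gives at most four. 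This is simpler and avoids your obstacle entirely.

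Third, invoking \cref{thm:main-homotopy} to show the four representatives are homotopy equivalent is unnecessary. By construction $\star E$ is homotopy equivalent to $E$, so the connected sums $E\#E$, $E\#\star E$, $\star E\#E$, $\star E\#\star E$ are all homotopy equivalent simply by taking connected sums of the given homotopy equivalences.
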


\begin{proof}
By \cite{kreck}*{Theorem~C}, two $4$-manifolds with normal $1$-type $(D_\infty,x^2+y^2)$ are stably homeomorphic if they admit bordant normal 1-smoothings. By \cite{teichner-star}*{Lemma~2}, the bordism group over this normal 1-type is isomorphic to $8\Z\oplus \Z/2\oplus \Z/2$, where the first summand is given by the signature. Hence there are at most four stable homeomorphism classes for a given signature.

Consider the bundle $S^2\to E\to \RP^2$ with orientable but not spin total space. By \cite{teichner-star}*{Proposition~1}, there exists a manifold $\star E$ that is homotopy equivalent to $E$ but has nontrivial Kirby--Siebenmann invariant. For $z\in \Z$, the manifolds $E\#E\#^zE_8$, $\star E\# \star E\#^zE_8$, $E\#\star E\#^zE_8$ and $\star E\# E\#^z E_8$ have normal $1$-type determined by $(D_\infty,\theta)$. Since $\sigma(E)=0$, these manifolds have signature $8z$. We have \[s(E\#E\#^zE_8)=(\sigma(E)/8+\ks(E),\sigma(E\#^z E_8)/8 +\ks(E\#^zE_8))=(0+0,z+z)=(0,0),\]
and similarly $s(\star E\#\star E\#^z E_8)=(1,1), s(E\#\star E\#^z E_8)=(0,1)$, and $s(\star E\# E\#^z E_8)=(1,0)$. It follows that there are precisely four stable homeomorphism classes with signature $8z$ and that they are distinguished by $s$, as asserted.
\end{proof}

\begin{remark}
If we also consider stable homeomorphisms that induce a nontrivial automorphism on $D_\infty$, then two of the stable classes are identified, namely those with $s=(1,0)$ and $s=(0,1)$. Actually, $\Out(D_\infty)\cong C_2$ with the nontrivial element given by the map that swaps $a$ and $b$. To see this, it is easier to use the presentation $\langle t,a\mid a^2,atat\rangle$, where $t=ab$. Then we see that $\Aut(D_\infty)=\{(m,\epsilon)\mid m\in \Z, \epsilon \in \{\pm 1\}\}$, with $(m,\epsilon)$ mapping $t$ to $t^\epsilon$ and $a$ to $at^m$. Then $(m,\epsilon)\circ (n,\eta)=(m+\epsilon n,\epsilon\eta)$ and hence $D_\infty \cong \Aut(D_\infty)$ by $t \mapsto (1,0)$ and $a \mapsto (0,1)$. The inner automorphisms are generated by the conjugations $c_t=(-2,1)$ and $c_a=(0,-1)$, where $c_g \colon x \mapsto gxg^{-1}$ denotes conjugation by $g$. Hence $\Out(D_\infty)\cong C_2$ is generated by $(1,-1)$, which is the map that swaps $a$ and $b$.
\end{remark}

\subsection{The structure set}\label{subsection:D-infty-structure-set}

The next step in the proof of \cref{thm:homeo-homotopy-stable} is to calculate the relevant structure set.
We will make use of the stable homeomorphism classification in this computation.

\begin{proposition}
\label{prop:structure-set-dihedral}
Let $M$ be a closed, oriented $4$-manifold with fundamental group $D_\infty$. Then the structure set $\sS(M)$ is isomorphic to $H_2(M;\Z/2)$.
\end{proposition}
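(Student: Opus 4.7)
The plan is to compute $\sS(M)$ via the topological surgery exact sequence
\[L_5(\Z[D_\infty]) \to \sS(M) \to \cN(M) \xrightarrow{\sigma} L_4(\Z[D_\infty]),\]
which is available because $D_\infty$ is good (being a two-fold extension of $\Z$ by $\Z/2$), and for which there is no distinction between $s$- and $h$-structure sets since $\Wh(D_\infty)=0$ by Stallings. First I would pin down the outer $L$-groups. Applying Cappell's splitting theorem to the free product $D_\infty=\Z/2*\Z/2$, combined with Connolly--Davis's vanishings $\UNil_n(\Z;\Z,\Z)=0$ for $n\equiv 0,1\pmod 4$ and the classical computations $L_4(\Z)=\Z$, $L_5(\Z)=0$, $L_4^h(\Z[\Z/2])\cong\Z^2$, $L_5^h(\Z[\Z/2])=0$, the resulting Mayer--Vietoris sequence will yield
\[L_5(\Z[D_\infty])=0 \quad \text{and} \quad L_4(\Z[D_\infty])\cong\Z^3.\]
The crucial feature of this computation that I will exploit below is that $L_4(\Z[D_\infty])$ is \emph{torsion-free}.

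Next I would compute $\cN(M)$. Since the only potential Postnikov $k$-invariant $\delta\Sq^2$ of $G/\Top$ lands in $H^5(M;\Z)=0$ for a $4$-complex, the normal invariants split as
\[\cN(M)\cong[M,G/\Top]\cong H^2(M;\Z/2)\oplus H^4(M;\Z)\cong H_2(M;\Z/2)\oplus\Z\]
via Poincar\'e duality. With $L_5(\Z[D_\infty])=0$, the surgery exact sequence identifies $\sS(M)$ with $\ker(\sigma)$, and I will show that this kernel is precisely the first summand. On the torsion summand $H_2(M;\Z/2)$ the homomorphism $\sigma$ must vanish identically, since its target is torsion-free; this gives $H_2(M;\Z/2)\subseteq\ker(\sigma)$. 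On the complementary $\Z$-summand a generator corresponds to a degree-one normal map with signature change $8$ (realised by connected sum with a Freedman $E_8$-manifold), so its surgery obstruction is the image of $[E_8]\in L_4(\Z)$ under the split injection $L_4(\Z)\hookrightarrow L_4(\Z[D_\infty])$ induced by the augmentation $\Z[D_\infty]\to\Z$, and is therefore nonzero. Hence $\sigma$ is injective on the $\Z$-summand and $\ker(\sigma)\cong H_2(M;\Z/2)$.

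The hardest input will be the $L$-theory calculation, which rests on the deep vanishing of $\UNil$ in degrees $0$ and $1$ due to Connolly--Davis. Everything else -- the identification of the surgery exact sequence, the normal invariant splitting, and the torsion versus torsion-free dichotomy that kills the $H_2(M;\Z/2)$ summand -- is then a standard manipulation.
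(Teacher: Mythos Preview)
Your proposal is correct and follows essentially the same approach as the paper: both compute the surgery exact sequence using the Connolly--Davis input $L_5(\Z D_\infty)=0$, $L_4(\Z D_\infty)\cong\Z^3$, split $\cN(M)\cong H_2(M;\Z/2)\oplus\Z$, and then use torsion-freeness of $L_4$ together with the signature to identify $\ker\sigma$ with $H_2(M;\Z/2)$. Your write-up is slightly more detailed in justifying the normal-invariant splitting and the nonvanishing on the $\Z$-summand via the $E_8$-manifold, but the argument is otherwise identical.
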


For the proof we need the following lemma.
The Whitehead group of $\Z/2$ vanishes, and the Whitehead group is additive with respect to free products by~\cite{Stallings-Whitehead-additive}. Hence $\Wh(D_\infty)=\Wh(\Z/2)\oplus \Wh(\Z/2) =0$, and so throughout this section we may and shall omit decorations from $L$-groups and structure sets.

\begin{lemma}[Connolly--Davis~\cite{davis-connolly}]
\label{lem:connolly-davis}
$L_5(\Z D_\infty)=0$ and $L_4(\Z D_\infty)\cong \Z^3$.
\end{lemma}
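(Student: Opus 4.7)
The plan is to exploit the amalgamated free product decomposition $D_\infty \cong \Z/2 *_{\{e\}} \Z/2$ and apply Cappell's splitting theorem, which gives a long exact Mayer--Vietoris sequence
\[
\cdots \to L_n(\Z) \xrightarrow{(i_1,i_2)} L_n(\Z[\Z/2]) \oplus L_n(\Z[\Z/2]) \to \wt L_n(\Z D_\infty) \to L_{n-1}(\Z) \to \cdots
\]
together with a splitting $L_n(\Z D_\infty) \cong \wt L_n(\Z D_\infty) \oplus \UNil_n(\Z;\Z,\Z)$. Here the amalgamating subgroup is trivial, hence automatically square-root-closed in both factors, so the hypotheses of Cappell's theorem are met. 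Since $\Wh(D_\infty)=\Wh(\Z/2)\oplus \Wh(\Z/2)=0$, there is no ambiguity in decoration and we may work with $L = L^h = L^s$ throughout.

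Next I would assemble the known input. For $\Z$ one has $L_4(\Z)\cong \Z$ (signature divided by $8$), $L_5(\Z)=0$, $L_3(\Z)=0$ by Kervaire--Milnor. For the group ring $\Z[\Z/2]$ with trivial orientation, Wall's computation gives $L_5(\Z[\Z/2])=0$ and $L_4(\Z[\Z/2])\cong\Z^2$, the two $\Z$-summands detected by the signatures of the rationalised form under the two characters $1,\mathrm{sgn}\colon \Z/2\to\{\pm1\}$. Under the inclusion $\Z \hookrightarrow \Z[\Z/2]$, an integral symmetric form $(M,\lambda)$ base-changes to $(M \otimes \Z[\Z/2], \lambda \otimes \mathrm{id})$ which has equal signature at both characters, so the induced map $L_4(\Z)\to L_4(\Z[\Z/2])$ is the diagonal $n\mapsto (n,n)$.

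Feeding this into the Mayer--Vietoris sequence in degrees $4$ and $5$, I obtain
\[
0 = L_5(\Z[\Z/2])^{2} \to \wt L_5(\Z D_\infty) \to L_4(\Z) = \Z \xrightarrow{\Delta} L_4(\Z[\Z/2])^{2} = \Z^4 \to \wt L_4(\Z D_\infty) \to L_3(\Z) = 0,
\]
where $\Delta(n)=(n,n,n,n)$ is injective. Hence $\wt L_5(\Z D_\infty)=0$ and $\wt L_4(\Z D_\infty)\cong \coker\Delta \cong \Z^3$. It remains to show that the $\UNil$ correction terms vanish, i.e.\ $\UNil_4(\Z;\Z,\Z)=0=\UNil_5(\Z;\Z,\Z)$. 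By the four-fold periodicity of $\UNil$ this reduces to $\UNil_0(\Z;\Z,\Z)=0$ and $\UNil_1(\Z;\Z,\Z)=0$. The first is Farrell's vanishing theorem; the second is the (harder) theorem of Connolly--Ranicki. Combining these with the Mayer--Vietoris computation yields the claimed values $L_5(\Z D_\infty)=0$ and $L_4(\Z D_\infty)\cong \Z^3$.

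The main obstacle is not the Mayer--Vietoris bookkeeping, which is standard once the maps are identified, but the control over the $\UNil$ contributions: these are known to be nontrivial in degrees $2$ and $3$ and their vanishing in degrees $0$ and $1$ is a deep input (Farrell, Connolly--Ranicki). Everything else in the argument is essentially the identification of the inclusion-induced transfer on $L_4$ as a diagonal map, which can be verified either by the multisignature description above or by a direct induction on the regular representation $\Z[\Z/2]\cong \Z \oplus \Z^{\mathrm{sgn}}$ over $\Q$.
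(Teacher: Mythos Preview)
Your proposal is correct and follows essentially the same approach as the paper. The paper quotes the direct sum decomposition $L_n(\Z D_\infty)\cong L_n(\Z[\Z/2])\oplus \wt L_n(\Z[\Z/2])\oplus \UNil_n(\Z;\Z,\Z)$ from Connolly--Davis, which is precisely the split form of your Cappell Mayer--Vietoris sequence (the splitting coming from the retractions $\Z/2\to\{e\}$); both then feed in Wall's values for $L_*(\Z[\Z/2])$ and the vanishing of $\UNil_0$ and $\UNil_1$.
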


\begin{proof}
By \cite{davis-connolly}*{p.~1046} we have $L_n(\Z D_\infty)\cong L_n(\Z[\Z/2])\oplus \wt L_n(\Z[\Z/2])\oplus \UNil_n(\Z,\Z,\Z)$. By Wall~\cite{Wall-surgery-book}*{Theorem~13A.1}, $L_5(\Z[\Z/2])=0$ and $L_4(\Z[\Z/2])\cong \Z^2$. Furthermore, $\UNil_5(\Z;\Z,\Z)=\UNil_1(\Z;\Z,\Z)=0$ and $\UNil_4(\Z;\Z,\Z)=\UNil_0(\Z;\Z,\Z)=0$ by \cite{davis-connolly}*{Theorem~1.10}. Here Davis--Connolly relied on prior computations by Connolly--Ranicki~\cite{connolly-ranicki}, Connolly--Ko\'zniewski~\cite{connolly-kozniewski}, and Cappell~\cite{cappell-bams}.
\end{proof}

\begin{proof}[Proof of \cref{prop:structure-set-dihedral}]
The group $D_\infty$ is an extension of abelian groups $0 \to \Z \to D_\infty \to \Z/2 \to 0$, and thus a good group (see e.g.~\cites{Freedman-Teichner:1995-1, DET-book-goodgroups}). Hence we have the surgery exact sequence
\[L_5(\Z D_\infty)\to \sS(M)\to \cN(M)\to L_4(\Z D_\infty).\]
As stated above, decorations are irrelevant for this fundamental group, so we omit them.
The set of normal invariants $\cN(M)$ is isomorphic to
\[\cN(M)\cong [M,\G/\Top]\cong H^4(M;\Z)\oplus H^2(M;\Z/2)\cong \Z\oplus H_2(M;\Z/2),\]
where the $\Z$-summand is detected in $L_4(\Z D_\infty)$ by the signature.
By \cref{lem:connolly-davis}, $L_4(\Z D_\infty)$ is torsion-free and hence the kernel of the surgery obstruction map is $H_2(M;\Z/2)$. Since $L_5(\Z D_\infty)=0$ by \cref{lem:connolly-davis}, $\sS(M)\cong H_2(M;\Z/2)$ as claimed.
\end{proof}

\subsection{The homeomorphism classification}\label{subsection:D-infty-homeo-classn}

Let $M$ be a closed, oriented $4$-manifold together with an identification of $\pi_1(M)$ with $D_\infty$. Let
$\hAut(M,D_\infty)$ denote the group of homotopy self-equivalences of $M$ that act as the identity on $\pi_1(M)$. Again, recall that all homotopy self-equivalences, homeomorphisms, and stable homeomorphisms are assumed to be orientation preserving.

\begin{theorem}
\label{thm:stableclasss}
Let $M$ be a closed, oriented $4$-manifold with fundamental group $D_\infty$. Then the set of homeomorphism classes over $D_\infty$ of manifolds homotopy equivalent to $M$, which by surgery theory is isomorphic to $\sS(M)/\hAut(M,D_\infty)$, has:
\begin{enumerate}[leftmargin=1cm,font=\normalfont]
\item\label{it:dinfty-i} a single element if $M$ is spin;
\item\label{it:dinfty-ii} two elements distinguished by the Kirby--Siebenmann invariant if $M$ has $w_2$-type $\infty$ or $x^2$;
\item\label{it:dinfty-iii} four elements if $M$ has $w_2$-type $x^2+y^2$, distinguished by the invariant~$s$.
\end{enumerate}
In all cases, the different classes are pairwise not stably homeomorphic.
\end{theorem}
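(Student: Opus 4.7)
The plan is to combine \cref{prop:structure-set-dihedral} with the stable classification developed in the preceding subsection. By surgery theory and \cref{prop:structure-set-dihedral}, the orbit set $\sS(M)/\hAut(M,D_\infty)$ parameterises the homeomorphism classes over $D_\infty$ of closed, oriented $4$-manifolds homotopy equivalent to $M$ over $D_\infty$. I will show that the natural forgetful map from this orbit set to the set of stable homeomorphism classes over $D_\infty$ is a bijection onto the stable classes sharing the signature and $w_2$-type of $M$, whose cardinality is $1$, $2$, or $4$ in cases (i), (ii), (iii) by \cref{khldflkhfglufdalkgualkguvf} (cases (i) and (ii)) and \cref{lem:stable-classes} (case (iii)). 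Since $\ks$ is a homeomorphism invariant and $s$ is a stable homeomorphism invariant, both descend to well-defined functions on the orbit set, and they distinguish the orbits in cases (ii) and (iii) respectively.

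For surjectivity onto this target, case (i) is immediate since the signature already classifies. In case (iii), \cref{lem:stable-classes} produces four explicit homotopy equivalent manifolds realising the four stable classes; combining with \cref{thm:main-homotopy} shows that any $M$ with matching quadratic $2$-type, signature, and $w_2$-type is homotopy equivalent over $D_\infty$ to these representatives. In case (ii), both $\ks$-values arise from $\sS(M)\cong H_2(M;\Z/2)$: a normal invariant whose image in $L_4(\Z D_\infty)$ vanishes but whose Kirby--Siebenmann reduction in $\Z/2$ is nontrivial yields a homotopy equivalent manifold of opposite Kirby--Siebenmann invariant.

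The main obstacle is injectivity: if $M_1,M_2$ are both homotopy equivalent to $M$ over $D_\infty$ and stably homeomorphic over $D_\infty$, one must produce a homeomorphism $M_1\to M_2$ over $D_\infty$. The strategy is to use a stabilised homeomorphism $H\colon M_1\#k(S^2\times S^2)\xrightarrow{\cong} M_2\#k(S^2\times S^2)$ and combine it with the given homotopy equivalences $h_i\colon M_i\to M$ to produce two structures in $\sS(M\#k(S^2\times S^2))$ lying in a common $\hAut(M\#k(S^2\times S^2),D_\infty)$-orbit. The vanishing $L_5(\Z D_\infty)=0$ from \cref{lem:connolly-davis} guarantees that structures differ precisely by normal invariants, while $\Wh(D_\infty)=0$ combined with the goodness of $D_\infty$ yields the $5$-dimensional $s$-cobordism theorem. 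The delicate point will be destabilising: showing that the orbit identification at the stabilised level descends to $\sS(M)$, equivalently, producing the desired self-homeomorphism $M_1\to M_2$ over $D_\infty$ at the unstabilised level. An alternative direct approach, which may be cleaner, is to compute the $\hAut(M,D_\infty)$-action on $\sS(M)\cong H_2(M;\Z/2)$ explicitly and check that its orbits are precisely the level sets of $(\ks,s)$.
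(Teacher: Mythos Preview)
Your main proposed route has a genuine gap at precisely the point you flag as ``delicate'': the destabilisation step in the injectivity direction. You want to show that if $M_1,M_2$ are homotopy equivalent to $M$ over $D_\infty$ and stably homeomorphic over $D_\infty$, then they are homeomorphic over $D_\infty$. But this assertion is essentially equivalent to \cref{thm:homeo-homotopy-stable}, which the paper deduces \emph{from} the present theorem, not the other way around. The tools you list ($L_5(\Z D_\infty)=0$, the $s$-cobordism theorem) let you conclude that two structures with the same normal invariant are homeomorphic, but a stable homeomorphism $M_1\# k(S^2\times S^2)\cong M_2\# k(S^2\times S^2)$ gives no direct control on the unstabilised normal invariants in $\cN(M)$. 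There is no general cancellation mechanism here.

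Your surjectivity argument for case~\ref{it:dinfty-iii} also does not cover the general statement: the four representatives in \cref{lem:stable-classes} have one specific quadratic $2$-type, so \cref{thm:main-homotopy} only shows that those particular $M$ with that quadratic $2$-type realise all four $s$-values. For an arbitrary $M$ with $w_2$-type $x^2+y^2$ you have not yet produced four homotopy equivalent manifolds with distinct $s$-invariants.

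The paper takes exactly the ``alternative direct approach'' you mention at the end: it computes the $\hAut(M,D_\infty)$-action on $\sS(M)\cong H_2(M;\Z/2)$ explicitly. The key external input you are missing is a result of Stong \cite{Stong-conn-sum}*{Proposition~3.2}: any class in $H_2(M;\Z/2)$ represented by a map $R\colon\RP^2\to M$ with $R^*w_2(M)=0$ is realised by a self-homotopy equivalence of $M$ (homotopic to the identity on the $2$-skeleton). This immediately gives the upper bounds. When $w_2(\wt M)=0$, all of $\pi_2(M)$ is in the $\hAut$-orbit of the basepoint, so the orbit space is a quotient of $H_2(M;\Z/2)/\pi_2(M)\cong H_2(D_\infty;\Z/2)\cong(\Z/2)^2$; the further classes $(1,0),(0,1)$ are killed according to whether $w_2$ pulls back trivially along $R_a,R_b$, which is governed by the $w_2$-type. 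In the $w_2$-type $\infty$ case one uses a sphere $S$ with $S^*w_2\ne 0$ to reduce modulo $[S]$. The matching lower bounds come, for cases~\ref{it:dinfty-ii}, from the existence of an $M'$ homotopy equivalent to $M$ with $\ks(M')\ne\ks(M)$ (again via Stong), and for case~\ref{it:dinfty-iii}, from the observation that the image in $H_2(D_\infty;\Z/2)$ is unchanged under stabilisation, so the four residual classes are pairwise not stably homeomorphic by \cref{lem:stable-classes} and are distinguished by $s$.
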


\begin{proof}
As mentioned above, since $\Wh(D_\infty) =0$, the forgetful map $\sS^s(M) \to \sS^h(M)$ from the  simple to the  non-simple structure set is an isomorphism, and so we can consider $\sS(M)$ as the simple structure set. Moreover since $D_\infty$ is good, the $s$-cobordism theorem holds, and so we can identify the set of homeomorphism classes over $D_\infty$ of manifolds homotopy equivalent to $M$ with~$\sS(M)/\hAut(M,D_\infty)$.

By \cref{prop:structure-set-dihedral}, $\sS(M)\cong H_2(M;\Z/2)$. It remains to deduce the action of $\hAut(M,D_\infty)$ on the structure set.
By Stong~\cite{Stong-conn-sum}*{Proposition~3.2}, every class in $H_2(M;\Z/2)\cong \sS(M)$ that is represented by a map $R\colon \RP^2\to M$ with $R^*w_2(M)=0$ can be represented by a self-homotopy equivalence of $M$ (which is homotopic to the identity on the 2-skeleton of $M$).

We first consider the case $w_2(\wt M)=0$. Then every element of $\pi_2(M)$ can be represented by a map $R\colon \RP^2\to M$ with $R^*w_2(M)=0$ and we have $H_2(M;\Z/2)/\pi_2(M)\cong H_2(D_\infty;\Z/2)\cong (\Z/2)^2$.

For elements $a,b \in D_\infty=\pi_1(M)$, we can choose maps $R_a,R_b \colon \RP^2\to M$ that map to the elements $(1,0)$ and $(0,1)$ in $H_2(D_\infty;\Z/2) \cong \Z/2 \oplus \Z/2$. The images in $H_2(D_\infty;\Z/2)$ are determined by the elements of $\pi_1(M)$ represented by the image of the generator of $\pi_1(\RP^2)$, $a$ and $b$ respectively, since the composition $\RP^2 \to M \xrightarrow{c} BD_\infty$ is determined by the induced map on fundamental groups.

If $w_2(M)=0$, then both $(R_a)_*[\RP^2]$ and $(R_b)_*[\RP^2]$ in $H_2(M;\Z/2) \cong \sS(M)$ can be represented by self-homotopy equivalences by \cite{Stong-conn-sum}*{Proposition~3.2}, as in the first paragraph of the proof. Thus \eqref{it:dinfty-i} follows. If $w_2(M)=c^*x^2$, then again using Stong's method, the map $R_b \colon \RP^2\to M$ can be represented by a self-homotopy equivalence of $M$. Thus $\sS(M)/\hAut(M,D_\infty)$ has at most two elements. In this case, there exists a manifold $M'$ homotopy equivalent to $M$ with $\ks(M')\neq\ks(M)$ by \cite{Stong} (see also \cite{table}*{Proposition~5.11}). This implies \eqref{it:dinfty-ii} in the case of $w_2$-type $x^2$.

Now we consider the case of $w_2$-type $x^2+y^2$. As before $\sS(M)/\hAut(M)$ has at most four elements. Let $f\colon M'\to M$ be a homotopy equivalence. Then $f$ and $f\#\id\colon M'\#(S^2\times S^2)\to M\#(S^2\times S^2)$ have the same image under $\sS(M) \xrightarrow{\cong} H_2(M;\Z/2) \to  H_2(D_\infty;\Z/2)$.
By \cref{lem:stable-classes}, the four classes are pairwise not stably homeomorphic and are distinguished by the values of $s$. Thus \eqref{it:dinfty-iii} follows.

It remains to show \eqref{it:dinfty-ii} in the case of $w_2$-type $\infty$. 		
If $w_2(\wt M)\neq 0$, choose a map $S\colon S^2\to M$ with $S^*w_2(M)\neq 0$. Then there is a basis of $x\in H_2(M;\Z/2)$ such that either $x$ or $x+[S]$ can be represented by a map $R\colon \RP^2\to M$ with $R^*w_2(M)=0$. Hence $\sS(M)/\hAut(M,D_\infty)$ has at most two elements. As before, by \cites{FQ,Stong-conn-sum} there exists a manifold $M'$ homotopy equivalent to $M$ with $\ks(M')\neq\ks(M)$. This implies \eqref{it:dinfty-ii} in the case of $w_2$-type $\infty$.
\end{proof}

\cref{thm:homeo-homotopy-stable} is a direct consequence of \cref{thm:stableclasss} as follows.

\begin{proof}[Proof of \cref{thm:homeo-homotopy-stable}]
For $M' \simeq M$, we deduce from the contrapositive of the last sentence of \cref{thm:stableclasss} that $M'$ is homeomorphic to $M$ if $M$ is stably homeomorphic to $M$.   Thus \cref{thm:homeo-homotopy-stable} follows from \cref{thm:stableclasss}.
\end{proof}

We now prove \cref{thm:homeo-class} from the introduction as an application of \cref{thm:homeo-homotopy-stable}. We restate the theorem for the convenience of the reader.

\begin{reptheorem}{thm:homeo-class}
Let $M_1$ and $M_2$ be closed, oriented $4$-manifolds with isomorphisms $\alpha_i\colon \pi_1(M_i)\xrightarrow{\cong} D_\infty$. Then $M_1$ and $M_2$ are orientation preserving homeomorphic over $D_\infty$ if and only if
\begin{enumerate}[leftmargin=1cm,font=\normalfont]
\item\label{item-D-infty-1-rep} $M_1$ and $M_2$ have isomorphic quadratic $2$-types over $D_\infty$,
\item\label{item-D-infty-2-rep} $\ks(M_1)=\ks(M_2)$, and
\item\label{item-D-infty-3-rep}
$s(M_1,\alpha_1)=s(M_2,\alpha_2) \in \Z/2 \times \Z/2$.
\end{enumerate}
Moreover, if  conditions \eqref{item-D-infty-2} and \eqref{item-D-infty-3} hold, then every isomorphism of the quadratic $2$-types over $D_\infty$
is realised by a homeomorphism $M_1 \to M_2$.
\end{reptheorem}

\begin{proof}[Proof of \cref{thm:homeo-class}]
By \cref{thm:main-homotopy}, the homotopy type is determined by the quadratic $2$-type. The homotopy type determines the signature, and by  homotopy invariance of Stiefel-Whitney classes, the homotopy type determines the $w_2$-type.
Hence by \cref{thm:stableclasss}, two oriented $4$-manifolds $M_0$ and $M_1$ with the same quadratic $2$-types are orientation preserving stably homeomorphic if and only if:
\begin{enumerate}[label=(\roman*)]
\item they have equal Kirby--Siebenmann invariants, $\ks(M_0)=\ks(M_1)$, and
 \item $s(M_0,\alpha_0) = s(M_1,\alpha_1) \in \Z/2 \times \Z/2$.
\end{enumerate}
If the common $w_2$-type differs from $x^2 +y^2$, then by \cref{khldflkhfglufdalkgualkguvf} the second item $s(M_0,\alpha_0) = s(M_1,\alpha_1)$ holds automatically, since the stable homeomorphism classification and hence the invariants $s(M_i,\alpha_i)$ are determined by the signatures and the Kirby-Siebenmann invariants, which by the other assumptions already agree. If the $w_2$-type is $x^2+y^2$, then the additional assumption on $s(M_i,\alpha_i)$ is necessary.

Then by \cref{thm:homeo-homotopy-stable}, two oriented $4$-manifolds with fundamental group $D_\infty$ are homeomorphic over $D_\infty$ if and only if they are homotopy equivalent and stably homeomorphic over $D_\infty$. This completes the proof of \cref{thm:homeo-class}.
\end{proof}

\subsection{Replacing the \texorpdfstring{$k$}{k}-invariant with the second Stiefel--Whitney class}\label{subsection:D-infty-replacing-k-invariant}

If one seeks to apply \cref{thm:homeo-class} in practice, it may be challenging to decide whether two quadratic $2$-types are isomorphic. We do not know whether there is an algorithm to decide this problem. To ease the burden, we demonstrate next that in some cases one need not compute the $k$-invariant, and can instead either compute the $w_2$-type or take connected sum with either $\CP^2$ or $\ol{\CP}^2$.

For an arbitrary group $\pi$ and the augmentation ideal $I\pi$, let $H(I\pi)$ denote the hyperbolic form on $I\pi$, i.e.\ the form
\begin{align*}
    (I\pi \oplus I\pi^\dagger) \times (I\pi \oplus I\pi^\dagger) &\to \Z\pi\\
    ((x,\varphi),(y,\psi)) &\mapsto \ol{\varphi(y)} + \psi(x).
\end{align*}

\begin{proposition}\label{prop:D-infty-k-invariant-free-statement}
    Let $M$ and $M'$ be oriented $4$-manifolds with fundamental group $\pi:=D_\infty$, with corresponding identifications $\alpha$ and $\alpha'$. Suppose that the equivariant intersection forms are both isomorphic to $H(I\pi)\oplus\lambda$, where $\lambda$ is a nonsingular Hermitian form on a stably free $\Z\pi$-module.
    \begin{enumerate}[(i)]
        \item\label{item:prop:dinfty-k-inv-i}  Then $M\#\CP^2$ and $M'\#\CP^2$ have isomorphic quadratic $2$-types, and hence by \cref{thm:main-homotopy} are homotopy equivalent.
        \item\label{item:prop:dinfty-k-inv-ii} If $M$ and $M'$ are almost spin and have the same $w_2$-type $w_2^\pi\in H^2(\pi;\Z/2)\cong \Z/2 \times \Z/2$, then they have isomorphic quadratic $2$-types, and hence by \cref{thm:main-homotopy} are homotopy equivalent.
    \end{enumerate}
If in addition $\ks(M)=\ks(M')$, and if in case \ref{item:prop:dinfty-k-inv-ii} we have moreover that $s(M,\alpha) = s(M',\alpha')$,  then \cref{thm:homeo-class} implies that $M$ and $M'$ are homeomorphic.
\end{proposition}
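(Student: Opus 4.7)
The plan is to verify the hypotheses of \cref{thm:main-homotopy} to produce isomorphisms of quadratic $2$-types (and hence homotopy equivalences) for each of cases (i) and (ii); the final homeomorphism claim is then immediate from \cref{thm:homeo-class}. Since both manifolds are oriented, $w_1$ is trivial for each. The intersection forms agree by hypothesis (with a common extra $\langle 1\rangle$ summand in case (i)). By \cref{prop:ev-inj}, the dual-evaluation map is an isomorphism (the pair $(D_\infty,0)$ is admissible), so the second homotopy group is determined as a $\Z\pi$-module by the intersection form. Thus $\pi_2$ agrees for both manifolds, and the only remaining piece of the quadratic $2$-type to match is the $k$-invariant.

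I would next compute $H^3(\pi;\pi_2(M))$. Using $BD_\infty\simeq B\Z/2\vee B\Z/2$, a Mayer--Vietoris argument shows $H^n(D_\infty;\Z D_\infty)=0$ for $n\geq 2$. Writing $\pi_2(M)^\dagger\cong I\pi\oplus I\pi^\dagger\oplus A$ with $A$ stably free, the $A$-summand contributes nothing to $H^3$. The long exact sequence associated to $0\to I\pi\to\Z\pi\to\Z\to 0$ gives $H^3(\pi;I\pi)\cong H^2(\pi;\Z)\cong(\Z/2)^2$, and an analogous dual computation handles $I\pi^\dagger$. So $H^3(\pi;\pi_2(M))$ is a small elementary abelian $2$-group whose orbits under $\Aut(\pi_2,\lambda)$ can be analyzed directly.

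For case (i), stabilisation by $\CP^2$ adds a free $\Z\pi$ summand to $\pi_2$ (contributing nothing to $H^3$) and a $\langle 1\rangle$ summand to $\lambda$. The $k$-invariant of $M\#\CP^2$ is the image of $k_M$ under the inclusion into $H^3(\pi;\pi_2(M\#\CP^2))$, and similarly for $M'$. The main step is to verify that both lie in the same $\Aut(\pi_2(M\#\CP^2),\lambda_{M\#\CP^2})$-orbit. This uses transvections mixing the new $\Z\pi$ summand with the hyperbolic $H(I\pi)$ via the unit $\langle 1\rangle$; these extra symmetries act transitively enough on $H^3(\pi;\pi_2)$ to identify the two $k$-invariants. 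Verifying this transitivity is the main technical obstacle.

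For case (ii), without stabilisation, the almost-spin hypothesis and the equality of $w_2$-types pin down the orbit of the $k$-invariant. The Wu formula applied to the Postnikov $2$-type expresses $w_2^\pi$ in terms of the pair $(\pi_2,k)$, and conversely, given the hyperbolic summand $H(I\pi)$, any two $k$-invariants yielding the same $w_2$-type are related by an automorphism of $(\pi_2,\lambda)$. Once the isomorphism of quadratic $2$-types is produced, \cref{thm:main-homotopy} provides a homotopy equivalence, and the hypotheses $\ks(M)=\ks(M')$ (and in case (ii), $s(M,\alpha)=s(M',\alpha')$) allow \cref{thm:homeo-class} to upgrade this to a homeomorphism.
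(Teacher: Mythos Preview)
Your outline is correct in spirit---reduce to matching the $k$-invariants up to the action of $\Aut(\pi_2,\lambda)$---but the substance is missing, and in two places your sketch would not go through as written.

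First, for case~(i) you propose to get transitivity on $H^3(\pi;\pi_2)$ via transvections mixing the new $\Z\pi$ summand with $H(I\pi)$ through the $\langle 1\rangle$. You yourself flag this as ``the main technical obstacle'', and indeed it is: the group $H^3(\pi;I\pi\oplus I\pi^\dagger)\cong(\Z/2)^4$ has sixteen elements, and not all of them arise as $k$-invariants of the manifolds under consideration. You have no mechanism for restricting which $k$-invariants occur, so you would need transitivity on the full $(\Z/2)^4$, which the isometry group does not provide. The paper supplies the missing restriction via modified surgery: any such $M$ is $\CP^2$-stably homeomorphic to $F\#F$ for specific $S^2$-bundles $E,F$ over $\RP^2$, and this forces the $k$-invariant to be nontrivial in each of two $(\Z/2)^2$ summands. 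Then, rather than an abstract transvection argument, the paper exhibits the needed isometries geometrically: a Kirby-calculus diffeomorphism $E\#\CP^2\cong F\#\CP^2$ yields an isometry of $H(I\pi)\oplus\langle 1\rangle$ carrying $k_{E\#E}$ to $k_{E\#F}$ to $k_{F\#F}$, and these exhaust the orbit.

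Second, for case~(ii) your appeal to ``the Wu formula applied to the Postnikov $2$-type'' to link $w_2^\pi$ and $k$ is not a valid step: the Wu formula relates $w_2$ to self-intersections modulo $2$, not to the $k$-invariant in $H^3(\pi;\pi_2)$. The paper again proceeds through the explicit models: by the stable classification, $M\#L\simeq N_1\#N_2\#K$ with $N_i\in\{E,F\}$, and the given isometry $\varphi$ induces an isometry of $\pi_2(N_1\#N_2\#K)$ whose effect on the $k$-invariant lands in the $k$-invariant of some $N_1'\#N_2'$. Since the $w_2$-types of $E$ and $F$ differ (Lemma~\ref{lemma:E-F-spin-or-not}) and a homotopy equivalence over $\pi$ preserves $w_2^\pi$, the hypothesis $w_2^\pi(M)=w_2^\pi(M')$ forces $N_i'=N_i$, whence $\varphi$ already preserved the $k$-invariant.

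In short, both parts of the paper's argument are driven by the explicit $4$-manifolds $E$ and $F$, their computed $k$-invariants and $w_2$-types, and the diffeomorphism $E\#\CP^2\cong F\#\CP^2$; your proposal replaces these with abstract claims (transvection transitivity, Wu-formula linkage) that are not established and, in the second case, not correct as stated.
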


The condition on $s$-invariants is automatic in case~\ref{item:prop:dinfty-k-inv-i} because the universal covers are not spin.
This quickly leads to  \cref{cor:intro-smooth-dihedral}, which we restate.

\begin{repcorollary}{cor:intro-smooth-dihedral}
    Let $M$ and $M'$ be closed, oriented, smooth $4$-manifolds with fundamental group $\pi:=D_\infty$ and equivariant intersection forms both isomorphic to $H(I\pi)\oplus\lambda$, where $\lambda$ is a nonsingular Hermitian form on a stably free $\Z\pi$-module.
    Then $M\#\CP^2$ and $M'\#\CP^2$ are homeomorphic, as are $M\#\ol{\CP}^2$ and $M'\#\ol{\CP}^2$.
\end{repcorollary}

\begin{proof}
    Both $M\#\CP^2$ and $M'\#\CP^2$ have trivial Kirby--Siebenmann invariant as they are smooth. Since their universal covers are not spin, they also have trivial $s$-invariant by definition. By \cref{prop:D-infty-k-invariant-free-statement}, $M\#\CP^2$ and $M'\#\CP^2$ have isomorphic quadratic $2$-types. Hence $M\#\CP^2$ and $M'\#\CP^2$ are homeomorphic by \cref{thm:homeo-class}.

    Since $-H(I\pi)$ is isometric to $H(I\pi)$, we can apply the same argument to show that $\ol{M}\#\CP^2$ and $\ol{M'}\#\CP^2$ are homeomorphic. Changing the orientation, it follows that also $M\#\ol{\CP}^2$ and $M'\#\ol{\CP}^2$ are homeomorphic.
\end{proof}

Now we begin working towards the proof of \cref{prop:D-infty-k-invariant-free-statement}. First we introduce a pair of useful $4$-manifolds.

\begin{example}\label{ex:E-and-F}
    There are two important examples of smooth, oriented $4$-manifolds with fundamental group $\Z/2$, denoted by $E$ and $F$, that arise as the total spaces of $S^2$-bundles over $\RP^2$. Let $\eta$ be the canonical line bundle over $\RP^2$ and let $\varepsilon$ be the trivial bundle. Then, as in \cite{Hambleton-Kreck:1988-1}*{Remark~4.5}, we define $E = S(3\eta)$ and $F=S(\eta \oplus 2\varepsilon)$.
    We already used $E$ in the proof of \cref{lem:stable-classes}.  Kirby diagrams for $E$ and $F$ are shown in \cref{fig:kirby-diagrams-for-E-F}; they also appear in \cite{GompfStip}*{Example~4.6.5, Figure~5.4.6}.
\end{example}

\begin{figure}[htb]
         \centering
        \begin{tikzpicture}
        \node[anchor=south west,inner sep=0] at (0,0){\includegraphics[width=0.3\textwidth]{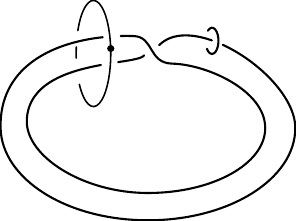}};
	\node at (4.65,1.5) {$1$};
        \node at (3.25,3.15) {$0$};
        \node[anchor=south west,inner sep=0] at (6,0){\includegraphics[width=0.3\textwidth]{E.pdf}};
	\node at (10.65,1.5) {$0$};
        \node at (9.25,3.15) {$0$};
	\end{tikzpicture}
        \caption{Kirby diagrams for $E$ (left) and $F$ (right).}
        \label{fig:kirby-diagrams-for-E-F}
\end{figure}

    \begin{lemma}\label{lemma:E-F-spin-or-not}
    The manifold $F$ is spin while $E$ is not. Further, if $p \colon E \to \RP^2$ is the bundle projection and $x \in H^2(\RP^2;\Z/2)\cong \Z/2$ is the nontrivial element, then $w_2(E) = p^*(x)$.
        \end{lemma}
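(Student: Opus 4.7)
The plan is a direct computation with Stiefel--Whitney classes, using the standard splitting for the tangent bundle of a sphere bundle. Let $p \colon S(V) \to \RP^2$ denote the bundle projection of the sphere bundle of a rank-$3$ real vector bundle $V$. The along-the-fibre tangent bundle $T_f S(V)$ satisfies $T_f S(V) \oplus \varepsilon \cong p^*V$ (split off the radial direction), and $T(S(V)) = T_f S(V) \oplus p^* T\RP^2$, so
\[ T(S(V)) \oplus \varepsilon \;\cong\; p^*(T\RP^2 \oplus V). \]
Hence $w(T(S(V))) = p^*\bigl(w(T\RP^2)\cdot w(V)\bigr)$ in $H^*(S(V);\Z/2)$.

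Let $u \in H^1(\RP^2;\Z/2)$ be the generator, so that $H^*(\RP^2;\Z/2)=\Z/2[u]/(u^3)$ and the nontrivial element $x \in H^2(\RP^2;\Z/2)$ equals $u^2$. The canonical line bundle has $w(\eta) = 1+u$, and the classical identity $T\RP^2 \oplus \varepsilon \cong 3\eta$ gives $w(T\RP^2) = (1+u)^3 = 1 + u + u^2$ modulo $u^3$ and modulo $2$.

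The first step is to treat $F=S(\eta\oplus 2\varepsilon)$. Here $w(\eta\oplus 2\varepsilon) = 1+u$, so
\[ w(TF) \;=\; p^*\bigl((1+u+u^2)(1+u)\bigr) \;=\; p^*(1 + 2u + 2u^2 + u^3) \;=\; 1, \]
working modulo $2$ and modulo $u^3$. In particular $w_2(F)=0$, so $F$ (which is orientable by the first equality as well) is spin.

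The second step is to treat $E = S(3\eta)$. Here $w(3\eta) = (1+u)^3 = 1+u+u^2$, and
\[ w(TE) \;=\; p^*\bigl((1+u+u^2)^2\bigr) \;=\; p^*(1 + u^2) \;=\; 1 + p^*(x), \]
again modulo $2$ and modulo $u^3$. Thus $w_2(E) = p^*(x)$. Since $p \colon E \to \RP^2$ is a fibration with simply connected fibre $S^2$, the map $p^* \colon H^2(\RP^2;\Z/2)\to H^2(E;\Z/2)$ is injective (for example by the Serre spectral sequence, as the relevant differential $d_3 \colon H^0(\RP^2;H^2(S^2;\Z/2)) \to H^3(\RP^2;\Z/2)$ lands in a group containing no potentially cancelling class of $p^*x$; more directly, $p$ admits a section, since $3\eta$ has a nowhere-zero section). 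Therefore $w_2(E) = p^*(x) \neq 0$ and $E$ is not spin.

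I expect no genuine obstacle here; the only care needed is in fixing conventions (interpreting $x$ as the generator of $H^2(\RP^2;\Z/2)$ rather than of $H^1$) and in justifying the injectivity of $p^*$ so that $p^*(x) \neq 0$.
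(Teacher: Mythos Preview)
Your proof is correct and follows essentially the same approach as the paper: both use the stable splitting $T(S(V))\oplus\varepsilon\cong p^*(T\RP^2\oplus V)$ (the paper phrases this via the inclusion $j\colon E\hookrightarrow E(3\eta)$ and the triviality of the normal line bundle, which amounts to the same identity) and then compute total Stiefel--Whitney classes. Your justification that $p^*(x)\neq 0$ via the existence of a section of $p$ is clean; the paper instead notes that $w_2(E)$ lies in the image of $j^*$ and is nontrivial there.
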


\begin{proof}
    This can be computed by the Whitney sum formula as follows. Writing $E(3\eta)$ for the total space of $3\eta$, we have $w_i(TE \oplus \nu_{E \subseteq E(3\eta)}) = j^*w_i(E(3\eta))$, where $j \colon E \to E(3\eta)$ is the inclusion of the sphere bundle into the total space.
    Since $E$ and $3\eta$ have orientable total spaces, it follows that $w_1(\nu_{E \subseteq E(3\eta)})=0$, and hence the line bundle $\nu_{E \subseteq E(3\eta)}$ is trivial. Therefore $w_2(TE) = j^*w_2(E(3\eta))$.
    Since  \[w(T\RP^2) = (1+x)^3 = 1+x+x^2,\] where $x \in H^1(\RP^2;\Z/2)$ is the generator,  we compute that
    \[w_2(T\RP^2 \oplus 3\eta) = w_2(T\RP^2) + w_1(T\RP^2)w_1(3\eta) + w_2(3\eta) = x^2 + x^2 +x^2 = x^2 \in H^2(\RP^2;\Z/2), \]
    so $w_2(T\RP^2 \oplus 3\eta)$ is nontrivial.      Then since \[w_2(T\RP^2 \oplus 3\eta) =z^*w_2(E(3\eta)) \in H^2(\RP^2;\Z/2),\] where $z \colon \RP^2 \to E(3\eta)$ is the zero section, we deduce that $w_2(E(3\eta))$ is nontrivial in
    \[H^2(E(3\eta);\Z/2) \cong H^2(\RP^2;\Z/2) \cong \Z/2,\]
    and hence $w_2(E)$ is the nontrivial element of $j^*(H^2(E(3\eta);\Z/2)) \subseteq H^2(E;\Z/2)$.  The computation for $F$ is similar, and yields that $w_2(F)=0$.

    Alternatively, the computation of $w_2$ follows by \cite{GompfStip}*{Corollary~5.7.2} from the Kirby diagrams for $E$ and $F$ in \cref{fig:kirby-diagrams-for-E-F}.
    \end{proof}

\begin{lemma}\label{lemma-univ-cover-E-F}
    Both $E$ and $F$ have $($spin$)$ universal cover $S^2 \times S^2$.
\end{lemma}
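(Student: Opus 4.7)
The plan is to identify the universal cover of each of $E$ and $F$ with a sphere bundle over $S^2$ obtained by pulling back along the double cover $p\colon S^2\to \RP^2$, and then to observe that in both cases this pullback bundle is trivial.

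First, I would record that each of the $S^2$-bundles $E,F\to \RP^2$ has fundamental group $\Z/2$. This follows from the long exact sequence of homotopy groups of the fibration $S^2\to E\to\RP^2$ (and similarly for $F$): since $\pi_2(\RP^2)\to\pi_1(S^2)=0$ is the zero map and $\pi_1(S^2)=0$, we read off $\pi_1(E)\cong\pi_1(\RP^2)\cong\Z/2$, and likewise for $F$. Consequently, the universal cover of $E$ is precisely the total space of the pullback bundle $p^*(3\eta)$, and analogously the universal cover of $F$ is the total space of $p^*(\eta\oplus 2\varepsilon)$.

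Next, I would observe that every real line bundle over $S^2$ is trivial, since they are classified by $H^1(S^2;\Z/2)=0$. In particular $p^*\eta$ is trivial. Hence $p^*(3\eta)\cong 3\varepsilon$ and $p^*(\eta\oplus 2\varepsilon)\cong 3\varepsilon$ are both the trivial rank-$3$ real vector bundle over $S^2$. Taking unit sphere bundles, we conclude that the universal covers of $E$ and $F$ are each homeomorphic to $S(3\varepsilon)\cong S^2\times S^2$, as claimed.

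There is no genuine obstacle here: the only mildly subtle point is the identification of the universal cover with the pullback bundle, which is standard for fibrations with simply connected fibre.
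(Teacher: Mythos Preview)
Your proof is correct. Both approaches first identify the universal cover as the total space of the pulled-back $S^2$-bundle over $S^2$, but then diverge in how they recognise it as the trivial bundle.

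The paper argues via characteristic classes: an $S^2$-bundle over $S^2$ is $S^2\times S^2$ if and only if its total space is spin, so one checks that $w_2$ of the cover vanishes. For $F$ this is immediate from $w_2(F)=0$; for $E$ it uses the prior computation (\cref{lemma:E-F-spin-or-not}) that $w_2(E)$ is pulled back from $H^2(\RP^2;\Z/2)$, which dies under $p^*$. Your argument is more direct and elementary: since $H^1(S^2;\Z/2)=0$, the line bundle $p^*\eta$ is trivial, so $p^*(3\eta)$ and $p^*(\eta\oplus 2\varepsilon)$ are both the trivial rank-$3$ bundle, and the sphere bundle is $S^2\times S^2$ on the nose. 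Your route has the advantage of not relying on the classification of $S^2$-bundles over $S^2$ by $w_2$, nor on the separate computation of $w_2(E)$; the paper's route, on the other hand, fits the narrative since $w_2(E)$ and $w_2(F)$ are already computed for other purposes.
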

 \begin{proof}
         This can also be computed geometrically using Kirby calculus. Alternatively, note that the universal covers $\wt{F}$ and $\wt{E}$ are both $S^2$-bundles over $S^2$. We have to argue that the nontrivial bundle does not arise. For this recall that $w_2$ pulls back under the covering map to $w_2$ of the cover. Since $w_2(F)=0$, this implies immediately that $\wt{F}$ is spin, and hence is $S^2 \times S^2$.  For $E$, the $\RP^2$ in $E$ is double covered by the base $S^2$ in $\wt{E}$,  and hence the nontrivial $w_2(E)$ pulls back trivially to $\wt{E}$. Thus $\wt{E} $ is also spin and hence is  $S^2 \times S^2$.
 \end{proof}

Next we compute some algebraic topological invariants of $E$ and $F$.

\begin{lemma}
    Let $M \in \{E,F\}$. Then $M$ is a rational homology 4-sphere with $H_1(M;\Z) \cong \Z/2 \cong H_2(M;\Z)$, $H_0(M;\Z) \cong \Z \cong H_4(M;\Z)$, and $H_i(M;\Z)=0$ otherwise.
\end{lemma}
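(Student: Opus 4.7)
The plan is to compute these integral homology groups uniformly for $M\in\{E,F\}$ using the Leray--Serre spectral sequence for the bundle projection $p\colon M\to\RP^2$. The essential input is the identification of the local coefficient system in the $q=2$ row. Since $w_1(M)=p^*w_1(\RP^2)+w_1(\nu)$, where $\nu$ denotes the vertical tangent bundle, and since $M$ is orientable while $\RP^2$ is not, one must have $w_1(\nu)=p^*w_1(\RP^2)\neq 0$. Consequently the monodromy action of $\pi_1(\RP^2)\cong\Z/2$ on $H_2(S^2;\Z)\cong\Z$ is by multiplication by $-1$, so the coefficient system in the $q=2$ row is the sign representation $\Z^-$ from the paper's notation.

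A direct calculation using the $\Z/2$-equivariant cellular chain complex of $S^2$ as the universal cover of $\RP^2$ yields \[H_*(\RP^2;\Z)=(\Z,\,\Z/2,\,0)\quad\text{and}\quad H_*(\RP^2;\Z^-)=(\Z/2,\,0,\,\Z)\] in degrees $0,1,2$. Thus $E^2_{p,q}$ is supported only in rows $q=0,2$ and columns $0\le p\le 2$. The only differential that could cross between these two rows is $d_3\colon E^3_{p,0}\to E^3_{p-3,2}$, but this vanishes because its source is zero for $p\ge 3$ and its target is zero for $p\le 2$. Hence the spectral sequence degenerates at $E^2$.

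Reading off the $E^\infty$-diagonals then gives $H_0(M)=\Z$, $H_1(M)=\Z/2$, $H_2(M)=E^\infty_{0,2}=\Z/2$ (the other entries on the $p+q=2$ diagonal vanish, so no extension issue arises), $H_3(M)=0$, and $H_4(M)=E^\infty_{2,2}=\Z$, giving the stated integral homology; tensoring with $\Q$ then confirms that $M$ is a rational homology $4$-sphere. Since the argument depends on $M$ only through its being an orientable $S^2$-bundle over $\RP^2$, it applies simultaneously to $E$ and $F$. The only delicate point is the identification of the twisted coefficient system in the $q=2$ row; once this is in place the remainder of the computation is essentially mechanical.
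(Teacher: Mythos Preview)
Your proof is correct and takes a genuinely different route from the paper. The paper simply reads the integral handle chain complex
\[
\Z \xrightarrow{(0)} \Z \xrightarrow{(0,2)} \Z\oplus \Z \xrightarrow{(2,0)} \Z \xrightarrow{(0)} \Z
\]
directly off the Kirby diagrams for $E$ and $F$ (which are already drawn in the paper), and computes the homology from this; it also remarks that the rational homology sphere statement alone follows from $\chi=2$ and $\pi_1\cong\Z/2$. Your approach instead runs the Leray--Serre spectral sequence for the bundle $S^2\to M\to\RP^2$, with the key observation that orientability of $M$ forces $w_1(\nu)=p^*w_1(\RP^2)\neq 0$, so the coefficient system on the $q=2$ row is $\Z^-$; degeneration is then immediate from the shape of the $E^2$-page.

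The trade-offs: the paper's argument is shorter given the Kirby diagrams are already in hand, and it simultaneously sets up the $\Z[\Z/2]$-chain complex used later. Your argument is self-contained (no Kirby calculus needed), conceptually explains \emph{why} $E$ and $F$ have identical integral homology despite being different bundles, and in fact computes $H_*(M;\Z)$ for \emph{any} orientable $S^2$-bundle over $\RP^2$ at once. Your identification of the monodromy via $w_1(\nu)$ is correct: since the fibre is simply connected, $p^*\colon H^1(\RP^2;\Z/2)\to H^1(M;\Z/2)$ is an isomorphism, and the class in $H^1(\RP^2;\Z/2)$ corresponding to $w_1(\nu)$ is precisely the obstruction to orienting the fibres coherently, i.e.\ the monodromy character on $H_2(S^2;\Z)$.
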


\begin{proof}
Both $E$ and $F$ have integral handle chain complexes
\[\Z \xrightarrow{(0)} \Z \xrightarrow{(0,2)} \Z\oplus \Z \xrightarrow{(2,0)} \Z \xrightarrow{(0)} \Z,\]
so in particular they are rational homology $4$-spheres. Here the attaching maps of the $3$-handles can be seen by turning the handle decomposition upside down.
The $\Z$-homology can be read off to be as claimed.
\end{proof}

\begin{remark}\label{rem:E-F-homology-nonlemma}
The fact that $E$ and $F$ are rational homology spheres can be deduced from $\chi(E) = 2 = \chi(F)$ and $\pi_1(E) = \Z/2 = \pi_1(F)$.

With $\Z/2$ coefficients, we have $H_2(M;\Z/2) \cong \Z/2 \oplus \Z/2$, with one summand generated by an embedded $\RP^2$, denoted by $R$, corresponding to a section of the $S^2$-bundle. It arises by capping off the M\"obius band that is visible in the Kirby diagram with the core of the $2$-handle.  The other summand is generated by an $S^2$ fibre and denoted by $S$, and is again visible in the Kirby diagram as the disc bounded by the `helper circle' union the core of that $2$-handle.  The fibre $S$ also represents the nontrivial element in $H_2(M;\Z)$. Identifying $H^2(E;\Z/2) \cong \Hom(H_2(E;\Z/2),\Z/2)$ we have $w_2(E)(R) = 1$ and $w_2(E)(S)=0$, while $w_2(F)(R) = 0 = w_2(F)(S)$, which accords with $F$ being spin.
\end{remark}

The $\Z[\Z/2]$-module chain complexes for $E$ and $F$ can also be read off from the Kirby diagrams to be
\begin{equation}\label{eq:d-infty-cc}
    \Z[\Z/2] \xrightarrow{(T-1)} \Z[\Z/2] \xrightarrow{(0,T+1)} \Z[\Z/2] \oplus \Z[\Z/2] \xrightarrow{(T+1, 0)} \Z[\Z/2] \xrightarrow{(T-1)}\Z[\Z/2],
\end{equation}
where $T$ is the nontrivial element in $\Z/2$ and corresponds to the covering transformation. This is immediate from the diagrams and Fox calculus for the handle $2$-skeleton, and then the remaining boundary maps can be seen by turning the handle decomposition upside down.
We deduce, again for $M \in \{E,F\}$, that $\pi_2(M) \cong H_2(M;\Z[\Z/2])\cong\ker(T+1) \oplus \coker(T+1) \cong \Z^- \oplus \Z^-$.
We then compute \[H^3(\Z/2;\pi_2(M)) \cong H^3(\Z/2;\Z^- \oplus \Z^-) \cong \Z/2 \oplus \Z/2.\] 

\begin{lemma}\label{lemma:E-F-k-invariants}
    Let $M\in \{E,F\}$. The intersection form $\lambda_M \colon \pi_2(M) \times \pi_2(M) \to \Z[\Z/2]$ is hyperbolic. With respect to a hyperbolic basis for $\pi_2(M) \cong \Z^- \oplus \Z^-$, we have an identification $H^3(\Z/2;\pi_2(M)) \cong \Z/2 \oplus \Z/2$. Then $k_F = (1,0)$ and $k_E = (1,1)$.
\end{lemma}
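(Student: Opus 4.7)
My plan is to prove the lemma in two stages: first compute the equivariant intersection form and exhibit a hyperbolic basis for $\pi_2(M)$, then compute the components of the $k$-invariant with respect to that basis.

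\emph{Intersection form.} Starting from the chain complex \eqref{eq:d-infty-cc}, I will identify explicit $\Z[\Z/2]$-generators of $\pi_2(M) \cong H_2(M;\Z[\Z/2]) \cong \Z^-\oplus\Z^-$ with geometric classes: a fibre class $[S]$ and a section class $[R]$ of the $S^2$-bundle $M\to\RP^2$. Both lie in $\Z^-$ summands since the deck transformation reverses orientation on the relevant lifts (the section lifts to a single $S^2$ on which $T$ acts antipodally; a fibre pair is exchanged by $T$, and the antisymmetric combination generates a $\Z^-$). Geometrically $\lambda_M([S],[S])=0$ (fibres are disjoint), and the transverse single-point intersection of section and fibre together with the sesquilinearity constraint (which forces every entry of the form to lie in $(1-T)\Z$) gives $\lambda_M([R],[S])=\pm(1-T)$ and $\lambda_M([R],[R])=c(1-T)$ for some $c\in\Z$.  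Since $[S]$ is isotropic, the change of basis $[R'] := [R]-c[S]$ zeroes the diagonal entry and produces the hyperbolic basis $(a,b):=([R'],[S])$.

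\emph{Computing $k_M$.} With the hyperbolic decomposition $\pi_2(M) = \Z^-\!\cdot a \oplus \Z^-\!\cdot b$ I obtain $H^3(\Z/2;\pi_2(M))\cong\Z/2\oplus\Z/2$, with $k_M=(k_1,k_2)$. The strategy is to read the $k$-invariant off the known $w_2$ information via the Bockstein $\beta \colon H^2(\Z/2;\Z/2)\to H^3(\Z/2;\Z^-)$ associated with $0\to\Z^-\xrightarrow{\cdot 2}\Z^-\to\Z/2\to 0$, which is an isomorphism $\Z/2\to\Z/2$.  Since both $E$ and $F$ share the same $\pi_1$ and $\pi_2$, their Postnikov 2-types differ only through $k$, and in turn $w_2(M)$ factors through $P_2(M)$. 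Concretely, I will use the Serre spectral sequence of $K(\pi_2(M),2)\to P_2(M)\to B\Z/2$: a vanishing component of $k$ splits off a $K(\Z^-,2)$ factor from $P_2(M)$, which via Bockstein detects a component of $w_2(M)$ in the corresponding summand of $H^2(\pi_1;\Z/2)$. Applying this with the known outputs $w_2(F)=0$ and $w_2(E)=p^*x\ne 0$ from \cref{lemma:E-F-spin-or-not} will pin down $k_F$ and $k_E$ up to the $a$-component.

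\emph{Pinning down the $a$-component and obstacles.} To see that the $a$-component is always nonzero, I will argue by contradiction: if the component in $H^3(\Z/2;\Z^-\!\cdot a)$ vanished, then the Postnikov 2-type would split off an additional $K(\Z^-,2)$ factor, yielding a cohomology class in $H^2(M;\Z^-)$ whose evaluation against $[S]$ via the hyperbolic form would be forced to be $\pm(1-T)$, contradicting the quotient structure already determined by the intersection form.  The main obstacle I anticipate is the bookkeeping to match the Bockstein-detected splitting of $H^3(\Z/2;\pi_2(M))$ to the hyperbolic basis $(a,b)$, with the correct signs and orientations; in particular I need to verify that the $a$-summand corresponds to the section class (which carries the $w_2$-data) and the $b$-summand to the fibre, so that $w_2(F)=0$ translates into $k_2=0$ and $w_2(E)\ne 0$ into $k_2=1$, while the $k_1=1$ condition is invariant under the ambiguity in the choice of hyperbolic basis.
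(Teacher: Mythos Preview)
Your outline has two genuine gaps.

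\textbf{The basis change is miscalculated, and the decisive quantity $c\bmod 4$ is never isolated.} With $\lambda([S],[S])=0$, $\lambda([R],[S])=1-T$, and $\lambda([R],[R])=c(1-T)$, the substitution $[R']=[R]+m[S]$ gives $\lambda([R'],[R'])=(c+2m)(1-T)$, so hyperbolising requires $m=-c/2$, not $m=-c$; in particular you must first show that $c$ is even (the paper uses $\wt M\cong S^2\times S^2$ for this). More importantly, a direct chain-level computation---which you skip---shows that $k_M=(1,0)$ in the $([R],[S])$-basis for \emph{both} $E$ and $F$, and the automorphism $[R]\mapsto [R]+m[S]$ sends $(k_1,k_2)\in H^3(\Z/2;\Z^-\oplus\Z^-)\cong(\Z/2)^2$ to $(k_1,k_2+mk_1)$. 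Hence the hyperbolic-basis $k$-invariant is $(1,\,c/2\bmod 2)$, and the entire lemma reduces to determining $c\bmod 4$, a quantity your proposal never singles out.

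\textbf{The $w_2$/Bockstein mechanism does not work as stated.} A vanishing component $k_2=0$ does produce a class in $H^2(M;\Z^-)$ via the resulting retraction of Postnikov $2$-types, but nothing forces its mod-$2$ reduction to equal $w_2(M)$; these are a priori unrelated cohomology classes. Your contradiction argument for $k_1\neq 0$ is likewise not an obstruction: a split Postnikov $2$-type is perfectly compatible with a hyperbolic intersection form. The paper does ultimately use $w_2$, but through the Wu formula $\lambda_M^{\Z/2}(x,x)=w_2(x)$ together with naturality of the intersection pairing under $\Z[\Z/2]\xrightarrow{\varepsilon^-}\Z^-\xrightarrow{\red_2}\Z/2$: tracking the section class through these coefficient changes yields $c/2\equiv w_2(M)(R)\bmod 2$. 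Since $w_2(F)=0$ this gives $c\equiv 0\bmod 4$ and $k_F=(1,0)$, while $w_2(E)(R)=1$ gives $c\equiv 2\bmod 4$ and $k_E=(1,1)$.
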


Note that this result agrees with the assertion in \cite{Hambleton-Kreck:1988-1}*{Remark~4.5}.

\begin{proof}
Recall from~\cite{EM49}*{(4.1)~and~Section~9} that the $k$-invariant can be computed by constructing a partial chain map up to degree $2$ of the standard free $\Z[\Z/2]$-module resolution $C_*^{\Z/2}:=C_*(\Z/2;\Z/2[\Z/2])$ of $\Z$ into $C_*^M := C_*(M;\Z[\Z/2])$, as follows:
\[
\begin{tikzcd}[column sep=large]
\Z[\Z/2]\ar[r,"(T-1)"] \ar[dr,"{k=(T-1,0)}"' near start] & \Z[\Z/2]\ar[r,"{(T+1)}"] \ar[d,"{(1,0)}"] & \Z[\Z/2]\ar[r,"(T-1)"]\ar[d,"(1)"] & \Z[\Z/2] \ar[d,"(1)"] \\
    \Z[\Z/2] \ar[r,"{(0,T+1)}"'] & \Z[\Z/2] \oplus \Z[\Z/2] \ar[r,"{(T+1, 0)}"']  & \Z[\Z/2] \ar[r,"{(T-1)}"']  & \Z[\Z/2].
\end{tikzcd}
\]
The image of the map $k$ under \[\Hom_{\Z[\Z/2]}\big(C_3^{\Z/2},\ker \big(d_2 \colon C_2^M \to C_1^M\big)\big) \to \Hom_{\Z[\Z/2]}\big(C_3^{\Z/2},\pi_2(M)\big) \cong \Hom_{\Z[\Z/2]}\big(C_3^{\Z/2},\Z^- \oplus \Z^-\big)\]  represents the $k$-invariant in $H^3(\Z/2;\Z^- \oplus \Z^-)$.
The cycles $(1-T,0)$ and $(0,1)$ in $C_2^M$ generate $\pi_2(M)\cong \Z^-\oplus\Z^-$. In this basis, the $k$-invariant of $M$ is $(1,0)\in H^3(\Z/2;\pi_2(M))\cong \Z/2 \oplus \Z/2$.

We compute the intersection forms from the diagrams of $E$ and $F$ with respect to this basis of $\pi_2$.
Note that $\Hom_{\Z[\Z/2]}(\Z^-,\Z[\Z/2]) \cong \Z^-$, with the generator given by the homomorphism that sends $1 \mapsto 1-T$. Hence, chasing through the definition, the hyperbolic form on $\Z^-$ is identified with the form on $\Z^- \oplus \Z^-$ that sends $((a,b),(c,d)) \mapsto (1-T)(ad+bc)$.

A geometric computation using the Kirby diagram shows that for $M \in \{E,F\}$ the form $\lambda_M$ is represented by
\begin{equation}\label{eqn:int-form-E-F}
    \left(\begin{smallmatrix}
    * & 1-T \\ 1-T & 0
\end{smallmatrix}\right) \colon (\Z^- \oplus \Z^-) \times (\Z^- \oplus \Z^-) \to \Z[\Z/2].
\end{equation}
The top left entry $*$ is of the form $n(1-T)$, because every element in $\Hom_{\Z[\Z/2]}(\Z^-,\Z[\Z/2])$ sends $1$ to a multiple of $1-T$.
If we ignore all $T$ terms we obtain $\left(\begin{smallmatrix}
    n & 1 \\ 1 & 0
\end{smallmatrix}\right)$, which gives the intersection pairing of the universal cover, which we know to be homeomorphic to $S^2 \times S^2$ by \cref{lemma-univ-cover-E-F}. Hence $n$ is even. However, unless $n=0$, our given basis is not a hyperbolic basis. Nonetheless we can change the basis of $\pi_2(M)$ to a hyperbolic basis, so that the intersection form is represented by the matrix in \eqref{eqn:int-form-E-F} with $*=0$.
We want to describe the $k$-invariant with respect to this hyperbolic basis of $\pi_2(M)$.

If $n \equiv 0 \mod{4}$ then the basis change matrix reduces to the identity modulo 2 and the $k$-invariant is again given by $(1,0) \in \Z/2 \oplus \Z/2 \cong H^3(\Z/2;\pi_2(M))$ with respect to the hyperbolic basis. On the other hand if $n \equiv 2 \mod{4}$ then the $k$-invariant is given by $(1,1) \in \Z/2 \oplus \Z/2$.

We show that $n \equiv 0 \mod{4}$ for $M=F$ and $n \equiv 2 \mod{4}$ for $M=E$, so that, as in the statement of the lemma, $k_F = (1,0)$ and $k_E=(1,1)$ with respect to the hyperbolic bases as claimed.
For this we use the intersection form $\lambda_M^{\Z^-}$ on $M$ with $\Z^-$ coefficients and its relationship with the intersection forms with $\Z[\Z/2]$ and with $\Z/2$ coefficients.
Let $\varepsilon^- \colon \Z[\Z/2] \to \Z^-$ be the twisted augmentation and let $\red_2 \colon \Z^- \to \Z/2$ be reduction modulo 2. Let \[\varepsilon^-_* \colon H_2(M;\Z[\Z/2]) \to H_2(M;\Z^-) \text{ and } (\red_2)_* \colon H_2(M;\Z^-) \to H_2(M;\Z/2)\] be the induced maps on homology.
By naturality of the cap product and the Kronecker pairing we have \[\varepsilon^-\lambda_M(x,y)=\lambda_M^{\Z^-}(\varepsilon^-_*(x),\varepsilon^-_*(y))\] for all $x,y \in H_2(M;\Z[\Z/2])$ and  \[\red_2\lambda_M^{\Z^-}(u,v)=\lambda_M^{\Z/2}\big((\red_2)_*(u),(\red_2)_*(v)\big)\] for all $u,v \in H_2(M;\Z^-)$.

Using the chain complex \eqref{eq:d-infty-cc} we compute that $H_2(M;\Z^-) \cong \Z^- \oplus \Z^-$, with basis elements $\{R,S\}$ from \cref{rem:E-F-homology-nonlemma} again corresponding to the two $2$-handles. Consider the composition
\begin{equation}\label{eq:disp-1}
H_2(M;\Z^-) \xrightarrow{(\red_2)_*} H_2(M;\Z/2) \xrightarrow{w_2} \Z/2.
\end{equation}
For $M=F$ this is the zero map since $w_2$ is trivial, while for $M= E$ this is given by $(1,0)$, by \cref{lemma:E-F-spin-or-not}.
We can also consider the composition
\begin{equation}\label{eq:disp-2}
H_2(M;\Z^-) \xrightarrow{x \mapsto\lambda^{\Z^-}_M(x,x)} \Z^- \xrightarrow{\red_2} \Z/2.
\end{equation}
The two maps $H_2(M;\Z^-) \to \Z/2$ in \eqref{eq:disp-1} and \eqref{eq:disp-2} coincide, because \[\red_2\lambda^{\Z^-}_M(x,x) = \lambda_M^{\Z/2}((\red_2)_* x,(\red_2)_*x) = w_2((\red_2)_* x)\]
by the Wu formula. So $\lambda^{\Z^-}_M((1,0),(1,0)) = m$, for some $m$ that is even for  $M=F$ and odd for $M=E$.

The element $(1,0) \in \Z^- \oplus \Z^- \cong \pi_2(M)\cong H_2(M;\Z[\Z/2])$ is represented  by \[(1-T,0) \in \Z[\Z/2] \oplus \Z[\Z/2] \cong C_2(M;\Z[\Z/2]);\] this is the element whose square with respect to $\lambda_M$ gives the entry $*$ in \eqref{eqn:int-form-E-F} we seek to understand.
It maps to $(2,0)$ in $\Z^- \oplus \Z^- \cong H_2(M;\Z^-)$.
Therefore, with $\varepsilon^- \colon \Z[\Z/2] \to \Z^-$ the twisted augmentation, we have
\[4m = \lambda^{\Z^-}_M((2,0),(2,0)) = \varepsilon^-(\lambda_M((1,0),(1,0))) = \varepsilon^-(n(1-T)) = 2n.\]
So $2m=n$. For $M=F$ we have that $m=2\ell$ for some $\ell$, thus $n=4\ell$ and hence $n\equiv 0 \mod{4}$. For~$M=E$ we have $m=2\ell+1$, thus $n = 4\ell+2$ and hence $n \equiv 2 \mod{4}$.

In fact we also verified geometrically using the Kirby diagram that for $M=F$ we have $n=-4$ and for $M=E$ we have $n = -2$, but we prefer to give the algebraic argument here as it is easier for the reader to verify.

This completes the computation that $k_F = (1,0)$ and $k_E=(1,1)$ with respect to the hyperbolic bases of $\pi_2$.
\end{proof}

\begin{lemma}\label{lemma:E-F}
 There is a diffeomorphism  $E \# \CP^2 \cong F \# \CP^2$.
\end{lemma}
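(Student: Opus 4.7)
The plan is to establish the diffeomorphism via Kirby calculus, using the diagrams for $E$ and $F$ displayed in \cref{fig:kirby-diagrams-for-E-F}. These two handlebodies share the same underlying handle structure---a $0$-handle, a single $1$-handle producing the half-twist responsible for the $\RP^2$ base, two $2$-handles, and the dual $3$- and $4$-handles---and they differ in exactly one framing: the $2$-handle whose attaching circle crosses the $1$-handle once has framing $+1$ in $E$ and framing $0$ in $F$ (the helper circle contributing the fibre $S^2$ has framing $0$ in both).

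The heart of the argument is a single handle slide. Connected sum with $\CP^2$ augments the Kirby diagram by a single $+1$-framed unknot that is disjoint from, and unlinked with, the rest of the handlebody. Hence $F\#\CP^2$ is presented by the $F$ diagram together with an extra, separated $+1$-framed unknot. In this diagram I would slide the distinguished $2$-handle (the one whose framing distinguishes $F$ from $E$) over the $+1$-framed unknot. Since the two attaching circles are disjoint and unlinked, the handle-slide formula changes the framing of the slid handle from $0$ to $0+(+1)+2\cdot 0=+1$, while the attaching circle remains isotopic to the original via a trivial band running across the connected-sum sphere; the $+1$-framed unknot itself is unaffected. The resulting diagram is precisely the Kirby diagram for $E$ together with a separated $+1$-framed unknot, and therefore presents $E\#\CP^2$.

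The main technicality to verify is that the handle slide really produces a pure framing change and nothing more. This is guaranteed by the connected-sum geometry: the $+1$-framed unknot sits inside a small ball separated from the $F$-handles by the connecting $S^3$, so the band used for the slide can be chosen to be an unknotted, unlinked, straight arc in the complementary region, leaving the attaching circle isotopic to its original position. No further algebraic or surgery-theoretic input is required; the argument is purely a $3$-dimensional Kirby calculus manipulation in the spirit of \cite{GompfStip}*{Example~4.6.5}.
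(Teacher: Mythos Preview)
Your overall approach---Kirby calculus starting from $F\#\CP^2$ and sliding the distinguished $2$-handle over the separated $+1$-framed unknot---matches the paper's, but your one-slide argument has a genuine gap. After the slide, the framing of the distinguished handle does become $+1$ as you compute, and the new attaching circle is indeed ambiently isotopic in $S^3$ to the old one. However, the new attaching circle is the band sum of the old one with a \emph{framing}-parallel pushoff of the $+1$-unknot $U$; since $U$ has framing $+1$, that pushoff links $U$ once, and hence so does the new attaching circle. So after your single slide the $+1$-framed unknot is no longer separated from the rest of the diagram: it is linked once with the distinguished handle. The resulting picture is therefore not $E\#\CP^2$, where the $\CP^2$ summand must appear as a split $+1$-unknot.

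The paper's proof (Figure~\ref{fig:kirby-calculus-E-F}) uses \emph{two} handle slides: the first is essentially your move, producing the intermediate linked configuration (b); the second slide then removes the unwanted linking and restores the $+1$-unknot to a separated position, giving the diagram for $E\#\CP^2$ in (c). Your argument can be repaired by supplying this second slide.
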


\begin{proof}
We give a Kirby calculus argument in \cref{fig:kirby-calculus-E-F}.
\end{proof}

\begin{figure}[htb]
     \centering
     \begin{subfigure}[b]{0.3\textwidth}
        \centering
        \begin{tikzpicture}
        \node[anchor=south west,inner sep=0] at (0,0){\includegraphics[height=3cm]{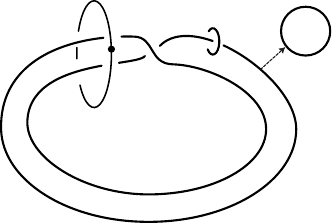}};
	\node at (4.15,1.3) {$0$};
        \node at (2.9,2.8) {$0$};
        \node at (3.75,2.9) {$1$};
	\end{tikzpicture}
         \caption{}
     \end{subfigure}
     \hfill
     \begin{subfigure}[b]{0.3\textwidth}
         \centering
        \begin{tikzpicture}
        \node[anchor=south west,inner sep=0] at (0,0){\includegraphics[height=3cm]{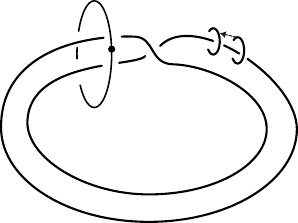}};
	\node at (4.15,1.3) {$1$};
        \node at (2.9,2.8) {$0$};
        \node at (3.25,2.7) {$1$};
	\end{tikzpicture}
         \caption{}
     \end{subfigure}
     \hfill
     \begin{subfigure}[b]{0.3\textwidth}
         \centering
        \begin{tikzpicture}
        \node[anchor=south west,inner sep=0] at (0,0){\includegraphics[height=3cm]{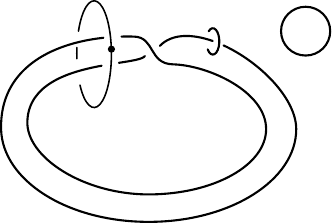}};
	\node at (4.15,1.3) {$1$};
        \node at (2.9,2.8) {$0$};
        \node at (3.75,2.9) {$1$};
	\end{tikzpicture}
         \caption{}
     \end{subfigure}
        \caption{Starting with $F\#\CP^2$ in (a), perform the handle slide indicated by a dashed arrow to produce (b). After a second handle slide the result is $E\#\CP^2$ in~(c).}
        \label{fig:kirby-calculus-E-F}
\end{figure}

Having gathered all this information about $E$ and $F$ we can now begin considering the $4$-manifolds with infinite dihedral fundamental group arising as their connected sum.

\begin{lemma}\label{lemma:k-invariants-E-conn-sum-F-etc}
    Let $\pi:= D_\infty$. The equivariant intersection form of each of the manifolds $F\# F$, $E\# F$, $F\# E$, and $E\#E$ is isomorphic to the hyperbolic form $H(I\pi)$.
    Their $k$-invariants lie in $H^3(\pi;I\pi\oplus I\pi^\dagger)\cong (\Z/2)^2\oplus(\Z/2)^2$.
    In some hyperbolic bases, their $k$-invariants are $((1,0),(1,0))$, $((1,1),(1,0))$, $((1,0),(1,1))$,  and $((1,1),(1,1))$, respectively.
\end{lemma}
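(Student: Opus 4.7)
The plan proceeds in two steps: first identify $\pi_2(M_1\# M_2)$ together with its equivariant intersection form, then compute the $k$-invariants by naturality under the collapse maps $M_1\# M_2 \to M_i$.

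\emph{Step 1: Intersection form.} For $M_1, M_2 \in \{E, F\}$, equation \eqref{eq:pi2-connsum-decomp} gives
\[\pi_2(M_1\# M_2) \cong \Ind_{G_a}^\pi \pi_2(M_1) \oplus \Ind_{G_b}^\pi \pi_2(M_2),\]
where $G_a := \langle a\rangle$ and $G_b := \langle b\rangle$ are the two $\Z/2$ factors of $\pi$. By \cref{lemma:E-F-k-invariants}, each $\pi_2(M_i)$ admits a hyperbolic basis identifying it with $IG_i \oplus (IG_i)^{\star}$, where $IG \cong \Z^-$ for $G \cong \Z/2$. By \cref{lem:adding-I} we have $\Ind_{G_a}^\pi IG_a \oplus \Ind_{G_b}^\pi IG_b \cong I\pi$, and using \cref{lem:ind-star-dagger-isom} we get $\Ind_{G_a}^\pi (IG_a)^{\star} \oplus \Ind_{G_b}^\pi (IG_b)^{\star} \cong I\pi^\dagger$. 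Hence $\pi_2(M_1\# M_2) \cong I\pi \oplus I\pi^\dagger$. Since the equivariant intersection form of a connected sum is the orthogonal sum of the induced intersection forms, and each summand is hyperbolic on $IG_i$ by \cref{lemma:E-F-k-invariants}, the total form is the hyperbolic form $H(I\pi)$.

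\emph{Step 2: $k$-invariants.} The $k$-invariant of $M_1\# M_2$ lies in $H^3(\pi; \pi_2(M_1\# M_2))$. Using the induction decomposition of $\pi_2$ from Step 1 together with Shapiro's lemma, this group identifies with
\[H^3(G_a; \pi_2(M_1)) \oplus H^3(G_b; \pi_2(M_2)) \cong (\Z/2)^2 \oplus (\Z/2)^2,\]
where each $H^3(G_i; \pi_2(M_i)) \cong \Z/2 \oplus \Z/2$ is the splitting used in \cref{lemma:E-F-k-invariants}. I claim that under this identification $k_{M_1\# M_2} = (k_{M_1}, k_{M_2})$. To establish this, I would use the collapse maps $p_i \colon M_1\# M_2 \to M_i$ (crushing the complement of $M_i$ in $M_1\# M_2$ to a point). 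On $\pi_1$ these induce the canonical projection $\pi \to G_i$, and on $\pi_2$, after the identifications of Step 1, they factor through the natural projection onto the $\Ind_{G_i}^\pi \pi_2(M_i)$-summand followed by the Mackey-type retraction $\Res_{G_i}^\pi \Ind_{G_i}^\pi \pi_2(M_i) \to \pi_2(M_i)$ onto the double coset summand of the identity. Naturality of $k$-invariants under $p_i$, combined with the Shapiro-lemma identification, then gives the asserted formula.

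Combining Step 2 with $k_F = (1, 0)$ and $k_E = (1, 1)$ from \cref{lemma:E-F-k-invariants} yields the four stated values $((1,0),(1,0))$, $((1,1),(1,0))$, $((1,0),(1,1))$ and $((1,1),(1,1))$ for $F\# F$, $E\# F$, $F\# E$ and $E\# E$ respectively. The main obstacle will be Step 2: justifying the naturality argument under the collapse maps $p_i$ rigorously, in particular identifying the map they induce on $\pi_2(M_1\#M_2)$ and checking that it is compatible with the Shapiro-lemma identification of $H^3(\pi;\pi_2(M_1\#M_2))$ used to decode the $k$-invariant into pairs.
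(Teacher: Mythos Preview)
Your Step~1 matches the paper's argument essentially verbatim. Step~2 is correct but takes a different route from the paper. The paper uses the \emph{inclusion} of the $3$-skeleton $M_i^{(3)} \hookrightarrow M_1\#M_2$, which induces the inclusion $G_i \hookrightarrow \pi$ on fundamental groups; naturality of $k$-invariants then gives $j_i^*(k_{M_1\#M_2}) = (\iota_2)_*(k_{M_i})$ in $H^3(G_i;\Res_{G_i}^\pi \pi_2(M_1\#M_2))$, and the Mackey double-coset decomposition reduces the coefficient module to $\pi_2(M_i)\oplus\text{(free)}$, so on $H^3$ only the summand $H^3(G_i;\pi_2(M_i))$ survives. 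This is precisely the map realising the Shapiro-type isomorphism, so the component is read off immediately.

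Your approach via the collapse maps $p_i$ instead induces the projection $\pi \twoheadrightarrow G_i$, and naturality yields an equation in $H^3(\pi;\mathrm{Inf}_{G_i}^\pi\pi_2(M_i))$ involving the inflation map. This does work: the inflation $H^3(G_i;\pi_2(M_i)) \to H^3(\pi;\mathrm{Inf}_{G_i}^\pi\pi_2(M_i))$ is an isomorphism here (both sides are $(\Z/2)^2$, and inflation followed by restriction to $G_i$ is the identity), and under the Mayer--Vietoris identifications the coefficient map $\Ind \to \mathrm{Inf}$ becomes the identity on $H^3$. One caveat: your description of the $\pi_2$-map as the ``Mackey-type retraction onto the identity double-coset summand'' is not quite accurate---the map $g\otimes m \mapsto p_i(g)\cdot m$ is nonzero on the other double-coset summands as well; it is only on $H^3(G_i;-)$ that those free summands vanish. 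This does not affect the argument, but it is exactly the compatibility check you flagged as the main obstacle.

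The paper's inclusion-based approach is more economical because restriction along $G_i\hookrightarrow\pi$ is already the map used to define the Shapiro-type identification, so no additional compatibility with inflation needs to be verified. Your collapse-map approach has the small advantage of using genuine maps between closed manifolds rather than $3$-skeleta.
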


\begin{proof}
Let $M \in \{F\# F, E\# F, F\# E,E\#E\}$.   Let $G_i \cong \Z/2$ denote the two factors of $\pi$, that is $\pi:=D_\infty = G_1 * G_2$.
Using \cref{lem:adding-I,lem:ind-star-dagger-isom} we compute that
        \begin{align*}
            H(I\pi)&\cong H(\Ind_{\Z/2}^\pi \Z^- \oplus\Ind_{\Z/2}^\pi \Z^-)\\
            &\cong H(\Ind_{\Z/2}^\pi \Z^-)\oplus H(\Ind_{\Z/2}^\pi \Z^-)\\
            &\cong \Ind_{\Z/2}^\pi H(\Z^-)\oplus \Ind_{\Z/2}^\pi H(\Z^-).
        \end{align*}
Hence by \cref{lemma:E-F-k-invariants}, the equivariant intersection form of $M$
is isomorphic to the hyperbolic form $H(I\pi)$.  In particular
\[\pi_2(M) \cong I\pi\oplus I\pi^\dagger \cong \Ind_{G_1}^\pi(\Z^-\oplus(\Z^-)^*) \oplus \Ind_{G_2}^\pi(\Z^-\oplus(\Z^-)^*) \cong \Ind_{G_1}^\pi(\Z^-\oplus \Z^-) \oplus \Ind_{G_2}^\pi(\Z^-\oplus \Z^-).\]
Therefore, the $k$-invariant lies in
 \[   H^3(\pi;\pi_2(M)) \cong H^3(\pi; \Ind_{G_1}^\pi(\Z^-\oplus\Z^-)) \oplus H^3(\pi; \Ind_{G_2}^\pi(\Z^-\oplus\Z^-)).
   \]
  By Shapiro's lemma (see e.g.~\cite{brown:cohomology-group}*{Proposition~III.6.2}),
    \[H^3(\pi;\Ind_{G_i}^\pi(\Z^-\oplus\Z^-))\cong H^3(\Z/2;\Z^-\oplus \Z^-) \cong H^3(\Z/2;\Z^-)\oplus H^3(\Z/2;\Z^-)\cong (\Z/2)^2,\]
    for each $i$.
    We only give the computation of the $k$-invariant for $M=E\# F$; the other three cases are similar. The inclusion of the 3-skeleton $E^{(3)}$ into $M$ induces the inclusion $G_1\to \pi$ on fundamental groups. Hence the $k$-invariant of $M$ maps to the image of the $k$-invariant of $E^{(3)}$ in $H^3(G_1;\Res_{G_1}^\pi \pi_2(M) )$. Note that $\Res_{G_i}^\pi \Ind_{G_i}^\pi \Z^- \cong \Z^-\oplus F_i$, where $F_i$ is a free $\Z[G_i]$-module, for each $i$, while $\Res_{G_j}^\pi \Ind_{G_i}^\pi\Z^-$ is a free $\Z[G_j]$-module $F'_j$ for $i\neq j$ by \cite{brown:cohomology-group}*{Proposition~III.5.6}. Using that $H^3(G_1;F_i)=H^3(G_1;F_j')=0$ for all $i,j$, we deduce that
    \begin{align*}
        H^3(G_1;\Res_{G_1}^\pi \pi_2(M)) & \cong H^3\big(G_1;\Res_{G_1}^\pi\big(\Ind_{G_1}^\pi(\Z^-\oplus\Z^-) \oplus \Ind_{G_2}^\pi(\Z^-\oplus\Z^-)\big)\big) \\
        &\cong  H^3(G_1;\Z^-\oplus\Z^-)\\
        &\cong (\Z/2)^2.
        \end{align*}
    The map $\Z^-\oplus \Z^-\cong \pi_2(E^{(3)})\to \Res_{G_1}^\pi \pi_2(M) \cong \Z^-\oplus F_1\oplus \Z^-\oplus F_1\oplus F'_1$ is given by the inclusion of the two $\Z^-$-summands.
    Identifying $\pi_2(E^{(3)}) = \pi_2(E)$, the $k$-invariant of $E^{(3)}$ equals the $k$-invariant of $E$.
    Hence the image of the $k$-invariant of $E^{(3)}$ in $H^3(G_1;\Res_{G_1}^\pi \pi_2(M) )$ is $(1,1)$ by \cref{lemma:E-F-k-invariants}. Similarly, the image of the $k$-invariant of $F^{(3)}$ in $H^3(G_2;\Res_{G_2}^\pi \pi_2(M) )$ is $(1,0)$, again using \cref{lemma:E-F-k-invariants}. Hence the $k$-invariant of $E\#F$ is $((1,1),(1,0))$ as claimed.
\end{proof}

The next proposition proves \cref{prop:D-infty-k-invariant-free-statement}\,\ref{item:prop:dinfty-k-inv-i}.

\begin{proposition}
    Let $M$ and $M'$ be 
    oriented $4$-manifolds with fundamental group $\pi:=D_\infty$ and equivariant intersection forms both isometric to $H(I\pi)\oplus\lambda$, where $\lambda$ is a nonsingular Hermitian form on a stably free $\Z\pi$-module.
    Then $M\#\CP^2$ and $M'\#\CP^2$ have isomorphic quadratic $2$-types.
\end{proposition}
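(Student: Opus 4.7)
The plan is to reduce the proposition to matching the $k$-invariants of $M\#\CP^2$ and $M'\#\CP^2$ up to an automorphism of the intersection form, and then to use the diffeomorphism $E\#\CP^2\cong F\#\CP^2$ from \cref{lemma:E-F} to supply the necessary automorphisms.

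First I would set up the comparison of quadratic $2$-types. The orientation characters and fundamental groups of $M$ and $M'$ agree, and the given equivariant intersection form determines the pair $(\pi_2(M),\lambda_M)\cong(I\pi\oplus I\pi^\dagger\oplus L,\ H(I\pi)\oplus\lambda)$ as a Hermitian form, where $L$ is the underlying module of $\lambda$. After connected sum with $\CP^2$, both $\pi_2(M\#\CP^2)$ and $\pi_2(M'\#\CP^2)$ become isometric to $(I\pi\oplus I\pi^\dagger\oplus L\oplus\Z\pi,\ H(I\pi)\oplus\lambda\oplus\langle 1\rangle)$. Thus to produce an isomorphism of quadratic $2$-types it suffices to match the $k$-invariants up to an automorphism of this intersection form.

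Next I would locate these $k$-invariants. Since $L$ is stably free, $\Z\pi$ is free, and $\pi=D_\infty$ is virtually cyclic (so $H^n(\pi;\Z\pi)=0$ for $n\geq 2$), the summands $L$ and $\Z\pi$ contribute nothing to $H^3$. Hence $H^3(\pi;\pi_2(M\#\CP^2))\cong H^3(\pi;I\pi\oplus I\pi^\dagger)\cong (\Z/2)^4$, as computed via Shapiro's lemma in the proof of \cref{lemma:k-invariants-E-conn-sum-F-etc}, with the restriction to each $\Z/2$ factor $G_i\leq\pi$ landing in $H^3(G_i;\Z^-\oplus\Z^-)\cong(\Z/2)^2$.

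Finally I would apply \cref{lemma:E-F}: the orientation preserving diffeomorphism $E\#\CP^2\cong F\#\CP^2$ induces an isomorphism of quadratic $2$-types, and hence an isometry of $(\Z^-\oplus\Z^-\oplus\Z[\Z/2],\ H(\Z^-)\oplus\langle 1\rangle)$ that converts the $E$-value $(1,1)$ of the restricted $k$-invariant to the $F$-value $(1,0)$ recorded in \cref{lemma:E-F-k-invariants}. Inducing this isometry along each inclusion $G_i\hookrightarrow\pi$ and extending by the identity on the complementary summand (on $I\pi^\dagger$ respectively $I\pi$) and on $L$, one obtains for each $i\in\{1,2\}$ a form-preserving automorphism of $\pi_2(M\#\CP^2)$ that toggles only the $G_i$-restriction of the $k$-invariant. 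Together these generate a group acting transitively on the orbit of admissible restricted $k$-invariants computed in \cref{lemma:k-invariants-E-conn-sum-F-etc}, so $k_{M\#\CP^2}$ and $k_{M'\#\CP^2}$ are conjugate and the quadratic $2$-types agree. The hardest step will be verifying that the induced-up, identity-extended automorphisms genuinely preserve the full Hermitian form $H(I\pi)\oplus\lambda\oplus\langle 1\rangle$ — the local action coming from $E\#\CP^2\cong F\#\CP^2$ on the $\Ind_{G_i}^\pi$ block must glue equivariantly into $H(I\pi)\oplus\lambda\oplus\langle 1\rangle$ — and in showing that the $k$-invariants of general such $M, M'$ really lie in the single orbit generated by these automorphisms, rather than in distinct orbits.
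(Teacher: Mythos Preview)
Your overall strategy matches the paper's: reduce to matching $k$-invariants inside $H^3(\pi;I\pi\oplus I\pi^\dagger)\cong(\Z/2)^2\oplus(\Z/2)^2$ by isometries of $H(I\pi)\oplus\lambda\oplus\langle 1\rangle$, and manufacture those isometries from $E\#\CP^2\cong F\#\CP^2$. The paper does the last step slightly differently---it uses the diffeomorphisms $F\#F\#\CP^2\cong E\#F\#\CP^2\cong F\#E\#\CP^2\cong E\#E\#\CP^2$ directly as $D_\infty$-manifolds, rather than inducing up a $\Z/2$-isometry and extending by the identity---but your variant is fine and yields the same action on $H^3$.

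The genuine gap is exactly the one you flag at the end: you have not shown that $k_M$ and $k_{M'}$ lie in the orbit generated by your automorphisms. Concretely, your automorphisms (the swap of the two $\Z^-$ summands in each $H(\Z^-)$, together with the $E\leftrightarrow F$ toggle) act transitively on pairs $((x_1,y_1),(x_2,y_2))$ with each $(x_i,y_i)\neq(0,0)$, but nothing you have written rules out that $k_M$ restricts trivially to one of the $G_i$. \cref{lemma:k-invariants-E-conn-sum-F-etc} only computes the $k$-invariants of the four model manifolds; it says nothing about a general $M$. The paper closes this gap by invoking the $\CP^2$-stable classification: since $H_4(D_\infty;\Z)=0$, modified surgery gives $p,p',q,q'$ with
\[
M\#\ks(M)(\star\CP^2)\#p\CP^2\#q\ol{\CP}^2\ \cong\ F\#F\#p'\CP^2\#q'\ol{\CP}^2,
\]
and since $k_{F\#F}$ is nontrivial in both $(\Z/2)^2$ summands, the same holds for $k_M$ (and likewise for $k_{M'}$). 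Without this step your argument is incomplete, and there is no shortcut via passing to a $G_i$-cover, since $G_i\leq D_\infty$ has infinite index.
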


\begin{proof}
	It suffices to show that there is an isometry from $\lambda_M\oplus\langle 1\rangle$ to $\lambda_{M'}\oplus\langle 1\rangle$ sending the $k$-invariant of $M\#\CP^2$ to that of $M'\#\CP^2$.  By modified surgery theory \cite{kreck} (see also \cite{KPT}*{Theorem~1.2~and~Remark~1.3}) and since $H_4(\pi;\Z)=0$, there exist $p,p',q,q'\in\N$ such that we have a homeomorphism \[M\#\ks(M)(\star\CP^2)\#p\CP^2\#q\ol{\CP}^2 \cong F\#F\#p'\CP^2\#q'\ol{\CP}^2.\]
    Since the $k$-invariant of $F\# F$ is nontrivial by \cref{lemma:k-invariants-E-conn-sum-F-etc}, so is the $k$-invariant of $M$.

    Indeed we will need slightly more than just nontriviality.
    Let $G_1$ and $G_2$ both denote $\Z/2$, so that $\pi=G_1* G_2$.
    As in the proof of \cref{lemma:k-invariants-E-conn-sum-F-etc}, the $k$-invariant of $M$ lies in
    \begin{equation}\label{eq:k-invariant-home}
    H^3(\pi;\pi_2(M))\cong H^3(\pi; \Ind_{G_1}^\pi(\Z^-\oplus\Z^-)) \oplus H^3(\pi; \Ind_{G_2}^\pi(\Z^-\oplus\Z^-)).
    \end{equation}
    We showed above that $M$ is $\CP^2$-stably homeomorphic (possibly with an extra $\star \CP^2$ connected-summand) to $F\# F$. The $k$-invariant of $F\# F$ is nontrivial in both summands of \eqref{eq:k-invariant-home} by \cref{lemma:k-invariants-E-conn-sum-F-etc}.
    Therefore, not only is the $k$-invariant of $M$ nontrivial, it restricts nontrivially to each direct summand in \eqref{eq:k-invariant-home}. By the same argument, this is also true for the $k$-invariant of~$M'$.

    Let $\alpha_1$ and $\alpha_2$ denote arbitrary elements of $H^3(\pi;I\pi\oplus I\pi^\dagger)$ which are nontrivial in each summand under the isomorphism \eqref{eq:k-invariant-home}.
    Since the equivariant intersection form of $M$ is isometric to $H(I\pi)\oplus\lambda$ and the $\alpha_i$ are arbitrary, showing that there is an isometry of $H(I\pi)\oplus\langle 1\rangle$ that maps $\alpha_1\in H^3(\pi;I\pi\oplus I\pi^\dagger\oplus\Z\pi)\cong H^3(\pi;I\pi\oplus I\pi^\dagger)$ to $\alpha_2$ will complete the proof.

    Up to the isometry of $H(\Z^-)$ that interchanges the two summands of $\Z^-\oplus(\Z^-)^*\cong \Z^-\oplus\Z^-$, for both $i=1,2$, each $\alpha_i\in H^3(\pi;I\pi\oplus I\pi^\dagger)$ is the $k$-invariant of one of $F\#F$, $E\#F$, $F\#E$ or $E\#E$ by \cref{lemma:k-invariants-E-conn-sum-F-etc}.
    Since
    \[F\#F\#\CP^2\cong F\#E\# \CP^2\cong E\#F\#\CP^2\cong E\# E\#\CP^2\]
    by \cref{lemma:E-F}, there exist enough isometries of $H(I\pi)\oplus\langle 1\rangle$.
\end{proof}

Finally, the next proposition proves \cref{prop:D-infty-k-invariant-free-statement}\,\ref{item:prop:dinfty-k-inv-ii}.

\begin{proposition}
	Let $M$ and $M'$ be oriented, almost spin $4$-manifolds with fundamental group $\pi:=D_\infty$, and the same $w_2^\pi\in H^2(\pi;\Z/2)\cong (\Z/2)^2$. Assume that the intersection forms of $M$ and $M'$ are both isometric to $H(I\pi)\oplus\lambda$,
    where $\lambda$ is a nonsingular Hermitian form on a stably free $\Z\pi$-module.
    Then $M$ and $M'$ are homotopy equivalent.
\end{proposition}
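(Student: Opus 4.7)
The plan is to invoke \cref{thm:main-homotopy}, which reduces the statement to producing an isomorphism of quadratic $2$-types $Q(M)\xrightarrow{\cong} Q(M')$. The fundamental groups, trivial orientation characters, and intersection forms already match by hypothesis. Fixing identifications $\pi_2(M),\pi_2(M')\cong I\pi\oplus I\pi^\dagger\oplus P$ realizing the assumed isometry $\lambda_M\cong H(I\pi)\oplus\lambda\cong\lambda_{M'}$, the remaining task is to exhibit an isometry of $H(I\pi)\oplus\lambda$ that sends $k_M$ to $k_{M'}$.

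As a first step, I would localize where the $k$-invariants live. Since $B\pi\simeq \RP^\infty\vee\RP^\infty$, a Mayer--Vietoris computation (as used in the proof of \cref{lemma:k-invariants-E-conn-sum-F-etc}) shows that $H^n(\pi;\Z\pi)=0$ for all $n\geq 2$, and since $P$ is stably free this forces $H^3(\pi;P)=0$. Hence both $k_M$ and $k_{M'}$ lie in $H^3(\pi;I\pi\oplus I\pi^\dagger)\cong (\Z/2)^2\oplus (\Z/2)^2$. The same vanishing implies that the $k$-invariant is insensitive to connected sum with $\CP^2$, $\ol{\CP^2}$, or $\star\CP^2$, which only add $\Z\pi$ summands to $\pi_2$.

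Next, I would invoke modified surgery~\cite{kreck} together with~\cite{KPT}*{Theorem~1.2}. Since $H_4(\pi;\Z)=0$ and $M$, $M'$ share the same $w_2$-type $\theta$, there exist $p,q,p',q'\geq 0$ and a homeomorphism
\[
M\#\ks(M)\star\CP^2\# p\CP^2\# q\ol{\CP^2}\;\cong\;M'\#\ks(M')\star\CP^2\# p'\CP^2\# q'\ol{\CP^2}.
\]
The induced isomorphism of stabilized quadratic $2$-types, combined with the $\CP^2$-stability of the $k$-invariants noted above, yields an isometry of the stabilized form $H(I\pi)\oplus\lambda\oplus\langle\pm 1\rangle^{\bullet}$ that identifies $k_M$ with $k_{M'}$ inside the fixed group $H^3(\pi;I\pi\oplus I\pi^\dagger)$.

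The main obstacle, and the step where the $w_2$-type hypothesis is genuinely used, is to descend this stable identification to an isometry of $H(I\pi)\oplus\lambda$ itself. Here my plan is to compare both $k_M$ and $k_{M'}$ against the $k$-invariant of the reference manifold $X_\theta\in\{F\#F,E\#F,F\#E,E\#E\}$ whose $w_2$-type equals $\theta$. By \cref{lemma:k-invariants-E-conn-sum-F-etc}, in a suitable hyperbolic basis $k_{X_\theta}$ is one of $((1,0),(1,0))$, $((1,1),(1,0))$, $((1,0),(1,1))$, $((1,1),(1,1))$, so $\theta$ pins down the component $k_{X_\theta}|_{G_i}\in H^3(G_i;\Z^-\oplus\Z^-)\cong(\Z/2)^2$. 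The isometries of each $H(\Ind_{G_i}^\pi\Z^-)$ that swap $\Ind_{G_i}^\pi\Z^-$ with its dual---extended by the identity on $\lambda$ and the other hyperbolic summand---are genuine isometries of $H(I\pi)\oplus\lambda$, and act on each factor $(\Z/2)^2$ with orbits $\{(0,0)\}$, $\{(1,0),(0,1)\}$, $\{(1,1)\}$. Crucially, unlike in part~\ref{item:prop:dinfty-k-inv-i} where \cref{lemma:E-F} provided the diffeomorphism $E\#\CP^2\cong F\#\CP^2$ mixing the last two orbits, no such mixing is available without the additional $\CP^2$-summand. A careful inspection of the shears of the stabilized hyperbolic form---using that all such shears preserve the projection to $H^3(\pi;I\pi\oplus I\pi^\dagger)$ because the $\Z\pi$-summands contribute trivially---shows that the orbit of $k_M$ under stable isometries equals its orbit under isometries of $H(I\pi)\oplus\lambda$, and likewise for $k_{M'}$. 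As both orbits coincide with the one determined by $\theta$, the required isometry of $H(I\pi)\oplus\lambda$ sending $k_M$ to $k_{M'}$ exists, completing the proof.
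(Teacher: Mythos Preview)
Your descent step is the heart of the argument, and it contains a genuine error. You claim that ``the orbit of $k_M$ under stable isometries equals its orbit under isometries of $H(I\pi)\oplus\lambda$,'' justified by noting that shears involving the extra $\Z\pi$-summands act trivially on $H^3(\pi;I\pi\oplus I\pi^\dagger)$. That observation about shears is correct, but it does not imply that every isometry of the $\CP^2$-stabilized form is a product of an isometry of $H(I\pi)\oplus\lambda$ and such shears. In fact your claim is \emph{false}, and you already have the counterexample in hand: the diffeomorphism $E\#\CP^2\cong F\#\CP^2$ of \cref{lemma:E-F} induces an isometry of $H(\Ind_{G_i}^\pi\Z^-)\oplus\langle 1\rangle$ carrying $(1,1)$ to $(1,0)$ in $H^3(G_i;\Z^-\oplus\Z^-)$, whereas no isometry of $H(\Ind_{G_i}^\pi\Z^-)$ alone can move between the orbits $\{(1,1)\}$ and $\{(1,0),(0,1)\}$. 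You even flag this phenomenon (``unlike in part~\ref{item:prop:dinfty-k-inv-i}\dots''), yet your next sentence asserts the contrary. If your descent claim held, the proposition would follow without the $w_2$-type hypothesis, which is impossible since $E\#F$ and $F\#F$ have isometric intersection forms $H(I\pi)$ but are not homotopy equivalent.

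The paper's proof avoids this trap by a bootstrap that uses \cref{thm:main-homotopy} \emph{twice}. One stabilizes by a simply connected \emph{spin} manifold $L$ (not $\CP^2$'s) so that both $M\#L$ and $M'\#L$ are homotopy equivalent to a common $N_1\#N_2\#K$ with $N_i\in\{E,F\}$ determined by the given $w_2$-type. The assumed isometry $\varphi$ then transports to an isometry $\Psi$ of $\pi_2(N_1\#N_2\#K)$; restricting to each $G_i$ one sees $\Psi(k_{N_1\#N_2\#K})$ is nontrivial in each $(\Z/2)^2$ factor, so after swap isometries of $H(\Z^-)$ it becomes $k_{N_1'\#N_2'\#K}$ for some $N_i'\in\{E,F\}$. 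Now apply \cref{thm:main-homotopy} to realize this modified isometry by a homotopy equivalence $N_1\#N_2\#K\simeq N_1'\#N_2'\#K$; since homotopy equivalences preserve $w_2^\pi$, one concludes $N_i'=N_i$, so the swaps were unnecessary and $\varphi$ already matched the $k$-invariants. The $w_2$-type hypothesis enters precisely at this last step, not through any control of stable-versus-unstable orbits.
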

	\begin{proof}
		By \cref{thm:main-homotopy}, it suffices to show that there is an isometry $\pi_2(M)\to \pi_2(M')$ that sends $k_M$ to $k_{M'}$. By assumption there is an isometry $\varphi\colon \pi_2(M)\to \pi_2(M')$, but we have no control over the behaviour of the $k$-invariants.
		
		By the stable classification (\cref{khldflkhfglufdalkgualkguvf,lem:stable-classes}), there are simply connected, spin $4$-manifolds $L$ and $K$, as well as $N_i\in\{E,F\}$, for $i=1,2$, such that
        \[
        \begin{tikzcd}
        M\#L\ar[r,"f","\simeq"'] &N_1\#N_2\#K &M'\#L\ar[l,"\simeq","{f'}"'].
        \end{tikzcd}
        \]
        Then
        \[\Psi:=f'_*\circ(\varphi \oplus \Id_{\Z\pi \otimes \pi_2(L)})\circ f_*^{-1}\colon \pi_2(N_1\#N_2\#K)\to \pi_2(N_1\#N_2\#K)\]
        is an isometry.
        Note that $\Psi$ preserves the $k$-invariant of $N_1\#N_2\#K$ if and only if
         $\varphi \oplus \Id_{\Z\pi \otimes \pi_2(L)}$ sends $k_{M \# L}$ to $k_{M' \# L}$, which in turn holds if and only if $\varphi$ sends  $k_M$ to $k_{M'}$.

		We carry out the argument in the case that $N_1\#N_2=E\#F$. The other cases are similar.
       As before  let $G_i \cong \Z/2$ denote the two factors of $\pi$, that is $\pi:=D_\infty = G_1 * G_2$.
        By \cref{lemma:k-invariants-E-conn-sum-F-etc}, since $K$ is simply connected, the $k$-invariant of $E\#F\#K$ is
		\begin{align*}
        ((1,1),(1,0)) \in H^3(\pi;\pi_2(E\#F\#K))&\cong H^3(\pi;\Ind_{G_1}^\pi(\Z^-\oplus\Z^-)\oplus \Ind_{G_2}^\pi(\Z^-\oplus\Z^-))\\
        &\cong (\Z/2)^2\oplus(\Z/2)^2.
        \end{align*}
        As in the proof of \cref{lemma:k-invariants-E-conn-sum-F-etc}, the inclusions $G_i\to \pi$ map the $k$-invariant to
		\[(1,1)\in(\Z/2)^2\cong H^3(G_1;\Z^-\oplus\Z^-)\cong H^3(G_1;\Res^\pi_{G_1}\pi_2(E\#F\#K)),\]
		and
		\[(1,0)\in(\Z/2)^2\cong H^3(G_2;\Z^-\oplus\Z^-)\cong H^3(G_2;\Res^\pi_{G_2}\pi_2(E\#F\#K)).\]
        The inclusion $j_i\colon G_i\to \pi$ induces the following commutative square.
        \[\begin{tikzcd}[column sep = large]
            H^3(\pi;\pi_2(E\# F\# K))\ar[r,"\Psi","\cong"']\ar[d,"j_i^*"] & H^3(\pi;\pi_2(E\# F\# K)) \ar[d,"j_i^*"] \\
            H^3(G_i;\Res_{G_i}^\pi\pi_2(E\# F\# K))\ar[r,"\Res_{G_i}^\pi(\Psi)","\cong"'] & H^3(G_i;\Res_{G_i}^\pi \pi_2(E\# F\# K))
        \end{tikzcd}\]
        Since $j_i^*(k_{E\#F\#K})$ is nontrivial, so is \[(x_i,y_i):=j_i^*\Psi(k_{E\#F\#K})\in (\Z/2)^2\cong H^3(G_i;\Res^\pi_{G_1}\pi_2(E\#F\#K))\] for $i=1,2$.

		As in the proof of \cref{lemma:k-invariants-E-conn-sum-F-etc}, using \cref{lem:adding-I,lem:ind-star-dagger-isom} we compute that
        \[H(I\pi)\cong 
        \Ind_{\Z/2}^\pi H(\Z^-)\oplus \Ind_{\Z/2}^\pi H(\Z^-),\]
        We can therefore change $\varphi$ (and hence $\Psi$) by the isometry that permutes the two copies of $\Z^-$ in $H(\Z^-)$, if necessary, to arrange that $(x_i,y_i)$ is the $k$-invariant of a $4$-manifold $N_i' \in \{E,F\}$ for $i=1,2$.
        Thus we can construct an isometry \[\Psi' \colon \pi_2(E\#F\# K)\xrightarrow{\Psi}\pi_2(E\#F\# K)\cong \pi_2(N_1'\# N_2'\# K)\] that preserves the $k$-invariant, using that the intersection forms of $E$ and $F$ are isometric.
        Hence by \cref{thm:main-homotopy}, the isometry $\Psi'$ is induced by a homotopy equivalence $E\# F \# K \to N_1' \# N_2' \# K$ over $\pi$.
        Since a homotopy equivalence over $\pi$ preserves the second Stiefel--Whitney class $w_2^\pi$, it follows that $N_1'=E$ and $N_2'=F$. We deduce that $\Psi$ must have already preserved the $k$-invariant of $k_{E\# F\# K}$. From the construction of $\Psi$ it follows that the isometry $\varphi$ maps the $k$-invariant of $M$ to that of $M'$. Again applying \cref{thm:main-homotopy}, $\varphi$ is realised by a homotopy equivalence $M\xrightarrow{\simeq}M'$, as desired.
        \end{proof}

\section{Additional information on classifications for torsion-free \texorpdfstring{$3$-manifold}{3-manifold} groups}\label{section:additional-information}

In this section we prove results primarily about $4$-manifolds with torsion-free $3$-manifold fundamental group. First we consider some aspects of the realisation question, which asks: for a fixed fundamental group, which values of the quadratic $2$-type are realised as the invariants of some $4$-manifold?  We focus on characterising the modules that arise as $\pi_2(M)$ and the sesquilinear forms that are realisable as equivariant intersection forms.

After that we investigate what can be said about the $s$-cobordism and homeomorphism classifications for $4$-manifolds in a fixed homotopy type. In particular, we  analyse the surgery exact sequence and give upper bounds on the number of $s$-cobordism and homeomorphism classes.

\subsection{Realisation of \texorpdfstring{$\pi_2(M)$}{the second homotopy group}}\label{section:realisation-pi-2}

We  say that two $\Z\pi$-modules $L$ and $L'$ are \emph{strictly stably
isomorphic} if $L\oplus \Z\pi^r \cong L'\oplus\Z\pi^r$ for some $r\in \N$.

\begin{theorem}
Let $M$ be a closed $4$-manifold such that $\pi:=\pi_1(M) \cong G_1*\ldots* G_m*F(r)$, where $G_i$ is a  $PD_3$-group for each $i$,  $F(r)$ is free of rank $r$, and $m, r \geq 0$. Let $\beta_1:=\beta_1(\pi;\bbF_2)$.
Then
\begin{equation}\label{eq:euler-char}
\chi(M)=2+\dim_{\bbF_2}(\bbF_2\otimes_{\Z\pi}\pi_2(M))-\beta_1-m-r.
\end{equation}
Let $w\colon \pi\to C_2$ be the orientation character of $M$. Let $v'\colon \pi\to C_2$ be trivial on $F(r)$ and the orientation character of a $PD_3$-complex with fundamental group $G_j$ for each $j$. Then $\pi_2(M)$ is strictly stably isomorphic to $\Z\pi^s
\oplus I\pi^v$, where $s = \chi(M) + m + r - 2$ and $v=wv'\colon \pi\to C_2$.
\end{theorem}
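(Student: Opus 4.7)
The plan is to prove the two claims in sequence, treating the Euler characteristic formula \eqref{eq:euler-char} as the main step and deducing the strict stable isomorphism from it together with \cref{cor:pi2-decomp}. For the stable isomorphism, \cref{cor:pi2-decomp} provides integers $a,b \geq 0$ with $\pi_2(M) \oplus \Z\pi^a \cong I\pi^v \oplus \Z\pi^b$; setting $s := b - a$, I pin down $s$ by applying $\bbF_2 \otimes_{\Z\pi} -$ to both sides. Tensoring the sequence $0 \to I\pi^v \to \Z\pi \to \Z^v \to 0$ over $\Z$ with $\bbF_2$ (which is exact since $\Z^v$ is $\Z$-free) and observing that $v$ becomes trivial mod $2$ yields a short exact sequence $0 \to I\pi^v \otimes \bbF_2 \to \bbF_2\pi \to \bbF_2 \to 0$; taking $\bbF_2 \otimes_{\Z\pi} -$ and using the associated long Tor sequence gives $\dim_{\bbF_2}(\bbF_2 \otimes_{\Z\pi} I\pi^v) = \dim H_1(\pi;\bbF_2) = \beta_1$. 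Combined with \eqref{eq:euler-char}, this yields $s = \chi(M) + m + r - 2$, as required.

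For \eqref{eq:euler-char}, my plan is to use \cref{lemma:stable-splitting} to fix a homeomorphism $M \# k(S^2 \times S^2) \cong M_1 \# \cdots \# M_m \# N_1 \# \cdots \# N_r$ with $\pi_1(M_i) \cong G_i$ and $\pi_1(N_j) \cong \Z$ (splitting $F(r)$ fully into $r$ copies of $\Z$). Under connected sum both $\chi$ (with a $-2$ correction per summand) and $\pi_2$ (via $\pi_2(A \# B) \cong \Ind_{\pi_1(A)}^{\pi_1(A\# B)} \pi_2(A) \oplus \Ind_{\pi_1(B)}^{\pi_1(A\# B)} \pi_2(B)$) are additive, and since $\bbF_2 \otimes_{\Z\pi} \Ind_{H}^\pi L \cong \bbF_2 \otimes_{\Z H} L$ the quantity $\dim_{\bbF_2}(\bbF_2 \otimes_{\Z\pi} \pi_2(-))$ is additive as well. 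A direct accounting (the $k$ hyperbolic stabilisations contribute $2k$ to both sides and cancel; the $\beta_1 = \sum \beta_1(G_i) + r$ and the $-m-r$ terms come out of the summand count) reduces \eqref{eq:euler-char} to the two prime cases $\pi \cong G_i$ ($m=1, r=0$) and $\pi \cong \Z$ ($m=0, r=1$).

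The case $\pi \cong \Z$ is immediate from the Leray--Serre spectral sequence $E^2_{p,q} = H_p(\Z; H_q(\wt M; \bbF_2)) \Rightarrow H_{p+q}(M; \bbF_2)$: since $\operatorname{cd}(\Z) = 1$ and $H_1(\wt M; \bbF_2) = 0$, the only $E^\infty$-term contributing to $b_2(M)$ is $E^\infty_{0,2} = \bbF_2 \otimes_{\Z[\Z]} \pi_2(M)$, and then $\chi(M) = 2 - 2 b_1 + b_2 = \dim_{\bbF_2}(\bbF_2 \otimes_{\Z[\Z]} \pi_2(M))$ matches the formula.

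The core difficulty is the $PD_3$-group case. My plan is to run the Leray--Serre spectral sequence with $\bbF_2$ coefficients for $\wt M \to M \to BG$, and the key technical step is to establish $H_3(\wt M; \bbF_2) = 0$. I would deduce this vanishing from the chain of isomorphisms
\[
H_3(\wt M; \bbF_2) \cong H_3(M; \bbF_2 G) \cong H^1(M; \bbF_2 G) \cong H^1(G; \bbF_2 G) \cong H_2(G; \bbF_2 G) = 0,
\]
using Poincar\'e duality for $M$ over $\bbF_2$ (second), the universal coefficient spectral sequence with $H_1(\wt M; \bbF_2) = 0$ (third), Poincar\'e duality for the $PD_3$-group $G$ (fourth), and freeness of $\bbF_2 G$ over itself (fifth). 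With only the $q=0$ and $q=2$ rows of the $E^2$-page nonvanishing, the Euler characteristic identity yields $\chi(M) = \chi(G; \bbF_2) + \chi(G; \pi_2(M) \otimes \bbF_2)$, where $\chi(G; A) := \sum_p (-1)^p \dim_{\bbF_2} H_p(G; A)$. Poincar\'e duality for $G$ gives $\chi(G; \bbF_2) = 0$, and since $\pi_2(M) \otimes \bbF_2$ is stably isomorphic to the augmentation ideal $I_{\bbF_2} G$ of $\bbF_2 G$ (the mod-$2$ reduction of \cref{lem:pi2-pd3}), the sequence $0 \to I_{\bbF_2} G \to \bbF_2 G \to \bbF_2 \to 0$ gives $\chi(G; \pi_2(M) \otimes \bbF_2) = 1 + s$. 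Thus $\chi(M) = 1 + s$, completing the formula in this case.
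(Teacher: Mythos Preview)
Your proof is correct but takes a genuinely different route from the paper. The paper works directly with the full group $\pi$: from the Leray--Serre spectral sequence it extracts the exact sequence
\[
H_3(M;\bbF_2)\xrightarrow{c_*} H_3(\pi;\bbF_2)\to \bbF_2\otimes_{\Z\pi}\pi_2(M)\to H_2(M;\bbF_2)\xrightarrow{c_*} H_2(\pi;\bbF_2)\to 0,
\]
and the key step is the cap-product argument that $c_* \colon H_3(M;\bbF_2)\to H_3(\pi;\bbF_2)$ vanishes (since $c_*[M]=0$ in $H_4(\pi;\Z)$). This gives $\beta_2(M;\bbF_2)=\beta_2(\pi)+d-\beta_3(\pi)$ in one stroke, after which the identity $\beta_2(\pi)-\beta_3(\pi)-\beta_1(\pi)=-m-r$ is a short additive computation. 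Your approach instead passes through the stable-splitting lemma to reduce to the two prime cases, and in the $PD_3$ case replaces the cap-product trick by the pleasant observation $H_3(\wt M;\bbF_2)=0$ via two Poincar\'e dualities, followed by Euler-characteristic additivity in the spectral sequence. The paper's argument is shorter and avoids both the splitting lemma and the case analysis; yours is more modular and makes the $PD_3$ contribution very transparent. The second half of the argument (pinning down $s$ via $\dim_{\bbF_2}(\bbF_2\otimes_{\Z\pi}I\pi^v)=\beta_1$) is essentially identical to the paper's.

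One expositional point: in your $PD_3$ paragraph you conclude ``$\chi(M)=1+s$, completing the formula in this case,'' but the formula you reduced to is $\chi(M_i)=1+d_i-\beta_1(G_i)$, not $1+s_i$. These agree because $s_i=d_i-\beta_1(G_i)$ follows from the same Tor computation you did for the full $\pi$, applied to $G_i$; it would be cleaner to say so explicitly. Also, you should note that $H_4(\wt M;\bbF_2)=0$ (since $G$ is infinite) to justify that only the $q=0$ and $q=2$ rows of the $E^2$-page are nonzero.
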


\begin{proof}
	Let $c\colon M\to B\pi$ be a classifying map. Consider the Leray--Serre spectral sequence for $\wt M\to M\to B\pi$ with $E^2$-page $H_{p}(\pi;H_q(\wt M;\bbF_2))$ converging to $H_{p+q}(M;\bbF_2)$. This gives rise to an exact sequence
\begin{equation}
	\label{eq:F2-sequence}
	H_3(M;\bbF_2)\xrightarrow{c_*} H_3(\pi;\bbF_2)\to \bbF_2\otimes_{\Z\pi}\pi_2(M)\to H_2(M;\bbF_2)\xrightarrow{c_*} H_2(\pi;\bbF_2)\to 0.
\end{equation}
By naturality of the cap product, the composition
\begin{equation}\label{eq:composition}
H^1(\pi;\bbF_2)\xrightarrow{c^*} H^1(M;\bbF_2)\xrightarrow{-\cap [M]} H_3(M;\bbF_2)\xrightarrow{c_*}H_3(\pi;\bbF_2)
\end{equation}
agrees with the map $-\cap c_*[M]$. Since $H_4(\pi;\Z)=0$, in particular $c_*[M]=0$ and thus the above composition is trivial. The first two maps of the composition \eqref{eq:composition} are isomorphisms and hence $H_3(M;\bbF_2)\xrightarrow{c_*}H_3(\pi;\bbF_2)$ is trivial.
It now follows from \eqref{eq:F2-sequence}, that \[\beta_2(M;\bbF_2)=\beta_2+\dim_{\bbF_2}(\bbF_2\otimes_{\Z\pi}\pi_2(M))-\beta_3,\]
where $\beta_i:=\beta_i(\pi;\bbF_2)$.
By Poincar\'e duality, $\beta_3(M;\bbF_2)=\beta_1(M;\bbF_2)=\beta_1$. Hence we have
\[\chi(M)=2+\beta_2+\dim_{\bbF_2}(\bbF_2\otimes_{\Z\pi}\pi_2(M))-\beta_3-2\beta_1.\]
Now we investigate the quantity $\beta_2 - \beta_3 -\beta_1$.
If $G$ is a $PD_3$-group, then
\[\beta_2(G;\bbF_2)-\beta_3(G;\bbF_2)-\beta_1(G;\bbF_2)=\beta_1(G;\bbF_2)-1-\beta_1(G;\bbF_2)=-1\]
by Poincar\'e duality. Furthermore,
\[\beta_2(\Z;\bbF_2)-\beta_3(\Z;\bbF_2)-\beta_1(\Z;\bbF_2)=0-0-1=-1.\]
Since the Betti numbers of a free product are the sums of the Betti numbers of the factors, and we have $m$ factors that are $PD_3$-groups and $r$ factors isomorphic to  $\Z$, this implies that $\beta_2-\beta_3-\beta_1=-m-r$.
Hence
\[\chi(M)=2+\beta_2+\dim_{\bbF_2}(\bbF_2\otimes_{\Z\pi}\pi_2(M))-\beta_3-2\beta_1=2+\dim_{\bbF_2}(\bbF_2\otimes_{\Z\pi}\pi_2(M))-\beta_1-m-r\]
as claimed.

Moreover, there exist $\ell,\ell'$ such that $\pi_2(M\#\ell(S^2\times S^2))\cong \pi_2(M)\oplus \Z\pi^{2\ell}$ is isomorphic to $\Z\pi^{\ell'}\oplus I\pi^v$ by \cref{cor:pi2-decomp}. Then $\pi_2(M)$ is strictly stably isomorphic to $\Z\pi^{\ell'-2\ell}\oplus I\pi$, and we want to show that $\ell'-2\ell=s$, where $s=\chi(M) + m + r - 2$ as in the statement of the proposition.
We have
\[\dim_{\bbF_2}(\bbF_2 \otimes_{\Z\pi}\pi_2(M \# \ell(S^2 \times S^2))) = \ell' + \dim_{\bbF_2}(\bbF_2 \otimes_{\Z\pi} I\pi^v) = \ell' + \beta_1.\]
 Here we used that $\dim_{\bbF_2}(\bbF_2 \otimes_{\Z\pi} I\pi^v)  = \beta_1$. To see this, consider the short exact sequence
 $0 \to I\pi^v \to \Z\pi \to \Z^v \to 0$. Tensoring with $\bbF_2 \otimes_{\Z\pi} -$ yields the long exact sequence
 \[\Tor_1^{\Z\pi}(\bbF_2 ,\Z\pi) \to \Tor_1^{\Z\pi}(\bbF_2,\Z^v) \to \bbF_2 \otimes_{\Z\pi} I\pi^v \to \bbF_2 \otimes_{\Z\pi} \Z\pi \to \bbF_2 \otimes_{\Z\pi} \Z^v \to 0.\]
 Since $\Tor_1^{\Z\pi}(\bbF_2 ,\Z\pi) =0$, $\Tor_1^{\Z\pi}(\bbF_2,\Z^v) \cong H_1(\pi;\bbF_2^v) \cong H_1(\pi;\bbF_2)$
 and $\bbF_2 \otimes_{\Z\pi} \Z\pi = \bbF_2 = \bbF_2 \otimes_{\Z\pi} \Z^v$, so we obtain the exact sequence $0 \to H_1(\pi;\bbF_2) \to \bbF_2 \otimes_{\Z\pi} I\pi^v \to \bbF_2 \to \bbF_2 \to 0$, from which the desired equality $\dim_{\bbF_2}(\bbF_2 \otimes_{\Z\pi} I\pi^v)  = \beta_1$ follows.

Now by \eqref{eq:euler-char} we therefore have
\[\chi(M)+2\ell=\chi(M\#\ell(S^2\times S^2))= 2 + (\ell' + \beta_1) - \beta_1 -m-r = 2+\ell'-m-r.\]
Hence $\ell'-2\ell=\chi(M) + m + r - 2=s$, as asserted.
\end{proof}

\subsection{Realisation of the equivariant intersection forms}\label{section:realisation-int-forms}

Let $N$ be a $3$-manifold with fundamental group $\pi:=\pi_1(N)$. Let $N_*:=\overline{N\setminus D^3}$. Then \[M_0:=\partial(N_*\times D^2)\cong (N_*\times S^1)\cup_{S^2\times S^1}(S^2\times D^2)\] is a $4$-manifold with fundamental group $\pi$. Similarly, we can form the Gluck twist $M_1:=(N_*\times S^1)\cup_{\tau}(S^2\times D^2)$ of $M_0$. The manifolds $M_0$ and $M_1$ are called the \emph{spin} and \emph{twist spin} of $N$ respectively.

Plotnick showed that $\pi_2(M_\varsigma)\cong I\pi\oplus \pi_2(N_*)$ for $\varsigma=0,1$ but that the intersection forms $\lambda_0$ and $\lambda_1$ of $M_0$ and $M_1$ are in general not isomorphic \cite{Plotnick}*{Propositions~2.1 and~2.2}. If $N$ is aspherical, then $\pi_2(N)\cong \Z\pi$ and $I\pi^\dagger\cong \Z\pi^\dagger$ \cite{KLPT}*{Lemma~7.5}.
If $N$ is aspherical and orientable, the intersection form on $M_\varsigma$ is given by $\left(\begin{smallmatrix}
	\varsigma&1\\1&0
\end{smallmatrix}\right)$. In particular, $\lambda_0(\alpha,\beta)=0$ and $\lambda_1(\alpha,\beta)=\alpha\overline{\beta}$ for $\alpha,\beta\in I\pi$ by \cite{Plotnick}*{Section~3}; see also \cite{KLPT}*{Section~7.2}.

For $\pi$ the fundamental group of a closed, orientable, aspherical $3$-manifold, \cite{KLPT}*{Theorem~9.6} gives a complete list of stable isomorphism classes of intersection forms of closed, oriented $4$-manifolds with fundamental group $\pi$.  Each of the forms on their list is of the form $\lambda\oplus \lambda_\varsigma$ for $\varsigma=0$ or $1$. The next theorem shows that every form $\lambda\oplus \lambda_\varsigma$ is moreover realised as the intersection form of some $PD_4$-complex. If the group $\pi$ is solvable, then it is in particular good, in which case we can also realise the form by a topological $4$-manifold.

\begin{theorem}
	Let $N$ be a $3$-manifold, let $\pi:=\pi_1(N)$ and let $w_N \colon \pi \to C_2$ be the orientation character of~$N$. Suppose that there are no elements of order two in $\ker(w_N)$.  Let $\lambda$ be a nonsingular Hermitian form on a stably free $\Z\pi$-module and let $\varsigma=0,1$. Then there is a $PD_4$-complex $Z$ with fundamental group $\pi$,  orientation character $w_Z=w_N$,  and equivariant intersection form $\lambda\oplus\lambda_\varsigma$. If $N$ is orientable and $\pi$ is torsion-free and solvable, then $Z$ is homotopy equivalent to a topological manifold.
\end{theorem}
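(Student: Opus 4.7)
The plan is two-stage: first construct $Z$ as a $PD_4$-complex by attaching cells to Plotnick's (twist) spin $M_\varsigma$; then, under the additional hypotheses in the final sentence, upgrade $Z$ to a topological manifold via Freedman--Quinn surgery.

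Starting with $M_\varsigma$, which already realises $\pi$, $w_N$, and $\lambda_\varsigma$, I would form
\[
Z := \bigl((M_\varsigma \setminus D^4) \vee \bigvee\nolimits_{i=1}^n S^2\bigr) \cup_\alpha D^4,
\]
where the $n$ spheres are indexed by a generating set of (a stabilisation of) the domain $L$ of $\lambda$ and $\alpha \colon S^3 \to Z^{(3)}$ is an attaching map chosen so that the resulting intersection pairing on $H_2(Z;\Z\pi)$ is $\lambda \oplus \lambda_\varsigma$. Since $L$ is stably free, one reduces to the case of free $L$ by working up to direct sum with a hyperbolic form on $\Z\pi^r$; the extra hyperbolic summand is realisable by connected sums of $M_\varsigma$ with $S^2$-bundles over $S^2$ whose twist matches $w_N$, and can then be stripped off at the $PD_4$-complex level at the end of the construction. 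This amounts to a Wall-type realisation of a nonsingular Hermitian form over $\Z\pi$ as the intersection form of a $PD_4$-complex.

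The key technical input is the existence of $\alpha$ making $Z$ into a $PD_4$-complex. The hypothesis that $\ker(w_N)$ contains no element of order two enters precisely here, through \cref{prop:free}, which gives injectivity of $\mB_A \colon \Z^{w_N} \otimes_{\Z\pi} \Gamma(A) \to \Her^{w_N}(A^\dagger)$ for $A = L$ (after stabilisation). Injectivity of $\mB_A$ allows the Hermitian form $\lambda$ to be lifted uniquely to a quadratic class in $\Z^{w_N} \otimes_{\Z\pi} \Gamma(L)$, which via Whitehead's $\Gamma$-sequence encodes the homotopy class of $\alpha$; nonsingularity of $\lambda \oplus \lambda_\varsigma$ then yields Poincar\'e duality for $Z$.

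For the final sentence, suppose $N$ is orientable and $\pi$ is torsion-free solvable. Then $\pi$ is good \cite{Freedman-Teichner:1995-1} and, being torsion-free, has vanishing Whitehead group and computable $L$-theory via the Farrell--Jones conjecture. I would apply Freedman--Quinn surgery: the $PD_4$-complex $Z$ is homotopy equivalent to a closed topological 4-manifold provided its Spivak normal fibration lifts to $\Top$ and the total surgery obstruction in $L_4(\Z\pi, w_N)$ vanishes. Both conditions follow from comparing $Z$ with the genuine manifold obtained by connected-summing $M_\varsigma$ with enough $S^2$-bundles to stably realise $\lambda \oplus \lambda_\varsigma$, using that stably equivalent $PD_4$-complexes with good fundamental group and equal Kirby--Siebenmann invariant are stably homeomorphic. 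The most delicate step will be the construction of $\alpha$ in Stage 1; the injectivity of $\mB_A$ supplied by \cref{prop:free} is the crucial input that is not available without the no-order-$2$ hypothesis.
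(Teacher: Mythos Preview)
Your Stage~1 is essentially the paper's approach: the paper invokes \cite{hillman-matrix}*{Theorem~10}, whose proof is precisely the cell-attachment construction you sketch, with the no-order-two hypothesis entering through the injectivity of $\mB_A$ (your \cref{prop:free}). So that part is fine.

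Stage~2 has a genuine gap. Your claimed justification---comparing $Z$ with a manifold obtained by connected-summing $M_\varsigma$ with $S^2$-bundles---does not do the work you need. Connected sums with $S^2$-bundles over $S^2$ add hyperbolic summands $H(\Z\pi)$ to $\lambda_\varsigma$, not the arbitrary nonsingular form $\lambda$; so there is no ``genuine manifold'' realising $\lambda\oplus\lambda_\varsigma$ to compare with, stably or otherwise, until you have already done the surgery. Moreover, the phrase ``equal Kirby--Siebenmann invariant'' is meaningless for a $PD_4$-complex that is not yet known to be a manifold. You therefore have no argument for either (a) the existence of a $\Top$ reduction of the Spivak fibration of $Z$, or (b) the vanishing of the surgery obstruction of any resulting degree-one normal map.

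The paper handles these two points separately and with care. For (a) it cites the reduction theorems of Hambleton and of Land, which apply because $Z$ is orientable. For (b), starting from a $2$-connected degree-one normal map $g\colon M\to Z$, it first adds $E_8$-manifolds to $M$ to equalise signatures, then uses the Farrell--Jones conjecture (valid for solvable $\pi$) to deduce that the surgery obstruction map $\cN(M)\to L_4(\Z\pi)$ is surjective, producing $g'\colon M'\to M$ with $\sigma(g')=-\sigma(g)$. The nontrivial remaining step is verifying that the composition $g\circ g'$ is again a \emph{normal} map: this is not automatic, and the paper uses the Dold--Whitney classification of stable bundles over an oriented $4$-manifold (via $w_2$ and $p_1$) to check that $g'$ pulls back $\nu M$ to $\nu M'$. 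Only then does goodness of $\pi$ let one surger $g\circ g'$ to a homotopy equivalence. Your outline would need all of these ingredients made explicit.
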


\begin{proof}
	By \cite{hillman-matrix}*{Theorem~10}, there is a $PD_4$-complex with fundamental group $\pi$ and a $2$-connected degree one map $f\colon Z\to M_\varsigma$ such that the equivariant intersection form on the surgery kernel is isomorphic to $\lambda$. (The hypothesis on 2-torsion is needed here.) Hence it follows from~\cite{AGS-book}*{Proposition~10.21} that $Z$ has equivariant intersection form $\lambda\oplus\lambda_\varsigma$.

    Now assume that $N$ is orientable and $\pi$ is torsion-free and solvable, then $\pi:=\pi_1(N)$ is either a  $PD_3$-group or $\Z$.
    It follows that $\pi$ is cohomologically 3-dimensional.

	If $N$ is orientable, then $M_\varsigma$ and $Z$ are orientable and thus $Z$ has a $\Top$ reduction \cites{Hambleton-red,Land-red}. Hence there is a 2-connected degree one normal map $g\colon M\to Z$. By adding copies of $E_8$ to $M$ we can assume that the signatures of $M$ and $Z$ agree.
	In the composition $\mathbb{L}\langle1\rangle_4(B\pi)\to \mathbb{L}_4(B\pi) \to  L_4(\Z\pi)$, the first map is an isomorphism since $\pi$ is 3-dimensional, and the second map is an isomorphism since $\pi$ satisfies the Farrell--Jones conjecture~\cite{Wegner}.
    It follows from a straightforward consideration of naturality in the Atiyah--Hirzebruch spectral sequence that the map $\cN(M)\cong \mathbb{L}\langle 1\rangle_4(M)\to \mathbb{L}\langle1\rangle_4(B\pi)$ is surjective. Hence the surgery obstruction map $\cN(M)\xrightarrow{\sigma} L_4(\Z\pi)$ is surjective and there exists a 2-connected degree one normal map $g'\colon M'\to M$ with surgery obstruction $\sigma(g' \colon M' \to M) = -\sigma(g \colon M \to Z)$.

   We claim that $g\circ g'\colon M'\to Z$ is a degree one normal map, as we show next. As a composition of degree one maps, $g\circ g'$ is degree one, so we only need to show the compatibility of normal structures. Since the difference of the signatures of $M$ and $Z$ is zero, and $\sigma(g')=-\sigma(g)$, the difference of the signatures of $M'$ and $M$ is also zero. By the Hirzebruch signature theorem and since $g'$ is degree one, we have that $(g')^*p_1(\nu M) = p_1 (\nu M')$. Since the Stiefel--Whitney classes depend only on the underlying spherical fibrations, $(g')^*w_2(\nu M) = w_2 (\nu M')$. Dold--Whitney~\cite{Dold-Whitney}*{Theorems~1 and~2} proved that (stable) orientable bundles over an orientable $4$-manifold (or more generally any $PD_4$-complex $X$ with no $2$-torsion in $H^4(X;\Z)$) are determined by $w_2$ and $p_1$.
    It follows that the map $g'$ pulls back the normal bundle of $M$ to the normal bundle of $M'$.  Hence  $g\circ g'\colon M'\to Z$ is a degree one normal map as claimed.

    The homology of the domain of a $2$-connected degree one map splits as the direct sum of the surgery kernel and the homology of the codomain. It follows that the surgery kernel of $g\circ g'$ is the direct sum of the surgery kernels of $g$ and $g'$, the surgery obstruction of $g\circ g'$ is trivial. Since solvable groups are good \cites{Freedman-Teichner:1995-1,Krushkal-Quinn:2000-1} (see also~\cite{DET-book-goodgroups}*{Example~19.6}), by surgery~\citelist{\cite{DET-book-surgery-chapter}\cite{FQ}*{Chapter~11}}, $g \circ g'$ is normally bordant to a homotopy equivalence, and hence there exists a manifold homotopy equivalent to $Z$ as claimed.
\end{proof}

\subsection{The \texorpdfstring{$s$-cobordism}{s-cobordism} and homeomorphism  classifications}\label{sec:homeo-s-cob-classification}

For $4$-manifolds with torsion-free $3$-manifold groups, we do not have a complete homeomorphism classification.  However by applying surgery we can obtain some information, which in the case that the group is also solvable comes very close.  We begin by considering arbitrary torsion-free $3$-manifold groups. These are in general not known to be good, so we can only obtain conclusions on the set of $4$-manifolds up to $s$-cobordism.

\begin{theorem}\label{thm:s-cob-classn-3-mfld-group}
Let $M$ be a closed $4$-manifold whose fundamental group $\pi$ is a torsion-free $3$-manifold group. Let $\beta_3:=\beta_3(\pi;\Z/2)$ and let $\beta_1:=\beta_1(\pi;\Z/2)$.
\begin{enumerate}
  \item\label{s-cob-bounds-1} There are at most $2^{\beta_3}$ topological $s$-cobordism classes of $4$-manifolds homotopy equivalent to $M$ and with the same Kirby--Siebenmann invariant.
  \item\label{s-cob-bounds-2} If $M$ is smooth and orientable, then there are at most $2^{\beta_3 + \beta_1}$ smooth $s$-cobordism classes of smooth $4$-manifolds homotopy equivalent to $M$.
  \item\label{s-cob-bounds-3} If $M$ is smooth and nonorientable, then there are at most $2^{\beta_3 + \beta_1+1}$ smooth $s$-cobordism classes of smooth $4$-manifolds homotopy equivalent to $M$.
\end{enumerate}
\end{theorem}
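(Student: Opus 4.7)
The plan is to apply the topological and smooth surgery exact sequences
\[
\cN_\partial(M \times I) \to L_5(\Z\pi,w) \to \sS^s(M) \xrightarrow{\eta} \cN(M) \xrightarrow{\sigma} L_4(\Z\pi,w),
\]
where $\sS^s$ and $\cN$ are understood in either the topological or the smooth category. Since $\pi$ is a torsion-free $3$-manifold group, $\Wh(\pi) = 0$ by Waldhausen, so decorations on $L$-groups and structure sets are irrelevant; moreover, the set of $s$-cobordism classes of $4$-manifolds homotopy equivalent to $M$ is a quotient of $\sS^s(M)$ (under the action of self-homotopy equivalences), so it suffices to bound
\[
|\sS^s(M)| = |\ker \sigma| \cdot \bigl|L_5(\Z\pi,w)/\operatorname{image}(\cN_\partial)\bigr|
\]
restricted to the relevant Kirby--Siebenmann fiber.

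The main computational input is the Farrell--Jones conjecture for $3$-manifold groups (established by the work of Roushon, Bartels--Farrell--L\"uck--Reich, and Wegner), which gives an isomorphism $H_*(B\pi; \mathbb{L}^w) \cong L_*(\Z\pi,w)$. Since torsion-free $3$-manifold groups satisfy $\operatorname{cd}(\pi) \leq 3$, the Atiyah--Hirzebruch spectral sequence converging to $H_*(B\pi; \mathbb{L}^w)$ is supported on columns $p \leq 3$ with surviving rows from $\pi_q(\mathbb{L}) \in \{\Z,\Z/2\}$. Hence $L_5(\Z\pi,w)$ is built from $H_1(\pi;\Z^w)$ and $H_3(\pi;\Z/2) \cong (\Z/2)^{\beta_3}$, while $L_4(\Z\pi,w)$ is built from $H_0(\pi;\Z^w)$ and $H_2(\pi;\Z/2)$.

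In the topological case, naturality of the Atiyah--Hirzebruch spectral sequence under $c\colon M \to B\pi$ shows that the composite $\cN_\partial(M \times I) \to L_5(\Z\pi,w)$ hits the $H_1(\pi;\Z^w)$-summand, because $c_*\colon H_1(M;\Z^w) \to H_1(\pi;\Z^w)$ is an isomorphism (first homology depends only on $\pi_1$). Thus $L_5/\operatorname{image}(\cN_\partial)$ injects into the cokernel of $c_*\colon H_3(M;\Z/2) \to H_3(\pi;\Z/2)$, whose order is at most $2^{\beta_3}$. Fixing the Kirby--Siebenmann invariant forces the $H^4(M;\Z)$-component of the normal invariant into a coset determined by the signature (a homotopy invariant), and the remaining $H^2(M;\Z/2)$-contributions to $\ker \sigma$ are absorbed by the $L_5$-action, yielding $|\sS^s_{\Top}(M)|_{\ks \text{ fixed}} \leq 2^{\beta_3}$.

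For the smooth case, the fibration $\Top/O \to G/O \to G/\Top$, with $\Top/O \simeq K(\Z/2,3)$ in the relevant range, gives a long exact sequence in which $[M, \Top/O] = H^3(M;\Z/2) \cong H_1(M;\Z/2) = (\Z/2)^{\beta_1}$ provides the additional smooth structures. This yields the factor $2^{\beta_1}$ over the topological bound. In the nonorientable case, $H^4(M;\Z) \cong \Z/2$ contributes an additional $\Z/2$ to $\ker \sigma$, because the signature (which constrains this component in the orientable case) is not defined for nonorientable manifolds, accounting for the extra factor of $2$ in the bound $2^{\beta_3+\beta_1+1}$. The principal technical obstacle will be the precise verification that the contributions to $\ker \sigma$ and to $L_5/\operatorname{image}(\cN_\partial)$ multiply out to the claimed bound, which requires compatible analysis of the Atiyah--Hirzebruch spectral sequences for both $H_*(M;\mathbb{L}^w)$ and $H_*(B\pi;\mathbb{L}^w)$, together with a careful identification of the $\ks$-component of the normal invariant in the non-simply-connected setting.
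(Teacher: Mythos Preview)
Your overall framework (surgery exact sequence, Farrell--Jones input, Atiyah--Hirzebruch computation of the $L$-groups, bounding the $L_5$-cokernel by $\operatorname{coker}(H_3(M;\Z/2)\to H_3(\pi;\Z/2))$) matches the paper's. However, there is a genuine gap in how you dispose of the normal-invariant side.

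You propose to bound $|\sS^s(M)|$ itself by $|\ker\sigma|\cdot|L_5/\operatorname{image}(\cN_\partial)|$ and then assert that ``the remaining $H^2(M;\Z/2)$-contributions to $\ker\sigma$ are absorbed by the $L_5$-action.'' This cannot work: in the surgery exact sequence the orbits of the $L_5$-action on $\sS^s(M)$ are precisely the fibres of $\eta$, so the $L_5$-action does not move the normal invariant at all. In particular $\ker\sigma = \operatorname{image}(\eta)$ can be as large as the spherical part of $H_2(M;\Z/2)$, and $|\sS^s(M)|$ is in general far larger than $2^{\beta_3}$ (already for $\pi = D_\infty$ the paper computes $\sS(M)\cong H_2(M;\Z/2)$). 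What you need to bound is the \emph{quotient} of $\sS^s(M)$ by self-homotopy equivalences, and the mechanism that kills the $H^2$-contributions is the $\hAut(M)$-action, not the $L_5$-action.

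The paper's argument supplies the missing step as follows. For a homotopy equivalence $f\colon N\to M$, the normal invariant $\eta(f)$ lies in $\ker\bigl(H_2(M;\Z/2)\to H_2(\pi;\Z/2)\bigr)$ (by injectivity of the degree-$4$ assembly map), so is represented by an immersed sphere $\alpha$. If $w_2(M)$ vanishes on $\alpha$, Novikov pinching produces a new homotopy equivalence $f'\colon N\to M$ with trivial normal invariant; if $w_2(M)$ is nontrivial on $\alpha$, a $\star\CP^2$ comparison shows $\ks(N)\neq\ks(M)$. Thus fixing $\ks$ forces every $N$ to be normally bordant to $M$, and only then does the $L_5$-cokernel bound of $2^{\beta_3}$ apply. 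Note in particular that the Kirby--Siebenmann invariant enters via the $H^2$-part (through $w_2$), not via the $H^4$-component as you suggest. For the smooth statements the paper compares $\cN^{\mathrm{Diff}}(M\times I,\partial)\to\cN^{\Top}(M\times I,\partial)$ (giving the $2^{\beta_1}$) and, in the nonorientable case, invokes Kirby--Taylor's computation that the fibre of $\cN^{\mathrm{Diff}}(M)\to\cN^{\Top}(M)$ over $\Id_M$ has two elements; your attribution of the extra factor of $2$ to $H^4(M;\Z)$ is not the correct source.
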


Note that if $\pi$ is torsion-free and $\pi = \pi_1(Y)$, for $Y$ a closed $3$-manifold, then $\beta_3$ equals the number of irreducible factors in a prime decomposition of $Y$.

\begin{proof}
	We will only sketch the proof since the strategy is well-known. We refer to \cite{Kasprowski-Land} for a more detailed treatment and further citations. We give the argument for the topological case and provide the necessary modifications for the smooth case at the end.
	
Every $3$-manifold group satisfies the Farrell--Jones conjecture by \cite{BFL-3d}*{Corollary~1.3}.
By \cite{Kasprowski-Land}*{Lemma~2.3}, every torsion-free $3$-manifold group $\pi$ thus satisfies the following properties for every homomorphism $w \colon \pi \to C_2$.
	\begin{enumerate}
		\item The Whitehead group $Wh(\pi)$ vanishes,
		\item the assembly map $\mathbb{L}\langle 1\rangle_4^w(B\pi)\to L_4(\Z\pi,w)$ is injective, and
		\item the assembly map $\mathbb{L}\langle 1\rangle_5^w(B\pi)\to L_5(\Z\pi,w)$  is surjective.
	\end{enumerate}
In particular by the first item every homotopy equivalence is simple and every $h$-cobordism is an $s$-cobordism.
	
Fix $w$ to be the orientation character of $M$.  Let $f\colon N\to M$ be a homotopy equivalence.
Let $\eta(f) \in \cN(M)$ denote its normal invariant.  Recall that
\[\cN(M) \cong [M,\G/\Top] \cong [\Sigma^{\infty} M_+,\mathbb{L}\langle1\rangle] \cong \mathbb{L}\langle1\rangle_4^w(M).\]
 Injectivity of $\mathbb{L}\langle 1\rangle_4^w(B\pi)\to L_4(\Z\pi,w)$ implies that the normal invariant $\eta(f)$ of $f$ is contained in the kernel of $\cN(M)\cong \mathbb{L}\langle 1\rangle_4^w(M)\to \mathbb{L}\langle 1\rangle_4^w(B\pi)$. Using the twisted Atiyah--Hirzebruch spectral sequence, the kernel of $\mathbb{L}\langle 1\rangle_4^w(M)\to \mathbb{L}\langle 1\rangle_4^w(B\pi)$ is isomorphic to the kernel of $H_2(M;\Z/2)\to H_2(\pi;\Z/2)$. Hence, with respect to this identification, $\eta(f)$ can be represented by an immersed $2$-sphere $\alpha$ in $M$, i.e.\ corresponds to an element in the image of $\pi_2(M) \to H_2(M;\Z/2)$. If $w_2(M)$ vanishes on $\alpha$, there exists a homotopy equivalence $f'\colon N\to M$ with trivial normal invariant using Novikov pinching; see e.g.~\cite{Kasprowski-Land}*{Lemma~3.3} for more details.
	
	We will now show that if $w_2(M)$ is nontrivial on $\alpha$, then $N$ and $M$ have different Kirby--Siebenmann invariant, cf.~\cite{Kasprowski-Land}*{Proof of Lemma~3.4}. There is a homotopy equivalence $g\colon \star\CP^2\to \CP^2$ with normal invariant represented by $\CP^1$. The homotopy equivalence $f\#g\colon N\#\star\CP^2\to M\#\CP^2$ has normal invariant represented by $\alpha+\CP^1$. On this 2-sphere $w_2$ now vanishes. Hence $N\#\star\CP^2$ and $M\#\CP^2$ are normally cobordant over $M\#\CP^2$, again using Novikov pinching, as above. In particular, they have the same Kirby--Siebenmann invariant. It follows that the Kirby--Siebenmann invariants of $N$ and $M$ are different, as claimed.
	
	So far we have shown that every manifold $N$ that is homotopy equivalent to $M$ and has the same Kirby--Siebenmann invariant is normally cobordant to $M$. Let $W$ be a choice of normal cobordism. It has a surgery obstruction $\sigma(W)\in L_5(\Z\pi,w)$. If $\sigma(W)=0$, then $N$ and $M$ are $s$-cobordant.
	
We claim that the number of $s$-cobordism classes is bounded by the order of the cokernel of the map $\mathbb{L}\langle1\rangle_5^w(M)\to L_5(\Z\pi,w)$.
To see this we need to show that if two normal cobordisms $W_1$ and $W_2$ over $M$, from $M$ to $N_1$ and $N_2$ respectively, are such that  $\sigma(W_1) = \sigma(W_2) \in \coker (\mathbb{L}\langle1\rangle_5^w(M)\to L_5(\Z\pi,w))$, then $N_1$ and $N_2$ are $s$-cobordant.
By the first paragraph of the proof of \cite{Kasprowski-Land}*{Theorem~3.1}, we can modify $W_1$ to a normal cobordism $W'_1$, also from $M$ to $N_1$, with surgery obstruction $\sigma(W'_1)=\sigma(W_2)$.
Stack $W_1'$ and $W_2$, gluing them along $M$, to obtain a normal cobordism $\ol{W} := -W_1' \cup_M W_2$ between $N_1$ and $N_2$ with surgery obstruction $\sigma(\ol{W}) = -\sigma(W_1') + \sigma(W_2) =0$. Hence we can surger $\ol{W}$ to an $s$-cobordism between $N_1$ and $N_2$, completing the proof of the claim.

Since the assembly map $\mathbb{L}\langle 1\rangle_5^w(B\pi)\to L_5(\Z\pi,w)$  is surjective, the order of the cokernel of $\mathbb{L}\langle1\rangle_5^w(M)\to L_5(\Z\pi,w)$ is the same as that of the cokernel $\mathbb{L}\langle1\rangle_5^w(M)\to \mathbb{L}\langle 1\rangle_5^w(B\pi)$.
	Considering the twisted Atiyah-–Hirzebruch spectral sequences for $M$ and $B\pi$, as in the proof of \cite{Kasprowski-Land}*{Theorem~3.1}, the cokernel of $\mathbb{L}\langle1\rangle_5^w(M)\to \mathbb{L}\langle 1\rangle_5^w(B\pi)$ is isomorphic to the cokernel of $H_3(M;\Z/2)\to H_3(\pi;\Z/2)$. In particular it has order at most $2^{\beta_3}$. This completes the proof of~\eqref{s-cob-bounds-1}.

For \eqref{s-cob-bounds-2}, the smooth orientable case, let $f \colon N \to M$ be a homotopy equivalence between smooth $4$-manifolds with fundamental group $\pi$.  The forgetful map $\cN^{\mathrm{Diff}}(M) \to \cN^{\Top}(M)$ is described by Kirby--Taylor in \cite{Kirby-Taylor}*{Lemma~7}.  In the orientable case the forgetful map is injective.  We showed above that $N$ and $M$ are topologically normally bordant over $M$. Hence by injectivity $N$ and $M$ are smoothly normally bordant over $M$.  The cokernel of $\cN^{\mathrm{Diff}}(M \times I,\partial) \to \cN^{\Top}(M \times I,\partial)$ can be identified with the cokernel of $[(M \times I,\partial),(\G/ O,*)] \to [(M \times I,\partial),(\G /\Top,*)]$, which is a subgroup of \[[(M\times I,\partial),(B(\Top/O),*)] = H^4(M \times I,\partial;\Z/2) \cong H_1(M;\Z/2) \cong H_1(\pi;\Z/2),\] and therefore has order at most $2^{\beta_1}$. Combining this with the argument above, the  cokernel of $\cN^{\mathrm{Diff}}(M \times I,\partial) \to L_5(\Z\pi,w)$ has order at most $2^{\beta_1 + \beta_3}$, and hence there are at most this many smooth $s$-cobordism classes of manifolds homotopy equivalent to $M$, proving \eqref{s-cob-bounds-2}.

For \eqref{s-cob-bounds-3}, the nonorientable case the fibre of $\Id_M$ under  $\cN^{\mathrm{Diff}}(M) \to \cN^{\Top}(M)$ has cardinality two, again by Kirby--Taylor \cite{Kirby-Taylor}*{Lemma~7}. In Kirby and Taylor's lemma there are two cases, depending on whether the Wu class $v_2(TM)$ vanishes or not, but in both cases the fibre of $\Id_M$ has cardinality two. Hence $N$ lies in one of two smooth normal bordism classes, each of which has at most $2^{\beta_1 + \beta_3}$ smooth $s$-cobordism classes, by the previous argument for \eqref{s-cob-bounds-2}, which already used either $w$-twisted or $\Z/2$-coefficients, and hence applies in the nonorientable case.
\end{proof}

\begin{corollary}
Let $M$ be a closed $4$-manifold whose fundamental group $\pi$ is a torsion-free, solvable $3$-manifold group.
There are at most two homeomorphism classes of closed $4$-manifolds homotopy equivalent to~$M$ and with the same Kirby--Siebenmann invariant.
\end{corollary}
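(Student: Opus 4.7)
The plan is to combine \cref{thm:s-cob-classn-3-mfld-group}\,\eqref{s-cob-bounds-1} with the facts that torsion-free solvable $3$-manifold groups are both good and have $\beta_3(\pi;\Z/2) \leq 1$. First I would classify the torsion-free solvable $3$-manifold groups: since any nontrivial free product contains a free subgroup of rank two and so is not solvable, the prime decomposition theorem for $3$-manifolds forces such a $\pi$ to be the fundamental group of a single prime $3$-manifold, which by torsion-freeness is either $S^1 \times S^2$ or aspherical. Hence $\pi$ is either trivial, isomorphic to $\Z$, or a solvable $PD_3$-group.

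Next I would compute $\beta_3 := \beta_3(\pi;\Z/2)$ in each case. For the trivial group and for $\Z$, clearly $\beta_3 = 0$. For a $PD_3$-group $\pi$, Poincar\'e duality with $\Z/2$-coefficients (which holds irrespective of orientability) gives $H_3(\pi;\Z/2) \cong H^0(\pi;\Z/2) \cong \Z/2$, so $\beta_3 = 1$. In all cases therefore $2^{\beta_3} \leq 2$.

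Applying \cref{thm:s-cob-classn-3-mfld-group}\,\eqref{s-cob-bounds-1} then bounds the number of topological $s$-cobordism classes of closed $4$-manifolds homotopy equivalent to $M$ and with the same Kirby--Siebenmann invariant as $M$ by $2^{\beta_3} \leq 2$. To upgrade this $s$-cobordism bound to a homeomorphism bound I would invoke two further facts already available in the text: the vanishing $\Wh(\pi) = 0$ (recorded in the proof of \cref{thm:s-cob-classn-3-mfld-group}), which ensures that every homotopy equivalence is simple and every $h$-cobordism is an $s$-cobordism; and the fact that solvable groups are good~\cites{Freedman-Teichner:1995-1,Krushkal-Quinn:2000-1}, which ensures that the topological $s$-cobordism theorem applies in dimension five for fundamental group $\pi$. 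The conclusion that $s$-cobordant closed $4$-manifolds with fundamental group $\pi$ are homeomorphic then completes the proof.

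There is no serious obstacle here: the argument is essentially a direct specialisation of \cref{thm:s-cob-classn-3-mfld-group} to solvable groups combined with the $s$-cobordism theorem for good groups. The only point requiring any care is the initial structural observation that torsion-free solvable $3$-manifold groups have $\beta_3(\pi;\Z/2) \leq 1$, but this reduces immediately to the absence of nontrivial free-product decompositions among solvable groups together with Poincar\'e duality.
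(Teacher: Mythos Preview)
Your proof is correct and follows essentially the same approach as the paper: both use that a torsion-free solvable $3$-manifold group admits no nontrivial free-product decomposition, deduce $\beta_3 \leq 1$, apply \cref{thm:s-cob-classn-3-mfld-group}\,\eqref{s-cob-bounds-1}, and then invoke goodness of solvable groups to pass from $s$-cobordism to homeomorphism. Your version is in fact slightly more careful, since you explicitly treat the cases $\pi$ trivial and $\pi \cong \Z$ (where $\beta_3 = 0$) separately, whereas the paper simply asserts $\beta_3 = 1$.
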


\begin{proof}
Such groups do not decompose nontrivially as free products, so  $H_3(\pi;\Z/2) \cong \Z/2$ and hence $\beta_3=1$.
Since $\pi$ is solvable it is a good group, so the topological $s$-cobordism theorem implies that $s$-cobordant manifolds are homeomorphic. Therefore \cref{thm:s-cob-classn-3-mfld-group}\,\eqref{s-cob-bounds-1} implies that within a fixed homotopy class, and fixing the Kirby--Siebenmann invariant, we have at most $2^{\beta_3} = 2$ homeomorphism classes.
\end{proof}

\def\MR#1{}
\bibliographystyle{alpha}
\bibliography{bib.bib}

\end{document}